\documentclass[draft,reqno]{amsart}


\usepackage{amsmath,amssymb,amsthm,enumerate} 
\usepackage{amsfonts} 
\usepackage{color}
\usepackage{dsfont} 

\theoremstyle{plain}
\newtheorem{thm}{Theorem}[section]

\newtheorem{lem}[thm]{Lemma}
\newtheorem{prp}[thm]{Proposition}
\newtheorem{rmk}[thm]{Remark}

\DeclareMathOperator{\dv}{div}

\DeclareMathOperator{\spp}{supp}
\DeclareMathOperator{\Dv}{Div}

\newcommand{\grv}{\gamma_a}
\newcommand{\intd}{\,d}
\newcommand{\jmp}[1]{\ensuremath{[\![#1]\!]}}

\newcommand{\pd}{\partial}
\newcommand{\wht}[1]{\widehat{#1}}
\newcommand{\wtd}[1]{\widetilde{#1}}


\newcommand{\Ba}{\mathbf{a}}
\newcommand{\Bb}{\mathbf{b}}

\newcommand{\Be}{\mathbf{e}}
\newcommand{\Bf}{\mathbf{f}}
\newcommand{\Bg}{\mathbf{g}}
\newcommand{\Bh}{\mathbf{h}}

\newcommand{\Bu}{\mathbf{u}}
\newcommand{\Bv}{\mathbf{v}}
\newcommand{\Bw}{\mathbf{w}}

\newcommand{\BC}{\mathbf{C}}
\newcommand{\BD}{\mathbf{D}}

\newcommand{\BF}{\mathbf{F}}

\newcommand{\BI}{\mathbf{I}}

\newcommand{\BL}{\mathbf{L}}
\newcommand{\BM}{\mathbf{M}}
\newcommand{\BN}{\mathbf{N}}

\newcommand{\BR}{\mathbf{R}}

\newcommand{\BU}{\mathbf{U}}


\newcommand{\CB}{\mathcal{B}}

\newcommand{\CD}{\mathcal{D}}
\newcommand{\CE}{\mathcal{E}}
\newcommand{\CF}{\mathcal{F}}
\newcommand{\CG}{\mathcal{G}}
\newcommand{\CH}{\mathcal{H}}
\newcommand{\CI}{\mathcal{I}}
\newcommand{\CJ}{\mathcal{J}}

\newcommand{\CL}{\mathcal{L}}
\newcommand{\CM}{\mathcal{M}}

\newcommand{\CS}{\mathcal{S}}

\newcommand{\CU}{\mathcal{U}}

\newcommand{\Fa}{\mathfrak{a}}
\newcommand{\Fb}{\mathfrak{b}}

\newcommand{\Fp}{\mathfrak{p}}
\newcommand{\Fq}{\mathfrak{q}}
\newcommand{\Fr}{\mathfrak{r}}


\newcommand{\SST}{\mathsf{T}}

\numberwithin{equation}{section} 
\allowdisplaybreaks[4]

\begin{document}
\title[Linearized two-phase Navier-Stokes equations]
{Time-decay estimates for linearized two-phase Navier-Stokes equations with surface tension and gravity}

\author[Hirokazu Saito]{Hirokazu Saito}
\address{Department of Mathematics, The University of Electro-Communications,
5-1 Chofugaoka 1-chome, Chofu, Tokyo 182-8585, Japan}
\email{hsaito@uec.ac.jp}

\subjclass[2010]{Primary: 35Q30; Secondary: 76D07.}
\keywords{Two-phase flow, semigroup, $L_p-L_q$ decay estimate, surface tension, gravity.}

\thanks{This research was partly supported by JSPS KAKENHI Grant Number JP17K14224.}

\begin{abstract}
The aim of this paper is to show time-decay estimates of solutions to
linearized two-phase Navier-Stokes equations with surface tension and gravity.
The original two-phase Navier-Stokes equations describe the two-phase incompressible viscous flow with a sharp interface
that is close to the hyperplane $x_N=0$ in the $N$-dimensional Euclidean space, $N \geq 2$.
It is well-known that the Rayleigh-Taylor instability occurs
when the upper fluid is heavier than the lower one,
while this paper assumes that the lower fluid is heavier than the upper one
and proves time-decay estimates of $L_p-L_q$ type for the linearized equations.
Our approach is based on solution formulas, given by Shibata and Shimizu (2011),
for a resolvent problem associated with the linearized equations.
\end{abstract}

\maketitle

\section{Introduction}\label{sec:intro}

Let us consider the motion of two immiscible, viscous, incompressible capillary fluids,
${\it fluid}_+$ and ${\it fluid}_-$, in the $N$-dimensional Euclidean space $\BR^N$ for $N\geq 2$.
Here ${\rm fluid}_+$ and ${\rm fluid}_-$ occupy $\Omega_+(t)$ and $\Omega_-(t)$, respectively, which are given by
\begin{equation*}
\Omega_\pm(t)=\{(x',x_N) : x'=(x_1,\dots,x_{N-1})\in\BR^{N-1}, \pm(x_N-H(x',t))>0\}
\end{equation*}
for time $t>0$ and
the so-called height function\footnote{The height function is unknown. This needs to be determined as part of the problem.}
$H=H(x',t)$.
The fluids are thus separated by the interface 
\begin{equation*}
\Gamma(t)=\{(x',x_N) : x'=(x_1,\dots,x_{N-1})\in\BR^{N-1}, x_N=H(x',t)\}.
\end{equation*}
We denote the density of ${\rm fluid}_\pm$ by $\rho_\pm$, 
while the viscosity coefficient of ${\rm fluid}_\pm$ by $\mu_\pm$.
Suppose that $\rho_\pm$ and $\mu_\pm$ are positive constants throughout this paper.
That motion of  two fluids is governed by the two-phase Navier-Stokes equations
where surface tension is included on the interface. In addition, we allow for gravity to act on the fluids.
The two-phase Navier-Stokes equations was studied by Pr$\ddot{{\rm u}}$ss and Simonett \cite{PS10},
and they proved that the Rayleigh-Taylor instability occurs in an $L_p$-setting
when the upper fluid is heavier than the lower one, i.e., $\rho_+>\rho_-$.
In the present paper, we assume that the lower fluid is heavier than the upper one, i.e., $\rho_->\rho_+$,
and show time-decay estimates of $L_p-L_q$ type for some linearized system
as the first step in proving global existence results for the two-phase Navier-Stokes equations when $\rho_->\rho_+$.

Let us define $\dot\BR^N=\BR_+^N\cup\BR_-^N$ for
\begin{equation*}
\BR_\pm^N = \{(x',x_N)  : x'=(x_1,\dots,x_{N-1})\in\BR^{N-1}, \pm x_N>0\}.
\end{equation*}
This paper is concerned with the following linearized system of the two-phase Navier-Stokes equations:
\begin{equation}\label{eq:main}
\left\{\begin{aligned}
\pd_t H-U_N|_{x_N=0} &= 0 \quad \text{on $\BR^{N-1}\times (0,\infty)$,} \\
\rho\pd_t\BU -\Dv(\mu\BD(\BU)-P\BI) &=0 \quad \text{in $\dot\BR^N\times(0,\infty)$,} \\
\dv\BU&=0 \quad \text{in $\dot\BR^N\times(0,\infty)$,} \\
-\jmp{(\mu\BD(\BU)-P\BI)\Be_N}+\left(\omega-\sigma\Delta'\right)H\Be_N&=0 \quad \text{on $\BR^{N-1}\times(0,\infty)$,} \\
\jmp{\BU}&=0 \quad \text{on $\BR^{N-1}\times(0,\infty)$,} \\
H(x',0) =d(x') \ (x'\in\BR^{N-1}),  \quad
&\BU(x,0) = \Bf(x) \ (x\in \dot\BR^N),
\end{aligned}\right.
\end{equation}
where $\sigma$ is a positive constant called the surface tension coefficient and
one has set for the indicator function $\mathds{1}_A$ of $A\subset\BR^N$
\begin{equation*}
\rho=\rho_+\mathds{1}_{\BR_+^N}+\rho_-\mathds{1}_{\BR_-^N}, \quad 
\mu=\mu_+\mathds{1}_{\BR_+^N}+\mu_-\mathds{1}_{\BR_-^N}.
\end{equation*}

Here $\BU=\BU(x,t)= (U_1(x,t),\dots,U_N(x,t))^\SST\footnote{$\BM^\SST$ denotes the transpose of $\BM$.}$ 
and $P=P(x,t)$ respectively denote the velocity field of the fluid and the pressure field of the fluid
at position $x\in\dot\BR^N$ and time $t>0$,
while $d=d(x')$ and $\Bf=\Bf(x)=(f_1(x),\dots,f_N(x))^\SST$ are given initial data.
Note that $\Be_N=(0,\dots,0,1)^\SST$ and $\BI$ is the $N\times N$ identity matrix.
Let $\pd_j=\pd/\pd x_j$ for $j=1,\dots,N$. Then
\begin{equation*}
\dv\BU=\sum_{j=1}^N\pd_j U_j,   \quad 
\Delta' H=\sum_{j=1}^{N-1}\pd_j^2 H,
\end{equation*}
while $\BD(\BU)$ is an $N\times N$ matrix whose $(i,j)$ element is given by $\pd_i U_j+\pd_j U_i$.
In addition, for matrix-valued functions $\BM=(M_{ij}(x))$, 
\begin{equation*}
\Dv\BM=\left(\sum_{j=1}^N\pd_j M_{1j},\dots,\sum_{j=1}^N\pd_j M_{N j}\right)^\SST.
\end{equation*}

Let $f=f(x)$ be a function defined on $\dot\BR^N$.
Then $\jmp{f}=\jmp{f}(x')$ denotes the jump of the quantity $f$ across the interface 
$\BR_0^N=\{(x',x_N) : x'\in\BR^{N-1},x_N=0\}$, that is,
\begin{equation*}
\jmp{f}=\jmp{f}(x')=f(x',0\,+)-f(x',0\,-),
\end{equation*}
where $f(x',0\,\pm)=\lim_{x_N \to 0,\, \pm x_N>0}f(x',x_N)$.
Note that $\jmp{\BU}=0$ on $\BR^{N-1}$ implies $U_N|_{x_N=0}=U_N(x',0\,+)=U_N(x',0\,-)$.
For the acceleration of gravity $\grv>0$, the constant $\omega$ is given by
\begin{equation*}
\omega=-\jmp{\rho}\grv = (\rho_--\rho_+)\grv,
\end{equation*}
which is positive when $\rho_->\rho_+$.

The local well-posedness for the two-phase Navier-Stokes equations
with $\Gamma(t)$ as above was proved in Pr$\ddot{\rm u}$ss and Simonett \cite{PS10b,PS11}.
Note that the local well-posedness holds for any positive constants $\rho_\pm$,
that is, the condition $\rho_->\rho_+$ is not required.
Those results were extended to a class of non-Newtonian fluids in \cite{HS17}.
In addition, \cite{SSZ20} considered the two-phase inhomogeneous incompressible viscous flow
without surface tension when gravity is not taken into account,  
and proved the local well-posedness in general domains including the above-mentioned $\Omega_\pm(t)$. 
If $\Omega_\pm(t)$ are assumed to be layer-like domains, 
then it is known that the global well-posedness holds when $\rho_->\rho_+$.
In fact, it was shown in \cite{WTK14} in a horizontally periodic setting, and also we refer to \cite{XZ10}.

Let us recall Shibata and Shimizu \cite{SS11b}. 
They considered the following two resolvent problems:
\begin{equation}\label{res-prob:1}
\left\{\begin{aligned}
\lambda\eta-u_N|_{x_N=0} &= d && \text{on $\BR^{N-1}$,} \\
\rho\lambda \Bu -\Dv(\mu\BD(\Bu)-\Fp\BI) &=\rho\Bf && \text{in $\dot\BR^N$,} \\
\dv\Bu&=0 && \text{in $\dot\BR^N$, } \\
-\jmp{(\mu\BD(\Bu)-\Fp\BI)\Be_N}+(\omega-\sigma\Delta')\eta\Be_N&=0 && \text{on $\BR^{N-1}$, } \\
\jmp{\Bu}&=0 && \text{on $\BR^{N-1}$,}
\end{aligned}\right.
\end{equation}
and also 
\begin{equation}\label{res-prob:2}
\left\{\begin{aligned}
\rho\lambda \Bv -\Dv(\mu\BD(\Bv)-\Fq\BI) &=\rho\Bf && \text{in $\dot\BR^N$,} \\
\dv\Bv&=0 && \text{in $\dot\BR^N$, } \\
\jmp{(\mu\BD(\Bv)-\Fq\BI)\Be_N}&=0 && \text{on $\BR^{N-1}$, } \\
\jmp{\Bv}&=0 && \text{on $\BR^{N-1}$.}
\end{aligned}\right.
\end{equation}
We define the sector
\begin{equation*}
\Sigma_\varepsilon=\{\lambda\in\BC\setminus\{0\} : |\arg\lambda|\leq \pi-\varepsilon\} \quad (0<\varepsilon<\pi/2),
\end{equation*}
where $\BC$ is the set of all complex numbers.
Let $q\in(1,\infty)$ and $(d,\Bf)\in X_q:=W_q^{2-1/q}(\BR^{N-1})\times L_q(\dot\BR^N)^N$.
In \cite{SS11b}, they obtained the following results:
there exists a constant $\lambda_0(\varepsilon)\geq 1$
such that, for any $\lambda \in\Sigma_\varepsilon$ with $|\lambda|\geq \lambda_0(\varepsilon)$,
\eqref{res-prob:1} admits a unique solution
$(\eta,\Bu,\Fp)\in W_q^{3-1/q}(\BR^{N-1})\times H_q^2(\dot\BR^N)^N\times \wht H_q^1(\dot\BR^N)$, which satisfies
\begin{equation}\label{res-est:1}
\|(\lambda\eta,\nabla'\eta)\|_{W_q^{2-1/q}(\BR^{N-1})}
+\|(\lambda\Bu,\lambda^{1/2}\nabla \Bu,\nabla^2\Bu,\nabla\Fp)\|_{L_q(\dot\BR^N)}
\leq C_{N,q,\varepsilon,\lambda_0(\varepsilon)}\|(d,\Bf)\|_{X_q},
\end{equation}
where
\begin{alignat*}{2}
\nabla'\eta&=(\pd_1\eta,\dots,\pd_{N-1}\eta)^\SST, \quad
 &\nabla\Fp&=(\pd_1\Fp,\dots,\pd_{N-1}\Fp,\pd_N\Fp)^\SST, \\
\nabla\Bu&=\{\pd_j u_k: j,k=1,\dots,N\}, \quad
&\nabla^2\Bu&=\{\pd_j\pd_k u_l : j,k,l=1,\dots,N\};
\end{alignat*}
for any $\lambda\in\Sigma_\varepsilon$, \eqref{res-prob:2} admits a unique solution 
$(\Bv,\Fq)\in H_q^2(\dot\BR^N)^N\times \wht H_q^1(\dot\BR^N)$, which satisfies
\begin{equation}\label{res-est:2}
\|(\lambda\Bv,\lambda^{1/2}\nabla\Bv,\nabla^2\Bv,\nabla\Fq)\|_{L_q(\dot\BR^N)}
\leq C_{N,q,\varepsilon}\|\Bf\|_{L_q(\dot\BR^N)}.
\end{equation}
These results hold for any $\rho_\pm>0$
and play a key role in proving time-decay estimates of solutions for \eqref{eq:main}
in the present paper.

This paper is organized as follows:
The next section introduces the notation used throughout this paper
and states the main results of this paper, that is,
time-decay estimates of solutions for \eqref{eq:main}.
Section 3 gives the representation formulas of solutions for \eqref{eq:main}
by means of the partial Fourier transform with respect to $x'=(x_1,\dots,x_{N-1})$ and its inverse transform.
Section 4 analyzes the boundary symbol appearing in the representation formulas given in Section 3.
Section 5 shows our main result stated in Section 2 for a low frequency part.
Section 6 shows our main result stated in Section 2 for a high frequency part.

\section{Notation and main results}
\subsection{Notation}
First, we introduce function spaces.
Let $X$ be a Banach space.
Then $X^m$, $m\geq 2$, stands for the $m$-product space of $X$,
while the norm of $X^m$ is usually denoted by $\|\cdot\|_X$ instead of $\|\cdot\|_{X^m}$ for the sake of simplicity.
For another Banach space $Y$, we set $\|u\|_{X\cap Y}=\|u\|_X+\|u\|_Y$.
Let $\BN$ be the set of all natural numbers and $\BN_0=\BN\cup\{0\}$.
Let $p\geq 1$ or $p=\infty$. 
For an open set $G\subset \BR^M$, $M\geq 1$,
the Lebesgue spaces on $G$ are denoted by $L_p(G)$ with norm $\|\cdot\|_{L_p(G)}$,
while the Sobolev spaces on $G$ are denoted by $H_p^n(G)$, $n\in\BN$, with norm $\|\cdot\|_{H_p^n(G)}$.
Let $H_p^0(G)=L_p(G)$.
In addition, $C_0^\infty(G)$ is the set of all functions in $C^\infty(G)$ whose supports are compact and contained in $G$,
and $C^\infty(I,X)$ is the set of all $C^\infty$ functions on an interval $I\subset\BR$ with value $X$.
For any multi-index $\alpha=(\alpha_1,\dots,\alpha_M)\in\BN_0^M$, 
\begin{equation*}
\pd^\alpha u=\pd_x^\alpha  u =\frac{\pd^{|\alpha|}u(x)}{\pd x_1^{\alpha_1}\dots \pd x_M^{\alpha_M}}
\quad \text{with $|\alpha|=\alpha_1+\dots+\alpha_M$.}
\end{equation*}
Let $q\in(1,\infty)$ and $\wht H_q^1(G)=\{u\in L_{q,\rm loc}(\overline{G}) : \pd^\alpha u \in L_q(G) \text{ for $|\alpha|=1$}\}$.
Let $s\in[0,\infty)\setminus\BN$ and $[s]$ be the largest integer less than s.
The Sobolev-Slobodeckij spaces on $\BR^{N-1}$ are defined by
\begin{align*}
W_q^s(\BR^{N-1})&=\{u\in L_q(\BR^{N-1}) : \|u\|_{W_q^s(\BR^{N-1})}<\infty\}, \\
\|u\|_{W_q^s(\BR^{N-1})}
&=\|u\|_{H_q^{[s]}(\BR^{N-1})} \\
&+\sum_{|\alpha|=[s]}\left(\int_{\BR^{N-1}}\int_{\BR^{N-1}}
\frac{|\pd^\alpha u(x)-\pd^\alpha u(y)|^q}{|x-y|^{N+(s-[s])q}}\intd xdy\right)^{\frac{1}{q}}.
\end{align*}
Let us define a solenoidal space by 
\begin{equation*}
J_q(\dot\BR^N)=\{\Bu\in L_q(\dot\BR^N)^N : (\Bu,\nabla\varphi)_{\dot\BR^N}=0 \text{ for any $\varphi\in\wht H_{q'}^1(\BR^N)$}\},
\end{equation*}
where $q'=q/(q-1)$ and 
\begin{equation*}
(\Ba,\Bb)_{\dot\BR^N}=\int_{\dot\BR^N}\Ba(x)\cdot \Bb(x)\intd x = \sum_{j=1}^N\int_{\dot\BR^N}a_j(x)b_j(x)\intd x.
\end{equation*}
Furthermore, we set
\begin{alignat*}{2}
X_q&=W_q^{2-1/q}(\BR^{N-1})\times L_q(\dot\BR^N)^N, 
\quad 
&JX_q&=W_q^{2-1/q}(\BR^{N-1})\times J_q(\dot\BR^N), \\
Y_q&=L_q(\BR^{N-1})\times L_q(\dot\BR^N)^N, 
\quad & JY_q&=L_q(\BR^{N-1})\times J_q(\dot\BR^N),
\end{alignat*}
and also $Y_1=L_1(\BR^{N-1})\times L_1(\dot\BR^N)^N$.

Next, we define the partial Fourier transform with respect to $x'=(x_1,\dots,x_{N-1})$ by
\begin{equation*}
\wht u(\xi',x_N)=\int_{\BR^{N-1}}e^{-ix'\cdot\xi'}u(x',x_N)\intd x',
\end{equation*}
where $i=\sqrt{-1}$. Its inverse transform is also defined by 
\begin{equation*}
\CF_{\xi'}^{-1}[v(\xi',x_N)](x')=\frac{1}{(2\pi)^{N-1}}\int_{\BR^{N-1}}e^{ix'\cdot\xi'}v(\xi',x_N)\intd \xi'.
\end{equation*}

Finally, we introduce some constants and symbols.
Let 
\begin{equation}\label{dfn:theta}
\theta_j={\rm Tan}^{-1}\left(\frac{j}{16}\right) \quad \text{for $j=1,2$,}
\quad \lambda_1=\lambda_0(\theta_1),
\end{equation}
where $\lambda_0(\varepsilon)$ is given in \eqref{res-est:1}.
In addition, we set
\begin{equation}\label{dfn:al-beta}
\alpha = \frac{\omega}{\rho_++\rho_-}=\frac{(\rho_--\rho_+)\gamma_a}{\rho_++\rho_-}, \quad
\beta=\frac{\sqrt{\rho_+\mu_+}\sqrt{\rho_-\mu_-}}{(\rho_++\rho_-)(\sqrt{\rho_+\mu_+}+\sqrt{\rho_-\mu_-})}.
\end{equation}
The integral path $\Gamma_0$ is defined by 
\begin{align}
\Gamma_0&=\Gamma_0^+\cup\Gamma_0^-, \quad
\Gamma_0^+ 
= \{\lambda \in\BC : \lambda=\frac{2\lambda_1}{\sin\theta_1} + s e^{i(\pi-\theta_1)}, s:0\to\infty\}, \notag \\
\Gamma_0^- 
&=\{\lambda \in\BC : \lambda=\frac{2\lambda_1}{\sin\theta_1} + s e^{-i(\pi-\theta_1)}, s:\infty\to 0\}.
\label{dfn:gamma_0}
\end{align}

\subsection{Main results}
We first introduce the existence of solution operators for \eqref{eq:main}.
This immediately follows from the resolvent estimate \eqref{res-est:1} and the standard theory of operator semigroups.

\begin{thm}\label{thm:main1}
Let $q\in(1,\infty)$ and $\rho_\pm$ be any positive constants.
Let
\begin{equation*}
T(t)(d,\Bf)=\frac{1}{2\pi i}\int_{\Gamma_0} e^{\lambda t}(\eta(x',\lambda),\Bu(x,\lambda))\,d\lambda,
\end{equation*}
with the solution $(\eta,\Bu)=(\eta(x',\lambda),\Bu(x,\lambda))$  to \eqref{res-prob:1}.
Then the following assertions hold.
\begin{enumerate}[$(1)$]
\item
For any $(d,\Bf)\in X_q$,
$T(t)(d,\Bf) \in C^\infty((0,\infty),W_q^{3-1/q}(\BR^{N-1})\times H_q^2(\dot\BR^N)^N)$. 
\item
$\{T(t)\}_{t\geq 0}$ is an analytic $C_0$-semigroup on $JX_q$.
\item
For any $(d,\Bf)\in JX_q$, $(H,\BU)=T(t)(d,\Bf)$ is  a unique solution to \eqref{eq:main} with some pressure $P\in\wht H_q^1(\dot\BR^N)$.
\end{enumerate}
\end{thm}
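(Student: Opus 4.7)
The plan is to realize \eqref{eq:main} as an abstract Cauchy problem $\pd_t U=AU$ on the phase space $JX_q$ and then invoke the Dunford--Riesz functional calculus together with the resolvent estimate \eqref{res-est:1} to produce the analytic semigroup $T(t)$.

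First I would define the generator. Let $A$ act on $JX_q$ by $A(d,\Bf) = (f_N|_{x_N=0},\, \rho^{-1}\Dv(\mu\BD(\Bf)-\Fp\BI))$, where the pressure $\Fp\in\wht H_q^1(\dot\BR^N)$ is uniquely determined by requiring the second component to be solenoidal and by imposing the transmission condition $-\jmp{(\mu\BD(\Bf)-\Fp\BI)\Be_N}+(\omega-\sigma\Delta')d\,\Be_N=0$ together with $\jmp{\Bf}=0$. The natural domain is
\begin{equation*}
D(A)=\{(d,\Bf)\in (W_q^{3-1/q}(\BR^{N-1})\times H_q^2(\dot\BR^N)^N)\cap JX_q : \jmp{\Bf}=0\}.
\end{equation*}
For $\lambda$ in the sector $\Sigma_{\theta_1}$ with $|\lambda|\geq \lambda_1$, the Shibata--Shimizu theorem stated through \eqref{res-est:1} gives a unique solution $(\eta,\Bu,\Fp)$ to \eqref{res-prob:1} with data $(d,\Bf)\in JX_q$; since $\dv\Bu=0$, one has $\Bu\in J_q(\dot\BR^N)$, so $(\lambda-A)^{-1}(d,\Bf)=(\eta,\Bu)$. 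Extracting the bounds on $\|\lambda\eta\|_{W_q^{2-1/q}}$ and $\|\lambda\Bu\|_{L_q}$ from \eqref{res-est:1} yields the sectorial estimate
\begin{equation*}
\|(\lambda-A)^{-1}\|_{\CL(JX_q)}\leq \frac{C}{|\lambda|}, \qquad \lambda\in\Sigma_{\theta_1},\ |\lambda|\geq\lambda_1.
\end{equation*}

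After an inessential shift placing the spectrum of $A$ to the left of $\Gamma_0$, this estimate makes $A$ the generator of an analytic semigroup on $JX_q$, whose Dunford representation along $\Gamma_0$ coincides with $T(t)$. Assertion (2) follows; strong continuity at $t=0$ is first established on the dense subspace $D(A)$ and then extended by the uniform boundedness of $\{T(t)\}$. For (3), existence of $(H,\BU)=T(t)(d,\Bf)$ is immediate, the pressure being recovered as $P(\cdot,t)=(2\pi i)^{-1}\int_{\Gamma_0} e^{\lambda t}\Fp(\cdot,\lambda)\,d\lambda$ with $\nabla\Fp$ controlled by \eqref{res-est:1}; uniqueness follows because any competing solution satisfies the homogeneous equation with zero data and hence vanishes. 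Assertion (1) is obtained by noting that the resolvent estimate \eqref{res-est:1} holds on the larger space $X_q$ (not just $JX_q$), so the contour integral makes sense for arbitrary $(d,\Bf)\in X_q$; smoothness in $t$ then comes from differentiating under the integral sign, each $t$-derivative producing a factor $\lambda^k$ whose integral along $\Gamma_0$ still converges in $W_q^{3-1/q}(\BR^{N-1})\times H_q^2(\dot\BR^N)^N$ thanks to the decay of $e^{\lambda t}$ for $t>0$.

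The main technical obstacle is the bookkeeping for the pressure and the transmission conditions. The pressure lives only in $\wht H_q^1(\dot\BR^N)$, so convergence of the Dunford integral must be read through $\nabla\Fp\in L_q$ rather than $\Fp$ itself, and one must check that this $P$ is indeed the pressure driving \eqref{eq:main} rather than some ambient gradient coming from the $\wht H_q^1$ construction. A parallel subtlety is verifying that the image $T(t)(d,\Bf)$ for initial data in $JX_q$ preserves the solenoidal constraint $\dv\BU=0$ as well as $\jmp{\BU}=0$ for all $t>0$; both reduce to the fact that the resolvent $(\lambda-A)^{-1}$ already encodes these constraints, but this needs to be transferred through the contour integral.
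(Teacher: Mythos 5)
Your proposal is correct and follows exactly the route the paper intends: the paper itself offers no proof beyond the remark that Theorem \ref{thm:main1} "immediately follows from the resolvent estimate \eqref{res-est:1} and the standard theory of operator semigroups," and your construction of the generator, the sectorial bound $\|(\lambda-A)^{-1}\|\leq C/|\lambda|$ extracted from \eqref{res-est:1}, the Dunford integral along $\Gamma_0$, and the recovery of the pressure are precisely that standard argument carried out in detail. Nothing in your write-up diverges from or adds a gap to the paper's intended proof.
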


Let us define projections $P_1$, $P_2$ by 
\begin{equation*}
P_1:X_q\ni (a, \Bb)\mapsto  a\in W_q^{2-1/q}(\BR^{N-1}), \quad
P_2:X_q\ni(a,\Bb)\mapsto \Bb\in L_q(\dot\BR^N)^N.
\end{equation*}
We set for $(d,\Bf)\in X_q$
\begin{align}
\CH(t)(d,\Bf)&=P_1 T(t)(d,\Bf)=\frac{1}{2\pi i}\int_{\Gamma_0} e^{\lambda t}\eta(x',\lambda)\,d\lambda, \notag \\ 
\CU(t)(d,\Bf)&=P_2 T(t)(d,\Bf)=\frac{1}{2\pi i}\int_{\Gamma_0} e^{\lambda t}\Bu(x,\lambda)\,d\lambda. \label{ope-U}
\end{align}

One now decomposes the solution $(\Bu,\Fp)$ of \eqref{res-prob:1} into 
a solution of parabolic system and a solution of hyperbolic-parabolic coupled system as follows:
\begin{align}
&\left\{\begin{aligned}\label{eq:para}
\rho\lambda\Bu^P-\Dv(\mu\BD(\Bu^P)-\Fp^P\BI)&=\rho\Bf && \text{in $\dot\BR^N$,} \\
\dv\Bu^P&=0 && \text{in $\dot \BR^N$,} \\
\jmp{(\mu\BD(\Bu^P)-\Fp^P\BI)\Be_N}&=0 && \text{on $\BR^{N-1}$,} \\
\jmp{\Bu^P}&=0  && \text{on $\BR^{N-1}$,} \\
\end{aligned}\right. \\
&\left\{\begin{aligned}\label{eq:hype}
\lambda\eta-u_N^H|_{x_N=0} &= d+u_N^P|_{x_N=0} && \text{on $\BR^{N-1}$,} \\
\rho\lambda \Bu^H -\Dv(\mu\BD(\Bu^H)-\Fp^H\BI) &=0 && \text{in $\dot\BR^N$,} \\
\dv\Bu^H&=0 && \text{in $\dot\BR^N$, } \\
-\jmp{(\mu\BD(\Bu^H)-\Fp^H\BI)\Be_N}+(\omega-\sigma\Delta')\eta\Be_N&=0 && \text{on $\BR^{N-1}$, } \\
\jmp{\Bu^H}&=0 && \text{on $\BR^{N-1}$.}
\end{aligned}\right.
\end{align}
It then holds that $\Bu=\Bu^P+\Bu^H$ and $\Fp=\Fp^P+\Fp^H$.
In addition, we set
\begin{equation*}
\CU^P(t)\Bf=\frac{1}{2\pi i}\int_{\Gamma_0} e^{\lambda t}\Bu^P(x,\lambda)\,d\lambda, \quad
\CU^H(t)(d,\Bf)=\frac{1}{2\pi i}\int_{\Gamma_0} e^{\lambda t}\Bu^H(x,\lambda)\,d\lambda,
\end{equation*}
which implies
\begin{equation}\label{decomp:1-P}
\CU(t)(d,\Bf)=\CU^P(t)\Bf+\CU^H(t)(d,\Bf) \quad \text{for $(d,\Bf)\in X_q$.}
\end{equation}

For the parabolic part, we have the following theorem 
by the resolvent estimate \eqref{res-est:2} and the standard theory of operator semigroups.

\begin{thm}\label{thm:main2}
Let $r\in(1,\infty)$ and $\rho_\pm$ be any positive constants.
Then the following assertions hold.
\begin{enumerate}[$(1)$]
\item
For any $\Bf\in L_r(\dot\BR^N)^N$, $\CU^P(t)\Bf\in C^\infty((0,\infty),H_r^2(\dot\BR^N)^N)$.
\item
$\{\CU^P(t)\}_{t\geq 0}$ is an analytic $C_0$-semigroup on $J_r(\dot\BR^N)$.
\item
Let $j\in\BN_0$ and $k=0,1,2$.
For any $t>0$ and $\Bf\in J_r(\dot\BR^N)$,
\begin{equation*}
\|\pd_t^j\nabla^k\CU^P(t)\Bf\|_{L_s(\dot\BR^N)}\leq 
C t^{-j-\frac{k}{2}-\frac{N}{2}\left(\frac{1}{r}-\frac{1}{s}\right)}\|\Bf\|_{L_r(\dot\BR^N)},
\end{equation*}
where $r\leq s \leq \infty$ when $k=0,1$ and $r \leq s <\infty$ when $k=2$.
Here $C$ is a positive constant independent of $t$ and $\Bf$.
\end{enumerate}
\end{thm}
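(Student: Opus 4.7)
Parts $(1)$ and $(2)$ follow directly from \eqref{res-est:2} and the generation theorem for bounded analytic semigroups. Denoting by $A^P$ the two-phase Stokes operator on $J_r(\dot\BR^N)$ associated with \eqref{eq:para}, \eqref{res-est:2} gives $\|\lambda(\lambda-A^P)^{-1}\|_{\CL(J_r(\dot\BR^N))}\leq C$ uniformly on $\Sigma_\varepsilon$ with $\varepsilon<\pi/2$, so $A^P$ generates a bounded analytic $C_0$-semigroup which, by Dunford's integral along $\Gamma_0$, coincides with $\{\CU^P(t)\}_{t\geq 0}$. The $C^\infty$-smoothness in $(1)$ is then the standard $t$-regularity of bounded analytic semigroups into the domain of every power of the generator.

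For the $L_r$-to-$L_r$ case of $(3)$, I would differentiate the Cauchy integral $\CU^P(t)\Bf=\tfrac{1}{2\pi i}\int_{\Gamma_0}e^{\lambda t}\Bv(\cdot,\lambda)\,d\lambda$ in $t$ and $x$, invoke \eqref{res-est:2} in the form $\|\nabla^k\Bv\|_{L_r}\leq C|\lambda|^{k/2-1}\|\Bf\|_{L_r}$ for $k=0,1,2$, and deform $\Gamma_0$ to pass through $|\lambda|\sim t^{-1}$; the circular-arc and the two ray contributions each yield bounds of the form $\lesssim t^{-j-k/2}\|\Bf\|_{L_r}$, giving $\|\pd_t^j\nabla^k\CU^P(t)\Bf\|_{L_r}\leq Ct^{-j-k/2}\|\Bf\|_{L_r}$.

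The upgrade from $L_r$ to $L_s$ with $s>r$ is the substantive step; my plan is to use iterated Sobolev embedding, exploiting the fact that \eqref{res-est:2} is available for every $q\in(1,\infty)$. The previous step applied at each such $q$ supplies the building block $\|\nabla\CU^P(t)\Bg\|_{L_q(\dot\BR^N)}\leq Ct^{-1/2}\|\Bg\|_{L_q(\dot\BR^N)}$; combined with the Sobolev embedding $H^1_q(\BR^N_\pm)\hookrightarrow L_{q^*}(\BR^N_\pm)$ with $1/q^*=1/q-1/N$ and continuity across $x_N=0$ (ensured by $\jmp{\Bu^P}=0$ together with $\CU^P(t)\Bg\in H^2_q(\dot\BR^N)^N$), this yields $\|\CU^P(t)\Bg\|_{L_{q^*}(\dot\BR^N)}\leq Ct^{-1/2}\|\Bg\|_{L_q(\dot\BR^N)}$. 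Iterating along a finite chain $r=q_0<q_1<\cdots<q_n$ with $1/q_j-1/q_{j+1}\leq 1/N$ and invoking the semigroup property on time intervals of length $t/n$ produces $\|\CU^P(t)\Bf\|_{L_{q_n}(\dot\BR^N)}\leq Ct^{-N(1/r-1/q_n)/2}\|\Bf\|_{L_r(\dot\BR^N)}$. Choosing $q_n=s$ settles the case of finite $s$; for $s=\infty$ (allowed only when $k=0,1$) one finishes the iteration with the Morrey-type embedding $H^{k'}_{q_n}(\BR^N_\pm)\hookrightarrow L_\infty(\BR^N_\pm)$ for suitable $q_n$, $k'$. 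The full $\pd_t^j\nabla^k$ version is recovered by composing the $L_r$-$L_s$ gain obtained on $[0,t/2]$ with the $L_s$-$L_s$ derivative bound from the second paragraph on $[t/2,t]$.

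The main obstacle I foresee is confirming that the Sobolev-embedding step is legitimate in the two-phase setting: this reduces to the observation that $\CU^P(t)\Bg$ restricted to $\BR^N_\pm$ lies in $H^2_q(\BR^N_\pm)^N$ and its traces from either side agree on $\{x_N=0\}$, so the half-space embeddings add to give a global $L_{q^*}(\dot\BR^N)$-bound without interface loss; a secondary point is tracking the uniformity of constants along the chain, which is ensured because the chain is finite and each constant in \eqref{res-est:2} is finite at each fixed $q_j$.
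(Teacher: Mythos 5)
The paper offers no written proof of this theorem --- it is dispatched with the remark that it follows from \eqref{res-est:2} and ``the standard theory of operator semigroups'' --- and your reconstruction is exactly the intended route: resolvent bound $\Rightarrow$ bounded analytic semigroup $\Rightarrow$ $t^{-j-k/2}$ bounds in a fixed $L_r$ via contour deformation to $|\lambda|\sim t^{-1}$ $\Rightarrow$ $L_r$--$L_s$ smoothing by iterating a one-derivative gain through Sobolev embedding and the semigroup property. Parts $(1)$, $(2)$ and the $L_r$-to-$L_r$ half of $(3)$ are fine as you present them.

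There is, however, one step that fails as literally written. You invoke the inhomogeneous embedding $H^1_q(\BR^N_\pm)\hookrightarrow L_{q^*}(\BR^N_\pm)$ and conclude $\|\CU^P(t)\Bg\|_{L_{q^*}}\leq Ct^{-1/2}\|\Bg\|_{L_q}$. The inhomogeneous embedding controls $\|u\|_{L_{q^*}}$ by $\|u\|_{L_q}+\|\nabla u\|_{L_q}$, so it only yields $C(1+t^{-1/2})\|\Bg\|_{L_q}$; for $t\geq 1$ this gives no decay at all, and after composing along the chain you would lose the factor $t^{-\frac{N}{2}(\frac{1}{r}-\frac{1}{s})}$ precisely in the regime $t\geq1$ where the theorem is used later in the paper. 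The repair is to use the homogeneous Gagliardo--Nirenberg--Sobolev inequality $\|u\|_{L_{q^*}}\leq C\|\nabla u\|_{L_q}$ (legitimate here: since $\jmp{\Bu^P}=0$ and $\CU^P(t)\Bg\in H_q^2(\dot\BR^N)^N$, the glued function lies in $H_q^1(\BR^N)^N$ and the whole-space inequality applies, so your ``no interface loss'' observation is exactly what is needed), or more flexibly the interpolation form $\|u\|_{L_p}\leq C\|\nabla u\|_{L_q}^{a}\|u\|_{L_q}^{1-a}$ with $\frac1p=\frac1q-\frac{a}{N}$, which gives the correct exponent $t^{-\frac{N}{2}(\frac1q-\frac1p)}$ at every step, removes the implicit constraint $q_j<N$ needed for $q_j^*$ to exist, and handles the endpoint $s=\infty$ (take $q>N$, $a=N/q$) in the same stroke. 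Two minor points to record explicitly: the semigroups on the various $J_{q_j}$ are consistent on intersections, which is what licenses the composition over subintervals of length $t/n$; and for $s=\infty$ you cannot finish with an $L_\infty$--$L_\infty$ derivative bound (the resolvent estimate is only for $q\in(1,\infty)$), so the derivatives must be taken in some $L_q$ with $q>N$ before the final Morrey/Gagliardo--Nirenberg step --- consistent with the exclusion of $s=\infty$ when $k=2$.
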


To complete time-decay estimates for $\CH(t)$ and $\CU(t)$, we further decompose $(\eta,\Bu^H,\Fp^H)$
satisfying \eqref{eq:hype}
as follows: for $z^1=d$ and $z^2=u_N^P|_{x_N=0}$,
let $(\eta^k,\Bu^k)$ be the solution to 
\begin{equation}\label{eq:hype-2}
\left\{\begin{aligned}
\lambda\eta^k-u_N^{k}|_{x_N=0} &= z^k && \text{on $\BR^{N-1}$,} \\
\rho\lambda \Bu^k -\Dv(\mu\BD(\Bu^k)-\Fp^k\BI) &=0 && \text{in $\dot\BR^N$,} \\
\dv\Bu^k&=0 && \text{in $\dot\BR^N$, } \\
-\jmp{(\mu\BD(\Bu^k)-\Fp^k\BI)\Be_N}+(\omega-\sigma\Delta')\eta^k\Be_N&=0 && \text{on $\BR^{N-1}$, } \\
\jmp{\Bu^k}&=0 && \text{on $\BR^{N-1}$.}
\end{aligned}\right.
\end{equation}
It then holds that $\eta=\eta^1+\eta^2$, $\Bu^H=\Bu^1+\Bu^2$, and $\Fp^H=\Fp^1+\Fp^2$.
Let $\varphi=\varphi(\xi')$ be a function in $C^\infty(\BR^{N-1})$ and satisfy
$0\leq \varphi\leq 1$ with
\begin{equation*}
\varphi(\xi')=\left\{\begin{aligned}
&1 && (|\xi'|\leq 1), \\
&0 && (|\xi'|\geq 2).
\end{aligned}\right.
\end{equation*}
In addition, we set $\varphi_{A_0}(\xi')=\varphi(\xi'/A_0)$ and $\varphi_{A_\infty}(\xi')=1-\varphi(\xi'/A_\infty)$
for positive constants $A_0\in(0,1)$ and $A_\infty\geq 2$.
Let $\varphi_{[A_0,A_\infty]}(\xi')=1-\varphi_{A_0}(\xi')-\varphi_{A_\infty}(\xi')$.
Together with these cut-off functions, 
we define for $Z\in\{A_0,A_\infty,[A_0,A_\infty]\}$ and for an integral path $\Gamma$
\begin{align}
\wht \CH_{Z}^1(t;\Gamma)d&=
\frac{\varphi_Z(\xi')}{2\pi i}\int_{\Gamma} e^{\lambda t}\wht \eta^{\,1}(\xi',\lambda)\,d\lambda, \notag \\
\wht \CH_{Z}^2(t;\Gamma)\Bf&=
\frac{\varphi_Z(\xi')}{2\pi i}\int_{\Gamma} e^{\lambda t}\wht \eta^{\,2}(\xi',\lambda)\,d\lambda, \notag \\
\wht \CU_{Z}^{1}(t;\Gamma)d&=
\frac{\varphi_Z(\xi')}{2\pi i}\int_{\Gamma} e^{\lambda t}\wht \Bu^{1}(\xi',x_N,\lambda)\,d\lambda, \notag \\
\wht \CU_{Z}^{2}(t;\Gamma)\Bf&=\frac{\varphi_Z(\xi')}{2\pi i}\int_{\Gamma} e^{\lambda t}\wht \Bu^{2}(\xi',x_N,\lambda)\,d\lambda.
\label{hat-H-U}
\end{align}
Furthermore, we set for $\CS\in\{\CH,\CU\}$ and $\Gamma_0$ given by \eqref{dfn:gamma_0}
\begin{equation}\label{eq:sol-main}
\CS_Z^1(t)d=\CF_{\xi'}^{-1}[\wht\CS_Z^{\,1}(t;\Gamma_0)d](x'), \quad 
\CS_Z^2(t)\Bf=\CF_{\xi'}^{-1}[\wht\CS_Z^{\,2}(t;\Gamma_0)\Bf](x').
\end{equation}
Noting \eqref{decomp:1-P},
we see that the formulas in \eqref{ope-U} satisfy
\begin{align}
\CH(t)(d,\Bf) &= \sum_{Z\in\{A_0,A_\infty,[A_0,A_\infty]\}} 
\big(\CH_Z^1(t)d +\CH_Z^2(t)\Bf\big), \notag \\ 
\CU(t)(d,\Bf) &= 
\sum_{Z\in\{A_0,A_\infty,[A_0,A_\infty]\}}\big(\CU_Z^{1}(t)d
+ \CU_Z^{2}(t)\Bf\big)+\CU^P(t)\Bf. \label{U:decomp}
\end{align}

The following two theorems are our main results of this paper.
The first one is time-decay estimates for the low frequency part.

\begin{thm}\label{thm:main3}
Let $1\leq p <  2 \leq  q \leq \infty$ and suppose that $\rho_->\rho_+>0$.
Then there exists a constant $A_0\in(0,1)$ such that the following assertions hold.
\begin{enumerate}[$(1)$]
\item
For $\xi'\in\BR^{N-1}$ with $|\xi'|\in(0,A_0)$, 
let $\zeta_\pm = \pm i \alpha^{1/2}|\xi'|^{1/2}-\sqrt{2}\alpha^{1/4}\beta(1\pm i)|\xi'|^{5/4}$ and
\begin{equation*}
\wht \Gamma_{\rm Res}^\pm=\{\lambda\in\BC : \lambda=\zeta_\pm+|\xi'|^{6/4}e^{is}, s:0\to 2\pi\}.
\end{equation*}
In addition, for $\CS\in\{\CH,\CU\}$ and $(d,\Bf)\in Y_p$, set  
\begin{equation*}
\CS_{A_0}^{1\pm}(t)d
=\CF_{\xi'}^{-1}[\wht \CS_{A_0}^{\,1}(t;\wht \Gamma_{\rm Res}^\pm)d](x'), \quad 
\CS_{A_0}^{2\pm}(t)\Bf
=\CF_{\xi'}^{-1}[\wht \CS_{A_0}^{\,2}(t;\wht \Gamma_{\rm Res}^\pm)\Bf](x').
\end{equation*}
Then for any $t\geq 1$
\begin{align*}
\|(\CH_{A_0}^{1\pm}(t)d, \CH_{A_0}^{2\pm}(t)\Bf)\|_{L_q(\BR^{N-1})}  
&\leq C t^{-\frac{4(N-1)}{5}\left(\frac{1}{p}-\frac{1}{q}\right)}\|(d,\Bf)\|_{Y_p}, \\
\|(\CU_{A_0}^{1\pm}(t)d, \CU_{A_0}^{2\pm}(t)\Bf)\|_{L_q(\dot\BR^{N})} 
&\leq C t^{-\frac{4(N-1)}{5}\left(\frac{1}{p}-\frac{1}{q}\right)-\frac{4}{5}\left(\frac{1}{2}-\frac{1}{q}\right)}\|(d,\Bf)\|_{Y_p},
\end{align*}
with some positive constant $C$ independent of $t$, $d$, and $\Bf$.
\item
For $\CS\in\{\CH,\CU\}$ and $(d,\Bf)\in Y_p$, set
\begin{align*}
\wtd\CS_{A_0}^{1}(t)d 
&=\CS_{A_0}^1(t)d-\CS_{A_0}^{1+}(t)d-\CS_{A_0}^{1-}(t)d, \\
\wtd\CS_{A_0}^{2}(t)\Bf 
&=\CS_{A_0}^2(t)\Bf-\CS_{A_0}^{2+}(t)\Bf-\CS_{A_0}^{2-}(t)\Bf.
\end{align*}
Let $\gamma_1$ be a constant satisfying
\begin{equation*}
0<\gamma_1<\min\left\{1,2(N-1)\left(\frac{1}{p}-\frac{1}{2}\right)\right\}.
\end{equation*}
Then for any $t\geq 1$
\begin{align*}
\|(\wtd \CH_{A_0}^{1}(t)d, \wtd \CH_{A_0}^{2}(t)\Bf)\|_{L_q(\BR^{N-1})} 
&\leq C t^{-\frac{N-1}{2}\left(\frac{1}{p}-\frac{1}{q}\right)-\frac{3}{4}\gamma_1}\|(d,\Bf)\|_{Y_p}, \\
\|(\wtd \CU_{A_0}^{1}(t)d,\wtd \CU_{A_0}^{2}(t)\Bf)\|_{L_q(\dot\BR^{N})}
&\leq C t^{-\frac{N}{2}\left(\frac{1}{p}-\frac{1}{q}\right)}\|(d,\Bf)\|_{Y_p},
\end{align*}
with some positive constant  $C$ independent of $t$, $d$, and $\Bf$.
\end{enumerate}
\end{thm}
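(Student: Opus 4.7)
The plan is to combine the explicit Fourier representations of $(\wht\eta^k,\wht\Bu^k)$ for $k\in\{1,2\}$ with a refined spectral analysis of the Lopatinskii boundary symbol at low frequencies. The solution formulas to be derived in Section~3 will express each unknown as $\wht\eta^k=N^k(\xi',\lambda)/L(\lambda,\xi')$ and $\wht\Bu^k(\xi',x_N,\lambda)=M^k(\xi',x_N,\lambda)/L(\lambda,\xi')$, where $L(\lambda,\xi')$ is the boundary symbol. Section~4 will produce a sharp asymptotic expansion showing that, once $A_0\in(0,1)$ is chosen small enough, $L(\cdot,\xi')$ has exactly two zeros $\lambda_\pm(\xi')$ in a neighborhood of $\lambda=0$, with leading asymptotics $\zeta_\pm$ and remainder $o(|\xi'|^{6/4})$; Rouch\'e's theorem then places them strictly inside the circles $\wht\Gamma_{\rm Res}^\pm$. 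By Cauchy's theorem, slice by slice in $\xi'$, the contour $\Gamma_0$ can be deformed into $\wht\Gamma_{\rm Res}^+\cup\wht\Gamma_{\rm Res}^-\cup\Gamma_{\rm out}$, where $\Gamma_{\rm out}$ is a parabolic contour lying in the deep left half plane, producing exactly the splitting
\begin{equation*}
\CS_{A_0}^k(t)=\CS_{A_0}^{k+}(t)+\CS_{A_0}^{k-}(t)+\wtd\CS_{A_0}^k(t)
\end{equation*}
appearing in the statement.

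For part~(1), I would first observe that on $\wht\Gamma_{\rm Res}^\pm$
\begin{equation*}
\mathrm{Re}\,\lambda\leq -\sqrt{2}\,\alpha^{1/4}\beta|\xi'|^{5/4}+|\xi'|^{6/4}\leq -c|\xi'|^{5/4},
\end{equation*}
which holds once $A_0^{1/4}<\sqrt{2}\,\alpha^{1/4}\beta/2$, so that $|e^{\lambda t}|\leq e^{-c|\xi'|^{5/4}t}$. Combined with pointwise bounds on $N^k$ and $M^k$ inherited from Section~4, this produces explicit Fourier multipliers in $\xi'$ whose $L_p$-to-$L_q$ action I would control by the usual Hausdorff--Young/Plancherel scheme: the case $(p,q)=(1,\infty)$ directly, the case $(p,q)=(2,2)$ by Plancherel, and intermediate cases by Riesz--Thorin (legitimate since $1\leq p<2\leq q\leq\infty$). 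The scaling substitution $\xi'=t^{-4/5}\eta'$, matched to the exponent in $|\xi'|^{5/4}t$, then extracts the rate $t^{-\frac{4(N-1)}{5}(1/p-1/q)}$. The additional factor $t^{-\frac{4}{5}(1/2-1/q)}$ in the $\CU_{A_0}^{k\pm}$ bound reflects the characteristic viscous length scale $|\xi'|^{-1/4}$ in $x_N$ at frequency $\xi'$: the $L^q_{x_N}$-norm of the $x_N$-factor in $\wht\Bu^k$ produces an extra power of $|\xi'|$ which, after the same scaling, yields precisely this exponent.

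For part~(2), on the outer contour $\Gamma_{\rm out}$ the resolvent estimate~\eqref{res-est:1} provides polynomial control while $|e^{\lambda t}|$ decays at least like $e^{-c_0 t}$, so a standard parabolic Fourier-multiplier argument gives the heat-kernel rates $t^{-(N-1)/2\,(1/p-1/q)}$ for $\wtd\CH_{A_0}^k$ and $t^{-N/2\,(1/p-1/q)}$ for $\wtd\CU_{A_0}^k$. The additional gain $t^{-3\gamma_1/4}$ in the $\wtd\CH_{A_0}^k$ estimate comes from the observation that after the two residue contributions at $\lambda_\pm(\xi')$ are subtracted, the integrand defining $\wtd{\wht\eta}^k$ vanishes to higher order at $\xi'=0$, effectively gaining a factor $|\xi'|^{\gamma_1}$ (the constraint $\gamma_1<1$ reflecting the order of the Taylor remainder around $\zeta_\pm$ that can be exploited); interpolating the resulting improved Plancherel bound (whose validity requires $\gamma_1<2(N-1)(1/p-1/2)$) against the basic $L^1$-bound then produces exactly the exponent $3\gamma_1/4$.

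The principal obstacle will be the spectral analysis behind the low-frequency asymptotics of $L(\lambda,\xi')$: one must locate its roots $\lambda_\pm(\xi')$ with an error strictly smaller than the radius $|\xi'|^{6/4}$ of $\wht\Gamma_{\rm Res}^\pm$, and simultaneously control the numerators $N^k,M^k$ on these circles with the precise powers of $|\xi'|$ that feed into the scaling argument. The $x_N$-dependence of $M^k$, which distinguishes the $\CH$ and $\CU$ decay rates, and the sharpness of the residue subtraction needed to justify the $3\gamma_1/4$ improvement in part~(2), will both require keeping careful track of every exponent in the boundary-symbol expansion.
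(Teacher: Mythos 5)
Your outline for part (1) is essentially the paper's argument: locate the two simple zeros $\lambda_\pm$ of the boundary symbol inside $\wht\Gamma_{\rm Res}^\pm$ by Rouch\'e's theorem, exploit $\Re\lambda\leq -cA^{5/4}$ there (with $A=|\xi'|$), and convert $e^{-cA^{5/4}t}$ into the rate $t^{-\frac{4(N-1)}{5}(\frac1p-\frac1q)}$ by the fractional-heat-kernel estimate; the extra $t^{-\frac45(\frac12-\frac1q)}$ for $\CU$ does come from the $x_N$-profile, although the operative length scale is $A^{-1}$ (from the $e^{-Ax_N}$ part of $\CM_\pm$, carrying amplitude $A^{1/2}$ after division by $\CL_A'(\lambda_\pm)$), not $A^{-1/4}$. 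One technical caveat: Riesz--Thorin between the $(1,\infty)$ and $(2,2)$ endpoints only yields the dual line $\frac1p+\frac1q=1$; to reach all $1\leq p<2\leq q\leq\infty$ you must factor the multiplier through $L_2$ (an $L_p\to L_2$ step and an $L_2\to L_q$ step, each absorbing a share of $e^{-cA^{5/4}t}$), which is what the paper does.

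Part (2) as proposed does not work. First, there is no admissible outer contour on which $|e^{\lambda t}|\leq e^{-c_0t}$ uniformly: the integrand contains $B_\pm=\sqrt{(\rho_\pm/\mu_\pm)\lambda+A^2}$, whose branch points sit at $\lambda=-(\mu_\pm/\rho_\pm)A^2$, so for small $A$ the deformed contour is forced to pass within distance $O(A^2)$ of the origin. Your own conclusion exposes the inconsistency: a uniform $e^{-c_0t}$ bound would give exponential, not heat-kernel, decay of the remainder. The paper accordingly splits the remainder over $\wht\Gamma_1^\pm$ (circles of radius $\sim A^2$ around $-z_0A^2/2$), $\wht\Gamma_4^\pm$ (segments from $z_1^\pm$ out to $\Gamma_0^\pm$), and $\wht\Gamma_5^\pm$ (rays, where exponential decay does hold); the factor $e^{-cA^2t}$ on the first two pieces is what produces $t^{-\frac{N-1}{2}(\frac1p-\frac1q)}$. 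Second, the gain $t^{-\frac34\gamma_1}$ is not obtained from a pointwise higher-order vanishing of the residue-subtracted integrand at $\xi'=0$; it comes from the decay of the symbol along $\wht\Gamma_4^\pm$, namely $|F/L|\leq C(|\lambda|^{1/2}+A^{1/4})^{-2}$, split as $(|\lambda|^{1/2})^{-(2-\delta)}A^{-\delta/4}$ with $0<\delta<2$ (whence $\gamma_1=\delta/2<1$): the first factor makes the $\lambda$-integral contribute $\langle t\rangle^{-\delta/2}$, while the singular weight $A^{-\delta/4}$ costs $\langle t\rangle^{+\delta/8}$ and requires $\delta<4(N-1)(\frac1p-\frac12)$ for the resulting convolution kernel to lie in the dual Young exponent; the net is $\langle t\rangle^{-\frac38\delta}=\langle t\rangle^{-\frac34\gamma_1}$. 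A cancellation-at-$\xi'=0$ argument of the kind you sketch would at best trade a factor $A^{\gamma_1}$ against $e^{-cA^2t}$ and yield $t^{-\gamma_1/2}$, not $t^{-3\gamma_1/4}$, and the claimed vanishing is in any case not justified.
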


\begin{rmk}
Time-decay estimates for higher-order derivatives of the low frequency part will be discussed in a forthcoming paper.
\end{rmk}

The second one is time-decay estimates for the high frequency part.

\begin{thm}\label{thm:main4}
Let $q\in(1,\infty)$ and $j\in\BN_0$. Suppose that $\rho_->\rho_+>0$.
Then there exist constants $A_\infty\geq 2$ and $\gamma_0>0$
such that for any $t\geq 1$ and $(d,\Bf)\in X_q$
\begin{align*}
\|(\pd_t^j\CH_Z^1(t)d,\pd_t^j\CH_Z^2(t)\Bf)\|_{W_q^{3-1/q}(\BR^{N-1})}
&\leq C e^{-\gamma_0 t}\|(d,\Bf)\|_{X_q}, \\
\|(\pd_t^j\CU_Z^1(t)d,\pd_t^j\CU_Z^2(t)\Bf)\|_{H_q^2(\dot\BR^{N})}
&\leq C e^{-\gamma_0 t}\|(d,\Bf)\|_{X_q}, 
\end{align*}
where $Z\in\{A_\infty,[A_0,A_\infty]\}$ and $C$ is a positive constant independent of $t$, $d$, and $\Bf$.
Here $A_0$ is the positive constant given in Theorem $\ref{thm:main2}$
\end{thm}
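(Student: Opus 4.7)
The plan is to deform the integration contour $\Gamma_0$ into a contour lying in the half-plane $\{\Re\lambda\le -\gamma_0\}$, exploiting the fact that on $\spp\varphi_{A_\infty}$ and $\spp\varphi_{[A_0,A_\infty]}$ the tangential Fourier variable satisfies $|\xi'|\ge A_0>0$. Using the representation formulas from Section 3, $\wht\eta^{\,k}(\xi',\lambda)$ and $\wht\Bu^{k}(\xi',x_N,\lambda)$ can be written with common denominator the Lopatinski-type boundary symbol $L(\xi',\lambda)$ analyzed in Section 4. By the resolvent estimate \eqref{res-est:1} and the hypothesis $\rho_->\rho_+$ (which eliminates the Rayleigh--Taylor eigenvalue), $L(\xi',\cdot)$ does not vanish on $\Sigma_{\theta_2}$ for any $|\xi'|\ge A_0$. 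The first and principal step is to promote this non-vanishing to a uniform lower bound on $|L|$ in a shifted region: there exist $\gamma_0>0$ and $A_\infty\ge 2$ such that, for some positive weight $\ell_0$,
\begin{equation*}
|L(\xi',\lambda)|\ge c\,\ell_0(\xi',\lambda)\qquad \text{whenever}\ |\xi'|\ge A_0,\ \lambda\in\Sigma_{\theta_2},\ \Re\lambda\ge -\gamma_0.
\end{equation*}
On the compact annulus $\{A_0\le|\xi'|\le A_\infty\}$ this follows from continuity together with the non-vanishing above; on $\{|\xi'|\ge A_\infty\}$ it requires asymptotic analysis of $L$ at high frequency, where viscosity and the surface tension term $\sigma|\xi'|^3$ produce leading powers of $|\xi'|$ that dominate the $\gamma_0$-perturbation.

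Granted this uniform symbol bound, let $\Gamma_{-\gamma_0}$ denote the contour obtained from $\Gamma_0$ by translating its vertex leftward by $\gamma_0$. Cauchy's theorem, justified by the analytic extension established above together with the $|\lambda|^{-1}$-decay of the resolvent at infinity on $\Gamma_0$, gives
\begin{equation*}
\frac{\varphi_Z(\xi')}{2\pi i}\int_{\Gamma_0}e^{\lambda t}\lambda^j\wht\eta^{\,k}(\xi',\lambda)\intd\lambda
=\frac{\varphi_Z(\xi')}{2\pi i}\int_{\Gamma_{-\gamma_0}}e^{\lambda t}\lambda^j\wht\eta^{\,k}(\xi',\lambda)\intd\lambda,
\end{equation*}
and likewise for $\wht\Bu^{\,k}$. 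On $\Gamma_{-\gamma_0}$ one has $|e^{\lambda t}|\le Ce^{-\gamma_0 t}$, and the resulting $\lambda$-integral is absolutely convergent once combined with the resolvent bound $|\wht\eta^{\,k}|+|\wht\Bu^{\,k}|\lesssim |\lambda|^{-1}\|(d,\Bf)\|_{X_q}$, even after accounting for the polynomial prefactor $\lambda^j$ and for the $\xi'$- and $x_N$-derivatives needed to realize the norms $W_q^{3-1/q}(\BR^{N-1})$ and $H_q^2(\dot\BR^N)$. To transfer from pointwise $\xi'$-estimates to $L_q$-estimates, I would verify the Mikhlin condition on $\spp\varphi_Z$ uniformly in $t\ge 1$: differentiating in $\xi'$ produces only factors of $|\xi'|^{-1}$ (controlled since $|\xi'|\ge A_0$) or extra powers of $|\lambda|^{-1/2}$ from the square roots appearing in the exponential factors $e^{-B_\ell(\xi',\lambda)|x_N|}$ (harmless on $\Gamma_{-\gamma_0}$), after which the Fourier multiplier theorem applies on $\BR^{N-1}$.

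The main obstacle is the uniform spectral bound on $L$ in the strip $\{\Re\lambda\ge -\gamma_0\}$, and specifically its behavior as $|\xi'|\to\infty$. For the annulus $\{A_0\le|\xi'|\le A_\infty\}$, compactness plus the non-vanishing that follows from \eqref{res-est:1} and $\rho_->\rho_+$ suffices. For very large $|\xi'|$, however, one needs an explicit high-frequency expansion of $L$ showing that its zeros lie in $\{\Re\lambda\le -c|\xi'|^{3/2}\}$ for some $c>0$ independent of $\xi'$; this is what determines the choice of $A_\infty$. Once this step is in place the remainder of the argument is bookkeeping: the strongly dissipative character of the symbol at high frequency is precisely what produces the exponential decay $e^{-\gamma_0 t}$ for every $q\in(1,\infty)$ and every $j\in\BN_0$, with no loss absorbed by the time-derivative order $j$.
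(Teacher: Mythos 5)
Your proposal follows essentially the same route as the paper: non-vanishing of $L(A,\lambda)$ for $\Re\lambda\geq 0$ under $\rho_->\rho_+$ promoted by compactness to a uniform bound on the bounded frequency annulus, a high-frequency asymptotic expansion of $L$ (dominated by the surface tension term) to fix $A_\infty$, contour deformation into $\{\Re\lambda\leq-\gamma_0\}$, and Fourier multiplier estimates to pass to $L_q$. The only quibble is quantitative and harmless: for large $A$ the zeros of $\CL_A$ satisfy $\Re\lambda\sim-cA$ rather than $-cA^{3/2}$, but any uniform negative bound suffices for the argument.
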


Recalling \eqref{U:decomp},
we have time-decay estimates of solutions for \eqref{eq:main}
from Theorems $\ref{thm:main2}$, $\ref{thm:main3}$, and $\ref{thm:main4}$ immediately.

\begin{thm}
Let $1 <   p<2\leq q < \infty$ and $(d,\Bf)\in X_q\cap JY_p$.
Then for any $t\geq 1$
\begin{align*}
\|\CH(t)(d,\Bf)\|_{L_q(\BR^{N-1})}
&\leq Ct^{-\min\left\{\frac{4(N-1)}{5}\left(\frac{1}{p}-\frac{1}{q}\right),\,\frac{(N-1)}{2}\left(\frac{1}{p}-\frac{1}{q}\right)+\frac{3}{4}\gamma_1\right\}}
\|(d,\Bf)\|_{X_q\cap Y_p}, \\
\|\CU(t)(d,\Bf)\|_{L_q(\dot\BR^{N})}
&\leq C t^{-\min\left\{\frac{4(N-1)}{5}\left(\frac{1}{p}-\frac{1}{q}\right)+\frac{4}{5}\left(\frac{1}{2}-\frac{1}{q}\right),
\, \frac{N}{2}\left(\frac{1}{p}-\frac{1}{q}\right)\right\}}
\|(d,\Bf)\|_{X_q\cap Y_p},
\end{align*}
with a positive constant $C$ independent of $t$, $d$, and $\Bf$,
where $\gamma_1$ is the positive constant given in Theorem $\ref{thm:main3}$.
\end{thm}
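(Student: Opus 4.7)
The plan is straightforward: assemble the estimate by applying the decomposition \eqref{U:decomp} and invoking Theorems \ref{thm:main2}, \ref{thm:main3}, and \ref{thm:main4} to the three distinct pieces (low frequency, middle/high frequency, and parabolic part). Since all three theorems are already in hand, no new analysis is required; the only work is careful bookkeeping of exponents and function-space embeddings.

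First I would write
\begin{align*}
\CH(t)(d,\Bf) &= \sum_{Z\in\{A_0,A_\infty,[A_0,A_\infty]\}}\big(\CH_Z^1(t)d+\CH_Z^2(t)\Bf\big), \\
\CU(t)(d,\Bf) &= \sum_{Z\in\{A_0,A_\infty,[A_0,A_\infty]\}}\big(\CU_Z^1(t)d+\CU_Z^2(t)\Bf\big)+\CU^P(t)\Bf,
\end{align*}
and treat each summand separately. For the low-frequency term $Z=A_0$, I would further split $\CS_{A_0}^j = \CS_{A_0}^{j+}+\CS_{A_0}^{j-}+\wtd\CS_{A_0}^{j}$ for $\CS\in\{\CH,\CU\}$, apply both parts of Theorem \ref{thm:main3}, and take the worst of the two polynomial rates; this gives exactly the $\min\{\cdot,\cdot\}$ exponents appearing in the statement, with the $\|(d,\Bf)\|_{Y_p}$ norm on the right side (which is dominated by $\|(d,\Bf)\|_{X_q\cap Y_p}$).

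For the two high-frequency pieces $Z\in\{A_\infty,[A_0,A_\infty]\}$, Theorem \ref{thm:main4} (with $j=0$) yields exponential decay $Ce^{-\gamma_0 t}\|(d,\Bf)\|_{X_q}$ in the stronger Sobolev-Slobodeckij norm $W_q^{3-1/q}(\BR^{N-1})$ for $\CH_Z^k$ and $H_q^2(\dot\BR^N)$ for $\CU_Z^k$; I then use the trivial embeddings into $L_q$ and the inequality $e^{-\gamma_0 t}\leq C_M t^{-M}$ (valid for $t\geq 1$ and any $M>0$) to absorb these pieces into either of the polynomial rates on the right-hand side. For the parabolic piece, I would apply Theorem \ref{thm:main2}(3) with $r=p$, $s=q$, $j=k=0$, which gives $\|\CU^P(t)\Bf\|_{L_q(\dot\BR^N)}\leq Ct^{-\frac{N}{2}(1/p-1/q)}\|\Bf\|_{L_p(\dot\BR^N)}$; this rate already equals one of the two exponents inside the min for $\CU$, and $\|\Bf\|_{L_p}\leq\|(d,\Bf)\|_{Y_p}\leq\|(d,\Bf)\|_{X_q\cap Y_p}$.

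There is essentially no main obstacle in this theorem, as it is a corollary of the preceding three. The only points requiring care are (i) verifying that neither rate in the min is always smaller than the other across the admissible range $1<p<2\leq q<\infty$, which explains why the min is genuinely necessary in the statement, and (ii) checking that Theorem \ref{thm:main2} applies, i.e.\ that $\Bf\in J_p(\dot\BR^N)$ (which holds because $(d,\Bf)\in JY_p$ by hypothesis). Everything else is a sum of a fixed number of terms, each bounded by the desired right-hand side.
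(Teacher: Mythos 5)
Your proposal is correct and is exactly the argument the paper intends: the paper states this theorem as an immediate consequence of the decomposition \eqref{U:decomp} together with Theorems \ref{thm:main2}, \ref{thm:main3}, and \ref{thm:main4}, and your bookkeeping (splitting the $A_0$ piece via Theorem \ref{thm:main3}, absorbing the exponentially decaying high-frequency pieces through $W_q^{3-1/q}\hookrightarrow L_q$, $H_q^2\hookrightarrow L_q$ and $e^{-\gamma_0 t}\leq C_M t^{-M}$, and applying Theorem \ref{thm:main2}(3) with $r=p$, $s=q$ using $\Bf\in J_p(\dot\BR^N)$) is precisely what is needed. No gaps.
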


\section{Representation formulas for solutions}

This section introduces the representation formulas for solutions of \eqref{eq:main}.
In this section, we assume that $\rho_\pm$ are any positive constants except for Lemma \ref{lem:nonzero} (2) below.
Here we collect several symbols appearing in the representation formulas.
Let $z_0=\min\{\mu_+/\rho_+,\mu_-/\rho_-\}$.
For $\xi'=(\xi_1,\dots,\xi_{N-1})\in\BR^{N-1}$ and $\lambda\in\BC\setminus(-\infty,-z_0|\xi'|^2]$, we set
\begin{equation}\label{dfn:AB}
A=|\xi'|, \quad B_\pm=\sqrt{\frac{\rho_\pm}{\mu_\pm}\lambda+|\xi'|^2}, 
\end{equation}
where we have chosen a branch cut along the negative real axis and a branch of the square root so that 
$\Re\sqrt z>0$ for $z\in\BC\setminus(-\infty,0]$.
In addition, 
\begin{equation}\label{dfn:DEM}
D_\pm=\mu_\pm B_\pm +\mu_\mp A, \quad E=\mu_+B_++\mu_-B_-, \quad 
\CM_\pm(a)=\frac{e^{-Aa}-e^{-B_\pm a}}{A-B_\pm} \quad (a\geq 0),
\end{equation}
and also
\begin{align}
F(A,\lambda)&=
-(\mu_+-\mu_-)^2A^3+\{(3\mu_+-\mu_-)\mu_+B_++(3\mu_--\mu_+)\mu_-B_-\}A^2 \notag \\
&+\{(\mu_+B_++\mu_-B_-)^2+\mu_+\mu_-(B_++B_-)^2\}A \notag \\
&+(\mu_+B_++\mu_-B_-)(\mu_+B_+^2+\mu_-B_-^2). \label{dfn:Lop-1}
\end{align}
Except for $D_\pm$ and $E$,
the above symbols are introduced in \cite[(3.3), (3.8), (3.15)]{SS11b}.
Furthermore, the following properties are proved in \cite[Lemmas 4.7 and 4.8]{SS11b}.
\begin{lem}\label{lem:fund-sym}
Let $\varepsilon\in(0,\pi/2)$ and $\rho_\pm$ be any positive constants.
Then the following assertions hold.
\begin{enumerate}[$(1)$]
\item
For any $\xi'\in\BR^{N-1}\setminus\{0\}$ and $\lambda\in\Sigma_{\varepsilon}\cup\{0\}$,
\begin{align*}
&C_1(|\lambda|^{1/2}+A)\leq \Re B_\pm \leq |B_\pm|\leq C_2(|\lambda|^{1/2}+A), \\
&C_1(|\lambda|^{1/2}+A)^3\leq |F(A,\lambda)|\leq C_2(|\lambda|^{1/2}+A)^3,
\end{align*}
with positive constants $C_1$ and $C_2$ depending on $\varepsilon$,
but independent of $\xi'$ and $\lambda$. 
\item
Let $s\in\BR$ and $\alpha'\in\BN_0^{N-1}$.
Then for any $\xi'\in\BR^{N-1}\setminus\{0\}$ and $\lambda\in\Sigma_{\varepsilon}\cup\{0\}$
\begin{align*}
&|\pd_{\xi'}^{\alpha'} A^s|\leq CA^{s-|\alpha'|}, 
\quad |\pd_{\xi'}^{\alpha'} B_\pm^s|\leq C(|\lambda|^{1/2}+A)^{s-|\alpha'|}, \\
&|\pd_{\xi'}^{\alpha'} E^s|\leq C(|\lambda|^{1/2}+A)^{s-|\alpha'|}, \\
&|\pd_{\xi'}^{\alpha'}(A+B_\pm)^s|\leq C(|\lambda|^{1/2}+A)^s A^{-|\alpha'|}, \\
&|\pd_{\xi'}^{\alpha'}F(A,\lambda)^s|  \leq C(|\lambda|^{1/2}+A)^{3s} A^{-|\alpha'|},
\end{align*}
where $C$ is a positive constant depending on $\varepsilon$, $s$, and $\alpha'$,
but independent of $\xi'$ and $\lambda$.
\end{enumerate}
\end{lem}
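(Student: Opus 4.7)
The overall plan is to exploit the quasi-homogeneity of every symbol under the scaling $(A,\lambda)\mapsto(tA,t^{2}\lambda)$, $t>0$, so that the two-sided bounds in (1) reduce to continuity plus non-vanishing on the compact slice $\{|\lambda|^{1/2}+A=1\}$ of the parameter set $\{A\geq 0,\ \lambda\in\overline{\Sigma_{\varepsilon}}\cup\{0\}\}$. The estimates in (2) will then follow by induction on $|\alpha'|$ using the chain and product rules together with the elementary identities $\pd_{\xi_{j}}A=\xi_{j}/A$ and $\pd_{\xi_{j}}B_{\pm}=\xi_{j}/B_{\pm}$, the lower bounds from (1) being used to legitimize negative powers of $F$.

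For part (1) I would first treat $B_{\pm}$. With $c_{\pm}=\rho_{\pm}/\mu_{\pm}>0$ and $w_{\pm}=c_{\pm}\lambda+A^{2}$, a case split on the sign of $\Re\lambda$ shows that $|\arg w_{\pm}|\leq\pi-\varepsilon$: when $\Re\lambda\geq 0$ we have $\Re w_{\pm}\geq A^{2}\geq 0$, while when $\Re\lambda<0$ the sector assumption $|\Im\lambda|\geq|\lambda|\sin\varepsilon$ together with $|\Re\lambda|\leq|\lambda|\cos\varepsilon$ yields $|\Re w_{\pm}|/|\Im w_{\pm}|\leq\cot\varepsilon$. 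The principal square root sends this sector into $|\arg z|\leq(\pi-\varepsilon)/2$, so $\Re B_{\pm}\geq|B_{\pm}|\sin(\varepsilon/2)$; computing $|w_{\pm}|^{2}=(A^{2}+c_{\pm}\Re\lambda)^{2}+(c_{\pm}\Im\lambda)^{2}$ with the same split then gives $|w_{\pm}|\sim|\lambda|+A^{2}$, whence $|B_{\pm}|=|w_{\pm}|^{1/2}\sim|\lambda|^{1/2}+A$. For $|F(A,\lambda)|$ the upper bound is immediate term by term. For the lower bound, $F$ is jointly homogeneous of degree $3$ under the rescaling $(A,\lambda)\mapsto(tA,t^{2}\lambda)$ (with $B_{\pm}\mapsto tB_{\pm}$), so with $\tau=|\lambda|^{1/2}+A$ one has $F(A,\lambda)=\tau^{3}F(A/\tau,\lambda/\tau^{2})$; continuity then reduces the problem to showing $F\neq 0$ on the compact slice $\tau=1$. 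At the two boundary pieces this is direct: $F(A,0)=4(\mu_{+}+\mu_{-})^{2}A^{3}$ by a short computation, and $F(0,\lambda)=(\rho_{+}+\rho_{-})\lambda\,E(0,\lambda)$ is nonzero for $\lambda\in\overline{\Sigma_{\varepsilon}}\setminus\{0\}$ because $\Re E(0,\lambda)>0$. In the interior one divides through by $A^{3}$ and rules out roots of the resulting polynomial in $X_{\pm}=B_{\pm}/A$, using that these quantities are forced to lie in a definite subregion of the right half-plane.

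For part (2) I would proceed inductively on $|\alpha'|$. The identity $\pd_{\xi_{j}}A^{s}=s\xi_{j}A^{s-2}$ iterates to give $|\pd^{\alpha'}A^{s}|\leq CA^{s-|\alpha'|}$. For $B_{\pm}^{s}$, $\pd_{\xi_{j}}B_{\pm}^{s}=s\xi_{j}B_{\pm}^{s-2}$ is bounded by $CA(|\lambda|^{1/2}+A)^{s-2}\leq C(|\lambda|^{1/2}+A)^{s-1}$, and higher derivatives expanded via Leibniz remain smooth in $\xi'$ (since $B_{\pm}$ is itself smooth in $\xi'$ when $\lambda\neq 0$), yielding $|\pd^{\alpha'}B_{\pm}^{s}|\leq C(|\lambda|^{1/2}+A)^{s-|\alpha'|}$ without any $A^{-|\alpha'|}$ factor; linearity transfers the bound to $E^{s}$. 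The $A^{-|\alpha'|}$ factor enters for $(A+B_{\pm})^{s}$ and $F^{s}$ solely through the direct $A=|\xi'|$ dependence, higher $\xi'$-derivatives of $|\xi'|$ being singular at the origin; after using the lower bound from (1) to define $F^{s}$ for non-integer $s$, writing $F^{s}=\tau^{3s}(F/\tau^{3})^{s}$ separates the homogeneous magnitude from a bounded quotient and completes the inductive step.

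The main obstacle is the non-vanishing of $F$ on the compact slice in part (1). The negative coefficient $-(\mu_{+}-\mu_{-})^{2}$ of the $A^{3}$ term precludes any direct positivity argument for $\Re F$, so interior non-vanishing is not automatic and must be extracted algebraically, by analyzing $F/A^{3}$ as a polynomial in $B_{\pm}/A$ with explicit coefficients in $\mu_{\pm}$ and exploiting the sector restrictions that force $B_{\pm}/A$ into a specific subregion of the right half-plane, in the spirit of a Routh–Hurwitz argument. Once this algebraic non-vanishing is in hand, the remainder of the lemma reduces to routine homogeneity and Leibniz calculations.
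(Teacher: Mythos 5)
First, note that the paper does not prove this lemma at all: it is imported verbatim from Shibata--Shimizu \cite{SS11b} (Lemmas 4.7 and 4.8), so there is no internal proof to compare against. Judged on its own, most of your architecture is sound. The sector argument for $w_\pm=(\rho_\pm/\mu_\pm)\lambda+A^2$ giving $\Re B_\pm\geq|B_\pm|\sin(\varepsilon/2)$ and $|B_\pm|\sim|\lambda|^{1/2}+A$ is complete and correct; the reduction of the two-sided bound on $F$ to non-vanishing on the compact slice $|\lambda|^{1/2}+A=1$ via the quasi-homogeneity $F(tA,t^2\lambda)=t^3F(A,\lambda)$ is the right device; the boundary computations $F(A,0)=4(\mu_++\mu_-)^2A^3$ and $F(0,\lambda)=(\rho_++\rho_-)\lambda\,E$ both check out; and in part (2) your observation that $B_\pm^s$ and $E^s$ do not incur the $A^{-|\alpha'|}$ loss, because each $\xi'$-derivative of $B_\pm$ produces a compensating factor $\xi_j$, is correct. (For $E^s$, $(A+B_\pm)^s$ and $F^s$ with $s<0$ or $s\notin\BZ$ you must invoke the Fa\`a di Bruno/Bell formula together with the lower bounds $\Re E\geq C(|\lambda|^{1/2}+A)$ and $|F|\geq C(|\lambda|^{1/2}+A)^3$; ``linearity transfers the bound'' is too quick, but the bookkeeping is routine once those lower bounds are available.)

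The genuine gap is exactly where you flagged it: the non-vanishing of $F$ at interior points of the slice, i.e.\ for $A>0$ and $\lambda\in\Sigma_\varepsilon$ with $\arg\lambda$ possibly close to $\pm\pi$. This is not a routine verification --- it is the entire content of the cited Lemma 4.7 of \cite{SS11b} --- and ``a Routh--Hurwitz-type argument on $F/A^3$ as a polynomial in $B_\pm/A$'' names a strategy rather than executing one; as you yourself observe, the negative coefficient $-(\mu_+-\mu_-)^2$ of $A^3$ blocks any direct positivity of $\Re F$. A concrete route that fits your framework: verify the identity $F(A,\lambda)=(\rho_++\rho_-)(D_++D_-)\lambda+4AD_+D_-$ (this is precisely the content of Lemma \ref{ano-form-L} after cancelling the $\omega,\sigma$ terms), divide by $D_++D_-\neq0$, and study $(\rho_++\rho_-)\lambda+4A\,D_+D_-/(D_++D_-)$. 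Since $\Re D_\pm>0$, the harmonic-mean term $D_+D_-/(D_++D_-)$ lies in the open right half-plane, which disposes of $\Re\lambda\geq0$ at once; but the half of the sector with $\Re\lambda<0$ still requires a quantitative comparison of the arguments and moduli of the two summands (using $\Re B_\pm\geq|B_\pm|\sin(\varepsilon/2)$ and $|B_\pm|\sim|\lambda|^{1/2}+A$). Until that case is actually carried out, the lower bound $|F(A,\lambda)|\geq C_1(|\lambda|^{1/2}+A)^3$ --- and with it every estimate in part (2) involving negative powers of $F$ --- remains unproven.
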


From Lemma \ref{lem:fund-sym} and the Bell formula of derivatives of composite functions, we have

\begin{lem}\label{fundlem:D}
Let $\varepsilon\in(0,\pi/2)$ and $\rho_\pm$ be any positive constants.
\begin{enumerate}[$(1)$]
\item
Let $s\in\BR$ and $\alpha'\in\BN_0^{N-1}$.
Then for any $\xi'\in\BR^{N-1}\setminus\{0\}$ and $\lambda\in\Sigma_{\varepsilon}\cup\{0\}$
\begin{align*}
|\pd_{\xi'}^{\alpha'}D_\pm^s|\leq C(|\lambda|^{1/2}+A)^{s}A^{-|\alpha'|}, \quad
|\pd_{\xi'}^{\alpha'}(D_++D_-)^s|\leq C(|\lambda|^{1/2}+A)^{s}A^{-|\alpha'|}, 
\end{align*}
where $C$ is a positive constant depending on $\varepsilon$, $s$, and $\alpha'$,
but independent of $\xi'$ and $\lambda$.
\item
Let $\theta,\nu,\tau>0$ and $\alpha'\in\BN_0^{N-1}$.
Then there exists a positive constant $C_{\alpha',\theta}$, independent of $\nu$ and $\tau$,
such that for any $\xi'\in\BR^{N-1}\setminus\{0\}$
\begin{equation*}
|\pd_{\xi'}^{\alpha'}e^{-\nu\tau A^\theta}|\leq C_{\alpha',\theta}A^{-|\alpha'|}e^{-(\nu\tau A^\theta)/2}.
\end{equation*}

\end{enumerate}
\end{lem}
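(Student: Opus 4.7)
The plan is to reduce both claims to the estimates already established in Lemma~\ref{lem:fund-sym} together with the Faà di Bruno (Bell) formula for derivatives of composites. Both assertions are of the form ``a pointwise size controls the derivative size up to losing one power of $A$ per derivative'', and the only subtle point in part (1) is that $A=|\xi'|$ rather than $B_\pm$ is the term responsible for the $A^{-|\alpha'|}$ loss.

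For part (1), I would first prove pointwise upper and lower bounds. Writing $\Re D_\pm = \mu_\pm \Re B_\pm + \mu_\mp A$ and using $\Re B_\pm \geq C_1(|\lambda|^{1/2}+A)$ from Lemma~\ref{lem:fund-sym}(1), I get
\begin{equation*}
c(|\lambda|^{1/2}+A) \leq |D_\pm| \leq C(|\lambda|^{1/2}+A),
\end{equation*}
uniformly in $\xi'\in\BR^{N-1}\setminus\{0\}$ and $\lambda\in\Sigma_\varepsilon\cup\{0\}$; the triangle inequality with $|B_\pm|\leq C(|\lambda|^{1/2}+A)$ gives the upper bound. For $D_++D_-=E+(\mu_++\mu_-)A$ the same argument applies using the corresponding estimate on $E$.

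Next I would estimate the derivatives of $D_\pm$ themselves. Since $D_\pm=\mu_\pm B_\pm+\mu_\mp A$, Lemma~\ref{lem:fund-sym}(2) with $s=1$ yields
\begin{equation*}
|\pd_{\xi'}^{\alpha'}D_\pm|\leq C\bigl((|\lambda|^{1/2}+A)^{1-|\alpha'|}+A^{1-|\alpha'|}\bigr)\leq C(|\lambda|^{1/2}+A)A^{-|\alpha'|}\qquad(|\alpha'|\geq 1),
\end{equation*}
the $A$-term being the bottleneck (since $A\leq |\lambda|^{1/2}+A$). Applying Faà di Bruno to $D_\pm^s=f(D_\pm)$ with $f(z)=z^s$ expresses $\pd_{\xi'}^{\alpha'}D_\pm^s$ as a sum of terms of the form $c\,D_\pm^{s-k}\prod_{j=1}^k\pd_{\xi'}^{\beta_j}D_\pm$ with $\sum|\beta_j|=|\alpha'|$ and $1\leq k\leq|\alpha'|$; combining the lower bound $|D_\pm|\geq c(|\lambda|^{1/2}+A)$ with the derivative bounds above yields
\begin{equation*}
|\pd_{\xi'}^{\alpha'}D_\pm^s|\leq C(|\lambda|^{1/2}+A)^{s-k}(|\lambda|^{1/2}+A)^k A^{-|\alpha'|}=C(|\lambda|^{1/2}+A)^s A^{-|\alpha'|}.
\end{equation*}
The argument for $(D_++D_-)^s$ is identical after noting $D_++D_-=E+(\mu_++\mu_-)A$ and invoking the $E$-estimates from Lemma~\ref{lem:fund-sym}(2).

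For part (2), I would again apply Faà di Bruno, this time to $g(A^\theta)=e^{-\nu\tau A^\theta}$. A typical term in $\pd_{\xi'}^{\alpha'}e^{-\nu\tau A^\theta}$ has the form
\begin{equation*}
c\,(-\nu\tau)^m e^{-\nu\tau A^\theta}\prod_{j=1}^m\pd_{\xi'}^{\beta_j}A^\theta,\qquad \sum_{j=1}^m|\beta_j|=|\alpha'|,\ \ 1\leq m\leq|\alpha'|.
\end{equation*}
By Lemma~\ref{lem:fund-sym}(2) with $s=\theta$, $|\pd_{\xi'}^{\beta_j}A^\theta|\leq CA^{\theta-|\beta_j|}$, so the product is bounded by $C(\nu\tau A^\theta)^m A^{-|\alpha'|}e^{-\nu\tau A^\theta}$. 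The elementary inequality $x^m e^{-x}\leq C_m e^{-x/2}$ for $x\geq 0$, applied with $x=\nu\tau A^\theta$, absorbs the polynomial factor into half of the exponential and produces the claimed bound $C_{\alpha',\theta}A^{-|\alpha'|}e^{-(\nu\tau A^\theta)/2}$ uniformly in $\nu,\tau>0$.

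There is no real obstacle here; both parts are routine consequences of Lemma~\ref{lem:fund-sym} once one organizes the combinatorics via Faà di Bruno. The only point that requires a little care is tracking where the per-derivative loss lives in part (1): because $A=|\xi'|$ is only smooth away from the origin, differentiating the $\mu_\mp A$ summand of $D_\pm$ costs a factor $A^{-1}$ rather than $(|\lambda|^{1/2}+A)^{-1}$, and this is what forces the mixed form $(|\lambda|^{1/2}+A)^s A^{-|\alpha'|}$ in the final estimate.
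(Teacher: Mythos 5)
Your proof is correct and follows exactly the route the paper indicates (the paper gives no details, saying only that the lemma follows ``from Lemma \ref{lem:fund-sym} and the Bell formula of derivatives of composite functions''); your pointwise two-sided bound on $D_\pm$, the derivative bound $|\pd_{\xi'}^{\alpha'}D_\pm|\leq C(|\lambda|^{1/2}+A)A^{-|\alpha'|}$, and the Fa\`a di Bruno bookkeeping for $D_\pm^s$ and $e^{-\nu\tau A^\theta}$ are precisely the intended argument. No gaps.
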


\subsection{A representation formula for the parabolic part}
In this subsection, we introduce a representation formula for solutions of \eqref{eq:para}. 
Note that \eqref{eq:para} admits a unique solution $(\Bu^P,\Fp^P)$
for $\lambda\in\Sigma_\varepsilon$ and $\Bf\in L_q(\dot\BR^N)^N$, $\varepsilon\in(0,\pi/2)$ and  $q\in(1,\infty)$,
as discussed in Section 1. 
Since $C_0^\infty(\dot\BR^N)$ is dense in $L_p(\dot\BR^N)$ for $1\leq p <\infty$,
it suffices to consider $\Bf\in C_0^\infty(\dot\BR^N)^N$ in what follows.
Our aim is to prove

\begin{prp}\label{thm:sol-para}
Let $\varepsilon\in(0,\pi/2)$ 
and $\rho_\pm$ be any positive constants.
Suppose that $(\Bu^P,\Fp^P)$ is a solution to \eqref{eq:para}
for some $\lambda\in\Sigma_\varepsilon$ and $\Bf=(f_1,\dots,f_N)^\SST\in C_0^\infty(\dot\BR^N)^N$. 
Then there holds
\begin{align*}
&\wht u_N^P(\xi',x_N,\lambda)|_{x_N=0} \\
&=
\sum_{\Fa,\Fb\in\{+,-\}}\sum_{j=1}^N
\int_0^\infty \frac{\Phi_j^{\Fa,\Fb}(\xi',\lambda)}{A(A+B_\Fb)F(A,\lambda)}\CM_\Fb(y_N)\wht f_j(\xi',\Fa y_N)\,dy_N \\
&+\sum_{\Fa,\Fb\in\{+,-\}}\sum_{j=1}^N\int_0^\infty 
\frac{\Psi_j^{\Fa,\Fb}(\xi',\lambda)}{AB_\Fb(A+B_\Fb)F(A,\lambda)}e^{-B_\Fb y_N}\wht f_j(\xi',\Fa y_N)\,dy_N.
\end{align*}
Here $\wht u_N^P(\xi',x_N,\lambda)$ stands for the partial Fourier transform
of the $N$th component of $\Bu^P$ and 
\begin{align*}
\Phi_j^{\Fa,\Fb}(\xi',\lambda)
&=\sum_{\substack{|\alpha'|+k+l+m=5 \\ |\alpha'|+k\geq 2}}
C^{\Fa,\Fb}_{j,\alpha',k,l,m}(\xi')^{\alpha'}A^k B_+^l B_-^m, \\
\Psi_j^{\Fa,\Fb}(\xi',\lambda)
&=\sum_{\substack{|\alpha'|+k+l+m=5 \\ |\alpha'|+k\geq 2}}
\wtd C^{\Fa,\Fb}_{j,\alpha',k,l,m}(\xi')^{\alpha'}A^k B_+^l B_-^m,
\end{align*}
with constants $C^{\Fa,\Fb}_{j,\alpha',k,l,m}$ and $\wtd C^{\Fa,\Fb}_{j,\alpha',k,l,m}$
independent of $\xi'$ and $\lambda$. 
\end{prp}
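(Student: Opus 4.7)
The plan is to derive the representation formula by reducing \eqref{eq:para} to a system of ODEs in $x_N$ via the partial Fourier transform in $x'$ and then carrying out a Cramer-type inversion, following the strategy of \cite{SS11b} for the full resolvent problem \eqref{res-prob:1}, with a final rewriting step that brings the result into the claimed form with denominator $A(A+B_\Fb)F(A,\lambda)$ and kernels $\CM_\Fb$ and $e^{-B_\Fb y_N}$.

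First I would take the partial Fourier transform of \eqref{eq:para}. Using $\dv\Bu^P=0$, the pressure decouples via $(\pd_N^2 - A^2)\wht\Fp^P = \rho_\pm \widehat{\dv\Bf}$ on $\pm x_N > 0$, while each velocity component satisfies $\mu_\pm(\pd_N^2 - B_\pm^2)\wht u_k^P = \rho_\pm \wht f_k - \widehat{\pd_k \Fp^P}$, with $A,B_\pm$ as in \eqref{dfn:AB}. Decaying particular solutions are constructed from the one-dimensional Green functions of $\pd_N^2 - A^2$ and $\pd_N^2 - B_\pm^2$, with kernels $e^{-A|x_N - y_N|}/(2A)$ and $e^{-B_\pm |x_N - y_N|}/(2B_\pm)$; to these I would add decaying homogeneous modes $\alpha_\pm e^{\mp A x_N}$ (pressure) and $\beta_{\pm,k}e^{\mp B_\pm x_N}$ (velocity) with coefficients depending on $(\xi',\lambda)$.

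Next, I would fix these coefficients by imposing the interface conditions $\jmp{\wht\Bu^P} = 0$ and $\jmp{(\mu\BD(\wht\Bu^P) - \wht\Fp^P\BI)\Be_N} = 0$, together with the divergence constraint $\dv\wht\Bu^P = 0$ read at $x_N = 0\pm$. As computed in \cite[Section~3]{SS11b}, the Lopatinski determinant of the resulting linear system equals $F(A,\lambda)$ up to an explicit nonvanishing factor, so Cramer's rule expresses each coefficient as a rational function of $(i\xi', A, B_+, B_-)$ with denominator $F(A,\lambda)$ and polynomial numerator. Evaluating $\wht u_N^P$ at $x_N = 0$, the homogeneous contribution directly produces integrals against $e^{-B_\Fb y_N}$, which give the family with numerators $\Psi_j^{\Fa,\Fb}$. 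The particular (Green function) contributions produce kernels $e^{-Ay_N}$ and $e^{-B_\Fb y_N}$ separately; reorganizing via the identity $e^{-Ay_N} - e^{-B_\Fb y_N} = (A - B_\Fb)\CM_\Fb(y_N)$ then yields the family with kernels $\CM_\Fb(y_N)$ and numerators $\Phi_j^{\Fa,\Fb}$. The denominator factor $A + B_\Fb$ emerges from the factorization $A^2 - B_\Fb^2 = (A-B_\Fb)(A+B_\Fb)$ used in this reorganization together with the $2A, 2B_\Fb$ factors in the Green functions, and the sign $\Fa$ simply tracks whether the source point sits in $\BR_+^N$ or $\BR_-^N$.

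Finally I would verify the homogeneity of the numerators. Treating $\xi', A, B_\pm$ as having weight one, the denominator $A(A+B_\Fb)F(A,\lambda)$ has total weight $5$, which fixes $|\alpha'| + k + l + m = 5$ for $\Phi_j^{\Fa,\Fb}$ and $\Psi_j^{\Fa,\Fb}$. The sharper lower bound $|\alpha'| + k \geq 2$ is a structural cancellation: in the Cramer inversion, the trace $\wht u_N^P|_{x_N = 0}$ is linked to tangential data only through the divergence-free constraint $i\sum_{j<N}\xi_j \wht u_j^P + \pd_N \wht u_N^P = 0$, which together with the horizontal derivatives appearing in the momentum equation forces at least two factors of $\xi'$ or $A$ in every surviving term. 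I expect this degree-bookkeeping step to be the main obstacle: the algebra is elementary but notationally heavy, and the lower bound $|\alpha'| + k \geq 2$ encodes precisely the horizontal vanishing that will drive the low- and high-frequency decay estimates of Sections 5 and 6.
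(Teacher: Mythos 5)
Your proposal is correct and follows essentially the same route as the paper: there, too, one reduces \eqref{eq:para} to ODEs in $x_N$ via the partial Fourier transform, posits the ansatz $\alpha_{J\pm}(e^{\mp Ax_N}-e^{\mp B_\pm x_N})+\beta_{J\pm}e^{\mp B_\pm x_N}$ with pressure $\gamma_\pm e^{\mp Ax_N}$, inverts a $2\times 2$ system whose determinant is $F(A,\lambda)$, and builds the particular solution from whole-space resolvent problems whose $\xi_N$-residue calculus (carried out in the appendix) reproduces exactly your one-dimensional Green functions and the $\CM_\Fb$ reorganization. Two small cautions: the divergence-free condition must be imposed for all $x_N$, yielding the two relations per sign in \eqref{eq:w-13} rather than a single trace relation at $x_N=0\pm$; and the paper obtains the lower bound $|\alpha'|+k\geq 2$ not from a structural argument about the divergence constraint but by reading it off the explicit boundary traces \eqref{res-app}, in which every term already carries a factor of $A$ or $i\xi_j$ --- so the ``degree bookkeeping'' you flag as the main obstacle is indeed where all of the remaining work lies.
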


\begin{proof}
Let $j=1,\dots,N-1$ and $J=1,\dots,N$ in this proof.
Although we follow calculations of \cite[Section 3]{SS11b},
we will achieve a set of equations simpler than \cite[(3.6)]{SS11b} in what follows,
see \eqref{s-eq} below.

{\bf Step 1.}
We compute $\wht w_N(\xi',0\,-,\lambda)$ for the following resolvent problem:
\begin{equation}\label{rp:w}
\left\{\begin{aligned}
\rho\lambda \Bw-\Dv(\mu \BD(\Bw)-\Fr \BI) &=0 && \text{in $\dot\BR^N$,} \\
\dv\Bw&=0 && \text{in $\dot \BR^N$,} \\
\jmp{(\mu\BD(\Bw)-\Fr\BI)\Be_N}&=\Bg && \text{on $\BR^{N-1}$,} \\
\jmp{\Bw}&=\Bh && \text{on $\BR^{N-1}$,}
\end{aligned}\right.
\end{equation}
where $\Bg=(g_1,\dots,g_N)^\SST$
and $\Bh=(h_1,\dots,h_N)^\SST$ are suitable functions on $\BR^{N-1}$ specified in Step 2 below.
The restriction of $\Bw$ and $\Fr$ to $\BR_\pm^N$ are denoted by $\Bw_\pm$ and $\Fr_\pm$, respectively.
Let us write the $J$th component of $\Bw_\pm$ by $w_{J\pm}$
and observe $\Dv(\mu\BD(\Bw_\pm))=\mu_\pm\Delta\Bw_\pm$ by $\dv\Bw_\pm =0$ in $\BR_\pm^N$.
Then \eqref{rp:w} can be written as 
\begin{equation*}
\left\{\begin{aligned}
\rho_\pm\lambda w_{J\pm}-\mu_\pm\Delta w_{J\pm}+\pd_J\Fr_\pm &=0 && \text{in $\BR_\pm^N$,} \\
\sum_{j=1}^{N-1}\pd_j w_{j\pm}+\pd_N w_{N\pm}&=0 && \text{in $\BR_\pm^N$,} \\
\mu_+(\pd_N w_{j+}+\pd_j w_{N+})|_{x_N=0}-\mu_-(\pd_N w_{j-}+\pd_j w_{N-})|_{x_N=0}
&= g_j && \text{on $\BR^{N-1}$,} \\
(2\mu_+\pd_N w_{N+}-\Fr_+)|_{x_N=0}-(2\mu_-\pd_N w_{N-}-\Fr_-)|_{x_N=0}&=g_N && \text{on $\BR^{N-1}$,} \\
w_{J+}|_{x_N=0}-w_{J-}|_{x_N=0}&= h_J && \text{on $\BR^{N-1}$.}
\end{aligned}\right.
\end{equation*}
Let $\wht w_{J\pm}(x_N)=\wht w_{J\pm}(\xi',x_N,\lambda)$ and $\wht\Fr_\pm(x_N)=\wht \Fr_\pm(\xi',x_N,\lambda)$.
Applying the partial Fourier transform to the last system yields
\begin{align}
\rho_\pm\lambda\wht w_{j\pm}(x_N)
-\mu_\pm(\pd_N^2-|\xi'|^2)\wht w_{j\pm}(x_N)
+i\xi_j\wht \Fr_\pm(x_N)&=0, \text{ $\pm x_N>0$,} \label{eq:w-1} \\
\rho_\pm\lambda\wht w_{N\pm}(x_N)
-\mu_\pm(\pd_N^2-|\xi'|^2)\wht w_{N\pm}(x_N)
+\pd_N\wht \Fr_\pm(x_N)&=0, \text{ $\pm x_N>0$,} \label{eq:w-2} \\
\sum_{j=1}^{N-1}i\xi_j\wht w_{j\pm}(x_N)+\pd_N \wht w_{N\pm}(x_N) &=0, \text{ $\pm x_N>0$,}\label{eq:w-3} \\
\mu_+(\pd_N\wht w_{j+}(0)+i\xi_j\wht w_{N+}(0))
-\mu_-(\pd_N\wht w_{j-}(0)+i\xi_j\wht w_{N-}(0))&=\wht g_j, \label{eq:w-4} \\
(2\mu_+\pd_N\wht w_{N+}(0)-\wht \Fr_+(0))-(2\mu_-\pd_N\wht w_{N-}(0)-\wht \Fr_-(0))&=\wht g_N, \label{eq:w-5} \\
\wht w_{J+}(0)-\wht w_{J-}(0)&=\wht h_J, \label{eq:w-6}
\end{align}
where $\wht g_J=\wht g_J(\xi')$ and $\wht h_J=\wht h_J(\xi')$.
Note that \eqref{eq:w-1} and \eqref{eq:w-2} are respectively equivalent to
\begin{align}
-\mu_\pm(\pd_N^2-B_\pm^2)\wht w_{j\pm}(x_N)+i\xi_j\wht \Fr_\pm(x_N)&=0, \text{ $\pm x_N>0$,} \label{eq:w-7} \\
-\mu_\pm(\pd_N^2-B_\pm^2)\wht w_{N\pm}(x_N)+\pd_N\wht \Fr_\pm(x_N)&=0, \text{ $\pm x_N>0$.} \label{eq:w-8}
\end{align}

From now on, 
we look for $\wht w_{J\pm}(x_N)$ and $\wht\Fr_\pm(x_N)$ of the forms: for $\pm x_N>0$,
\begin{equation}\label{sol:form}
\wht w_{J\pm}(x_N)=\alpha_{J\pm}(e^{\mp A x_N}-e^{\mp B_\pm x_N})+\beta_{J\pm} e^{\mp B_\pm x_N}, \quad
\wht\Fr_\pm(x_N)=\gamma_\pm e^{\mp A x_N}.
\end{equation}
Inserting these formulas into \eqref{eq:w-7}, \eqref{eq:w-8}, and \eqref{eq:w-3}-\eqref{eq:w-6} furnishes
\begin{align}
-\mu_\pm\alpha_{j\pm}(A^2-B_\pm^2)+i\xi_j\gamma_\pm =0, \quad 
-\mu_\pm\alpha_{N\pm}(A^2-B_\pm^2)\mp A\gamma_\pm &=0, \label{eq:w-11} \\
i\xi'\cdot\alpha_\pm'\mp A\alpha_{N\pm}=0,  \quad 
-i\xi'\cdot\alpha_\pm'+i\xi'\cdot\beta_\pm'\pm B_\pm\alpha_{N\pm}\mp B_\pm \beta_{N\pm}&=0, \label{eq:w-13} \\
\mu_+\{\alpha_{j+}(-A+B_+)-\beta_{j+}B_++i\xi_j \beta_{N+}\}  \qquad\qquad\qquad\qquad\qquad& \notag \\
-\mu_-\{\alpha_{j-}(A-B_-)+\beta_{j-}B_-+i\xi_j\beta_{N-}\}&=\wht g_j, \label{eq:w-14} \\
[2\mu_+\{\alpha_{N+}(-A+B_+)-\beta_{N+}B_+\}-\gamma_+] \qquad\qquad\qquad\qquad\qquad& \notag \\ 
-[2\mu_-\{\alpha_{N-}(A-B_-)+\beta_{N-}B_-\}-\gamma_-]&=\wht g_N, \label{eq:w-15}  \\
\beta_{J+}-\beta_{J-}&=\wht h_J, \label{eq:w-16}
\end{align}
where $i\xi'\cdot\alpha_\pm'=\sum_{j=1}^{N-1}i\xi_j\alpha_{j\pm}$ and $i\xi'\cdot\beta'=\sum_{j=1}^{N-1}i\xi_j\beta_{j\pm}$.

Let us solve the equations \eqref{eq:w-11}-\eqref{eq:w-16}.
By \eqref{eq:w-13}, we have
\begin{equation}\label{eq:w-20}
\alpha_{N\pm}
=\pm\frac{(i\xi'\cdot\beta_\pm'\mp B_\pm\beta_{N\pm})}{A-B_\pm}, \quad
i\xi'\cdot\alpha_\pm'
=\frac{A(i\xi'\cdot\beta_\pm'\mp B_\pm\beta_{N\pm})}{A-B_\pm}.
\end{equation}
By the first equation of \eqref{eq:w-20} and the second equation of \eqref{eq:w-11},
\begin{equation}
\gamma_\pm=-\frac{\mu_\pm(A+B_\pm)}{A}(i\xi'\cdot\beta_\pm'\mp B_\pm\beta_{N\pm}). \label{eq:w-22}
\end{equation}
Multiplying \eqref{eq:w-14} by $i\xi_j$ and summing the resultant formulas yield
\begin{multline*}
\mu_+\{i\xi'\cdot\alpha_+'(-A+B_+)-i\xi'\cdot\beta_+' B_+-A^2\beta_{N+}\} \\
-\mu_-\{i\xi'\cdot\alpha_-'(A-B_-)+i\xi'\cdot\beta_-'B_--A^2\beta_{N-}\}
=i\xi'\cdot\wht g',
\end{multline*}
where $i\xi'\cdot\wht g'=\sum_{j=1}^{N-1}i\xi_j\wht g_j$.
Combining this equation with the second one of \eqref{eq:w-20}, furnishes
\begin{multline}\label{eq:w-23}
\mu_+\{-(A+B_+)i\xi'\cdot\beta_+'+A(B_+-A)\beta_{N+}\} \\
-\mu_-\{(A+B_-)i\xi'\cdot\beta_-'+A(B_--A)\beta_{N-}\}=i\xi'\cdot\wht g'. 
\end{multline}
In addition, by \eqref{eq:w-15} and \eqref{eq:w-22} together with the first equation of \eqref{eq:w-20}, 
\begin{multline}\label{eq:w-24}
\mu_+\{(-A+B_+)i\xi'\cdot\beta_+'-B_+(A+B_+)\beta_{N+}\}  \\
-\mu_-\{(-A+B_-)i\xi'\cdot\beta_-'+B_-(B_-+A)\beta_{N-}\}=A\wht g_N.
\end{multline}
Since it follows from \eqref{eq:w-16} that
$i\xi'\cdot\beta_+'=i\xi'\cdot\beta_-'+i\xi'\cdot\wht h'$
and $\beta_{N+}=\beta_{N-}+\wht h_N$, 
it holds by \eqref{eq:w-23} and \eqref{eq:w-24} that
\begin{align}
&\{\mu_+(A+B_+)+\mu_-(A+B_-)\}i\xi'\cdot\beta_-'  
-\{\mu_+A(B_+-A)-\mu_-A(B_--A)\}\beta_{N-} \notag  \\
&=-i\xi'\cdot \wht g'-\mu_+(A+B_+)i\xi'\cdot\wht h'+\mu_+A(B_+-A)\wht h_N =:G(\Bg,\Bh), \notag \\
&-\{\mu_+(-A+B_+)-\mu_-(-A+B_-)\}i\xi'\cdot\beta_-' \notag \\
&+\{\mu_+B_+(A+B_+)+\mu_-B_-(B_-+A)\}\beta_{N-} \notag \\
&=-A\wht g_N+\mu_+(-A+B_+)i\xi'\cdot\wht h'-\mu_+B_+(A+B_+)\wht h_N=:H(\Bg,\Bh). \label{dfn:G-H}
\end{align}
We have thus achieved
\begin{equation}\label{s-eq}
\BL\begin{pmatrix}i\xi'\cdot\beta_-' \\ \beta_{N-}\end{pmatrix}
=\begin{pmatrix}G(\Bg,\Bh) \\ H(\Bg,\Bh) \end{pmatrix},
\end{equation}
where
\begin{equation*}
\BL=
\begin{pmatrix}
\mu_+(A+B_+)+\mu_-(A+B_-) & -\mu_+A(B_+-A)+\mu_-A(B_--A) \\
-\mu_+(-A+B_+)+\mu_-(-A+B_-) & \mu_+B_+(A+B_+)+\mu_-B_-(B_-+A)
\end{pmatrix}.
\end{equation*}
Then the inverse matrix $\BL^{-1}$ of $\BL$ is given by
\begin{equation*}
\BL^{-1}=\frac{1}{F(A,\lambda)}
\begin{pmatrix}
L_{11} & L_{12} \\
L_{21} & L_{22}
\end{pmatrix},
\end{equation*}
where $F(A,\lambda)$ is defined in \eqref{dfn:Lop-1} and
\begin{align}
L_{11}
&=\mu_+B_+(A+B_+)+\mu_-B_-(B_-+A), \notag \\
L_{12}
&=\mu_+A(B_+-A)-\mu_-A(B_--A), \notag \\
L_{21}
&=\mu_+(-A+B_+)-\mu_-(-A+B_-), \notag\\
L_{22}
&=\mu_+(A+B_+)+\mu_-(A+B_-).  \label{dfn:cofa}
\end{align}
Solving \eqref{s-eq}, we have
\begin{equation}\label{sol:beta}
i\xi'\cdot\beta_-'= \frac{L_{11}G(\Bg,\Bh)+L_{12}H(\Bg,\Bh)}{F(A,\lambda)}, 
\quad \beta_{N-}=\frac{L_{21}G(\Bg,\Bh)+L_{22}H(\Bg,\Bh)}{F(A,\lambda)}.
\end{equation}
Since $\wht w_N(\xi',0\,-,\lambda)=\wht w_{N-}(\xi',0,\lambda)=\beta_{N-}$,
there holds
\begin{equation}\label{w-final}
\wht w_N(\xi',0\,-,\lambda)=\frac{L_{21}G(\Bg,\Bh)+L_{22}H(\Bg,\Bh)}{F(A,\lambda)}.
\end{equation}

{\bf Step 2.}
We compute the formula of $\wht u_N^P(\xi',0,\lambda)$.
Let 
$$(\psi_\pm,\phi_\pm)=(\psi_{1\pm},\dots,\psi_{N\pm},\phi_\pm)$$
be solutions to the following whole space problems without interface:
\begin{equation}\label{eq:1-A}
\left\{\begin{aligned}
\rho_+\lambda\psi_+-\Dv(\mu_+\BD(\psi_+)-\phi_+\BI)=\BF, \quad \dv\psi_+=0 \quad \text{in $\BR^N$,}  \\
\rho_-\lambda\psi_--\Dv(\mu_-\BD(\psi_-)-\phi_-\BI)=\BF, \quad \dv\psi_-=0 \quad \text{in $\BR^N$,} 
\end{aligned}\right.
\end{equation}
where $\BF=\rho\Bf$.
Set $\psi=\psi_+\mathds{1}_{\BR_+^N}+\psi_-\mathds{1}_{\BR_-^N}$ and 
$\phi=\phi_+\mathds{1}_{\BR_+^N}+\phi_-\mathds{1}_{\BR_-^N}$, and then $(\psi,\phi)$ satisfies
\begin{equation*}
\rho\lambda-\Dv(\mu\BD(\psi)-\phi\BI)=\rho\Bf, \quad \dv\psi=0 \quad \text{in $\dot\BR^N$.}
\end{equation*}
In addition, $\jmp{\phi}=0$ as discussed in the appendix below.
Thus $(\Bu^P,\Fp^P)$ are given by  $\Bu^P=\psi+\Bv$ and $\Fp^P=\phi+\Fq$ for
a solution $(\Bv,\Fq)$ to
\begin{equation*}
\left\{\begin{aligned}
\rho\lambda\Bv-\Dv(\mu\BD(\Bv)-\Fq\BI)&=0 && \text{in $\dot\BR^N$,} \\
\dv\Bv&=0 && \text{in $\dot\BR^N$,} \\
\jmp{(\mu\BD(\Bv)-\Fq\BI)\Be_N}&=-\jmp{\mu\BD(\psi)\Be_N} && \text{on $\BR^{N-1}$,} \\
\jmp{\Bv}&=-\jmp{\psi} && \text{on $\BR^{N-1}$.}
\end{aligned}\right.
\end{equation*}

We now see that by \eqref{w-final} with $\Bg=-\jmp{\mu\BD(\psi)\Be_N}$ and $\Bh=-\jmp{\psi}$
\begin{align*}
\wht v_{N}(\xi',0\,-,\lambda)
&=\frac{L_{21}}{F(A,\lambda)}
\big\{\mu_+(i\xi'\cdot\pd_N\wht \psi_+'(\xi',0,\lambda)-A^2\wht \psi_{N+}(\xi',0,\lambda)) \\
&-\mu_-(i\xi'\cdot\pd_N\wht \psi_-'(\xi',0,\lambda)-A^2\wht \psi_{N-}(\xi',0,\lambda))  \\
&+\mu_+(A+B_+)i\xi'\cdot (\wht \psi_+'(\xi',0,\lambda)-\wht \psi_-'(\xi',0,\lambda)) \\
& -\mu_+A(B_+-A)(\wht \psi_{N+}(\xi',0,\lambda)-\wht \psi_{N-}(\xi',0,\lambda))\big\} \\
&+\frac{L_{22}}{F(A,\lambda)}
\big\{A(2\mu_+\pd_N\wht \psi_{N+}-2\mu_-\pd_N\wht \psi_{N-}) \\
& -\mu_+(-A+B_+)i\xi'\cdot (\wht \psi_+'(\xi',0,\lambda)-\wht \psi_-'(\xi',0,\lambda)) \\
&+\mu_+ B_+(A+B_+)(\wht \psi_{N+}(\xi',0,\lambda)-\wht \psi_{N-}(\xi',0,\lambda))\big\}.
\end{align*}
Since $\wht u_N^P(\xi',0,\lambda)=\wht u_N^P(\xi',0\,-,\lambda)$ by $\jmp{\Bu^P}=0$,
there holds 
$$\wht u_N^P(\xi',0,\lambda)=\wht \psi_{N-}(\xi',0,\lambda)+\wht v_N(\xi',0\,-,\lambda).$$
Combing this property with the above formula of $\wht v_{N}(\xi',0\,-,\lambda)$
 and \eqref{res-app} in the appendix below yields the desired formula of $\wht u_N^P(\xi',0,\lambda)$.
This completes the proof of Proposition \ref{thm:sol-para}.
\end{proof}

\subsection{Solution formulas for the hyperbolic-parabolic part}
In this subsection, we introduce solution formulas of $(\eta^k,\Bu^k,\Fp^k)$, $k=1,2$, for \eqref{eq:hype-2}.
System \eqref{eq:hype-2} can be written as
\begin{equation*}
\left\{\begin{aligned}
\rho\lambda \Bu^k -\Dv(\mu\BD(\Bu^k)-\Fp^k\BI)  &=0 && \text{in $\dot\BR^N$,} \\
\dv\Bu^k&=0 && \text{in $\dot\BR^N$, } \\
\jmp{(\mu\BD(\Bu^k)-\Fp^k\BI)\Be_N}&=(\omega-\sigma\Delta')\eta^k\Be_N && \text{on $\BR^{N-1}$, } \\
\jmp{\Bu^k}&=0 && \text{on $\BR^{N-1}$,}
\end{aligned}\right.
\end{equation*}
coupled with
\begin{equation}\label{eq:hype-4}
\lambda\eta^k-u_N^k|_{x_N=0}=z^k \quad \text{on $\BR^{N-1}$.}
\end{equation}

In what follows,
we apply the argumentation in Step 1 for the proof of  Proposition \ref{thm:sol-para} in the previous subsection.
To this end, we set $\Bg=(0,\dots,0,(\omega-\sigma\Delta')\eta^k)^\SST$ and $\Bh=0$ in \eqref{rp:w}.
Then $G(\Bg,\Bh)$ and $H(\Bg,\Bh)$ in \eqref{dfn:G-H} are given by
\begin{equation}\label{relat:G-H}
G(\Bg,\Bh)=0, \quad H(\Bg,\Bh)=-A(\omega+\sigma A^2)\wht \eta^k.
\end{equation}
Since $\Bh=0$, we have by \eqref{eq:w-16}
\begin{equation}\label{relat:beta}
\beta_{J+}=\beta_{J-} \quad (J=1,\dots,N).
\end{equation}
Combining this relation with \eqref{sol:beta} and \eqref{relat:G-H} furnishes
\begin{equation}\label{exp:beta}
\beta_{N+}=\beta_{N-}=-\frac{L_{22}}{F(A,\lambda)}A(\omega+\sigma A^2)\wht \eta^k,
\end{equation}
and thus \eqref{sol:form} gives 
\begin{equation*}
\wht u_N^k(0)=-\frac{L_{22}}{F(A,\lambda)}A(\omega+\sigma A^2)\wht \eta^k.
\end{equation*}
Inserting this formula into \eqref{eq:hype-4} yields
\begin{equation*}
\lambda\wht \eta^k+\frac{L_{22}}{F(A,\lambda)}A(\omega+\sigma A^2)\wht \eta^k=\wht z^k.
\end{equation*}
Solving this equation, we obtain
\begin{equation}\label{Lop-det-2}
\wht \eta^k=\frac{F(A,\lambda)}{L(A,\lambda)}\wht z^k, \quad 
L(A,\lambda)=\lambda F(A,\lambda)+A(\omega+\sigma A^2)(D_++D_-),
\end{equation}
where we have used $L_{22}=D_++D_-$ with $D_\pm=\mu_\pm B_\pm+\mu_\mp A$ given in \eqref{dfn:DEM}.
At this point, we note the following lemma.

\begin{lem}\label{lem:nonzero}
\begin{enumerate}[$(1)$]
\item
Let $\varepsilon\in(0,\pi/2)$ and $\rho_\pm$ be any positive constants.
Then there exists a constant $\delta_0\geq 1$ such that 
for any $\xi'\in\BR^{N-1}\setminus\{0\}$ and $\lambda\in\Sigma_\varepsilon$ with $|\lambda|\geq \delta_0$
\begin{align*}
|L(A,\lambda)|&\geq C_{\varepsilon}(|\lambda|^{1/2}+A)\{|\lambda|(|\lambda|^{1/2}+A)^2+\sigma A^3\}, \\
 |\pd_{\xi'}^{\alpha'} L(A,\lambda)^{-1}| 
&\leq C_{\alpha',\varepsilon}[(|\lambda|^{1/2}+A)\{|\lambda|(|\lambda|^{1/2}+A)^2+\sigma A^3\}]^{-1}A^{-|\alpha'|},
\end{align*}
where $\alpha'\in\BN_0^{N-1}$ and $C_{\varepsilon}$, $C_{\alpha',\varepsilon}$ are positive constants 
independent of $\xi'$ and $\lambda$.
\item
Suppose that $\rho_- >\rho_+>0$.
Let $\xi'\in\BR^{N-1}\setminus\{0\}$ and $\lambda\in\BC$ with $\Re\lambda\geq 0$.
Then $L(A,\lambda)\neq 0$.
\end{enumerate}
\end{lem}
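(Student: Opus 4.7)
The plan is to treat the two parts by distinct methods: for Part~(2), an energy identity in a single Fourier mode; for Part~(1), a two-regime quantitative argument organized around the competition between the two summands of $L(A,\lambda)$.

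For Part~(2), I would argue by contradiction. If $L(A,\lambda_0)=0$ at some $A=|\xi'_0|>0$ and $\lambda_0$ with $\Re\lambda_0\geq 0$, then formula \eqref{exp:beta}, together with Step~1 of the proof of Proposition~\ref{thm:sol-para} applied with $\Bg=(0,\dots,0,(\omega+\sigma|\xi'_0|^2)\wht\eta)^\SST$ and $\Bh=0$, produces a nontrivial Fourier-mode triple $(\wht\eta,\wht\Bu,\wht\Fp)$ decaying exponentially in $|x_N|$ at rates $A$ and $\Re B_\pm>0$. Testing the momentum equation against $\overline{\wht\Bu}$, integrating over $x_N\in\BR$, and integrating $\pd_N$-derivatives by parts, the divergence-free condition kills the volumetric pressure contribution, the Hermitian pairing of $\BD$ against $\overline{\nabla\wht\Bu}$ yields $\tfrac12\mu|\wht{\BD(\Bu)}|^2$, and the jump-of-stress term, via the capillary-gravity boundary condition together with $\lambda_0\wht\eta=\wht U_N|_{x_N=0}$, produces $\overline{\lambda_0}(\omega+\sigma|\xi'_0|^2)|\wht\eta|^2$. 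The resulting identity
\[
\lambda_0\int_{\BR}\rho|\wht\Bu|^2\,dx_N+\tfrac12\int_{\BR}\mu|\wht{\BD(\Bu)}|^2\,dx_N+\overline{\lambda_0}(\omega+\sigma|\xi'_0|^2)|\wht\eta|^2=0
\]
has only nonnegative terms in its real part (since $\omega>0$ and $\Re\lambda_0\geq 0$), so $\wht{\BD(\Bu)}\equiv 0$; with $\xi'_0\neq 0$ and exponential decay this forces $\wht\Bu\equiv 0$, and then $\lambda_0\wht\eta=\wht U_N(0)=0$ forces $\wht\eta=0$ when $\lambda_0\neq 0$. The case $\lambda_0=0$ is handled directly: $L(A,0)=2(\mu_++\mu_-)A^2(\omega+\sigma A^2)>0$.

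For Part~(1), set $R:=|\lambda|^{1/2}+A$, $T_1:=|\lambda|R^3$, and $T_2:=\sigma A^3 R$, so the target reads $|L|\gtrsim T_1+T_2$. Lemmas~\ref{lem:fund-sym} and~\ref{fundlem:D} give $|\lambda F|\sim T_1$ and $|A(\omega+\sigma A^2)(D_++D_-)|\leq C(A\omega R+T_2)$. In the \emph{parabolic} regime $T_2\leq \kappa^{-1}T_1$ (with $\kappa$ chosen large), the $\lambda F$-summand dominates: the $T_2$-part is absorbed by $\kappa$, while the lower-order remainder $A\omega R$ is absorbed by taking $\delta_0$ so large that $A\omega R\leq T_1/(2C)$ whenever $|\lambda|\geq\delta_0$; the triangle inequality then gives $|L|\geq |\lambda F|/2\gtrsim T_1\gtrsim T_1+T_2$. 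In the \emph{capillary} regime $T_1\leq \kappa T_2$ (forcing $A$ large), a direct expansion as $|\lambda|/A^2\to 0$ yields
\[
L(A,\lambda)=4(\mu_++\mu_-)^2\lambda A^3+2\sigma(\mu_++\mu_-)A^4+\text{lower order},
\]
whose vanishing locus in $\lambda$ lies on the negative real axis; since $\lambda\in\Sigma_\varepsilon$ the distance from $\lambda$ to that locus is at least $\sin\varepsilon$ times the sum of the moduli of the two leading terms, so $|L|\gtrsim\sigma A^4\sim T_2\gtrsim T_1+T_2$ once the lower-order remainders are controlled via Lemma~\ref{lem:fund-sym}. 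The derivative bound on $L^{-1}$ then follows from Fa\`a di Bruno combined with the lower bound on $|L|$ and the fact that, by Lemmas~\ref{lem:fund-sym} and~\ref{fundlem:D}, each $\xi'$-derivative of $L$ costs at most a factor $A^{-1}$ while preserving the overall size $T_1+T_2$.

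The principal obstacle is the non-cancellation estimate in the capillary regime, where both summands of $L$ are of comparable magnitude $T_2$ and destructive interference must be ruled out uniformly in $A$ and in $\lambda\in\Sigma_\varepsilon$ with $|\lambda|\geq\delta_0$; the sector condition on $\lambda$, paired with the positive-real-part character of $D_++D_-$ and the positivity of the $\lambda$-independent leading coefficient $F(A,0)=4(\mu_++\mu_-)^2A^3$, is precisely what is exploited to close this estimate.
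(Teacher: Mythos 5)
Your argument is correct, but it is genuinely different from what the paper does: the paper proves neither part, deferring (1) to \cite[Lemma 6.1]{SS11b} and declaring (2) ``similar to \cite[Lemma 3.2]{SaS1}'', whereas you supply complete, self-contained proofs. For part (2), your energy identity is exactly the mechanism that makes the sign hypothesis $\rho_->\rho_+$ (i.e.\ $\omega>0$) visible: testing the Fourier-mode momentum equation against $\overline{\wht\Bu}$, using $\dv\Bu=0$ to remove the pressure, and converting the stress jump via the kinematic condition into $\overline{\lambda}(\omega+\sigma A^2)|\wht\eta|^2$ gives an identity whose real part is a sum of nonnegative terms when $\Re\lambda\geq 0$; this is the standard route and almost certainly what the cited reference does. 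Two points you should make explicit: the homogeneous mode you build from a putative zero of $L$ is nontrivial and admissible because $F(A,\lambda)\neq 0$ on $\{\Re\lambda\geq 0\}\subset\Sigma_\varepsilon\cup\{0\}$ by Lemma \ref{lem:fund-sym} and because $\Re B_\pm>0$ there, which justifies the integrations by parts; and the case $\lambda=0$ must indeed be treated separately, as you do via $L(A,0)=2(\mu_++\mu_-)A^2(\omega+\sigma A^2)>0$. For part (1), your two-regime split is the right structure and is not merely cosmetic: a purely perturbative bound off $\lambda F$ alone cannot work, since $\sigma A^3R/(|\lambda|R^3)\sim\sigma A/|\lambda|\to\infty$ for fixed $|\lambda|$ as $A\to\infty$, so the capillary regime genuinely requires the separate expansion $L=2(\mu_++\mu_-)A^3\{2(\mu_++\mu_-)\lambda+\sigma A\}+\text{l.o.t.}$ together with the sector estimate $|2(\mu_++\mu_-)\lambda+\sigma A|\geq c_\varepsilon(|\lambda|+\sigma A)$; your observation that $T_1\leq\kappa T_2$ and $|\lambda|\geq\delta_0$ force $A\geq(\delta_0^2/\kappa\sigma)^{1/3}$ is what legitimizes that expansion and makes $R\sim A$ there. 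The derivative bound via the Bell/Fa\`a di Bruno formula from $|\pd_{\xi'}^{\alpha'}L|\leq C(T_1+T_2)A^{-|\alpha'|}$ and the lower bound on $|L|$ is routine, provided you also absorb the lower-order term $\omega AR\leq\omega R^2\leq\omega\delta_0^{-3/2}T_1$ by enlarging $\delta_0$, which your parabolic-regime discussion already contains. In short: what the paper buys by citation, you buy with a slightly longer but independent argument; I find no gap.
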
	

\begin{proof}
(1) See \cite[Lemma 6.1]{SS11b}.

(2)
The proof is similar to \cite[Lemma 3.2]{SaS1}, so that the detailed proof may be omitted.
\end{proof}

We continue to calculate the solution formulas for \eqref{eq:hype-2}.
By \eqref{exp:beta} and \eqref{Lop-det-2}
\begin{equation}\label{eq:26}
\beta_{N+}=\beta_{N-}=-\frac{L_{22}}{L(A,\lambda)}A(\omega+\sigma A^2)\wht z^k,
\end{equation}
while by \eqref{sol:beta}, \eqref{relat:G-H}, and \eqref{relat:beta}
\begin{equation}\label{eq:27}
i\xi'\cdot\beta_+'=i\xi'\cdot\beta_-'=-\frac{L_{12}}{L(A,\lambda)}A(\omega+\sigma A^2)\wht z^k.
\end{equation}
It thus holds by \eqref{eq:26} and \eqref{eq:27} that
\begin{equation*}
i\xi'\cdot\beta_\pm'\mp B_\pm\beta_{N\pm}=
-\frac{A(\omega+\sigma A^2)}{L(A,\lambda)}
\left(L_{12}\mp B_\pm L_{22}\right)\wht z^k,
\end{equation*}
which, combined with \eqref{eq:w-20} and \eqref{eq:w-22}, furnishes
\begin{align}
&\alpha_{N\pm}
=\mp\frac{1}{A-B_\pm}\cdot\frac{A(\omega+\sigma A^2)}{L(A,\lambda)}\left(L_{12}\mp B_\pm L_{22}\right)\wht z^k, \notag \\
&\gamma_\pm
=\frac{\mu_\pm(A+B_\pm)}{A}\cdot
\frac{A(\omega+\sigma A^2)}{L(A,\lambda)}\left(L_{12}\mp B_\pm L_{22}\right)\wht z^k. \label{final:gam}
\end{align}
By the first equation of \eqref{eq:w-11} and the above formula of $\gamma_\pm$,  we have
\begin{equation}\label{alpha-tan}
\alpha_{j\pm} = \frac{i\xi_j}{(A-B_\pm)A}\cdot\frac{A(\omega+\sigma A^2)}
{L(A,\lambda)}\left(L_{12}\mp B_\pm L_{22}\right)\wht z^k \quad (j=1,\dots,N-1).
\end{equation}
Noting $g_j=0$ for $j=1,\dots,N-1$, we have by \eqref{eq:w-14}, \eqref{relat:beta}, \eqref{eq:26},
and \eqref{alpha-tan}
\begin{align}
&\beta_{j+}=\beta_{j-}= \notag\\ 
&-\frac{i\xi_j(\omega+\sigma A^2)}{E L(A,\lambda)}
\left[(\mu_++\mu_-)L_{12}+\left\{\mu_+(A-B_+)-\mu_-(A-B_-)\right\}L_{22}\right]\wht z^k, \label{beta-tan}
\end{align}
together with $E=\mu_+B_++\mu_-B_-$ given in \eqref{dfn:DEM}.

Let us define for $j=1,\dots,N-1$
\begin{align*}
\CI_{j\pm}(\xi',\lambda)
&=i\xi_j(\omega+\sigma A^2)(L_{12}\mp B_\pm L_{22}), \\
\CI_{N\pm}(\xi',\lambda)
&=\mp A(\omega+\sigma A^2)(L_{12}\mp B_\pm L_{22}), \\
\CJ_{j}(\xi',\lambda)
&=-i\xi_j(\omega+\sigma A^2)\left[(\mu_++\mu_-)L_{12}+\left\{\mu_+(A-B_+)-\mu_-(A-B_-)\right\}L_{22}\right], \\
\CJ_{N}(\xi',\lambda) 
&=-E L_{22}A(\omega+\sigma A^2),
\end{align*}
where $L_{11}$, $L_{12}$, $L_{21}$, and $L_{22}$ are given in \eqref{dfn:cofa}.
Recall $\CM_\pm(a)$ in \eqref{dfn:DEM}.
Then, in view of \eqref{sol:form}, 
we have achieved by \eqref{eq:26}, \eqref{final:gam}, \eqref{alpha-tan}, and \eqref{beta-tan}
\begin{equation}\label{solform-velo}
\wht u_{m\pm}^k(x_N)
=\frac{\CI_{m\pm}(\xi',\lambda)}{L(A,\lambda)}\CM_\pm(\pm x_N)\wht z^k
+\frac{\CJ_m(\xi',\lambda)}{EL(A,\lambda)}e^{\mp B_\pm x_N}\wht z^k
\end{equation}
for $\pm x_N>0$ and $m=1,\dots,N$,
with the pressure $\wht \Fp_\pm^k(x_N)=\gamma_\pm e^{\mp x_N}$ $(\pm x_N>0)$. 
For the above $\wht\eta^k$, $\wht u_{m\pm}^k(x_N)$, and $\wht \Fp_\pm^k(x_N)$,
we define
\begin{align*}
&\eta^k=\CF_{\xi'}^{-1}\big[\wht \eta^k(\xi',\lambda)\big](x'), \quad
u_{m\pm}^k=\CF_{\xi'}^{-1}\big[\wht u_{m\pm}^k(\xi',x_N,\lambda)\big](x'), \\
&\Fp_\pm^k=\CF_{\xi'}^{-1}\big[\wht \Fp_{\pm}^k(\xi',x_N,\lambda)\big](x').
\end{align*}
Then setting 
\begin{equation*}
u_m^k=u_{m+}^k\mathds{1}_{\BR_+^N}+u_{m-}^k\mathds{1}_{\BR_-^N} \quad \text{and} \quad
\Fp^k=\Fp_+^k\mathds{1}_{\BR_+^N}+\Fp_-^k\mathds{1}_{\BR_-^N},
\end{equation*}
we observe that $\eta^k$, $\Bu^k=(u_1^k,\dots,u_N^k)^\SST$, and $\Fp^k$
become a solution to \eqref{eq:hype-2}.
This completes the calculation of solution formulas for \eqref{eq:hype-2}.

\subsection{Representation formulas for \eqref{eq:main}}\label{subsec:3-3}
In this subsection, we give the representation formulas of solutions for \eqref{eq:main}.
To this end, we first consider 
$\CH_Z^1(t)d$, $\CH_Z^2(t)\Bf$, $\CU_Z^1(t)d$, and $\CU_Z^2(t)\Bf$ given in \eqref{eq:sol-main}.
Together with Proposition \ref{thm:sol-para},
inserting $\wht\eta^1$ and $\wht\eta^2$ of \eqref{Lop-det-2} into 
$\wht \CH_{Z}^1(t;\Gamma)d$ and $\wht \CH_{Z}^2(t;\Gamma)\Bf$ in \eqref{hat-H-U}, respectively, yields
\begin{align*}
&\wht \CH_{Z}^1(t;\Gamma)d
=\frac{\varphi_Z(\xi')}{2\pi i}\int_{\Gamma} e^{\lambda t}
\frac{F(A,\lambda)}{L(A,\lambda)}\,d\lambda \, \wht  d(\xi'), \\ 
&\wht \CH_{Z}^2(t;\Gamma)\Bf  \\
&=
\sum_{\Fa,\Fb\in\{+,-\}}\sum_{j=1}^N\int_0^\infty 
\Big(\frac{\varphi_Z(\xi')}{2\pi i}\int_{\Gamma}  e^{\lambda t}
\frac{\Phi_{j}^{\Fa,\Fb}(\xi',\lambda)\CM_\Fb(y_N)}{A(B_\Fb+A)L(A,\lambda)}\,d\lambda\Big) 
\wht f_j(\xi',\Fa y_N)\,dy_N \\
&+\sum_{\Fa,\Fb\in\{+,-\}}\sum_{j=1}^N\int_0^\infty 
\Big(\frac{\varphi_Z(\xi')}{2\pi i}\int_{\Gamma}  e^{\lambda t}
\frac{\Psi_{j}^{\Fa,\Fb}(\xi',\lambda)e^{-B_\Fb y_N}}{AB_\Fb(B_\Fb+A)L(A,\lambda)}\,d\lambda\Big)
 \wht f_j(\xi',\Fa y_N)\,dy_N.
\end{align*}

Let us define for $\pm x_N>0$ and $m=1,\dots,N$
\begin{align*}
\wht \CU_{Z,m\pm}^1(t;\Gamma)d
&=\frac{\varphi_Z(\xi')}{2\pi i}\int_\Gamma e^{\lambda t}\wht u_{m\pm}^1(\xi',x_N,\lambda)\,d\lambda, \\
\wht \CU_{Z,m\pm}^2(t;\Gamma)\Bf
&=\frac{\varphi_Z(\xi')}{2\pi i}\int_\Gamma e^{\lambda t}\wht u_{m\pm}^2(\xi',x_N,\lambda)\,d\lambda.
\end{align*}
Together with Proposition \ref{thm:sol-para},
inserting $\wht u_{m\pm}^1$ and $\wht u_{m\pm}^2$ of \eqref{solform-velo}
into $\wht \CU_{Z,m\pm}^1(t;\Gamma)d$ and $\wht \CU_{Z,m\pm}^2(t;\Gamma)\Bf$, respectively,
yields the following formulas: for $\pm x_N>0$ and $m=1,\dots,N$
\begin{align*}
\wht \CU_{Z,m \pm}^1(t;\Gamma)d
&=\frac{\varphi_Z(\xi')}{2\pi i}\int_{\Gamma} e^{\lambda t}
\frac{ \CI_{m\pm}(\xi',\lambda)}{L(A,\lambda)}\CM_\pm(\pm x_N)\,d\lambda \, \wht d(\xi') \\
&+ \frac{\varphi_Z(\xi')}{2\pi i}\int_{\Gamma} e^{\lambda t}
 \frac{\CJ_m(\xi',\lambda)}{EL(A,\lambda)}e^{\mp B_\pm x_N}\,d\lambda \, \wht d(\xi'),
\end{align*}
and furthermore,
\begin{align*}
&\wht \CU_{Z,m \pm}^2(t;\Gamma)\Bf   \\
&=\sum_{\Fa,\Fb\in\{+,-\}}\sum_{j=1}^N\int_0^\infty\Big(
\frac{\varphi_Z(\xi')}{2\pi i}\int_\Gamma e^{\lambda t}
\frac{\Phi_{j}^{\Fa,\Fb}(\xi',\lambda)\CI_{m\pm}(\xi',\lambda)\CM_{\pm}(\pm x_N)\CM_\Fb(y_N)}{
A(B_\Fb+A)F(A,\lambda)L(A,\lambda)}\,d\lambda\Big) \\
&\times \wht f_j(\xi',\Fa y_N)\,dy_N \\
&+\sum_{\Fa,\Fb\in\{+,-\}}\sum_{j=1}^N\int_0^\infty\Big(
\frac{\varphi_Z(\xi')}{2\pi i}\int_\Gamma e^{\lambda t}
\frac{\Psi_j^{\Fa,\Fb}(\xi',\lambda)\CI_{m\pm}(\xi',\lambda)\CM_{\pm}(\pm x_N)e^{-B_\Fb y_N}}{
AB_\Fb(B_\Fb+A)F(A,\lambda)L(A,\lambda)}\,d\lambda\Big) \\
&\times \wht f_j(\xi',\Fa y_N)\,dy_N \\
&+\sum_{\Fa,\Fb\in\{+,-\}}\sum_{j=1}^N\int_0^\infty
\Big(\frac{\varphi_Z(\xi')}{2\pi i}\int_\Gamma e^{\lambda t}
\frac{\Phi_j^{\Fa,\Fb}(\xi',\lambda)\CJ_{m}(\xi',\lambda)e^{\mp B_\pm x_N}\CM_\Fb(y_N)}
{A(B_\Fb+A)F(A,\lambda)
E L(A,\lambda)}\,d\lambda\Big) \\
&\times \wht f_j(\xi',\Fa y_N)\,dy_N \\
&+\sum_{\Fa,\Fb\in\{+,-\}}\sum_{j=1}^N\int_0^\infty
\Big(\frac{\varphi_Z(\xi')}{2\pi i}\int_\Gamma e^{\lambda t}
\frac{\Psi_j^{\Fa,\Fb}(\xi',\lambda)\CJ_m(\xi',\lambda)e^{\mp B_\pm x_N}e^{-B_\Fb y_N}}
{AB_\Fb(B_\Fb+A)F(A,\lambda)
E L(A,\lambda)}\,d\lambda\Big) \\
&\times  \wht f_j(\xi',\Fa y_N)\,dy_N.
\end{align*}
One defines
\begin{align*}
\wht \CU_{Z,m}^1(t;\Gamma)d
&=(\wht \CU_{Z,m+}^1(t;\Gamma)d)\mathds{1}_{\BR_+^N}
+(\wht \CU_{Z,m-}^1(t;\Gamma)d)\mathds{1}_{\BR_-^N}, \\
\wht \CU_{Z,m}^2(t;\Gamma)\Bf
&=(\wht \CU_{Z,m+}^2(t;\Gamma)\Bf)\mathds{1}_{\BR_+^N}
+(\wht \CU_{Z,m-}^2(t;\Gamma)\Bf)\mathds{1}_{\BR_-^N}, 
\end{align*}
and then there holds the following relation between these formulas and $\wht \CU_Z^1(t;\Gamma)d$, $\wht \CU_Z^2(t;\Gamma)\Bf$
given in \eqref{hat-H-U}:
\begin{align*}
\wht \CU_Z^1(t;\Gamma)d
&=(\wht \CU_{Z,1}^1(t;\Gamma)d,\dots,\wht \CU_{Z,N}^1(t;\Gamma)d)^\SST, \\
\wht \CU_Z^2(t;\Gamma)\Bf
&=(\wht \CU_{Z,1}^2(t;\Gamma)\Bf,\dots,\wht \CU_{Z,N}^2(t;\Gamma)\Bf)^\SST.
\end{align*}

Summing up the above argumentation, we have obtained the representation formulas of 
$\CH_Z^1(t)d$, $\CH_Z^2(t)\Bf$, $\CU_Z^1(t)d$, and $\CU_Z^2(t)\Bf$ given in \eqref{eq:sol-main}
from the formulas of $\wht \CH_Z^1(t;\Gamma)d$, $\wht \CH_Z^2(t;\Gamma)\Bf$,
$\wht \CU_Z^1(t;\Gamma)d$, and $\wht \CU_Z^2(t;\Gamma)\Bf$ as above.
Furthermore, those representation formulas of $\CH_Z^1(t)d$, $\CH_Z^2(t)\Bf$, $\CU_Z^1(t)d$, and $\CU_Z^2(t)\Bf$
give the representation  formulas of solutions for \eqref{eq:main} by the relation \eqref{U:decomp}.

Finally, we introduce another useful formula of $L(A,\lambda)$.

\begin{lem}\label{ano-form-L}
Let $L(A,\lambda)$ be given in \eqref{Lop-det-2} and set
\begin{equation*}
\CL_A(\lambda)=\lambda^2+\frac{4 AD_+D_-}{(\rho_++\rho_-)(D_++D_-)}\lambda+\alpha A+\wtd\sigma A^3, \quad 
\wtd\sigma = \frac{\sigma}{\rho_++\rho_-},
\end{equation*}
where $D_\pm$ and $\alpha$ are defined in \eqref{dfn:DEM} and \eqref{dfn:al-beta}, respectively.
Then
\begin{equation*}\label{dfn:CL}
L(A,\lambda)=(\rho_++\rho_-)(D_++D_-)\CL_A(\lambda).
\end{equation*}
\end{lem}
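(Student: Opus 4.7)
The plan is a direct algebraic verification. First, I would expand the right-hand side of the claimed identity and split it into terms by powers of $\lambda$:
\[
(\rho_++\rho_-)(D_++D_-)\CL_A(\lambda)
= (\rho_++\rho_-)(D_++D_-)\lambda^2 + 4AD_+D_-\,\lambda
+ (\rho_++\rho_-)(D_++D_-)(\alpha A + \wtd\sigma A^3).
\]
The $\lambda^0$-term is handled immediately by the definitions in \eqref{dfn:al-beta}: $(\rho_++\rho_-)\alpha = \omega$ and $(\rho_++\rho_-)\wtd\sigma = \sigma$, so the last summand collapses to $A(\omega+\sigma A^2)(D_++D_-)$, which matches the second piece of $L(A,\lambda)$ in \eqref{Lop-det-2}. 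Consequently the lemma reduces to proving the $\lambda$-free identity
\[
F(A,\lambda) = (\rho_++\rho_-)(D_++D_-)\lambda + 4AD_+D_-.
\]

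The main step is therefore to verify this reduced identity. The key algebraic tool is the relation $\rho_\pm \lambda = \mu_\pm(B_\pm^2 - A^2)$, which follows from the definition $B_\pm=\sqrt{(\rho_\pm/\mu_\pm)\lambda + A^2}$ in \eqref{dfn:AB}. Using this, I would rewrite
\[
(\rho_++\rho_-)\lambda = \mu_+B_+^2 + \mu_-B_-^2 - (\mu_++\mu_-)A^2,
\]
and, with $D_\pm = \mu_\pm B_\pm + \mu_\mp A$, expand $(\rho_++\rho_-)\lambda(D_++D_-)$ as a polynomial in $A,B_\pm,\mu_\pm$. Similarly, $4AD_+D_- = 4A(\mu_+B_++\mu_-A)(\mu_-B_-+\mu_+A)$ expands into four monomials in these same variables.

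Finally, I would add these expansions and compare term-by-term with the definition of $F(A,\lambda)$ in \eqref{dfn:Lop-1}. The cubic-in-$A$ monomial contributes to $-(\mu_+-\mu_-)^2 A^3$, the $A^2 B_\pm$ monomials rearrange to $\{(3\mu_+-\mu_-)\mu_+B_+ + (3\mu_--\mu_+)\mu_-B_-\}A^2$, the $A B_\pm B_\mp$ and $A B_\pm^2$ monomials produce $\{(\mu_+B_++\mu_-B_-)^2+\mu_+\mu_-(B_++B_-)^2\}A$, and the purely-cubic-in-$B$ piece furnishes $(\mu_+B_++\mu_-B_-)(\mu_+B_+^2+\mu_-B_-^2)$.

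There is no conceptual obstacle; the proof is essentially bookkeeping. The one place to be careful is sign tracking when collecting the $A^2B_\pm$ and $AB_\pm^2$ monomials, since each comes from several sources (the expansion of $(\rho_++\rho_-)\lambda(D_++D_-)$ contributes both positive pieces from $\mu_\pm B_\pm^2$ and negative pieces from $-(\mu_++\mu_-)A^2$, while $4AD_+D_-$ adds further positive pieces). Organizing the expansion by total degree in $\{A,B_\pm\}$ and verifying the coefficient of each monomial separately is the cleanest way to avoid errors.
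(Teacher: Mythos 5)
Your proposal is correct and is exactly the ``elementary calculation'' the paper alludes to while omitting the details: the reduced identity $F(A,\lambda)=(\rho_++\rho_-)(D_++D_-)\lambda+4AD_+D_-$ does check out monomial by monomial using $\rho_\pm\lambda=\mu_\pm(B_\pm^2-A^2)$, with the $A^3$, $A^2B_\pm$, $AB_\pm B_\mp$/$AB_\pm^2$, and pure-$B$ coefficients matching \eqref{dfn:Lop-1} as you describe.
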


\begin{proof}
The desired relation follows from an elementary calculation, so that the detailed proof may be omitted.
\end{proof}

\section{Analysis of boundary symbol}
We assume $\rho_->\rho_+>0$ throughout this section
and analyze mainly the boundary symbol $\CL_A(\lambda)$ introduced in the last part of the previous section.
Note that $\alpha$ in \eqref{dfn:al-beta} is positive by the assumption $\rho_->\rho_+>0$.

\subsection{Analysis of low frequency part}
Recalling $\theta_2,\lambda_1$ given in \eqref{dfn:theta} and
$z_0=\min\{\mu_+/\rho_+,\mu_-/\rho_-\}$, we define for $A=|\xi'|$
\begin{equation}\label{z1-z2}
z_1^\pm =-\frac{z_0}{2}A^2\pm i \frac{z_0}{4}A^2, \quad
z_2^\pm= \lambda_1e^{\pm i (\pi-\theta_2)},
\end{equation}
and also
\begin{align}
\wht \Gamma_1^\pm &=\{\lambda\in\BC : \lambda=-\frac{z_0}{2}A^2+\frac{z_0}{4}A^2 e^{\pm is}, \,0\leq s \leq \frac{\pi}{2}\}, \notag \\
\wht \Gamma_2^\pm &=\{\lambda\in\BC : \lambda = z_1^\pm (1-s) +z_2^\pm s, \, 0\leq s \leq 1\}, \notag \\
\wht \Gamma_3 &=\{\lambda\in\BC : \lambda = \lambda_1e^{ is}, \, -(\pi-\theta_2)\leq s \leq \pi-\theta_2\}. \label{gamma1-def}
\end{align}
In addition, we set
\begin{equation*}
\CF_A(\lambda)=(\lambda-\zeta_+)(\lambda-\zeta_-), \quad \CG_A(\lambda)=\CL_A(\lambda)-\CF_A(\lambda),
\end{equation*}
where $\zeta_\pm$ and $\CL_A(A)$ are given in Theorem \ref{thm:main3} and Lemma \ref{ano-form-L}, respectively.
Then
\begin{align}
\CG_A(\lambda)
&=\frac{4 AD_+D_-}{(\rho_++\rho_-)(D_++D_-)}\lambda+\wtd\sigma A^3 \notag \\
&-2\sqrt{2}\alpha^{1/4}\beta A^{5/4}\lambda 
+2\sqrt{2}\alpha^{3/4}\beta A^{7/4}-4\alpha^{1/2}\beta^2 A^{10/4}
\label{dfn:com}
\end{align}
and the following lemma holds.

\begin{lem}\label{lem:roots-1}
There exists a constant $A_1\in(0,1)$
such that $|\CF_A(\lambda)|>|\CG_A(\lambda)|$ for $A\in(0,A_1)$
and $\lambda\in \wht \Gamma_1^+\cup\wht \Gamma_1^-\cup \wht \Gamma_2^+\cup \wht \Gamma_2^-\cup\wht \Gamma_3$.
\end{lem}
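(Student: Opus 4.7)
The plan is to control $|\CF_A(\lambda)|$ from below and $|\CG_A(\lambda)|$ from above on each of the five pieces $\wht\Gamma_1^\pm$, $\wht\Gamma_2^\pm$, and $\wht\Gamma_3$, and then pick $A_1$ small enough that the ratio $|\CG_A|/|\CF_A|$ is uniformly less than $1$. The upper bound on $|\CG_A|$ will come directly from \eqref{dfn:com} together with the symbol estimates $|B_\pm|,|D_\pm|\le C(|\lambda|^{1/2}+A)$ and $|D_++D_-|\ge c(|\lambda|^{1/2}+A)$ supplied by Lemmas~\ref{lem:fund-sym} and \ref{fundlem:D}, namely
\begin{equation*}
|\CG_A(\lambda)|\le C\bigl\{A(|\lambda|^{1/2}+A)|\lambda|+A^3+A^{5/4}|\lambda|+A^{7/4}+A^{10/4}\bigr\}.
\end{equation*}

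For the lower bound on $|\CF_A(\lambda)|=|\lambda-\zeta_+||\lambda-\zeta_-|$ I would split into three regimes. On $\wht\Gamma_1^\pm$, $|\lambda|\le z_0A^2$ while $|\Im\zeta_\pm|=\alpha^{1/2}A^{1/2}+O(A^{5/4})$, so the imaginary part of $\zeta_\pm$ dominates $\Im\lambda$ and $|\lambda-\zeta_\pm|\ge (\alpha^{1/2}/2)A^{1/2}$, hence $|\CF_A(\lambda)|\ge(\alpha/4)A$. On $\wht\Gamma_3$, $|\lambda|=\lambda_1\ge 1$ is a fixed constant while $|\zeta_\pm|=O(A^{1/2})\to 0$, so $|\CF_A(\lambda)|\to\lambda_1^2$. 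The delicate piece is $\wht\Gamma_2^\pm$: I would observe that the argument of a point on a line segment not passing through the origin is monotonic in the parameter (a direct differentiation of $\arg\lambda(s)$ shows its derivative has constant sign), so $\arg\lambda$ is trapped between $\arg z_1^\pm=\pm(\pi-{\rm Tan}^{-1}(1/2))$ and $\arg z_2^\pm=\pm(\pi-\theta_2)$. Since $\arg\zeta_\pm=\pm\pi/2+O(A^{3/4})$, this produces a uniform angular gap of at least $\pi/2-{\rm Tan}^{-1}(1/2)-O(A^{3/4})$ between $\lambda$ and $\zeta_\pm$. The elementary identity $|z_1-z_2|^2\ge \sin^2\delta\cdot(|z_1|^2\vee|z_2|^2)$ when $|\arg z_1-\arg z_2|\ge\delta$ then gives $|\lambda-\zeta_\pm|\ge c(|\lambda|+A^{1/2})$, so $|\CF_A(\lambda)|\ge c(|\lambda|+A^{1/2})^2$.

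Combining the two bounds, on $\wht\Gamma_1^\pm$ one reads off $|\CG_A|/|\CF_A|\le CA^{3/4}$ (the largest summand $A^{7/4}$ divided by $A$); on $\wht\Gamma_3$ the numerator tends to $0$ while the denominator stays $\ge\lambda_1^2/2$; and on $\wht\Gamma_2^\pm$ each of $A|\lambda|^{3/2},\,A^{5/4}|\lambda|,\,A^{7/4}$ divided by $(|\lambda|+A^{1/2})^2$ is bounded by $CA^{3/4}$ (case-split $|\lambda|\ge A^{1/2}$ versus $|\lambda|\le A^{1/2}$), with the remaining monomials of still higher order in $A$. Choosing $A_1\in(0,1)$ small enough that the overall constant times $A^{3/4}$ is below $1$ finishes the proof. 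The main obstacle is clearly $\wht\Gamma_2^\pm$, where $|\lambda|$ sweeps every intermediate scale from $A^2$ up to $\lambda_1$ and may match $|\zeta_\pm|$ in modulus; the quantitative angular separation—which is precisely why $\theta_1,\theta_2$ were defined as in \eqref{dfn:theta}, small enough that $\wht\Gamma_2^\pm$ stays angularly away from the approximate resonance points $\pm i\alpha^{1/2}A^{1/2}$—is the essential ingredient that rescues the estimate there.
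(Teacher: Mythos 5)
Your proof follows essentially the same route as the paper's: the same three-way split of the contour, the same lower bounds $|\CF_A|\geq CA$ on $\wht\Gamma_1^\pm$, $|\CF_A|\geq C(|\lambda|+A^{1/2})^2$ on $\wht\Gamma_2^\pm$, $|\CF_A|\geq C$ on $\wht\Gamma_3$, and the same term-by-term domination of $\CG_A$ ending in a ratio of order $A^{3/4}$. On $\wht\Gamma_2^\pm$ your angular-separation argument is a geometric restatement of the paper's computation, which expands $|\lambda-\zeta_\pm|^2=a^2+b^2+\alpha A\mp 2\alpha^{1/2}A^{1/2}b+O(A^{5/4})$ for $\lambda=-a+bi$ and exploits the slope bound $b\leq a/2$ along the segment; that slope bound is exactly the quantitative angular gap you invoke, so the two versions are interchangeable.

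The one point you must repair is the justification of the upper bound on $|\CG_A|$ over $\wht\Gamma_1^\pm$. Lemmas \ref{lem:fund-sym} and \ref{fundlem:D} are stated only for $\lambda\in\Sigma_\varepsilon\cup\{0\}$, and $\wht\Gamma_1^\pm$ is contained in no such sector: at $s=0$ it meets the negative real axis at $\lambda=-(z_0/4)A^2$. So the inequality $|D_+D_-/(D_++D_-)|\leq C(|\lambda|^{1/2}+A)$ cannot be cited from those lemmas on this piece. The estimate is nevertheless true there, and the paper proves it directly: writing $D_\pm=A\bigl(\mu_\pm\sqrt{(\rho_\pm/\mu_\pm)(-z_0/2+(z_0/4)e^{is})+1}+\mu_\mp\bigr)$ and noting that the real part of the square root is positive (because $\Re\bigl((\rho_\pm/\mu_\pm)\lambda+A^2\bigr)\geq A^2/4$ by the definition of $z_0$), one gets $|D_+D_-/(D_++D_-)|\leq|D_+||D_-|/(\Re D_++\Re D_-)\leq CA$, and then $|\CG_A(\lambda)|\leq CA^{7/4}$ as you claim. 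With that direct verification substituted for the lemma citation, your argument is complete.
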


\begin{proof}
{\bf Case 1}: $\lambda\in \wht \Gamma_1^+\cup\wht \Gamma_1^-$. 
Let $\lambda=-(z_0/2)A^2+(z_0/4)A^2 e^{is}$ $(-\pi/2\leq s \leq \pi/2)$.
It is clear that
\begin{equation*}
|\CF_A(\lambda)|\geq C A \quad \text{for $A\in(0,A_1)$,}
\end{equation*}
with a sufficiently small $A_1$ 
and a positive constant $C$ independent of $A$ and $\lambda$.

Next, we estimate $|\CG_A(\lambda)|$ from above. Since
\begin{align*}
D_\pm =A\left(\mu_\pm\sqrt{\frac{\rho_\pm}{\mu_\pm}\left(-\frac{z_0}{2}+\frac{z_0}{4}e^{is}\right)+1}+\mu_\mp\right), \\
\Re\left(\sqrt{\frac{\rho_\pm}{\mu_\pm}\left(-\frac{z_0}{2}+\frac{z_0}{4}e^{is}\right)+1}\right)>0,
\end{align*}
there holds
\begin{equation*}
\left|\frac{D_+D_-}{D_++D_-}\right|\leq \frac{|D_+||D_-|}{(\Re D_+)+(\Re D_-)} \leq  CA \quad \text{for $A>0$.}
\end{equation*}
One thus sees that
\begin{equation*}
|\CG_A(\lambda)|\leq C A^{7/4} \quad \text{for $A\in(0,1)$,}
\end{equation*}
which implies $|\CF_A(\lambda)|>|\CG_A(\lambda)|$ for $A\in(0,A_1)$ when $A_1$ is sufficiently small.

{\bf Case 2}: $\lambda\in \wht\Gamma_2^+\cup \wht\Gamma_2^-$.
We consider $\lambda\in \wht\Gamma_2^+$ only.
Let $\lambda=z_1^+(1-s)+z_2^+s$ $(0\leq s \leq 1)$. 
We write $\lambda=-a+bi$ $(a,b\geq 0)$, that is,
\begin{equation*}
 a=\frac{z_0}{2}A^2(1-s)+\lambda_1 (\cos\theta_2)s, \quad 
 b=\frac{z_0}{4}A^2(1-s)+\lambda_1(\sin\theta_2)s.
\end{equation*}
We then observe that
\begin{align}
|\lambda-\zeta_\pm|^2
&=(-a+\sqrt{2}\alpha^{1/4}\beta A^{5/4})^2+(b\mp\alpha^{1/2}A^{1/2}\pm\sqrt{2}\alpha^{1/4}\beta A^{5/4})^2 \notag \\
&=a^2+b^2+\alpha A \mp 2\alpha^{1/2}A^{1/2}b+O(A^{5/4}) \quad \text{as $A\to 0$.} \label{est:path-2}
\end{align}
From this, we immediately see that there exists a constant $A_1\in(0,1)$ such that 
\begin{equation*}
|\lambda-\zeta_-|\geq C(|\lambda|+A^{1/2}) \quad \text{for $A\in(0,A_1)$.}
\end{equation*}
Since $2\alpha^{1/2}A^{1/2}b\leq \varepsilon \alpha A+b^2/\varepsilon$ $(\varepsilon>0)$,
choosing $\varepsilon=2/3$ furnishes
\begin{equation}\label{ineq-sub1}
a^2+b^2+\alpha A - 2\alpha^{1/2}A^{1/2}b\geq 
a^2-\frac{1}{2}b^2+\frac{1}{3}\alpha A.
\end{equation}
In addition,
\begin{equation*}
b=\frac{z_0}{4}A^2(1-s)+\tan\theta_2\cdot \lambda_1(\cos\theta_2)s 
=\frac{1}{2}\left\{\frac{z_0}{2}A^2(1-s)+\frac{1}{4}\lambda_1(\cos\theta_2)s\right\}\leq \frac{1}{2}a,
\end{equation*}
which implies
\begin{equation*}
a^2-\frac{1}{2}b^2 = \frac{1}{4}(a^2+b^2)+\frac{3}{4}(a^2-b^2)\geq \frac{1}{4}(a^2+b^2).
\end{equation*}
It thus holds by \eqref{est:path-2} and \eqref{ineq-sub1} that $|\lambda-\zeta_+|\geq C (|\lambda|+A^{1/2})$
when $A$ is small enough, and therefore
\begin{equation}\label{ineq:F-1}
|\CF_A(\lambda)|\geq C (|\lambda|+A^{1/2})^2 \quad \text{for $A\in(0,A_1)$.}
\end{equation}
In particular,
\begin{equation}\label{ineq:F-2}
|\CF_A(\lambda)|\geq CA^{1/2} (|\lambda|+A^{1/2}) \quad \text{for $A\in(0,A_1)$.}
\end{equation}

Next, we estimate $|\CG_A(\lambda)|$ from above.
By Lemma \ref{fundlem:D} and \eqref{dfn:com}, we have for $A\in(0,1)$
\begin{align}
|\CG_A(\lambda)|
&\leq C\big(A|\lambda|(|\lambda|^{1/2}+A)+A^{5/4}|\lambda|+A^{7/4}\big) \notag \\
&= CA^{3/4} \big(A^{1/4}|\lambda|(|\lambda|^{1/2}+A)+A^{1/2}|\lambda|+A\big) .
\label{est:G-A:1}
\end{align}
Since $|\lambda|\leq \lambda_1$, one sees for $A\in(0,1)$ that 
\begin{equation*}
A^{1/4}|\lambda|(|\lambda|^{1/2}+A) \leq |\lambda|(|\lambda|^{1/2}+A^{1/2})
\leq \lambda_1^{1/2}(|\lambda|+\lambda_1^{1/2}A)\leq C(|\lambda|+A^{1/2})
\end{equation*}
and that $A^{1/2}|\lambda|+A\leq |\lambda|+A^{1/2}$.
Combining these inequalities with \eqref{est:G-A:1} furnishes
\begin{equation*}
|\CG_A(\lambda)|\leq C A^{3/4}(|\lambda|+A^{1/2}) \quad \text{for $A\in (0,1)$},
\end{equation*}
which implies $|\CF_A(\lambda)|>|\CG_A(\lambda)|$ for $A\in(0,A_1)$ when $A_1$ is sufficiently small.

{\bf Case 3}: $\lambda\in \wht\Gamma_3$. Let $\lambda = \lambda_1 e^{is}$ $(-(\pi-\theta_2)\leq s \leq \pi-\theta_2)$.
Then 
\begin{equation*}
|\CF_A(\lambda)|\geq C \quad \text{for $A\in(0,A_1)$,}
\end{equation*}
with a sufficiently small $A_1$ and 
a positive constant $C$ independent of $A$ and $\lambda$. 
Since \eqref{est:G-A:1} is valid for $\lambda\in \wht\Gamma_3$ when $A\in(0,1)$,
there holds
\begin{equation*}
|\CG_A(\lambda)|\leq C A^{3/4} \quad \text{for $A\in(0,1)$.}
\end{equation*}
It therefore holds that $|\CF_A(\lambda)|>|\CG_A(\lambda)|$ for $A\in(0,A_1)$ when $A_1$ is sufficiently small. 
This completes the proof of Lemma \ref{lem:roots-1}.
\end{proof}

By Lemma \ref{lem:roots-1} and Rouch\'e's theorem, we immediately have
\begin{prp}\label{prp:simple}
Let $A_1$ be the positive constant given in Lemma $\ref{lem:roots-1}$ and $A\in(0,A_1)$.
Let $K$ be the region enclosed by
$\wht \Gamma_1^+\cup\wht\Gamma_1^-\cup \wht \Gamma_2^+\cup\wht \Gamma_2^-\cup\wht \Gamma_3$.
Then $\CL_A(\lambda)$ has two zeros in $K$.
\end{prp}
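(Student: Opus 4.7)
The plan is a direct application of Rouché's theorem on the closed contour $\partial K=\wht\Gamma_1^+\cup\wht\Gamma_1^-\cup\wht\Gamma_2^+\cup\wht\Gamma_2^-\cup\wht\Gamma_3$. By construction $\CL_A(\lambda)=\CF_A(\lambda)+\CG_A(\lambda)$, with both summands entire in $\lambda$, and Lemma~\ref{lem:roots-1} already delivers the key inequality $|\CF_A(\lambda)|>|\CG_A(\lambda)|$ on $\partial K$ for $A\in(0,A_1)$. Rouché's theorem then yields that $\CL_A$ and $\CF_A$ have the same number of zeros in the open region $K$, counted with multiplicity.

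Since $\CF_A(\lambda)=(\lambda-\zeta_+)(\lambda-\zeta_-)$ is a polynomial of degree two in $\lambda$, its only zeros in $\BC$ are $\zeta_\pm$, so the remaining task is to verify that both zeros in fact lie inside $K$ (shrinking $A_1$ further if needed). From the definition of $\zeta_\pm$ in Theorem~\ref{thm:main3} one reads off
\begin{equation*}
\Re\zeta_\pm=-\sqrt{2}\alpha^{1/4}\beta A^{5/4},\qquad
\Im\zeta_\pm=\pm\bigl(\alpha^{1/2}A^{1/2}-\sqrt{2}\alpha^{1/4}\beta A^{5/4}\bigr),
\end{equation*}
so $|\zeta_\pm|\asymp\alpha^{1/2}A^{1/2}$ for $A$ small.

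I would then check the three pieces of $\partial K$ in turn. First, $|\zeta_\pm|\ll\lambda_1$ for $A$ small, placing $\zeta_\pm$ strictly inside the outer arc $\wht\Gamma_3$. Second, $|\Re\zeta_\pm|=O(A^{5/4})\gg O(A^2)$, so $\zeta_\pm$ lies well outside the tiny excluded region around $-(z_0/2)A^2$ bounded by the small quarter circles $\wht\Gamma_1^\pm$. Third, on the line $\wht\Gamma_2^\pm$ at height $|\Im\zeta_\pm|\asymp\alpha^{1/2}A^{1/2}$, the real part is of order $-\cot\theta_2\cdot\alpha^{1/2}A^{1/2}=-8\alpha^{1/2}A^{1/2}$ (since $\tan\theta_2=1/8$), which is far more negative than $\Re\zeta_\pm=-O(A^{5/4})$; thus $\zeta_\pm$ sits strictly to the right of $\wht\Gamma_2^\pm$. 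Combining these three observations gives $\zeta_\pm\in K$ for all sufficiently small $A$, and the proposition follows.

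The argument is essentially immediate once Lemma~\ref{lem:roots-1} is in hand, and the only step requiring any thought is the scale-comparison bookkeeping that confirms $\zeta_\pm\in K$. This goes through cleanly thanks to the strict hierarchy $A^2\ll A^{5/4}\ll A^{1/2}\ll 1$ as $A\to 0^+$, so no genuine obstacle arises beyond the already non-trivial Rouché-type inequality established in Lemma~\ref{lem:roots-1}.
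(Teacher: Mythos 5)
Your proof is correct and takes essentially the same route as the paper, which likewise deduces the proposition from Lemma \ref{lem:roots-1} via Rouch\'e's theorem in a single line; you usefully spell out the step the paper leaves implicit, namely the scale comparison $A^{2}\ll A^{5/4}\ll A^{1/2}\ll 1$ showing that both zeros $\zeta_\pm$ of $\CF_A$ actually lie in $K$. One small correction: $\CG_A$ is not entire, since it involves $B_\pm=\sqrt{(\rho_\pm/\mu_\pm)\lambda+A^2}$, which has a branch cut along $(-\infty,-z_0A^2]$; however, it is holomorphic on a neighbourhood of $\overline{K}$ because the contour is constructed precisely to avoid that cut (and $\Re(D_++D_-)>0$ there), which is all that Rouch\'e's theorem requires.
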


Recalling $\wht\Gamma_{\rm res}^\pm$ given in Theorem \ref{thm:main3},
we prove

\begin{lem}\label{lem:roots-2}
There exists a constant $A_2\in(0,1)$ such that 
$|\CF_A(\lambda)|>|\CG_A(\lambda)|$ for $A\in(0,A_2)$ and $\lambda\in\wht \Gamma_{\rm Res}^+\cup\wht \Gamma_{\rm Res}^-$.
\end{lem}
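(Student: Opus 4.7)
The strategy is again Rouché's theorem, now applied to the small circles $\wht\Gamma_{\rm Res}^\pm$ of radius $A^{3/2}$ centered at the approximate roots $\zeta_\pm$. By the complex-conjugate symmetry of the problem, it is enough to work on $\wht\Gamma_{\rm Res}^+$, and I would parameterize $\lambda=\zeta_++A^{3/2}e^{is}$ for $s\in[0,2\pi]$.

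The lower bound for $|\CF_A(\lambda)|$ is immediate: $|\lambda-\zeta_+|=A^{3/2}$ by construction, while $\zeta_+-\zeta_-=2i\alpha^{1/2}A^{1/2}+O(A^{5/4})$ yields $|\lambda-\zeta_-|\geq|\zeta_+-\zeta_-|-A^{3/2}\geq cA^{1/2}$ for $A$ small, so $|\CF_A(\lambda)|\geq cA^{2}$.

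The more delicate half is the upper bound for $|\CG_A(\lambda)|$, which must beat $A^{2}$. The crucial step is evaluation at the center: since $\CF_A(\zeta_+)=0$ one has $\CG_A(\zeta_+)=\CL_A(\zeta_+)$, and the explicit shape of $\zeta_\pm$ was engineered precisely so that the contributions at orders $A$ and $A^{7/4}$ vanish. Concretely, one computes $\zeta_+^2=-\alpha A+2\sqrt{2}\alpha^{3/4}\beta(1-i)A^{7/4}+O(A^{5/2})$; since $\zeta_+\sim A^{1/2}$ dominates $A^2$, one may expand $B_\pm=\sqrt{(\rho_\pm/\mu_\pm)\zeta_+}\bigl(1+O(A^{3/2})\bigr)$ and use $\sqrt{\zeta_+}=e^{i\pi/4}\alpha^{1/4}A^{1/4}\bigl(1+O(A^{3/4})\bigr)$ to get $D_\pm=\sqrt{\rho_\pm\mu_\pm}\,e^{i\pi/4}\alpha^{1/4}A^{1/4}\bigl(1+O(A^{3/4})\bigr)$, whence the middle term of $\CL_A(\zeta_+)$ reduces to $-2\sqrt{2}\alpha^{3/4}\beta(1-i)A^{7/4}+O(A^{5/2})$. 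Feeding these into the representation \eqref{dfn:com}, the two $A^{7/4}$ contributions cancel against $2\sqrt{2}\alpha^{3/4}\beta A^{7/4}$ and $-2\sqrt{2}\alpha^{1/4}\beta A^{5/4}\zeta_+$, and what remains is $\CG_A(\zeta_+)=O(A^{5/2})$.

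To pass from the center to the whole circle, I would control the variation of $\CG_A$ on the disk $|\lambda-\zeta_+|\leq A^{3/2}$. Differentiating \eqref{dfn:com} in $\lambda$ and applying Lemma \ref{fundlem:D} together with $|\lambda|\leq CA^{1/2}$ on the disk yields $|\CG_A'(\lambda)|\leq CA^{5/4}$, and hence $|\CG_A(\lambda)-\CG_A(\zeta_+)|\leq CA^{5/4}\cdot A^{3/2}=CA^{11/4}$. Therefore $|\CG_A(\lambda)|\leq CA^{5/2}$ on $\wht\Gamma_{\rm Res}^+$, which is strictly smaller than $cA^{2}$ once $A<A_2$ for some sufficiently small $A_2\in(0,1)$. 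The main obstacle is the twofold cancellation in $\CL_A(\zeta_+)$ at orders $A$ and $A^{7/4}$: it is precisely this cancellation that forces the specific form of $\zeta_\pm$, and carrying it out cleanly requires careful bookkeeping of products of $e^{i\pi/4}$, $(1\pm i)$ factors and the expansions of $B_\pm$; everything else is routine given Lemma \ref{fundlem:D}.
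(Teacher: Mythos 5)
Your proposal is correct and follows essentially the same route as the paper: both rest on the lower bound $|\CF_A(\lambda)|\geq cA^{2}$ on the circle of radius $A^{6/4}$ and on the cancellation of the $A^{7/4}$ contributions in $\CG_A$ coming from the expansions $B_\pm=\sqrt{\rho_\pm/\mu_\pm}\,e^{i\pi/4}\alpha^{1/4}A^{1/4}(1+O(A^{3/4}))$ and $D_\pm=\sqrt{\rho_\pm\mu_\pm}\,e^{i\pi/4}\alpha^{1/4}A^{1/4}+O(A)$, yielding $\CG_A=O(A^{10/4})$. The only cosmetic difference is that you evaluate $\CL_A(\zeta_+)=\CG_A(\zeta_+)$ at the center and propagate to the circle via a bound on $\CG_A'$, whereas the paper expands $\CG_A(\lambda)$ directly for $\lambda$ on $\wht\Gamma_{\rm Res}^+$; both give the same conclusion.
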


\begin{proof}
We consider $\lambda\in \wht \Gamma_{\rm Res}^+$ only.
Let $\lambda=\zeta_++A^{6/4}e^{is}$ $(0\leq s \leq 2\pi)$. It is clear that
\begin{equation*}
|\CF_A(\lambda)|\geq CA^{8/4} \quad  \text{for $A\in(0,A_2)$,}
\end{equation*}
with a sufficiently small $A_2$ and 
a positive constant $C$ independent of $A$ and $\lambda$.

Next, we estimate $|\CG_A(\lambda)|$ from above. It holds that
\begin{equation*}
B_\pm=\sqrt{\frac{\rho_\pm}{\mu_\pm}}e^{i(\pi/4)}\alpha^{1/4}A^{1/4}(1+O(A^{3/4})) \quad \text{as $A\to 0$,}
\end{equation*} 
which implies 
\begin{equation*}
D_\pm=\sqrt{\rho_\pm\mu_\pm}e^{i(\pi/4)}\alpha^{1/4}A^{1/4}+O(A) \quad \text{as $A\to 0$.}
\end{equation*}
Therefore,
\begin{equation*}
\frac{D_+D_-}{D_++D_-}=\frac{\sqrt{\rho_+\mu_+}\sqrt{\rho_-\mu_-}}{\sqrt{\rho_+\mu_+}+\sqrt{\rho_-\mu_-}}
e^{i(\pi/4)}\alpha^{1/4}A^{1/4}+O(A) \quad \text{as $A\to 0$.} 
\end{equation*}
From this, recalling the definition of $\beta$ given in \eqref{dfn:al-beta}, we have 
\begin{equation*}
\frac{4AD_+D_-}{(\rho_++\rho_-)(D_++D_-)}\lambda
= (-1+i)\cdot 2\sqrt{2}\alpha^{3/4}\beta A^{7/4}+O(A^{10/4}) \quad \text{as $A\to 0$.}
\end{equation*}
In addition,
\begin{equation*}
-2\sqrt{2}\alpha^{1/4}\beta A^{5/4}\lambda=-i\cdot  2\sqrt{2}\alpha^{3/4}\beta A^{7/4}  +O(A^{10/4}) \quad \text{as $A\to 0$.}
\end{equation*}
Combining these two formulas with \eqref{dfn:com} shows that
\begin{align*}
\CG_A(\lambda)=O(A^{10/4}) \quad \text{as $A\to 0$,}
\end{align*}
which implies $|\CF_A(\lambda)|>|\CG_A(\lambda)|$ for $A\in(0,A_2)$ when $A_2$ is sufficiently small.
This completes the proof of Lemma \ref{lem:roots-2}.
\end{proof}

We now obtain

\begin{prp}\label{prp:roots}
Let $A_1$ and $A_2$ be the positive constants given in Lemma $\ref{lem:roots-1}$
and Lemma $\ref{lem:roots-2}$, respectively.
Then there exists a constant $A_3\in(0,\min\{A_1,A_2\})$ such that 
the following assertions hold.
\begin{enumerate}[$(1)$]
\item
Let $A\in(0,A_3)$. Then for the $K$ in Proposition $\ref{prp:simple}$
\begin{align*}
&\wht \Gamma_{\rm Res}^+ \subset K\cap \{\lambda\in\BC\setminus\{0\} :\frac{\pi}{2}<\arg\lambda<\frac{3}{4}\pi\}, \\
&\wht \Gamma_{\rm Res}^- \subset K\cap\{\lambda\in\BC\setminus\{0\} :-\frac{3}{4}\pi<\arg\lambda<-\frac{\pi}{2}\}.
\end{align*}
\item
Let $A\in(0,A_3)$ and $K_\pm$ be the regions enclosed by $\wht\Gamma_{\rm Res}^\pm$, respectively.
Then $\CL_A(\lambda)$ has a simple zero denoted by $\lambda_+$ in $K_+$ 
and another simple zero denoted by $\lambda_-$ in $K_-$.
\item
Let $\CL_A'(\lambda)$ be the derivative of $\CL_A(\lambda)$ with respect to $\lambda$.
Then the inequalities
\begin{equation*}
|\CL_A'(\lambda_+)|\geq C A^{1/2}, \quad |\CL_A'(\lambda_-)|\geq CA^{1/2}
\end{equation*}
hold for $A\in(0,A_3)$ and a positive constant $C$ independent of $A$.
\end{enumerate}
\end{prp}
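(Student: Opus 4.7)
We treat the $+$ case; the $-$ case is symmetric.

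\emph{Part (1).} Parametrize $\lambda=\zeta_++A^{6/4}e^{is}$ on $\wht\Gamma_{\rm Res}^+$. Writing $\zeta_+=-\sqrt{2}\alpha^{1/4}\beta A^{5/4}+i(\alpha^{1/2}A^{1/2}-\sqrt{2}\alpha^{1/4}\beta A^{5/4})$ and using the hierarchy $A^{6/4}\ll A^{5/4}\ll A^{1/2}$ for $A$ small, we obtain $\Re\lambda\leq -\tfrac{1}{2}\sqrt{2}\alpha^{1/4}\beta A^{5/4}<0$ and $\Im\lambda\geq \tfrac{1}{2}\alpha^{1/2}A^{1/2}>0$. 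Hence $|\Re\lambda|/\Im\lambda=O(A^{3/4})\to 0$, which places $\arg\lambda$ strictly inside $(\pi/2,3\pi/4)$. For $\wht\Gamma_{\rm Res}^+\subset K$, it suffices to bound the distance from $\zeta_+$ to $\partial K$ below by $A^{6/4}$: since $|\zeta_+|\asymp A^{1/2}$, the pieces $\wht\Gamma_1^\pm$ (within $O(A^2)$ of the origin), $\wht\Gamma_3$ (of modulus $\lambda_1$), and $\wht\Gamma_2^\pm$ (lying along directions $e^{\pm i(\pi-\theta_2)}$, whose perpendicular distance from $\zeta_+$ is $\gtrsim A^{1/2}\cos\theta_2$) are each separated from $\zeta_+$ by $\gtrsim A^{1/2}\gg A^{6/4}$.

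\emph{Part (3).} Split $\CL_A'=\CF_A'+\CG_A'$. Since $\CF_A'(\lambda)=2\lambda-\zeta_+-\zeta_-$, for $\lambda_+\in K_+$ (so $|\lambda_+-\zeta_+|<A^{6/4}$) we have $\CF_A'(\lambda_+)=(\zeta_+-\zeta_-)+2(\lambda_+-\zeta_+)$; the first summand equals $2i\alpha^{1/2}A^{1/2}-2\sqrt{2}i\alpha^{1/4}\beta A^{5/4}$ of modulus $\approx 2\alpha^{1/2}A^{1/2}$, and the second is at most $2A^{6/4}\ll A^{1/2}$, so $|\CF_A'(\lambda_+)|\geq \alpha^{1/2}A^{1/2}$ for small $A$. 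Differentiating \eqref{dfn:com} in $\lambda$ gives
\[
\CG_A'(\lambda)=\frac{4A}{\rho_++\rho_-}\left(\frac{D_+D_-}{D_++D_-}+\lambda\,\frac{d}{d\lambda}\frac{D_+D_-}{D_++D_-}\right)-2\sqrt{2}\alpha^{1/4}\beta A^{5/4}.
\]
On $\wht\Gamma_{\rm Res}^+$ Lemma \ref{lem:fund-sym} yields $|B_\pm|,|D_\pm|=O(A^{1/4})$ and $|\lambda|=O(A^{1/2})$, while $dB_\pm/d\lambda=\rho_\pm/(2\mu_\pm B_\pm)=O(A^{-1/4})$. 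A short quotient-rule computation then gives $d/d\lambda(D_+D_-/(D_++D_-))=O(A^{-1/4})$, so multiplying by $4A$ and $\lambda$ produces $|\CG_A'(\lambda_+)|\leq CA^{5/4}$. Since $A^{5/4}\ll A^{1/2}$, we conclude $|\CL_A'(\lambda_+)|\geq \tfrac{1}{2}\alpha^{1/2}A^{1/2}$.

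\emph{Part (2).} Lemma \ref{lem:roots-2} and Rouch\'e applied to $\CL_A=\CF_A+\CG_A$ on $\wht\Gamma_{\rm Res}^+$ give exactly one zero $\lambda_+$ of $\CL_A$ in $K_+$ counted with multiplicity, because $\CF_A$ has the unique simple root $\zeta_+$ there. Part (1) guarantees $K_+\cap K_-=\emptyset$ for small $A$, so $\lambda_+\neq\lambda_-$, and simplicity of each follows from Part (3) via $\CL_A'(\lambda_\pm)\neq 0$. The main obstacle is the derivative estimate in Part (3): one must verify that the $O(A^{-1/4})$ blow-up from $dB_\pm/d\lambda$ is absorbed through the cancellation in the quotient $D_+D_-/(D_++D_-)$ and by the $A$-prefactor, so that $\CG_A'$ ends up at the scale $A^{5/4}$ strictly smaller than $\CF_A'\asymp A^{1/2}$; the remaining steps are routine applications of Rouch\'e and the symbol bounds from Lemma \ref{lem:fund-sym}.
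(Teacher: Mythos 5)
Your proof is correct and follows exactly the route the paper indicates but leaves to the reader: elementary geometry of $\zeta_\pm$ relative to $\partial K$ for (1), Rouch\'e via Lemma \ref{lem:roots-2} for (2), and the decomposition $\CL_A'=\CF_A'+\CG_A'$ with $|\CF_A'(\lambda_\pm)|\asymp A^{1/2}$ and $|\CG_A'(\lambda_\pm)|=O(A^{5/4})$ for (3). The only detail worth making explicit in the quotient-rule step is the lower bound $|D_++D_-|\gtrsim A^{1/4}$, which holds because $D_\pm=\sqrt{\rho_\pm\mu_\pm}\,e^{i\pi/4}\alpha^{1/4}A^{1/4}+O(A)$ share the same phase, so no cancellation occurs in the sum.
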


\begin{rmk}
The zeros $\lambda_\pm$ of $\CL_A(\lambda)$ satisfy
\begin{equation*}
\lambda_\pm=\zeta_\pm +O(A^{6/4}) = \pm i\alpha^{1/2} A^{1/2}-\sqrt{2}\alpha^{1/4}\beta (1\pm i )A^{5/4}+O(A^{6/4}) \quad \text{as $A\to 0$.}
\end{equation*}
When gravity is not taken into account, i.e. $\alpha=0$,
the asymptotics of the zeros of $\CL_A(\lambda)$ are obtained in \cite{PS2009}.
\end{rmk}

\begin{proof}[Proof of Proposition $\ref{prp:roots}$]
(1) The desired properties can be proved by an elementary calculation,
so that the detailed proof may be omitted.

(2) The result follows from Lemma \ref{lem:roots-2} and Rouch\'e's theorem immediately.

(3) Since $\CL_A'(\lambda_\pm)=\CF_A'(\lambda_\pm)+\CG_A'(\lambda_\pm)$,
one has the desired inequalities immediately by direct calculations together with \eqref{dfn:com}.
This completes the proof of Proposition \ref{prp:roots}.
\end{proof}

\subsection{Analysis of high frequency part}
Let us define 
\begin{equation}\label{cap:lam}
\Lambda(a,b)=\{\lambda\in\BC : \lambda = x+yi, \, -a\leq  x \leq 0, \, -b \leq y \leq b\} \quad (a,b\geq 0).
\end{equation}
We then have
\begin{prp}\label{prp:high}
Let $a,b>0$. Then there exists a sufficiently large positive number $A_{\rm high}=A_{\rm high}(a,b)$
such that the following assertions hold.
\begin{enumerate}[$(1)$]
\item
For any $A\geq A_{\rm high}$ and $\lambda\in\Lambda(a,b)$,
\begin{equation*}
C_1 A \leq \Re B_\pm \leq  |B_\pm| \leq C_2A, 
\end{equation*}
where $C_1$ and $C_2$ are positive constants independent of $A$ and $\lambda$.
\item
For any $A\geq A_{\rm high}$ and $\lambda\in\Lambda(a,b)$,
\begin{equation*}
|F(A,\lambda)|\geq CA^3, \quad  |L(A,\lambda)|\geq  C \sigma A^4,
\end{equation*}
where $C$ is a positive constant independent of $A$, $\lambda$, and $\sigma$.
\end{enumerate}
\end{prp}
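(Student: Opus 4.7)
The basic observation is that on the bounded set $\Lambda(a,b)$ the quantity $|\lambda|$ is uniformly bounded by $\sqrt{a^2+b^2}$, while $A \to \infty$, so every occurrence of $\lambda/A^2$ becomes arbitrarily small. The plan is therefore to perform an asymptotic expansion of $B_\pm$, $F(A,\lambda)$, and $L(A,\lambda)$ in powers of $A^{-1}$, with errors that are uniform for $\lambda \in \Lambda(a,b)$.

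For part (1), I would write
\begin{equation*}
B_\pm = A\,\sqrt{1 + \tfrac{\rho_\pm}{\mu_\pm}\,\tfrac{\lambda}{A^2}},
\end{equation*}
using the branch with positive real part. Choosing $A_{\rm high}$ so large that $(\rho_\pm/\mu_\pm)|\lambda|/A^2 \le 1/2$ for all $\lambda\in\Lambda(a,b)$, the square root is holomorphic in a neighborhood of $1$ and equals $1 + O(A^{-2})$ uniformly. This gives $B_\pm = A + O(A^{-1})$ and in particular $\Re B_\pm \ge A/2$ and $|B_\pm| \le 3A/2$ for $A$ large.

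For part (2), I would substitute the expansion $B_\pm = A + O(A^{-1})$ into the explicit formula \eqref{dfn:Lop-1} for $F(A,\lambda)$ and collect the leading $A^3$ contribution. A direct computation of the four groups of terms in $F$ at $B_\pm = A$ gives the coefficients $-(\mu_+-\mu_-)^2$, $3\mu_+^2-2\mu_+\mu_-+3\mu_-^2$, $(\mu_++\mu_-)^2+4\mu_+\mu_-$, and $(\mu_++\mu_-)^2$, which sum to $4(\mu_++\mu_-)^2 > 0$. Hence
\begin{equation*}
F(A,\lambda) = 4(\mu_++\mu_-)^2 A^3 + O(A^2),
\end{equation*}
uniformly in $\lambda\in\Lambda(a,b)$, and $|F(A,\lambda)| \ge C A^3$ follows by taking $A_{\rm high}$ even larger. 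For $L(A,\lambda)$ I would use Lemma \ref{ano-form-L}, or equivalently the formula $L = \lambda F + A(\omega+\sigma A^2)(D_++D_-)$. Because $D_\pm = \mu_\pm B_\pm + \mu_\mp A = (\mu_++\mu_-)A + O(A^{-1})$, we get $D_++D_- = 2(\mu_++\mu_-)A + O(A^{-1})$, so
\begin{equation*}
A(\omega+\sigma A^2)(D_++D_-) = 2\sigma(\mu_++\mu_-) A^4 + O(A^2),
\end{equation*}
while $|\lambda F| = O(A^3)$ since $|\lambda|$ is bounded on $\Lambda(a,b)$. Combining yields $L(A,\lambda) = 2\sigma(\mu_++\mu_-) A^4 + O(A^3)$, and $|L(A,\lambda)| \ge C\sigma A^4$ follows for $A \ge A_{\rm high}$ after one more enlargement of $A_{\rm high}$.

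The computations are all elementary; the only mild subtlety is making the error terms genuinely uniform in $\lambda\in\Lambda(a,b)$ and in showing that the error bound in the estimate for $L$ does not swallow the $\sigma$ in the main term. This is handled by keeping $\sigma$ explicit throughout and noting that the subleading contributions ($\lambda F$ and $A\omega(D_++D_-)$) are $O(A^3)$ with a constant independent of $\sigma$, so that dividing by $\sigma A^4$ gives an $O(A^{-1})$ remainder that can be made smaller than $1/2$ by choosing $A_{\rm high}$ large, the threshold depending on $a$, $b$ and $\sigma$.
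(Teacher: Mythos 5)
Your proposal is correct and follows essentially the same route as the paper: the uniform expansion $B_\pm = A + O(A^{-1})$ on the bounded set $\Lambda(a,b)$, the leading coefficient $4(\mu_++\mu_-)^2$ for $F$, and $D_++D_-=2(\mu_++\mu_-)A+O(A^{-1})$ giving $L=2(\mu_++\mu_-)\sigma A^4+O(A^3)$. The only differences are cosmetic: you prove part (1) directly where the paper cites \cite[Lemma 5.3]{SaS1}, and your closing remark that the threshold for $A_{\rm high}$ implicitly depends on the fixed constant $\sigma$ (so that the $O(A^3)$ term $\lambda F$ does not swallow the main term) is an honest point that the paper glosses over.
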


\begin{proof}
(1) See \cite[Lemma 5.3]{SaS1}.

(2) First, we estimate $|F(A,\lambda)|$ from below.
Since
\begin{equation}\label{asym-B}
B_\pm = A+O(\frac{1}{A}) \quad \text{as $A\to \infty$,}
\end{equation}
it holds that
\begin{equation*}
F(A,\lambda)=4(\mu_++\mu_-)^2A^3+O(A) \quad \text{as $A\to \infty$.}
\end{equation*}
This implies the desired inequality for $|F(A,\lambda)|$.

Next, we estimate $|L(A,\lambda)|$ from below.
Recall the formula of $\CL_A(\lambda)$ in Lemma \ref{ano-form-L}.
The asymptotics \eqref{asym-B} gives
\begin{equation*}
D_\pm=(\mu_++\mu_-)A+O(\frac{1}{A}) \quad \text{as $A\to \infty$,}
\end{equation*}
which implies
\begin{equation*}
\CL_A(\lambda)=\wtd\sigma A^3+O(A^2) \quad \text{as $A\to \infty$.}
\end{equation*}
Thus, from the formula of $L(A,\lambda)$ in Lemma \ref{ano-form-L},
we see that
\begin{equation*}
L(A,\lambda)=2(\mu_++\mu_-)\sigma A^4+O(A^3) \quad \text{as $A\to \infty$.}
\end{equation*}
This yields the desired inequality of $|L(A,\lambda)|$,
which completes the proof of Proposition \ref{prp:high}.
\end{proof}

Next, we consider $A\in[M_1,M_2]$ for $M_2>M_1>0$.

\begin{prp}\label{prp:mid}
Let $b>0$ and $M_2>M_1>0$. Then there exist $a_0\in(0,1)$
such that the following assertions hold.
\begin{enumerate}[$(1)$]
\item
For any $A\in[M_1, M_2]$ and $\lambda\in\Lambda(a_0,b)$, 
\begin{equation*}
C_1 \leq \Re B_\pm \leq  |B_\pm| \leq C_2, 
\end{equation*}
where $C_1$ and $C_2$ are positive constants independent of $A$ and $\lambda$. 
\item
For any $A\in [M_1,M_2]$ and $\lambda\in\Lambda(a_0,b)$,
\begin{equation*}
|F(A,\lambda)|\geq C, \quad  |L(A,\lambda)|\geq  C,
\end{equation*}
where $C$ is a positive constant independent of $A$ and $\lambda$.
\end{enumerate}
\end{prp}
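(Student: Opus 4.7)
The plan is to reduce everything to a continuity plus compactness argument starting from the known positivity results on the imaginary axis, which is the boundary case $a=0$. Let $\Lambda_0(b) := \{iy : |y|\leq b\}$, and note that $\Lambda_0(b) \subset \Sigma_\varepsilon \cup \{0\}$ for any $\varepsilon\in(0,\pi/2)$, and also $\Lambda_0(b) \subset \{\lambda\in\BC:\Re\lambda\geq 0\}$, so both Lemma \ref{lem:fund-sym} and Lemma \ref{lem:nonzero}(2) apply on $\Lambda_0(b)$. The compact parameter set we work with is $[M_1,M_2]\times\Lambda_0(b)$.

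For part (1), fix $a_0\in(0,z_0 M_1^2/2)$ where $z_0=\min\{\mu_+/\rho_+,\mu_-/\rho_-\}$. For $(A,\lambda)\in[M_1,M_2]\times\Lambda(a_0,b)$ one has $(\rho_\pm/\mu_\pm)\lambda+A^2\in\BC\setminus(-\infty,0]$, since when $\Im\lambda=0$ we have $(\rho_\pm/\mu_\pm)\Re\lambda+A^2 \geq -a_0/z_0 + M_1^2 > 0$. Hence $B_\pm$ is a well-defined continuous function on the compact set $[M_1,M_2]\times\Lambda(a_0,b)$, with $\Re B_\pm>0$ throughout by the choice of branch. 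By continuity, compactness, and nonvanishing, the desired two-sided bounds $C_1\leq \Re B_\pm\leq |B_\pm|\leq C_2$ follow.

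For part (2), the lower bound on $|F(A,\lambda)|$ is immediate on $\Lambda_0(b)$ from Lemma \ref{lem:fund-sym}(1): $|F(A,\lambda)|\geq C_1 A^3 \geq C_1 M_1^3$. Since $F(A,\lambda)$ is continuous in $(A,\lambda)$ in a neighborhood of $[M_1,M_2]\times\Lambda_0(b)$ (as $B_\pm$ is, by the previous step with $a_0$ possibly shrunk), uniform continuity on the compact set extends the lower bound to $\Lambda(a_0,b)$, at worst after decreasing $a_0$. For $L(A,\lambda)$, use the factorization $L(A,\lambda)=(\rho_++\rho_-)(D_++D_-)\CL_A(\lambda)$ from Lemma \ref{ano-form-L}. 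By Lemma \ref{lem:nonzero}(2), $L(A,\lambda)\neq 0$ on $[M_1,M_2]\times\Lambda_0(b)$. By continuity of $L$ in $(A,\lambda)$ and compactness, there is $c>0$ with $|L(A,\lambda)|\geq 2c$ on this compact set; by uniform continuity, $|L(A,\lambda)|\geq c$ on $[M_1,M_2]\times\Lambda(a_0,b)$ after a final shrinking of $a_0$.

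The only subtle point is the choice of $a_0$: one must simultaneously keep $(\rho_\pm/\mu_\pm)\lambda+A^2$ off the branch cut of the square root (so that $B_\pm$, and hence $F$ and $L$, stay continuous) and keep the perturbation from $\Lambda_0(b)$ small enough that the uniform positivity obtained on the imaginary segment persists. Both requirements are satisfied by taking $a_0$ strictly smaller than $z_0 M_1^2$ and smaller than the modulus of continuity threshold derived from the compact-set estimate, and the constants $C,C_1,C_2$ then depend only on $b$, $M_1$, $M_2$, $\rho_\pm$, $\mu_\pm$, $\sigma$, and $\omega$, as required.
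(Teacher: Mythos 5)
Your proposal is correct and follows essentially the same route as the paper: nonvanishing of $F$ and $L$ on the closed right half-plane (via Lemmas \ref{lem:fund-sym} and \ref{lem:nonzero}(2)), a positive minimum on the compact set $[M_1,M_2]\times\Lambda(0,b)$, and then uniform continuity to push the bound a small distance $a_0$ to the left. Your additional care about keeping $(\rho_\pm/\mu_\pm)\lambda+A^2$ off the branch cut, and the compactness argument for part (1), merely fill in details the paper leaves as "elementary calculation."
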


\begin{proof}
(1) The desired inequalities can be proved by an elementary calculation,
so that the detailed proof may be omitted.

(2) By Lemma \ref{lem:fund-sym}, we see that $F(A,\lambda)\neq 0$ for $A>0$ and  $\Re\lambda\geq 0$.
Then the continuity of $F(A,\lambda)$ and the compactness of $[M_1,M_2]\times \Lambda(0,b)$ 
implies there exists an $m:=\min\{|F(A,\lambda)| : A\in[M_1,M_2], \lambda \in\Lambda(0,b)\}>0$.
Choosing a sufficiently small $a_1\in(0,1)$,
we see that $F(A,\lambda)$ is uniform continuous on $[M_1,M_2]\times \Lambda(a_1,b)$.
Thus there exists an $a_0\in(0,a_1]$ such that 
\begin{equation*}
|F(A,\lambda)|\geq \frac{m}{2} \quad \text{for $(A,\lambda)\in[M_1,M_2]\times\Lambda(a_0,b)$,}
\end{equation*}
which implies the desired inequality of $|F(A,\lambda)|$ holds.
Analogously, the inequality for $|L(A,\lambda)|$ follows from Lemma \ref{lem:nonzero} (2).
This completes the proof of Proposition \ref{prp:mid}.
\end{proof}

\section{Time-decay estimates for low frequency part}
This section proves Theorem \ref{thm:main3}.
Suppose $\rho_->\rho_+>0$ throughout this section.

Let us denote the points of intersection between
$\lambda=se^{\pm i (3\pi/4)}$ $(s\geq 0)$ and $\Gamma_0^\pm$ given in \eqref{dfn:gamma_0}
by $z_3^\pm$. Then we define
\begin{align*}
\wht\Gamma_4^{\pm}
&=\{\lambda\in\BC : \lambda=z_1^\pm(1-s)+z_3^\pm s, 0\leq s \leq 1\}, \\
\wht\Gamma_5^\pm
&=\{\lambda\in\BC :  \lambda=z_3^\pm + se^{\pm i (\pi-\theta_1)}, s\geq 0\},
\end{align*}
where $z_1^\pm$ are given in \eqref{z1-z2}.
Let $A_3$ be the positive constant given in Proposition \ref{prp:roots} and let $A_0\in(0,A_3)$.
By Propositions \ref{prp:simple} and \ref{prp:roots} together with Cauchy's integral theorem,
we see for the operators of \eqref{eq:sol-main} that
\begin{align}
\CS_{A_0}^1(t)d
&=\sum_{\Fa\in\{+,-\}}\sum_{j\in\{{\rm Res},1,4,5\}}\CS_{A_0}^{1}(t;\Gamma_j^\Fa)d, \quad
\CS_{A_0}^{1}(t;\Gamma_j^\Fa)d
:= \CF_{\xi'}^{-1}[\wht \CS_{A_0}^{\,1}(t;\wht \Gamma_j^\Fa)d ](x'), \notag \\
\CS_{A_0}^2(t)\Bf
&=\sum_{\Fa\in\{+,-\}}\sum_{j\in\{{\rm Res},1,4,5\}} \CS_{A_0}^{2}(t;\Gamma_j^\Fa)\Bf, \quad
\CS_{A_0}^{2}(t;\Gamma_j^\Fa)\Bf
:=\CF_{\xi'}^{-1}[\wht \CS_{A_0}^{\,2}(t;\wht \Gamma_j^\Fa)\Bf](x'), \label{dfn:u-h}
\end{align}
where $\CS\in\{\CH,\CU\}$.
Here we have used $\wht \Gamma_1^\pm $ in \eqref{gamma1-def}
and the symbols $\wht \CS_{A_0}^{\,1}(t;\wht \Gamma_j^\Fa)d$, $\wht \CS_{A_0}^{\,2}(t;\wht \Gamma_j^\Fa)\Bf$
introduced in Subsection \ref{subsec:3-3}.

At this point, we introduce several lemmas used in the following argumentation.
From \cite[Lemmas 4.3 and 4.4]{SaS1}, we have the following two lemmas.

\begin{lem}\label{lem:fund1}
Let $c_1,c_2,d \geq 0$ and $\nu_1,\nu_2>0$.
Then there exists a positive constant $C$ such that for any $\tau\geq 0$, $a\geq 0$, and $Z\geq 0$
\begin{equation*}
e^{-c_1 (Z^{\nu_1}) \tau} Z^d e^{-c_2 (Z^{\nu_2}) a}\leq C(\tau^{d/\nu_1}+a^{d/\nu_2})^{-1}.
\end{equation*}
\end{lem}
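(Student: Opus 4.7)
The plan is to prove the bound by one-variable maximization applied separately to each of the two exponential factors, then combined via the symmetric inequality $\min\{X,Y\}\leq 2XY/(X+Y)$. The key elementary fact I would invoke first is that, for parameters $c,s>0$ and $d,\nu>0$, the function $Z\mapsto Z^d e^{-cZ^\nu s}$ attains its maximum at $Z^\nu = d/(c\nu s)$, giving
\[
\sup_{Z\geq 0} Z^d e^{-c Z^\nu s} \;\leq\; K(c,d,\nu)\, s^{-d/\nu}.
\]

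Next, using $e^{-c_2 Z^{\nu_2} a}\leq 1$, I would estimate
\[
e^{-c_1 Z^{\nu_1}\tau}\, Z^d\, e^{-c_2 Z^{\nu_2} a}
\;\leq\; Z^d e^{-c_1 Z^{\nu_1}\tau} \;\leq\; K_1\, \tau^{-d/\nu_1},
\]
and symmetrically the same LHS is bounded by $K_2\, a^{-d/\nu_2}$. Setting $K=\max\{K_1,K_2\}$, the LHS is dominated by
\[
K\,\min\{\tau^{-d/\nu_1},\, a^{-d/\nu_2}\}
\;=\; \frac{K}{\max\{\tau^{d/\nu_1},\, a^{d/\nu_2}\}}
\;\leq\; \frac{2K}{\tau^{d/\nu_1}+a^{d/\nu_2}},
\]
which is the claimed bound with $C=2K$.

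Finally, I would dispose of the trivial boundary cases: if $d=0$ then LHS $\leq 1$ and the inequality holds for any $C\geq 2$; if $\tau=a=0$ the right-hand side is naturally interpreted as $+\infty$ so there is nothing to prove. I do not anticipate any genuine obstacle here, since the only mildly delicate point is the passage $\min\{\tau^{-d/\nu_1},a^{-d/\nu_2}\}\leq 2(\tau^{d/\nu_1}+a^{d/\nu_2})^{-1}$, which is the elementary identity $1/\max\{X,Y\}\leq 2/(X+Y)$ for nonnegative $X,Y$ (not both zero). The lemma is a purely scalar calibration, and in the sequel it will be used to convert pointwise exponential-times-polynomial bounds on symbols into the algebraic time-decay factors $(\tau+a)^{-\gamma}$ that drive the $L_p$-$L_q$ estimates.
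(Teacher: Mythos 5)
Your proof is correct. Note that the paper does not actually prove this lemma: it is imported directly from \cite[Lemmas 4.3 and 4.4]{SaS1}, so your self-contained argument is, if anything, more complete than what the paper offers. The three ingredients --- the one-variable maximization $\sup_{Z\geq 0}Z^d e^{-cZ^\nu s}\leq K s^{-d/\nu}$ (equivalently $x^{d/\nu}e^{-cx}\leq K$ with $x=Z^\nu s$), discarding one exponential factor at a time to get the two separate bounds, and the passage $\min\{X^{-1},Y^{-1}\}=1/\max\{X,Y\}\leq 2/(X+Y)$ --- are exactly the standard route for this kind of calibration, and your handling of the boundary cases $d=0$ and $\tau=a=0$ is fine. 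One remark worth recording: your maximization step requires $c_1,c_2>0$, whereas the lemma is stated with $c_1,c_2\geq 0$. This is not a defect of your proof but of the statement: for $d>0$ the inequality is actually false if, say, $c_1=0$ (the left-hand side is then independent of $\tau$ while the right-hand side tends to $0$ as $\tau\to\infty$), so strict positivity of $c_1,c_2$ is the intended hypothesis whenever $d>0$, and that is precisely the regime in which the lemma is invoked later in the paper.
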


\begin{lem}\label{lem:fund2}
Let $1\leq p,q\leq \infty$ and $r$ be the dual exponent of $p$.
Suppose that $a>0$, $b_1>0$, and $b_2>0$.
\begin{enumerate}[$(1)$]
\item
For $f\in L_p(0,\infty)$, $x_N>0$, and $\tau>0$, set
\begin{equation*}
I(x_N,\tau)=\int_0^\infty \frac{f(y_N)}{\tau^a+(x_N)^{b_1}+(y_N)^{b_2}}\,dy_N.
\end{equation*}
Then there exists a positive constant $C$, independent of $f$, such that for any $\tau>0$
\begin{equation*}
\|I(\cdot,\tau)\|_{L_q(0,\infty)}\leq C\tau^{-a\left(1-\frac{1}{b_1q}-\frac{1}{b_2r}\right)}\|f\|_{L_p(0,\infty)},
\end{equation*}
provided that $b_1q>1$ and $b_2r(1-1/(b_1 q))>1$.
\item
For $f\in L_p(0,\infty)$ and $\tau>0$, set
\begin{equation*}
J(\tau)=\int_0^\infty \frac{f(y_N)}{\tau^a+(y_N)^{b_2}}\,dy_N.
\end{equation*}
Then there exists a positive constant $C$, independent of $f$, such that for any $\tau>0$
\begin{equation*}
|J(\tau)|\leq C \tau^{-a\left(1-\frac{1}{b_2r}\right)}\|f\|_{L_p(0,\infty)},
\end{equation*}
provided that $b_2r>1$.
\end{enumerate}
\end{lem}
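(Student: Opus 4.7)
The plan is to reduce both parts to H\"older's inequality in the $y_N$-variable, followed by an explicit rescaling that extracts the $\tau$-dependence and leaves an absolute constant.

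For part (2), I would apply H\"older's inequality with conjugate exponents $p$ and $r$ to obtain
\begin{equation*}
|J(\tau)|\leq \|f\|_{L_p(0,\infty)}\left(\int_0^\infty\frac{dy_N}{(\tau^a+y_N^{b_2})^r}\right)^{1/r}.
\end{equation*}
The substitution $y_N=\tau^{a/b_2}s$ factors $\tau^a+y_N^{b_2}=\tau^a(1+s^{b_2})$ and reduces the inner integral to $\tau^{a/b_2-ar}\int_0^\infty(1+s^{b_2})^{-r}\,ds$. The $s$-integral converges precisely when $b_2r>1$, and taking the $r$-th root produces the claimed factor $\tau^{-a(1-1/(b_2r))}$.

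For part (1), I would apply the same H\"older bound in $y_N$ with $T=\tau^a+x_N^{b_1}$ in place of $\tau^a$, which by the same scaling $y_N=T^{1/b_2}s$ gives
\begin{equation*}
|I(x_N,\tau)|\leq C\|f\|_{L_p(0,\infty)}(\tau^a+x_N^{b_1})^{-(1-1/(b_2r))}.
\end{equation*}
Next I would raise to the $q$-th power, integrate in $x_N$, and substitute $x_N=\tau^{a/b_1}t$ to separate $\tau$ from the remaining integral $\int_0^\infty(1+t^{b_1})^{-q(1-1/(b_2r))}\,dt$. Taking the $q$-th root collapses the exponents to $a/(b_1q)-a(1-1/(b_2r))=-a(1-1/(b_1q)-1/(b_2r))$, matching the statement.

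The only genuine thing to verify is that the hypotheses $b_1q>1$ and $b_2r(1-1/(b_1q))>1$ are precisely what the two convergence steps require: the H\"older estimate demands $b_2r>1$ (so that the $y_N$-kernel lies in $L_r$), and the $x_N$-integral at infinity converges iff $b_1q(1-1/(b_2r))>1$. A short algebraic manipulation shows that this latter inequality is equivalent to $b_2r(1-1/(b_1q))>1$ (both reduce to the symmetric condition $b_1qb_2r>b_1q+b_2r$), and in combination with $b_1q>1$ it forces $b_2r>1$, so both convergence requirements are covered. There is no substantive obstacle; the argument is essentially a two-stage rescaling of explicit kernels, and the main bookkeeping is only in matching the exponent conditions to those stated in the lemma.
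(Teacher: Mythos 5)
Your proof is correct, and it is essentially the argument one expects here: the paper itself gives no proof of this lemma (it is imported from \cite[Lemmas 4.3 and 4.4]{SaS1}), and the H\"older-plus-rescaling derivation you give is the standard route. Your exponent bookkeeping checks out — the stated hypotheses $b_1q>1$ and $b_2r(1-1/(b_1q))>1$ are indeed equivalent to the two convergence conditions $b_2r>1$ and $b_1q(1-1/(b_2r))>1$ that your two integrals require, both reducing to $b_1qb_2r>b_1q+b_2r$; the only cosmetic point worth adding is that for $p=1$ (so $r=\infty$) or $q=\infty$ the corresponding integral must be read as a supremum, in which case the bounds degenerate to the stated exponents with $1/(b_2r)=0$ or $1/(b_1q)=0$.
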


Next, we introduce time-decay estimates 
arising in the study of an evolution equation with the fractional Laplacian.

\begin{lem}\label{lem:Lp-Lq}
Let $1\leq p \leq q \leq \infty$, $\theta>0$, and $\nu>0$. 
Then the following assertions hold.
\begin{enumerate}[$(1)$]
\item
For any $\tau>0$ and $\varphi\in L_p(\BR^{N-1})$
\begin{equation*}
\|\CF_{\xi'}^{-1}[e^{- \nu  \tau |\xi'|^\theta}\wht \varphi(\xi')]\|_{L_q(\BR^{N-1})}
\leq C \tau ^ {-\frac{N-1}{\theta}\left(\frac{1}{p}-\frac{1}{q}\right)}\|\varphi\|_{L_p(\BR^{N-1})},
\end{equation*}
with a positive constant $C$ independent of $\tau$ and $\varphi$.
\item
If it is assumed that $1\leq p \leq 2$ additionally, then for any $\tau>0$ and $\varphi\in L_p(\BR^{N-1})$
\begin{equation*}
\|e^{-\nu\tau |\xi'|^\theta}\wht \varphi\|_{L_2(\BR^{N-1})}
\leq C\tau^{-\frac{N-1}{\theta}\left(\frac{1}{p}-\frac{1}{2}\right)},
\end{equation*}
with a positive constant $C$ independent of $\tau$ and $\varphi$.
\end{enumerate}
\end{lem}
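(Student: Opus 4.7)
The plan is to recognize both estimates as standard Fourier-multiplier bounds for the semigroup generated by the fractional Laplacian $(-\Delta')^{\theta/2}$, and to derive them by combining a parabolic-type scaling with Young's convolution inequality (for part (1)) and with Hausdorff--Young plus Hölder (for part (2)).

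For part (1), I would write the operator as a convolution against the kernel
\[
K_\tau(x')=\CF_{\xi'}^{-1}\bigl[e^{-\nu\tau|\xi'|^\theta}\bigr](x'),
\]
so that the object on the left-hand side equals $K_\tau \ast \varphi$. The change of variables $\xi'=(\nu\tau)^{-1/\theta}\eta'$ gives the scaling identity
\[
K_\tau(x')=(\nu\tau)^{-\frac{N-1}{\theta}}K_1\bigl((\nu\tau)^{-1/\theta}x'\bigr),\qquad K_1(x'):=\CF_{\xi'}^{-1}\bigl[e^{-|\eta'|^\theta}\bigr](x'),
\]
and hence $\|K_\tau\|_{L_r(\BR^{N-1})}=(\nu\tau)^{-\frac{N-1}{\theta}(1-1/r)}\|K_1\|_{L_r(\BR^{N-1})}$. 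Choosing $r\in[1,\infty]$ by the relation $1/r=1-(1/p-1/q)$ and applying Young's convolution inequality yields the stated bound, provided one knows that $\|K_1\|_{L_r(\BR^{N-1})}<\infty$ for every $r\in[1,\infty]$.

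For part (2), I would use Hausdorff--Young, which for $1\le p\le 2$ gives $\|\wht\varphi\|_{L_{p'}(\BR^{N-1})}\le c\,\|\varphi\|_{L_p(\BR^{N-1})}$ with $p'=p/(p-1)$, followed by Hölder's inequality with exponents determined by $1/2=1/s+1/p'$, i.e.\ $1/s=1/p-1/2$:
\[
\bigl\|e^{-\nu\tau|\xi'|^\theta}\wht\varphi\bigr\|_{L_2(\BR^{N-1})}\le \bigl\|e^{-\nu\tau|\xi'|^\theta}\bigr\|_{L_s(\BR^{N-1})}\|\wht\varphi\|_{L_{p'}(\BR^{N-1})}.
\]
The same scaling $\xi'=(\nu\tau)^{-1/\theta}\eta'$ gives $\bigl\|e^{-\nu\tau|\xi'|^\theta}\bigr\|_{L_s(\BR^{N-1})}=c_s(\nu\tau)^{-\frac{N-1}{s\theta}}=c_s(\nu\tau)^{-\frac{N-1}{\theta}(1/p-1/2)}$, which is exactly the claimed decay rate.

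The only nontrivial point, and what I regard as the main obstacle, is justifying $\|K_1\|_{L_r(\BR^{N-1})}<\infty$ for every $r\in[1,\infty]$ in part (1). Since $e^{-|\eta'|^\theta}\in L_1\cap L_\infty$, the inverse Fourier transform is continuous and bounded by $(2\pi)^{-(N-1)}\|e^{-|\cdot|^\theta}\|_{L_1}$, giving the $L_\infty$ bound. For the $L_1$ bound, one invokes the fact that for the exponents $\theta\in(0,2]$ relevant to this paper, $e^{-|\eta'|^\theta}$ is the characteristic function of a symmetric $\theta$-stable density, so $K_1\ge 0$ and $\|K_1\|_{L_1}=\wht K_1(0)=1$; intermediate $r$ follows by the log-convexity of $L_r$ norms. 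With this fact in hand, both parts of the lemma reduce to routine Fourier-analytic manipulations.
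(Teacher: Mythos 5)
Your argument is correct and, unlike the paper, is self-contained: the paper disposes of part (1) by citing the literature (\cite{MYZ08}, \cite{YS16}) and obtains part (2) from part (1) with $q=2$ via Parseval's identity, whereas you prove (1) directly by the scaling $K_\tau(x')=(\nu\tau)^{-(N-1)/\theta}K_1((\nu\tau)^{-1/\theta}x')$ plus Young's inequality, and (2) independently by Hausdorff--Young and H\"older. Both routes for (2) are equally short; your version has the mild advantage of not passing through the kernel at all, and it also makes visible the factor $\|\varphi\|_{L_p(\BR^{N-1})}$ that is evidently missing from the right-hand side of the statement of (2).

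The one point to flag is the step you yourself identify as the main obstacle, $\|K_1\|_{L_1(\BR^{N-1})}<\infty$. Your justification via Bochner's theorem (that $e^{-|\eta'|^\theta}$ is the characteristic function of a symmetric $\theta$-stable law, so $K_1\ge 0$) is valid only for $\theta\in(0,2]$; for $\theta>2$ the function $e^{-|\eta'|^\theta}$ is not positive definite and $K_1$ changes sign. Since every application in the paper uses $\theta=5/4$ or $\theta=2$, this restriction is harmless in context, but the lemma is stated for arbitrary $\theta>0$. To cover the general case one needs a pointwise decay bound such as $|K_1(x')|\le C(1+|x'|)^{-(N-1+\min\{\theta,1\})}$, which follows from the derivative estimates $|\pd_{\xi'}^{\gamma}e^{-|\xi'|^\theta}|\le C_\gamma|\xi'|^{\theta-|\gamma|}e^{-|\xi'|^\theta/2}$ together with Lemma \ref{lem:SS01} (this is essentially what the cited references do); with that in hand your Young-inequality argument goes through verbatim for all $\theta>0$.
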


\begin{proof}
(1) See e.g. \cite[Lemma 3.1]{MYZ08} and \cite[Lemma 2.5]{YS16}.

(2) The desired estimate follows from (1) and Parseval's identity immediately, so that the detailed proof may be omitted.
\end{proof}

Let $L_p(\BR^n,X)$ be the $X$-valued Lebesgue spaces on $\BR^n$, $n\in\BN$, for $1\leq p \leq \infty$.
The following lemma is proved in \cite[Theorem 2.3]{SS01}.

\begin{lem}\label{lem:SS01}
Let $X$ be a Banach space and $\|\cdot\|_X$ its norm.
Suppose that $L$ and $n$ be a non-negative integer and positive integer, respectively.
Let $0<\sigma\leq 1$ and $s=L+\sigma-n$.
Let $f(\xi)$ be a $C^\infty$-function on $\BR^n\setminus\{0\}$ with value $X$ and
 satisfy the following two conditions:
\begin{enumerate}[$(1)$]
\item
$\pd_\xi^\gamma f \in L_1(\BR^n,X)$ for any multi-index $\gamma\in\BN_0^n$ with $|\gamma|\leq L$.
\item
For any multi-index $\gamma\in\BN_0^n$, there exists a positive constant $M_\gamma$ such that
\begin{equation*}
\|\pd_\xi^{\gamma} f(\xi)\|_X\leq M_\gamma|\xi|^{s-|\delta|}\quad(\xi\in\BR^n\setminus\{0\}).
\end{equation*}
\end{enumerate}
Then there exists a positive constant $C_{n,s}$ such that
\begin{equation*}
\|\CF_{\xi}^{-1}[f](x)\|_X\leq C_{n,s}\left(\max_{|\gamma|\leq L+2}M_\gamma\right)|x|^{-(n+s)}
\quad(x\in\BR^n\setminus\{0\}).
\end{equation*}
\end{lem}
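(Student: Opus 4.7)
The plan is to prove this by scaling to the case $|x|=1$ and splitting the Fourier integral into low- and high-frequency parts. By the rotational invariance of the estimate, I may rotate coordinates so that $x=|x|\Be_1$. The change of variables $\eta=|x|\xi$ then yields
\begin{equation*}
\CF_{\xi}^{-1}[f](x) = (2\pi)^{-n}|x|^{-n}\int_{\BR^n} e^{i\eta_1}h(\eta)\,d\eta,\qquad h(\eta):=f(\eta/|x|),
\end{equation*}
and the chain rule together with condition (2) gives
\begin{equation*}
\|\pd_\eta^\gamma h(\eta)\|_X \leq M_\gamma\,|x|^{-s}\,|\eta|^{s-|\gamma|}.
\end{equation*}
Hence it suffices to bound the $\eta$-integral by $C\max_{|\gamma|\leq L+2}M_\gamma\cdot|x|^{-s}$; combined with the prefactor $|x|^{-n}$, this produces the desired factor $|x|^{-(n+s)}$.

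For the $\eta$-integral I fix a smooth cutoff $\phi\in C_0^\infty(\BR^n)$ with $\phi\equiv 1$ on $|\eta|\leq 1$ and $\phi\equiv 0$ on $|\eta|\geq 2$, and split
\begin{equation*}
\int_{\BR^n} e^{i\eta_1}h(\eta)\,d\eta
= \int_{\BR^n} e^{i\eta_1}\phi(\eta)h(\eta)\,d\eta + \int_{\BR^n} e^{i\eta_1}(1-\phi(\eta))h(\eta)\,d\eta
=: I_{\rm low}+I_{\rm high}.
\end{equation*}
For $I_{\rm low}$, the pointwise bound $\|h(\eta)\|_X\leq M_0|x|^{-s}|\eta|^s$ together with $s+n=L+\sigma>0$ gives $\|I_{\rm low}\|_X\leq CM_0|x|^{-s}$ by direct integration over $|\eta|\le 2$.

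For $I_{\rm high}$, I integrate by parts $L+2$ times in $\eta_1$ via $e^{i\eta_1}=i^{-(L+2)}\pd_{\eta_1}^{L+2}e^{i\eta_1}$. Boundary terms at infinity vanish because, by the Leibniz expansion, $\pd_{\eta_1}^{L+2}[(1-\phi)h]$ is smooth, supported in $|\eta|\geq 1$, and controlled on that set by $C(\max_{|\gamma|\leq L+2}M_\gamma)|x|^{-s}|\eta|^{s-(L+2)}$. The key exponent check is that $s-(L+2)+n-1=\sigma-3<0$ since $\sigma\leq 1$, so this bound is integrable over $|\eta|\geq 1$; the derivatives-of-$(1-\phi)$ cross-terms are supported in the compact annulus $1\leq|\eta|\leq 2$, where they are bounded by the same constant times $|x|^{-s}$. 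Thus $\|I_{\rm high}\|_X\leq C(\max_{|\gamma|\leq L+2}M_\gamma)|x|^{-s}$, and combining with the prefactor $|x|^{-n}$ finishes the proof.

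The main obstacle is a matter of careful exponent book-keeping rather than deep analysis: one has to see that $L+2$ integrations by parts is the precise number required — enough to make the tail integral converge (which is why the hypothesis $\sigma\leq 1$ enters) and no more than is needed to keep the constant in the final estimate of the form $\max_{|\gamma|\leq L+2}M_\gamma$. Condition (1) then plays an auxiliary but essential role: the $L_1$-integrability of $f$ and its derivatives up to order $L$ guarantees that $\CF_\xi^{-1}[f]$ is a well-defined continuous $X$-valued function and justifies the formal integration-by-parts manipulations by a standard truncation-and-limit argument.
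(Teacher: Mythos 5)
The paper does not actually prove this lemma; it quotes it from Shibata--Shimizu [SS01, Theorem~2.3]. Your strategy — rotate, rescale to $|x|=1$, split with a cutoff, integrate by parts $L+2$ times — is the standard argument behind that result, and your low-frequency estimate and exponent bookkeeping are correct (for the tail one needs $s-(L+2)+n<0$, i.e.\ $\sigma<2$; your written criterion ``$\sigma-3<0$'' should be ``$\sigma-3<-1$'', but both hold).

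The genuine gap is in the justification of the $(L+2)$-fold integration by parts in $I_{\rm high}$. When $L+\sigma>n$, i.e.\ $s>0$, hypothesis (2) permits $h$ and its first several $\eta_1$-derivatives to \emph{grow} like $|\eta|^{s-j}$ at infinity, so the boundary terms $e^{i\eta_1}\pd_{\eta_1}^{j}[(1-\phi)h]$ at $\eta_1=\pm\infty$ for $j\le L+1$ — in particular for small $j$, where $s-j>0$ — are not controlled by your pointwise bounds, and the intermediate integrals need not converge. You only verify decay of the final $(L+2)$-th derivative, which is not where the problem sits, and the closing appeal to condition (1) via ``a standard truncation-and-limit argument'' is exactly the step that needs proof: $L_1$-integrability of $\pd_\xi^\gamma f$ does not by itself yield pointwise decay along lines. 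The clean repair (and the route of [SS01]) is to use condition (1) \emph{first} to write $x^\gamma\CF_{\xi}^{-1}[f](x)=i^{|\gamma|}\CF_{\xi}^{-1}[\pd_\xi^\gamma f](x)$ for $|\gamma|=L$; then $g=\pd_\xi^\gamma f$ satisfies $\|\pd_\xi^\delta g\|_X\le M|\xi|^{\sigma-n-|\delta|}$ with $\sigma-n\le 0$, so after your rescaling only two integrations by parts are needed and every boundary term genuinely tends to zero (with a small extra argument in the borderline case $\sigma=n=1$, where $g$ is a priori only bounded and one combines $g\in L_1$ with $\|\pd_\xi g\|_X\lesssim|\xi|^{-1}$ to conclude $g\to0$). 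Summing over $|\gamma|=L$ and using $\sum_{|\gamma|=L}|x^\gamma|\ge c_n|x|^{L}$ then gives the stated $|x|^{-(n+s)}$ bound. This is also where the exponent $L+2$ in $\max_{|\gamma|\le L+2}M_\gamma$ really comes from: $L$ derivatives to pull out $x^\gamma$, plus two in the rescaled oscillatory integral.
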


\subsection{Analysis for $\Gamma_{\rm Res}^\pm$.}
In this subsection, we prove

\begin{thm}\label{thm:gam_res}
Let $1 \leq p < 2 \leq q \leq \infty$ and $\langle t\rangle =t+1$.
Then there exists a constant $A_0\in(0,A_3)$ such that for any $t>0$ and $(d,\Bf)\in Y_p$
\begin{align*}
\|\CH_{A_0}^{1}(t;\Gamma_{\rm Res}^\pm)d\|_{L_q(\BR^{N-1})}
&\leq C \langle t\rangle^{-\frac{4(N-1)}{5}\left(\frac{1}{p}-\frac{1}{q}\right)}\|d\|_{L_p(\BR^{N-1})},  \\
\|\CH_{A_0}^{2}(t;\Gamma_{\rm Res}^\pm)\Bf\|_{L_q(\BR^{N-1})}
&\leq C \langle t\rangle^{-\frac{4(N-1)}{5}\left(\frac{1}{p}-\frac{1}{q}\right)-\frac{4}{5}\left(\frac{1}{p}-\frac{1}{2}\right)}\|\Bf\|_{L_p(\dot\BR^{N})},  \\
\|\CU_{A_0}^1(t;\Gamma_{\rm Res}^\pm)d\|_{L_q(\dot\BR^N)}
&\leq C\langle t\rangle^{-\frac{4(N-1)}{5}\left(\frac{1}{p}-\frac{1}{q}\right)-\frac{4}{5}\left(\frac{1}{2}-\frac{1}{q}\right)}\|d\|_{L_p(\BR^{N-1})}, \\
\|\CU_{A_0}^2(t;\Gamma_{\rm Res}^\pm)\Bf\|_{L_q(\dot\BR^N)}
&\leq C\langle t\rangle^{-\frac{4N}{5}\left(\frac{1}{p}-\frac{1}{q}\right)}\|\Bf\|_{L_p(\dot\BR^{N})},
\end{align*}
where $C$ is a positive constant independent of $t$, $d$, and $\Bf$.
\end{thm}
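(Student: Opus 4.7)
My plan is to collapse each of the four contour integrals over $\wht\Gamma_{\rm Res}^\pm$ to a residue at the unique simple zero $\lambda_\pm$ of $L(A,\lambda)$ inside $K_\pm$ (Proposition \ref{prp:roots}), and then convert the resulting pointwise multipliers into the stated $L_p$-$L_q$ bounds via Hausdorff--Young in $\xi'$, which is legitimate since $1\leq p\leq 2\leq q$. Since $L(A,\lambda)=(\rho_++\rho_-)(D_++D_-)\CL_A(\lambda)$ by Lemma \ref{ano-form-L} and $\CL_A(\lambda)$ has only $\lambda_\pm$ as a simple zero in $K_\pm$, Cauchy's formula gives for example
$$
\frac{1}{2\pi i}\int_{\wht\Gamma_{\rm Res}^\pm}e^{\lambda t}\frac{F(A,\lambda)}{L(A,\lambda)}\intd\lambda
=\frac{e^{\lambda_\pm t}F(A,\lambda_\pm)}{(\rho_++\rho_-)(D_++D_-)\big|_{\lambda=\lambda_\pm}\CL_A'(\lambda_\pm)},
$$
and completely analogous residue formulas hold for the $\CH_{A_0}^2$, $\CU_{A_0}^1$, $\CU_{A_0}^2$ integrands (the kernels contain only $L(A,\lambda)$ in the denominator), each producing an explicit multiplier of $\wht d$ or of $\wht f_j(\xi',\Fa y_N)$. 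The basic decay mechanism is then the elementary identity
$$
\int_0^{A_0} e^{-cA^{5/4}t}A^{\mu}\intd A\leq C \langle t\rangle^{-\frac{4(\mu+1)}{5}}\qquad (t>0,\ \mu>-1),
$$
which follows by rescaling $A=t^{-4/5}u$ once $t\geq 1$ and is trivial for $t\leq 1$ thanks to the cutoff $\varphi_{A_0}$.

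\textbf{Symbol bounds.} Combining the asymptotics $\lambda_\pm=\pm i\alpha^{1/2}A^{1/2}-\sqrt 2\alpha^{1/4}\beta(1\pm i)A^{5/4}+O(A^{6/4})$ with Lemma \ref{lem:fund-sym}, Lemma \ref{fundlem:D}, and Proposition \ref{prp:roots}(3), I obtain for $A\in(0,A_0)$ small enough
$$
|\lambda_\pm|^{1/2}+A\sim A^{1/4},\qquad \Re B_\pm\big|_{\lambda=\lambda_\pm}\sim |B_\pm|\big|_{\lambda=\lambda_\pm}\sim A^{1/4},\qquad |D_++D_-|\big|_{\lambda=\lambda_\pm}\sim A^{1/4},
$$
together with $|\CL_A'(\lambda_\pm)|\gtrsim A^{1/2}$ and $\Re\lambda_\pm\leq -c_0 A^{5/4}$, so $|e^{\lambda_\pm t}|\leq e^{-c_0 A^{5/4}t}$. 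Using the explicit structure of $F,\Phi_j^{\Fa,\Fb},\Psi_j^{\Fa,\Fb},\CI_{m\pm},\CJ_m$ (each a polynomial in $A,B_\pm$ with the orders listed in Proposition \ref{thm:sol-para} and Section 3.2), and the bound $|\CM_\pm(a)|\leq \min\{a,\,|A-B_\pm|^{-1}\}(e^{-c_0 A^{1/4}a}+e^{-cAa})$ obtained from the mean value theorem, the residue multipliers reduce to expressions of the form
$$
C\,e^{-c_0 A^{5/4}t}\,A^{\kappa}\,e^{-c_0 A^{1/4}x_N}\,e^{-c_0 A^{1/4}y_N},
$$
where the exponent $\kappa=\kappa(\text{operator})$ is determined by the algebra and all the $A^{1/4}$ factors from $B_\pm,D_\pm$ cancel cleanly against the $A^{1/2}$ from $\CL_A'(\lambda_\pm)$.

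\textbf{$L_p$-$L_q$ reduction and main obstacle.} With $1/r=1/p-1/q$, Hausdorff--Young in $\xi'$ combined with Hölder gives for $\CH^1$
$$
\|\CH_{A_0}^1(t;\Gamma_{\rm Res}^\pm)d\|_{L_q(\BR^{N-1})}\leq C\|m^1_\pm(\cdot,t)\|_{L_r(\BR^{N-1})}\|d\|_{L_p(\BR^{N-1})},
$$
and the scaling identity applied to the Step~2 bound produces the rate $\langle t\rangle^{-\frac{4(N-1)}{5}(1/p-1/q)}$. For $\CU_{A_0}^1(t;\Gamma_{\rm Res}^\pm)$, I take $L_q$ in $x_N$ first (using Minkowski when $q\geq r$, or directly computing the $L_q(0,\infty)$-norm of the symbol via Lemma \ref{lem:fund2}); the factor $\|e^{-B_\pm x_N}\|_{L_q(0,\infty)}\sim A^{-1/(4q)}$ together with the $L_2$-$L_q$ gain through Plancherel in $x_N$ produces the bonus $\langle t\rangle^{-\frac{4}{5}(1/2-1/q)}$. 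For $\CH_{A_0}^2(t;\Gamma_{\rm Res}^\pm)$, the extra $y_N$-integration against $\wht f_j(\xi',\Fa y_N)$ is handled by Plancherel/Hölder in $y_N$ trading the $L_p$ norm of $\Bf$ against an $L_2$ norm of the kernel, which costs the $y_N$-penalty $\langle t\rangle^{-\frac{4}{5}(1/p-1/2)}$; and $\CU_{A_0}^2(t;\Gamma_{\rm Res}^\pm)$ combines both the $x_N$-bonus and the $y_N$-penalty, giving exactly $\langle t\rangle^{-\frac{4N}{5}(1/p-1/q)}$ after writing $\frac{4N}{5}(1/p-1/q)=\frac{4(N-1)}{5}(1/p-1/q)+\frac{4}{5}(1/p-1/2)+\frac{4}{5}(1/2-1/q)$. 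The main obstacle is precisely this bookkeeping: the orderings of Minkowski, Hausdorff--Young in $\xi'$, and Plancherel/Hölder in $x_N,y_N$ have to be chosen so that the multitude of $A^{1/4}$ powers from $B_\pm,D_\pm$ and the $A^{1/2}$ from $\CL_A'(\lambda_\pm)$ cancel against the $A$-powers inside $\Phi_j^{\Fa,\Fb},\Psi_j^{\Fa,\Fb},\CI_{m\pm},\CJ_m$ to leave an integrable $A^{\mu}$ weight whose scaling matches the stated exponents, and the $\CU^2$ case, where all four decay mechanisms interact, is where this cancellation is tightest.
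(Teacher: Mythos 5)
Your proposal is correct and follows essentially the same route as the paper: collapse the $\wht\Gamma_{\rm Res}^\pm$ integrals to the residue at the simple zero $\lambda_\pm$ of $\CL_A(\lambda)$, use the $A^{1/4}$-scale symbol bounds together with $|\CL_A'(\lambda_\pm)|\gtrsim A^{1/2}$ and $\Re\lambda_\pm\leq -cA^{5/4}$, and harvest the extra $t$-decay from the $x_N$- and $y_N$-exponentials exactly as the paper does via its Lemmas \ref{lem:fund1}, \ref{lem:fund2}, and \ref{lem:Lp-Lq} (your Hausdorff--Young plus H\"older in $\xi'$ is an equivalent substitute for the paper's two-step use of Lemma \ref{lem:Lp-Lq} with Parseval). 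Two harmless slips: the $\CH^2,\CU^2$ kernels also carry $F(A,\lambda)$, $B_{\Fb}$, $E$ in the denominator (all nonvanishing inside $K_\pm$, so the residue is still only at $\lambda_\pm$), and $\CM_\pm$ only yields $e^{-cAx_N}$ rather than $e^{-cA^{1/4}x_N}$, which is the weaker bound actually needed and still produces the stated exponents.
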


Recalling $A=|\xi'|$ and $\lambda_\pm$ given in Proposition \ref{prp:roots}, we define 
\begin{equation*}
\CB_\pm=\sqrt{\frac{\rho_\pm}{\mu_\pm}\lambda_\pm+A^2}, \quad 
\CD_\pm=\mu_\pm\CB_\pm+\mu_\mp A, \quad 
\CE=\mu_+\CB_++\mu_-\CB_-.
\end{equation*}
Then we immediately obtain

\begin{lem}\label{lem:g-res-sym}
There exists a constant $A_4\in(0,A_3)$ such that for any $A\in(0,A_4)$
\begin{equation*}
C_1 A^{1/4}\leq \Re \CB_\pm \leq |\CB_\pm| \leq C_2 A^{1/4}, \quad  
C_1 A^{3/4} \leq |F(A,\lambda_\pm)|\leq  C_2 A^{3/4}, 
\end{equation*}
with positive constants $C_1$ and $C_2$ independent of $\xi'$,
and also
\begin{alignat*}{2}
|\Phi_j^{\Fa,\Fb}(\xi',\lambda_\pm)| &\leq C A^{2+(3/4)}, 
\quad &|\Psi_j^{\Fa,\Fb}(\xi',\lambda_\pm)|&\leq C A^{2+(3/4)}, \\
|\CI_{m\pm}(\xi',\lambda_\pm)|&\leq C A^{1+(2/4)},
\quad &|\CJ_m(\xi',\lambda_\pm)| &\leq C A^{1+(2/4)}, 
\end{alignat*}
with a positive constant $C$ independent of $\xi'$,
where $\Fa,\Fb\in\{+,-\}$ and $j,m=1,\dots,N$. 
\end{lem}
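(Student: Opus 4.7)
The plan is to exploit the asymptotic expansion of the resonance zeros recorded in the remark following Proposition \ref{prp:roots}, namely
\begin{equation*}
\lambda_\pm = \pm i\alpha^{1/2} A^{1/2} - \sqrt{2}\alpha^{1/4}\beta(1 \pm i)A^{5/4} + O(A^{6/4}) \quad \text{as } A \to 0,
\end{equation*}
and to derive from it the leading-order asymptotics of $\mathcal{B}_\pm$, from which all remaining bounds follow by direct substitution. First, writing $(\rho_\pm/\mu_\pm)\lambda_\pm + A^2 = \pm i(\rho_\pm/\mu_\pm)\alpha^{1/2} A^{1/2}\bigl(1 + O(A^{3/4})\bigr)$ and using the principal branch of the square root, one obtains $\mathcal{B}_\pm = e^{\pm i\pi/4}\sqrt{(\rho_\pm/\mu_\pm)\alpha^{1/2}}\,A^{1/4}\bigl(1 + O(A^{3/4})\bigr)$. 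Since $\cos(\pi/4) = 1/\sqrt{2} > 0$, choosing $A_4 \in (0,A_3)$ sufficiently small yields the two-sided bound $C_1 A^{1/4} \le \Re \mathcal{B}_\pm \le |\mathcal{B}_\pm| \le C_2 A^{1/4}$.

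For the estimate on $|F(A,\lambda_\pm)|$, I first observe that $\arg \lambda_\pm \to \pm \pi/2$ as $A \to 0$, so for $A \in (0,A_4)$ (with $A_4$ shrunk if necessary) we have $\lambda_\pm \in \Sigma_\varepsilon$ for some fixed $\varepsilon \in (0,\pi/2)$. Lemma \ref{lem:fund-sym}(1) then gives
\begin{equation*}
C_1(|\lambda_\pm|^{1/2} + A)^3 \le |F(A,\lambda_\pm)| \le C_2(|\lambda_\pm|^{1/2} + A)^3,
\end{equation*}
and since $|\lambda_\pm|^{1/2} \sim \alpha^{1/4}A^{1/4}$ dominates $A$ for small $A$, both sides are equivalent to $A^{3/4}$.

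The bounds on $\Phi_j^{\mathfrak{a},\mathfrak{b}}$ and $\Psi_j^{\mathfrak{a},\mathfrak{b}}$ follow by plugging $|\xi'| = A$ and $|B_\pm| \le CA^{1/4}$ at $\lambda = \lambda_\pm$ into each monomial $(\xi')^{\alpha'} A^k B_+^l B_-^m$, giving a bound of order $A^{|\alpha'| + k + (l+m)/4}$. Under the constraints $|\alpha'| + k + l + m = 5$ and $|\alpha'| + k \ge 2$, setting $s = |\alpha'|+k$ the exponent becomes $s + (5-s)/4 = (3s+5)/4$, minimized at $s = 2$ to give $11/4 = 2 + 3/4$. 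The same calculation yields the same bound for $\Psi_j^{\mathfrak{a},\mathfrak{b}}$.

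Finally, the bounds on $\mathcal{I}_{m\pm}$ and $\mathcal{J}_m$ are obtained by bounding each factor in \eqref{dfn:cofa}: $|L_{12}| \le CA(|B_\pm| + A) \le CA^{5/4}$, $|L_{22}| \le C(A + |B_\pm|) \le CA^{1/4}$, and $|E| \le CA^{1/4}$. This gives $|L_{12} \mp B_\pm L_{22}| \le C(A^{5/4} + A^{1/4}\cdot A^{1/4}) \le CA^{1/2}$ and similarly $|(\mu_++\mu_-)L_{12} + \{\mu_+(A-B_+) - \mu_-(A-B_-)\}L_{22}| \le CA^{1/2}$. Multiplying by $|\xi_j| \le A$ or $A$ and using $|\omega + \sigma A^2| \le C$ produces the bound $A^{3/2} = A^{1 + 2/4}$ for all of $\mathcal{I}_{m\pm}$ and $\mathcal{J}_m$. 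The only subtle step is ensuring that $\lambda_\pm$ genuinely lies in a sector $\Sigma_\varepsilon$ throughout $A \in (0,A_4)$; this is where the asymptotic dominance of the $\pm i\alpha^{1/2}A^{1/2}$ term over the negative-real perturbation must be used, but it requires only elementary bookkeeping given the explicit expansion of $\lambda_\pm$.
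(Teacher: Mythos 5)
Your proposal is correct and follows exactly the route the paper intends: the paper states this lemma with no proof (``Then we immediately obtain''), and the ingredients you use --- the expansion $\lambda_\pm=\zeta_\pm+O(A^{6/4})$ from Proposition \ref{prp:roots}, the resulting asymptotics $B_\pm=e^{\pm i\pi/4}\sqrt{\rho_\pm/\mu_\pm}\,\alpha^{1/4}A^{1/4}(1+O(A^{3/4}))$ (already computed in the proof of Lemma \ref{lem:roots-2}), the sector localization $\pi/2<|\arg\lambda_\pm|<3\pi/4$ from Proposition \ref{prp:roots}(1) that lets you invoke Lemma \ref{lem:fund-sym}(1) for $F$, and monomial counting for $\Phi_j^{\Fa,\Fb}$, $\Psi_j^{\Fa,\Fb}$, $\CI_{m\pm}$, $\CJ_m$ --- are precisely the computations the paper leaves to the reader. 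The only point worth making explicit is that the bounds on $\Phi_j^{\Fa,\Fb}(\xi',\lambda_\pm)$ require both $B_+$ and $B_-$ evaluated at the \emph{same} root $\lambda_+$ (resp.\ $\lambda_-$), but your argument is insensitive to this sign-matching and covers that case as well.
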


Let $A\in(0,A_4)$.
Then, by Lemma \ref{lem:g-res-sym},
we have the following estimates for the symbols of the representation formulas given in Subsection \ref{subsec:3-3}:
for the height function,
\begin{align}
\left|\frac{F(A,\lambda_\pm)}{\CD_{+}+\CD_-}\right| \leq CA^{1/2}, \quad 
\left|\frac{\Phi_j^{\Fa,\Fb}(\xi',\lambda_\pm)}{A(\CB_\Fb+A)(\CD_++\CD_-)}\right| &\leq C A^{5/4}, \notag \\
\left|\frac{\Psi_j^{\Fa,\Fb}(\xi',\lambda_\pm)}{A\CB_\Fb(\CB_\Fb+A)(\CD_++\CD_-)}\right| &\leq C A; \label{est:sym-1}
\end{align}
for the fluid velocity, 
\begin{align}
\left|\frac{\CI_{m\pm}(\xi',\lambda_\pm)}{\CD_++\CD_-}\right|
\leq C A^{5/4}, \quad \left|\frac{\CJ_{m}(\xi',\lambda_\pm)}{\CE(\CD_++\CD_-)}\right|&\leq C A, \notag \\
\left|\frac{\Phi_j^{\Fa,\Fb}(\xi',\lambda_\pm)\CI_{m\pm}(\xi',\lambda_\pm)}{A(\CB_\Fb+A)F(A,\lambda_\pm)(\CD_++\CD_-)}\right|
&\leq  CA^2, \notag \\
\left|\frac{\Psi_j^{\Fa,\Fb}(\xi',\lambda_\pm)\CI_{m\pm}(\xi',\lambda_\pm)}{A\CB_\Fb(\CB_\Fb+A)F(A,\lambda_\pm)(\CD_++\CD_-)}\right|
& \leq CA^{7/4}, \notag\\
\left|\frac{\Phi_j^{\Fa,\Fb}(\xi',\lambda_\pm)\CJ_{m}(\xi',\lambda_\pm)}{A(\CB_\Fb+A)F(A,\lambda_\pm)\CE(\CD_++\CD_-)}\right|
&\leq  CA^{7/4}, \notag \\
\left|\frac{\Psi_j^{\Fa,\Fb}(\xi',\lambda_\pm)\CJ_{m}(\xi',\lambda_\pm)}{A\CB_\Fb(\CB_\Fb+A)F(A,\lambda_\pm)\CE
(\CD_++\CD_-)}\right|
&\leq  CA^{6/4}. \label{est:sym-2}
\end{align}

To prove Theorem \ref{thm:gam_res},
we introduce some technical lemma.
Let us define the following operators:
\begin{align}
&[K_{A_0}(t;\Gamma)d](x)
=\CF_{\xi'}^{-1}\left[
\frac{\varphi_{A_0}(\xi')}{2\pi i}
\int_{\wht\Gamma}e^{\lambda t} k(\xi',\lambda)\,d\lambda \, \wht d(\xi')\right](x'),  \notag\\
&[K_{A_0,\CM}^{\Fa,\Fb}(t;\Gamma)\Bf](x) \notag \\
&=\int_0^\infty\CF_{\xi'}^{-1}\left[
\frac{\varphi_{A_0}(\xi')}{2\pi i}\int_{\wht\Gamma}e^{\lambda t}
k_\CM(\xi',\lambda)\CM_\Fb(y_N)\,d\lambda \, \wht \Bf(\xi',\Fa y_N)\right](x') \,dy_N, \notag \\
&[K_{A_0,B}^{\Fa,\Fb}(t;\Gamma)\Bf](x) \notag \\
&=\int_0^\infty\CF_{\xi'}^{-1}\left[
\frac{\varphi_{A_0}(\xi')}{2\pi i}\int_{\wht\Gamma}e^{\lambda t}
k_B(\xi',\lambda)
e^{-B_\Fb y_N}\,d\lambda \, \wht \Bf(\xi',\Fa y_N)\right](x') \,dy_N, \label{dfn:op-heig}
\end{align}
and also for $\pm x_N>0$
\begin{align}
&[L_{A_0,\CM}^{\pm}(t;\Gamma)d](x)=\CF_{\xi'}^{-1}\left[
\frac{\varphi_{A_0}(\xi')}{2\pi i}\int_{\wht\Gamma}e^{\lambda t} l_\CM(\xi',\lambda)
\CM_\pm(\pm x_N)\,d\lambda \, \wht d(\xi')\right](x'), \notag \\
&[L_{A_0,B}^{\pm}(t;\Gamma)d](x)=\CF_{\xi'}^{-1}\left[
\frac{\varphi_{A_0}(\xi')}{2\pi i}\int_{\wht\Gamma}e^{\lambda t}
l_B(\xi',\lambda)e^{\mp B_\pm y_N}\,d\lambda \, \wht d(\xi')\right](x'), \notag \\
&[L_{A_0,\CM\CM}^{\pm,\Fa,\Fb}(t;\Gamma)\Bf](x) \notag \\
&=\int_0^\infty\CF_{\xi'}^{-1}\left[
\frac{\varphi_{A_0}(\xi')}{2\pi i}\int_{\wht\Gamma}e^{\lambda t}
l_{\CM\CM}(\xi',\lambda)
\CM_\pm(\pm x_N)\CM_\Fb(y_N)\,d\lambda \,  \wht \Bf(\xi',\Fa y_N)\right](x')\,dy_N, \notag \\
&[L_{A_0,\CM B}^{\pm,\Fa,\Fb}(t;\Gamma)\Bf](x) \notag \\
&=\int_0^\infty\CF_{\xi'}^{-1}\left[
\frac{\varphi_{A_0}(\xi')}{2\pi i}\int_{\wht\Gamma}e^{\lambda t}
l_{\CM B}(\xi',\lambda)
\CM_\pm(\pm x_N)e^{-B_\Fb y_N}\,d\lambda \,  \wht \Bf(\xi',\Fa y_N)\right](x')\,dy_N, \notag \\
&[L_{A_0,B\CM}^{\pm,\Fa,\Fb}(t;\Gamma)\Bf](x) \notag \\
&=\int_0^\infty\CF_{\xi'}^{-1}\left[
\frac{\varphi_{A_0}(\xi')}{2\pi i}\int_{\wht\Gamma}e^{\lambda t}
l_{B\CM}(\xi',\lambda)
e^{\mp B_\pm x_N}\CM_\Fb(y_N)\,d\lambda \, \wht \Bf(\xi',\Fa y_N)\right](x') \,dy_N, \notag \\
&[L_{A_0,BB}^{\pm,\Fa,\Fb}(t;\Gamma)\Bf](x) \notag \\
&=
\int_0^\infty\CF_{\xi'}^{-1}\left[
 \frac{\varphi_{A_0}(\xi')}{2\pi i}\int_{\wht\Gamma}e^{\lambda t}
l_{BB}(\xi',\lambda)
e^{\mp B_\pm x_N}e^{-B_\Fb y_N}\,d\lambda \, \wht \Bf(\xi',\Fa y_N)\right](x') \,dy_N.  \label{dfn:op-velo}
\end{align}
Here it is assumed that the symbols
\begin{align*}
&k(\xi',\lambda), \ k_\CM(\xi',\lambda), \ k_B(\xi',\lambda), 
\ l_\CM(\xi',\lambda), \ l_B(\xi',\lambda), \\
&l_{\CM\CM}(\xi',\lambda), \ l_{\CM B}(\xi',\lambda), 
\ l_{B \CM}(\xi',\lambda), \ l_{B B}(\xi',\lambda)
\end{align*}
are infinitely many times differentiable with respect to $\xi'\in\BR^{N-1}\setminus\{0\}$
and holomorphic with respect to $\lambda\in\BC\setminus(-\infty,-z_0|\xi'|^2]$.
Then we have

\begin{lem}\label{lem:res-main}
Let $1\leq p < 2 \leq q \leq \infty$, $\langle t\rangle=t+1$, and $\Fa,\Fb\in\{+,-\}$.
Suppose that 
\begin{equation*}
Z(\xi',\lambda)=\frac{\wtd Z(\xi',\lambda)}{\CL_A(\lambda)}
\quad \text{for $Z\in\{k,k_\CM,k_B,l_\CM,l_B,l_{\CM\CM},l_{\CM B},l_{B\CM},l_{BB}\}$}
\end{equation*}
and that there exists a constant $A_5\in(0,A_3)$ such that for any $A\in(0,A_5)$
\begin{alignat*}{3}
|\wtd k(\xi',\lambda_\pm)|&\leq CA^{1/2},
\quad &|\wtd k_\CM(\xi',\lambda_\pm)|&\leq CA^{5/4},
\quad &|\wtd k_B(\xi',\lambda_\pm)| &\leq CA, \\
|l_\CM(\xi',\lambda_\pm)|&\leq CA^{5/4},
\quad &|\wtd l_B(\xi',\lambda_\pm)|&\leq CA,
\quad &|\wtd l_{\CM\CM}(\xi',\lambda_\pm)| &\leq CA^2, \\
|l_{\CM B}(\xi',\lambda_\pm)|&\leq CA^{7/4},
\quad &|\wtd l_{B\CM}(\xi',\lambda_\pm)|&\leq CA^{7/4},
\quad &|\wtd l_{BB}(\xi',\lambda_\pm)| &\leq CA^{6/4},
\end{alignat*}
with some positive constant $C$ independent of $\xi'$.
Then there exists a constant $A_0\in(0,A_5)$ such that the following assertions hold.
\begin{enumerate}[$(1)$]
\item
For any $t>0$ and $(d,\Bf)\in Y_p$
\begin{align*}
\|K_{A_0}(t;\Gamma_{\rm Res}^\pm)d\|_{L_q(\BR^{N-1})}
&\leq C\langle t\rangle^{-\frac{4(N-1)}{5}\left(\frac{1}{p}-\frac{1}{q}\right)}
 \|d\|_{L_p(\BR^{N-1})}, \\
\|K_{A_0,\CM}^{\Fa,\Fb}(t;\Gamma_{\rm Res}^\pm)\Bf\|_{L_q(\BR^{N-1})}
&\leq C\langle t\rangle^{-\frac{4(N-1)}{5}\left(\frac{1}{p}-\frac{1}{q}\right)-\frac{4}{5}\left(\frac{1}{p}-\frac{1}{2}\right)}
 \|\Bf\|_{L_p(\BR_\Fa^{N})}, \\
\|K_{A_0,B}^{\Fa,\Fb}(t;\Gamma_{\rm Res}^\pm)\Bf\|_{L_q(\BR^{N-1})}
&\leq C\langle t\rangle^{-\frac{4(N-1)}{5}\left(\frac{1}{p}-\frac{1}{q}\right)-\frac{4}{5}\left(\frac{1}{p}-\frac{1}{2}\right)}
 \|\Bf\|_{L_p(\BR_\Fa^{N})},
\end{align*}
with some positive constant $C$ independent of $t$, $d$, and $\Bf$.
\item
Let $\Gamma=\Gamma_{\rm Res}^+$ or $\Gamma=\Gamma_{\rm Res}^-$.
Then for any $t>0$ and $(d,\Bf)\in Y_p$
\begin{align*}
\|L_{A_0,\CM}^{\pm}(t;\Gamma)d\|_{L_q(\BR_\pm^N)}
&\leq C\langle t\rangle^{-\frac{4(N-1)}{5}\left(\frac{1}{p}-\frac{1}{q}\right)-\frac{4}{5}\left(\frac{1}{2}-\frac{1}{q}\right)}\|d\|_{L_p(\BR^{N-1})}, \\
\|L_{A_0,B}^{\pm}(t;\Gamma)d\|_{L_q(\BR_\pm^N)}
&\leq C\langle t\rangle^{-\frac{4(N-1)}{5}\left(\frac{1}{p}-\frac{1}{q}\right)-\frac{4}{5}\left(\frac{1}{2}-\frac{1}{q}\right)}\|d\|_{L_p(\BR^{N-1})}, \\
\|L_{A_0,\CM\CM}^{\pm,\Fa,\Fb}(t;\Gamma)\Bf\|_{L_q(\BR_\pm^N)}
&\leq C\langle t\rangle^{-\frac{4N}{5}\left(\frac{1}{p}-\frac{1}{q}\right)}\|\Bf\|_{L_p(\BR_\Fa^{N})}, \\
\|L_{A_0,\CM B}^{\pm,\Fa,\Fb}(t;\Gamma)\Bf\|_{L_q(\BR_\pm^N)}
&\leq C\langle t\rangle^{-\frac{4N}{5}\left(\frac{1}{p}-\frac{1}{q}\right)}\|\Bf\|_{L_p(\BR_\Fa^{N})}, \\
\|L_{A_0,B\CM}^{\pm,\Fa,\Fb}(t;\Gamma)\Bf\|_{L_q(\BR_\pm^N)}
&\leq C\langle t\rangle^{-\frac{4N}{5}\left(\frac{1}{p}-\frac{1}{q}\right)}\|\Bf\|_{L_p(\BR_\Fa^{N})}, \\
\|L_{A_0,BB}^{\pm,\Fa,\Fb}(t;\Gamma)\Bf\|_{L_q(\BR_\pm^N)}
&\leq C\langle t\rangle^{-\frac{4N}{5}\left(\frac{1}{p}-\frac{1}{q}\right)}\|\Bf\|_{L_p(\BR_\Fa^{N})}, 
\end{align*}
with some positive constant $C$ independent of $t$, $d$, and $\Bf$.
\end{enumerate}
\end{lem}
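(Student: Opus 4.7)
The plan is to reduce each contour integral around $\wht\Gamma_{\rm Res}^\pm$ to a residue via Cauchy's theorem, and then to bound the resulting Fourier multipliers in $L_q$ by Hausdorff--Young. By the holomorphy of $\wtd Z$ and Proposition~\ref{prp:roots}(2), the integrand $e^{\lambda t}\wtd Z(\xi',\lambda)/\CL_A(\lambda)$ has exactly one simple pole $\lambda_\pm$ inside $\wht\Gamma_{\rm Res}^\pm$, whence
\begin{equation*}
\frac{1}{2\pi i}\oint_{\wht\Gamma_{\rm Res}^\pm}e^{\lambda t}\frac{\wtd Z(\xi',\lambda)}{\CL_A(\lambda)}\,d\lambda
=e^{\lambda_\pm t}\frac{\wtd Z(\xi',\lambda_\pm)}{\CL_A'(\lambda_\pm)}.
\end{equation*}
Combining the hypothesized bounds on $|\wtd Z(\xi',\lambda_\pm)|$ with $|\CL_A'(\lambda_\pm)|\geq cA^{1/2}$ from Proposition~\ref{prp:roots}(3) gives pointwise size estimates for each residue symbol. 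The asymptotics $\lambda_\pm=\pm i\alpha^{1/2}A^{1/2}-\sqrt{2}\alpha^{1/4}\beta(1\pm i)A^{5/4}+O(A^{6/4})$ in the remark following Proposition~\ref{prp:roots} supply the key exponential factor
\begin{equation*}
|e^{\lambda_\pm t}|\leq e^{-c_0 A^{5/4}t}\quad (A\in(0,A_0))
\end{equation*}
when $A_0$ is chosen sufficiently small, while Lemma~\ref{lem:g-res-sym} gives $\Re\CB_\pm\geq cA^{1/4}$, whence $|e^{-\CB_\pm a}|\leq e^{-cA^{1/4}a}$ and $|\CM_\pm(a)|\leq CA^{-1/4}(e^{-cAa}+e^{-cA^{1/4}a})$ for $a\geq 0$.

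For the height operator $K_{A_0}(t;\Gamma_{\rm Res}^\pm)$, I would apply Hausdorff--Young with $1/s=1/p-1/q$ to obtain
\begin{equation*}
\|K_{A_0}(t;\Gamma_{\rm Res}^\pm)d\|_{L_q(\BR^{N-1})}\leq \Bigl\|\varphi_{A_0}(\xi')\,e^{\lambda_\pm t}\frac{\wtd k(\xi',\lambda_\pm)}{\CL_A'(\lambda_\pm)}\Bigr\|_{L_s(\BR^{N-1})}\|d\|_{L_p(\BR^{N-1})};
\end{equation*}
the multiplier is uniformly bounded and dominated by $Ce^{-c_0 A^{5/4}t}$, and the change of variables $A=t^{-4/5}B$ turns its $L_s$-norm into $Ct^{-(4(N-1)/5)(1/p-1/q)}$ for $t\geq1$, while for $t\leq1$ the compact support of $\varphi_{A_0}$ gives a uniform bound, so the $\langle t\rangle$-form is recovered. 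For $K_{A_0,\CM}^{\Fa,\Fb}$ and $K_{A_0,B}^{\Fa,\Fb}$, I would apply Minkowski's integral inequality to pull $L_q^{x'}$ inside the $y_N$-integral, apply Hausdorff--Young pointwise in $y_N$, and close by H\"older in $y_N$ with dual exponent $p'$; the additional $t^{-(4/5)(1/p-1/2)}$ factor arises from the coupled $A$-$y_N$ rescaling together with Lemma~\ref{lem:fund2} applied to the $y_N$-integral.

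For the $L$-operators mapping into $L_q(\BR_\pm^N)$, the same Hausdorff--Young scheme applies in $\xi'$, leaving an $L_q$-integration in $x_N$ of $\CM_\pm(\pm x_N)$ or $e^{\mp\CB_\pm x_N}$, and, whenever a $y_N$-kernel is also present, a further H\"older-type integration in $y_N$. Rescaling $x_N=t^{4/5}w$ and $y_N=t^{4/5}z$ and using Lemma~\ref{lem:fund1} to convert the remaining exponential decay in $\xi'$ into polynomial bounds of $(\cdot)^{-1}$-type in the rescaled variables, followed by Lemma~\ref{lem:fund2} to handle the mixed integrals, yields the extra factor $t^{-(4/5)(1/2-1/q)}$ for $L_{A_0,\CM}^{\pm}$ and $L_{A_0,B}^{\pm}$, and $t^{-(4/5)(1/p-1/q)}$ in total for the four $L^{\pm,\Fa,\Fb}$-operators, so that their overall rate is $t^{-(4N/5)(1/p-1/q)}$.

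The main obstacle will be the delicate bookkeeping of the $A^{-1/4}$ singularity from $(A-\CB_\pm)^{-1}$ in $\CM_\pm$ against the competing decays $e^{-c_0 A^{5/4}t}$, $e^{\mp\CB_\pm x_N}$, and $e^{-\CB_\Fb y_N}$: in each of the ten subcases one must split $\CM_\pm(a)$ into the two exponential branches $e^{-cAa}$ and $e^{-cA^{1/4}a}$ and choose, via Lemma~\ref{lem:fund1}, the branch dictating the $y_N$- or $x_N$-integrability, so that Lemma~\ref{lem:fund2} produces exactly the polynomial rate matching the power of $A$ in the hypothesized bound on $|\wtd Z(\xi',\lambda_\pm)|$. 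The restriction $p\leq 2\leq q$ is used exactly to make Hausdorff--Young available.
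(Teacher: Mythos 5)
Your proposal is correct and follows essentially the same route as the paper: reduce each $\wht\Gamma_{\rm Res}^\pm$-integral to the residue $e^{\lambda_\pm t}\wtd Z(\xi',\lambda_\pm)/\CL_A'(\lambda_\pm)$, exploit $\Re\lambda_\pm\leq -cA^{5/4}$ and $|\CL_A'(\lambda_\pm)|\geq cA^{1/2}$ from Proposition~\ref{prp:roots}, bound $\CM_\pm$, and finish with Lemmas~\ref{lem:fund1} and \ref{lem:fund2} in the transverse variables. The only methodological difference is the $\xi'$-multiplier step: you use Hausdorff--Young plus H\"older with $1/s=1/p-1/q$ in one shot, whereas the paper splits the factor $e^{\Re\zeta_\pm\langle t\rangle/3}$ into pieces and runs $L_p\to L_2\to L_q$ via Parseval and Lemma~\ref{lem:Lp-Lq}; the two are interchangeable here and both use exactly $p\le 2\le q$. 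Two small remarks. First, the ``two-branch'' splitting of $\CM_\pm(a)$ you anticipate is unnecessary: on $\spp\varphi_{A_0}$ one has $A<1$, so $e^{-cA^{1/4}a}\leq e^{-cAa}$ and the single bound $|\CM_\pm(a)|\leq CA^{-1/4}e^{-cAa}$ suffices throughout. Second, a genuine (but easily repaired) endpoint gap: for $L_{A_0,\CM}^{\pm}$ and $L_{A_0,B}^{\pm}$ at $q=2$ the kernel bound $(\langle t\rangle^{2/5}+x_N^{1/2})^{-1}$ produced by Lemma~\ref{lem:fund1} is not in $L_2(dx_N)$, and Lemma~\ref{lem:fund2} does not apply since it requires $b_1q>1$ with $b_1=1/2$; the paper treats $q=2$ separately by computing
\begin{equation*}
\int_0^\infty\big\|A^{1/2}e^{-cAx_N}e^{\frac{\Re\zeta_+}{3}\langle t\rangle}\wht d\big\|_{L_2(\BR^{N-1})}^2\,dx_N
\leq C\big\|e^{\frac{\Re\zeta_+}{3}\langle t\rangle}\wht d\big\|_{L_2(\BR^{N-1})}^2
\end{equation*}
by Fubini, and your argument needs the same separate treatment there (which is consistent with the claimed rate, since the extra factor $\langle t\rangle^{-\frac{4}{5}(\frac12-\frac1q)}$ is trivial at $q=2$).
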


\begin{proof}
We here consider
\begin{equation*}
K_{A_0}(t;\Gamma_{\rm Res}^+), \quad 
K_{A_0,\CM}^{+,+}(t;\Gamma_{\rm Res}^+), \quad 
L_{A_0,\CM}^+(t;\Gamma_{\rm Res}^+), \quad 
L_{A_0,\CM\CM}^{+,+,+}(t;\Gamma_{\rm Res}^+),
\end{equation*}
only. The other cases can be proved analogously (cf. also \cite[Subsection 4.1]{SaS1}).

{\bf Case 1}: $K_{A_0}(t;\Gamma_{\rm Res}^+)$. By the residue theorem, we have
\begin{equation*}
[K_{A_0}(t; \Gamma_{\rm Res}^+)d](x')
=\CF_{\xi'}^{-1}\Big[
\varphi_{A_0}(\xi')
e^{\lambda_+ t}\frac{\wtd k(\xi',\lambda_+)}{L_A'(\lambda_+)}\,d\lambda \, \wht d(\xi')\Big](x').
\end{equation*}
Recalling $\Re\zeta_\pm=-\sqrt{2}\alpha^{1/4}\beta A^{5/4}$,
we write this formula as
\begin{equation*}
[K_{A_0}(t; \Gamma_{\rm Res}^+)d](x')
=
\CF_{\xi'}^{-1}\Big[
\varphi_{A_0}(\xi')
e^{\frac{\Re\zeta_+}{3}\langle t\rangle}
e^{-\frac{\Re\zeta_+}{3}\langle t\rangle}
e^{\lambda_+ t}\frac{\wtd k(\xi',\lambda_+)}{L_A'(\lambda_+)}\, \wht d(\xi')\Big](x').
\end{equation*}
Combining this formula with Lemma \ref{lem:Lp-Lq} yields
\begin{equation*}
\begin{aligned}
&\|K_{A_0}(t; \Gamma_{\rm Res}^+)d\|_{L_q(\BR^{N-1})} \\
&\leq C\langle t \rangle^{-\frac{4(N-1)}{5}\left(\frac{1}{2}-\frac{1}{q}\right)}
\Big\|\CF_{\xi'}^{-1}\Big[\varphi_{A_0}(\xi')
e^{-\frac{\Re\zeta_+}{3}\langle t\rangle}
e^{\lambda_+ t}\frac{\wtd k(\xi',\lambda_+)}{L_A'(\lambda_+)}\wht d\,\Big]\Big\|_{L_2(\BR^{N-1})}=:I_1(t).
\end{aligned}
\end{equation*}
We choose a sufficiently small $A_0$ so that
\begin{equation*}
|e^{-\frac{\Re\zeta_+}{3}\langle t\rangle}e^{\lambda_+ t}|\leq  C e^{\frac{\Re\zeta_+}{3}\langle t\rangle}
\quad \text{on $\spp\varphi_{A_0}$},
\end{equation*}
and thus we have by Parseval's identity, Proposition \ref{prp:roots}, and the assumption for $\wtd k(\xi',\lambda_+)$ 
\begin{equation}\label{eq:model-1}
I_1(t)
\leq 
C\langle t \rangle^{-\frac{4(N-1)}{5}\left(\frac{1}{2}-\frac{1}{q}\right)}
\|\varphi_{A_0}(\xi')e^{\frac{\Re\zeta_+}{3}\langle t\rangle}\wht d\|_{L_2(\BR^{N-1})}.
\end{equation}
Since $0\leq \varphi_{A_0}\leq 1$, this implies 
$$I_1(t) \leq C \langle t \rangle^{-\frac{4(N-1)}{5}\left(\frac{1}{2}-\frac{1}{q}\right)}
\|e^{(\Re\zeta_+/3)\langle t\rangle}\wht d\|_{L_2(\BR^{N-1})}.$$
Applying Lemma \ref{lem:Lp-Lq} to the right-hand side of the last inequality
furnishes the desired estimate for $K_{A_0}(t; \Gamma_{\rm Res}^+)$.
This completes the proof of Case 1.

{\bf Case 2}: $K_{A_0,\CM}^{+,+}(t;\Gamma_{\rm Res}^+)$. 
In the same way as we have obtained \eqref{eq:model-1}, we obtain
\begin{align*}
&\|K_{A_0,\CM}^{+,+}(t;\Gamma_{\rm Res}^+)\Bf\|_{L_q(\BR^{N-1})} \\
&\leq C\langle t \rangle^{-\frac{4(N-1)}{5}\left(\frac{1}{2}-\frac{1}{q}\right)}
 \int_0^\infty \|\varphi_{A_0}(\xi')
e^{\frac{\Re\zeta_+}{3}\langle t\rangle}
A^{3/4} \CM_+ (y_N)\wht \Bf(\xi',y_N)\|_{L_2(\BR^{N-1})}dy_N \\
&=:I_2(t).
\end{align*}
We choose a sufficiently small $A_0$ so that
\begin{equation}\label{ineq:M}
|\CM_+(y_N)|\leq C A^{-1/4} e^{-cAy_N} \quad \text{on $\spp\varphi_{A_0}$}
\end{equation}
for positive constant $C$ and $c$. Then
\begin{equation*}
I_2(t)\leq 
C\langle t \rangle^{-\frac{4(N-1)}{5}\left(\frac{1}{2}-\frac{1}{q}\right)}
\int_0^\infty \| 
e^{\frac{\Re\zeta_+}{3}\langle t\rangle}
A^{1/2} e^{-cA y_N}\wht \Bf(\xi',y_N) \|_{L_2(\BR^{N-1})}dy_N,
\end{equation*}
which, combined with Lemmas \ref{lem:fund1} and \ref{lem:Lp-Lq}, implies
\begin{equation*}
I_2(t)
\leq 
C\langle t \rangle^{-\frac{4(N-1)}{5}\left(\frac{1}{p}-\frac{1}{q}\right)}
\int_0^\infty \frac{\|\Bf(\xi',y_N)\|_{L_p(\BR^{N-1})}}
{\langle t\rangle^{2/5}+y_N^{1/2}}\,dy_N.
\end{equation*}
Since $1\leq p <2$,
applying Lemma \ref{lem:fund2} to the right-hand side of the last inequality shows that
the desired estimate for $K_{A_0,\CM}^{+,+}(t;\Gamma_{\rm Res}^+)$ holds.
This completes the proof of Case 2.

{\bf Case 3}: $L_{A_0,\CM}^+(t;\Gamma_{\rm Res}^+)$.
In the same way as we have obtained \eqref{eq:model-1}, we obtain
\begin{align*}
&\|L_{A_0,\CM}^{+}(t;\Gamma_{\rm Res}^+)d\|_{L_q(\BR^{N-1})}  \\
&\leq C\langle t \rangle^{-\frac{4(N-1)}{5}\left(\frac{1}{2}-\frac{1}{q}\right)}
\| \varphi_{A_0}(\xi')
e^{\frac{\Re\zeta_+}{3}\langle t\rangle}
A^{3/4} \CM_+ (x_N)\wht d(\xi')\|_{L_2(\BR^{N-1})} 
 =: I_3(x_N,t).
\end{align*}
By \eqref{ineq:M}, we see that
\begin{equation}\label{eq:model2}
I_3(x_N,t)\leq 
C\langle t \rangle^{-\frac{4(N-1)}{5}\left(\frac{1}{2}-\frac{1}{q}\right)}
\|
e^{\frac{\Re\zeta_+}{3}\langle t\rangle}
A^{1/2} e^{-c A x_N}\wht d(\xi')\|_{L_2(\BR^{N-1})}.
\end{equation}
When $q=2$, it follows from \eqref{eq:model2} that
\begin{align*}
\int_0^\infty I_3(x_N,t)^2\,dx_N &\leq C\int_0^\infty 
\|
e^{\frac{\Re\zeta_+}{3}\langle t\rangle}
A^{1/2} e^{-c A x_N}\wht d(\xi')\|_{L_2(\BR^{N-1})}^2\,dx_N \\
&\leq C
\|e^{\frac{\Re\zeta_+}{3}\langle t\rangle}
\wht d(\xi')\|_{L_2(\BR^{N-1})}^2.
\end{align*}
Combining this with Lemma \ref{lem:Lp-Lq}
yields the desired estimate of $L_{A_0,\CM}^+(t;\Gamma_{\rm Res}^+)$ for $q=2$.
When $q>2$, it follows from \eqref{eq:model2} and Lemmas \ref{lem:fund1} and \ref{lem:Lp-Lq} that
\begin{align*}
I(x_N,t)\leq C
\langle t \rangle^{-\frac{4(N-1)}{5}\left(\frac{1}{p}-\frac{1}{q}\right)}
\frac{\|d\|_{L_p(\BR^{N-1})}}{\langle t\rangle^{2/5}+x_N^{1/2}}.
\end{align*}
In this inequality, taking $L_q$ norm of both sides with respect to $x_N\in(0,\infty)$ 
furnishes the desired estimate of $L_{A_0,\CM}^+(t;\Gamma_{\rm Res}^+)$ for $q>2$.
This completes the proof of Case 3.

{\bf Case 4}: $L_{A_0,\CM\CM}^{+,+,+}(t;\Gamma_{\rm Res}^+)$. 
In the same way as we have obtained \eqref{eq:model-1}, we obtain
\begin{align*}
&\|L_{A_0,\CM\CM}^{+,+,+}(t;\Gamma_{\rm Res}^+)d\|_{L_q(\BR^{N-1})} \\
&\leq C\langle t \rangle^{-\frac{4}{5}\left(\frac{1}{2}-\frac{1}{q}\right)}
\int_0^\infty \| \varphi_{A_0}(\xi')
e^{\frac{\Re\zeta_+}{3}\langle t\rangle}
A^\frac{6}{4} \CM_+ (x_N)\CM_+(y_N)\wht \Bf(\xi',y_N)\|_{L_2(\BR^{N-1})}dy_N \\
&=:I_4(x_N,t).
\end{align*}
Combining this with \eqref{ineq:M} and Lemmas \ref{lem:fund1} and \ref{lem:Lp-Lq} yields
\begin{align*}
&I_4(x_N,t) \\
&\leq C\langle t \rangle^{-\frac{4}{5}\left(\frac{1}{2}-\frac{1}{q}\right)}
\int_0^\infty \|
e^{\frac{\Re\zeta_+}{3}\langle t\rangle}
Ae^{-cA(x_N+y_N)}\wht \Bf(\xi', y_N)\|_{L_2(\BR^{N-1})}\,dy_N \\
&\leq C\langle t \rangle^{-\frac{4(N-1)}{5}\left(\frac{1}{p}-\frac{1}{q}\right)} 
\int_0^\infty\frac{\|\Bf(\xi',y_N)\|_{L_p(\BR^{N-1})}}{\langle t \rangle^{4/5}+x_N+y_N}\,dy_N.
\end{align*}
Lemma \ref{lem:fund2} thus yields the desired estimate for $L_{A_0,\CM\CM}^{+,+,+}(t;\Gamma_{\rm Res}^+)$.
This completes the proof of Lemma \ref{lem:res-main}.
\end{proof}

Combining Lemma \ref{lem:res-main} with \eqref{est:sym-1} and \eqref{est:sym-2}
yields Theorem \ref{thm:gam_res} immediately.
This completes the proof of Theorem \ref{thm:gam_res}.

\subsection{Analysis for $\Gamma_1^\pm$.}
In this subsection, we prove 
\begin{thm}\label{gam_1-decay}
Let $1\leq p \leq 2 \leq q \leq \infty$ and $\langle t \rangle=t+1$.
Then there exists a constant $A_0\in(0,A_3)$ such that for any $t>0$ and $(d,\Bf)\in Y_p$
\begin{align*}
\|\CH_{A_0}^{1}(t;\Gamma_1^\pm)d\|_{L_q(\BR^{N-1})}
&\leq C \langle t\rangle^{-\frac{N-1}{2}\left(\frac{1}{p}-\frac{1}{q}\right)-\frac{3}{2}}\|d\|_{L_p(\BR^{N-1})},  \\
\|\CH_{A_0}^{2}(t;\Gamma_1^\pm)\Bf\|_{L_q(\BR^{N-1})}
&\leq C \langle t\rangle^{-\frac{N-1}{2}\left(\frac{1}{p}-\frac{1}{q}\right)
-\frac{1}{2}\left(\frac{1}{p}-\frac{1}{2}\right)-\frac{3}{4}}\|\Bf\|_{L_p(\dot\BR^N)},  \\
\|\CU_{A_0}^1(t;\Gamma_1^\pm)d\|_{L_q(\dot\BR^N)}
&\leq C\langle t\rangle^{-\frac{N-1}{2}\left(\frac{1}{p}-\frac{1}{q}\right)
-\frac{1}{2}\left(\frac{1}{2}-\frac{1}{q}\right)-\frac{3}{4}}\|d\|_{L_p(\BR^{N-1})}, \\
\|\CU_{A_0}^2(t;\Gamma_1^\pm)\Bf\|_{L_q(\dot\BR^N)}
&\leq C\langle t\rangle^{-\frac{N}{2}\left(\frac{1}{p}-\frac{1}{q}\right)}\|\Bf\|_{L_p(\dot\BR^{N})},
\end{align*}
where $C$ is a positive constant independent of $t$, $d$, and $\Bf$.
\end{thm}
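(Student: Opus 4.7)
The plan is to mirror the structure of the proof of Theorem \ref{thm:gam_res}, which used Lemma \ref{lem:res-main} to handle the residue contribution on $\Gamma_{\rm Res}^\pm$. On $\wht\Gamma_1^\pm$ there are no poles (Lemma \ref{lem:roots-1}), so rather than picking up residues I would integrate directly over the contour. Parameterizing $\lambda = -(z_0/2)A^2 + (z_0/4)A^2 e^{\pm is}$ gives $\Re\lambda \leq -(z_0/4)A^2$, $|\lambda|\sim A^2$, and $|d\lambda|\sim A^2 ds$, so $|e^{\lambda t}|\leq e^{-(z_0/4)A^2 t}$. This exponential will play the role that $e^{(\Re\zeta_+/3)\langle t\rangle}$ played on $\Gamma_{\rm Res}^\pm$ and will be fed into Lemma \ref{lem:Lp-Lq} with the exponent $\theta=2$ (instead of the $\theta=5/4$ arising from $\Re\zeta_\pm \sim A^{5/4}$).

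First I would establish uniform symbol bounds on $\wht\Gamma_1^\pm$ for $A\in(0,A_0)$ with $A_0\in(0,A_1)$ small enough. Since $(\rho_\pm/\mu_\pm)\lambda/A^2$ ranges over a compact set of modulus $\leq 3/4$ that is bounded away from the negative real axis, Lemma \ref{lem:fund-sym} yields $|B_\pm|\sim A$ with $\Re B_\pm\geq cA$, together with $|A-B_\pm|\geq cA$. Hence $|F(A,\lambda)|\sim A^3$, $|D_++D_-|\sim A$, $|E|\sim A$, and by Case 1 of Lemma \ref{lem:roots-1} combined with Lemma \ref{ano-form-L}, $|L(A,\lambda)|\geq cA^2$. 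Moreover $|\CM_\pm(\pm x_N)|\leq CA^{-1}e^{-cAx_N}$ and $|e^{\mp B_\pm x_N}|\leq e^{-cAx_N}$. Together with the degree bounds on $\Phi_j^{\Fa,\Fb}, \Psi_j^{\Fa,\Fb}, \CI_{m\pm}, \CJ_m$, this gives pointwise control of every symbol appearing in the representation formulas of Subsection \ref{subsec:3-3}.

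Second I would prove an analogue of Lemma \ref{lem:res-main} adapted to $\wht\Gamma_1^\pm$. After contour integration the effective Fourier multipliers have the shape $A^k e^{-cA^2 t}\cdot(\text{exponential in }x_N,y_N)$ for some $k\geq 2$; splitting $e^{-cA^2 t}=e^{-cA^2 t/2}\cdot e^{-cA^2 t/2}$, one factor is absorbed via Lemma \ref{lem:fund1} against the remaining $A$-powers and the $x_N,y_N$-exponentials to produce decay of the form $(t^{a}+x_N^{b_1}+y_N^{b_2})^{-1}$, while the other is used in Lemma \ref{lem:Lp-Lq} (with $\theta=2$) to convert $L_p$ into $L_q$ in $x'\in\BR^{N-1}$, producing the base rate $t^{-(N-1)/2(1/p-1/q)}$. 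The $x_N$- and $y_N$-integrations are then completed by Lemma \ref{lem:fund2}, exactly as in the four model computations of Lemma \ref{lem:res-main}. The four claimed rates then match: for $\CH^1_{A_0}(t;\Gamma_1^\pm)d$ the effective symbol has total order $A^3 e^{-cA^2 t}$ and gives the extra $t^{-3/2}$; for $\CU^2_{A_0}(t;\Gamma_1^\pm)\Bf$ the two $\CM_\pm$/exponential factors combine through Lemma \ref{lem:fund2}(1) to yield the full $t^{-N/2(1/p-1/q)}$.

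The main obstacle is purely bookkeeping: for each of the four operators and, when $\Bf$ is the input, each of the four sub-cases in Subsection \ref{subsec:3-3} (pairs $(\CM_\Fb,\CM_\pm)$, $(\CM_\Fb,e^{-B_\pm\cdot})$, etc.), one must identify the right splitting of $A$-powers between the symbol bound, the $A^2$ from $|d\lambda|$, the $A^{-1}$ from $\CM_\Fb$, and additional factors extracted from $e^{-cA^2 t}$ via Lemma \ref{lem:fund1}, so that the integrability hypotheses $b_1q>1$ and $b_2 r(1-1/(b_1q))>1$ of Lemma \ref{lem:fund2} are satisfied. The boundary cases $q=\infty$ for $\CH$, and $q$ close to $2$ for $\CU$ in $L_q(\dot\BR^N)$, are the tightest points, but no new ideas are required beyond those already implemented in Lemma \ref{lem:res-main}; the qualitative gain over $\Gamma_{\rm Res}^\pm$ is that the exponential decay $e^{-cA^2 t}$ here is faster in $A$ than $e^{-cA^{5/4}t}$ there, which is precisely why the rates in Theorem \ref{gam_1-decay} dominate those in Theorem \ref{thm:gam_res} for $t\geq 1$.
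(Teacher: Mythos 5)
Your overall strategy is exactly the paper's: establish uniform symbol bounds on $\wht\Gamma_1^\pm$ ($|B_\pm|\sim A$, $|F(A,\lambda)|\sim A^3$, $|L(A,\lambda)|\gtrsim A^2$, $|\CM_\pm|\lesssim A^{-1}e^{-cAx_N}$), then run a model-operator lemma in the style of Lemma \ref{lem:res-main} using Lemmas \ref{lem:fund1}, \ref{lem:fund2}, and \ref{lem:Lp-Lq} with $\theta=2$, the extra polynomial decay coming from the leftover powers of $A$ (including the $A^2$ from $|d\lambda|$) absorbed against $e^{-cA^2 t}$. The exponent bookkeeping you describe matches the paper's \eqref{sym-gam1-1}--\eqref{sym-gam1-2} and Lemma \ref{lem:gam-1}.

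There is, however, one concrete misstep. You justify the symbol bounds by asserting that $\lambda/A^2=-z_0/2+(z_0/4)e^{is}$ ranges over a compact set \emph{bounded away from the negative real axis}, and then invoke Lemma \ref{lem:fund-sym}. This is false at $s=0$, where $\lambda=-(z_0/4)A^2$ is a negative real number and hence lies in no sector $\Sigma_\varepsilon$; Lemma \ref{lem:fund-sym} is therefore unavailable on a neighborhood of that point of the contour. For $B_\pm$ this is harmless, since $(\rho_\pm/\mu_\pm)\lambda+A^2=A^2\bigl[1+(\rho_\pm/\mu_\pm)\zeta\bigr]$ has real part $\geq A^2/4$ by the choice $z_0=\min\{\mu_+/\rho_+,\mu_-/\rho_-\}$, so $|B_\pm|\sim A$ and $\Re B_\pm\geq cA$ follow directly. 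But the \emph{lower} bound $|F(A,\lambda)|\geq CA^3$ does not follow from $|B_\pm|\sim A$ alone: $F$ is a specific signed combination and one must verify it does not vanish. By homogeneity $F(A,\lambda)=A^3F(1,\zeta)$, and the non-vanishing of $F(1,\zeta)$ at $\zeta=-z_0/4$ (real and negative) requires a separate direct computation — this is precisely the content of the paper's Lemma \ref{lem:sym-gam1}, where one checks that with $b_\pm=\sqrt{(\rho_\pm/\mu_\pm)\zeta+1}\in[\sqrt{3}/2,1)$ the potentially negative term $-(\mu_+-\mu_-)^2$ is dominated and $F(1,\zeta)>0$. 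Your argument needs this verification inserted; without it the estimate $|L(A,\lambda)|\gtrsim A^2$ and all the subsequent symbol bounds for the velocity part (which divide by $F$) are unsupported on the part of the contour near $s=0$.
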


To prove Theorem \ref{gam_1-decay},
we start with the following lemma.

\begin{lem}\label{lem:sym-gam1}
There exists a constant $A_6\in(0,A_3)$ such that for any $A\in(0,A_6)$
and $\lambda\in\wht \Gamma_1^+\cup \wht \Gamma_1^-$
\begin{equation*}
C_1 A \leq \Re B_\pm\leq |B_\pm| \leq C_2 A, \quad 
C_1A^3\leq |F(A,\lambda)|\leq C_2 A^3, 
\end{equation*}
with positive constants $C_1$ and $C_2$ independent of $\xi'$ and $\lambda$, and also
\begin{alignat*}{2}
|\Phi_j^{\Fa,\Fb}(\xi',\lambda)|&\leq CA^5, \quad 
&|\Psi_j^{\Fa,\Fb}(\xi',\lambda)|&\leq CA^5, \\
|\CI_{m\pm}(\xi',\lambda)|&\leq CA^3, \quad 
&|\CJ_m(\xi',\lambda)|&\leq CA^3,
\end{alignat*}
with a positive constant $C$ independent of $\xi'$ and $\lambda$,
where $\Fa,\Fb\in\{+,-\}$ and $j,m=1,\dots,N$.
\end{lem}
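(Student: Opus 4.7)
My plan is to treat the four groups of inequalities in turn, exploiting the fundamental scaling observation that on the contour $\wht\Gamma_1^\pm$ the normalized spectral parameter $z:=\lambda/A^2$ ranges over the fixed compact arc
\[
K=\{-z_0/2+(z_0/4)e^{\pm is}:s\in[0,\pi/2]\},
\]
which is independent of $A$. The first step establishes the bounds on $B_\pm$: substituting the parameterization of $\wht\Gamma_1^\pm$ into the definition $B_\pm^2=(\rho_\pm/\mu_\pm)\lambda+A^2$ yields $B_\pm^2/A^2=1+(\rho_\pm/\mu_\pm)z$. The crucial input is that $z_0\le\mu_\pm/\rho_\pm$ by definition, so $(\rho_\pm/\mu_\pm)z_0\le 1$; combined with $\Re z\ge-z_0/2$ and an elementary bound on $|z|$, this forces $B_\pm^2/A^2$ into a fixed compact subset of the open right half-plane. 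Taking principal square roots then produces the claimed two-sided bound $C_1A\le\Re B_\pm\le|B_\pm|\le C_2A$.

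For the second step, I would observe that the polynomial structure of $F$ in \eqref{dfn:Lop-1} implies the clean scaling $F(A,\lambda)=A^3\wtd F(z)$, where $\wtd F$ is an $A$-independent polynomial in $B_\pm/A$ and hence continuous in $z$. The upper bound $|F|\le C_2A^3$ is immediate after inserting $|B_\pm|\le CA$ into the defining formula. For the lower bound it suffices to show that $\wtd F$ has no zero on the compact arc $K$; by compactness this upgrades automatically to $\inf_K|\wtd F|>0$, giving $|F(A,\lambda)|\ge C_1 A^3$. I would split $K=(K\cap\Sigma_\varepsilon)\cup(K\setminus\Sigma_\varepsilon)$ for $\varepsilon$ small: on the first piece Lemma \ref{lem:fund-sym}(1) applies directly; on the remaining small piece near the negative real axis, I would invoke the same Lopatinski reasoning as in Proposition \ref{prp:mid}(2), using the fact (established in the first step) that $\Re B_\pm>0$ on $K$, which is precisely the condition under which $F(A,\lambda)$ is the non-degenerate boundary-symbol determinant for the Stokes transmission problem. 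A direct evaluation at $z=0$ yields $\wtd F(0)=4(\mu_++\mu_-)^2>0$ as a sanity check.

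The final two groups of inequalities are algebraic consequences of the first step. The symbols $\Phi_j^{\Fa,\Fb}$ and $\Psi_j^{\Fa,\Fb}$ are, by Proposition \ref{thm:sol-para}, homogeneous polynomials of weighted degree five in $(\xi',A,B_+,B_-)$; since $|\xi'|=A$ and $|B_\pm|\le CA$, each monomial is $O(A^5)$, giving $|\Phi_j^{\Fa,\Fb}|,|\Psi_j^{\Fa,\Fb}|\le CA^5$. Similarly, inspection of the formulas immediately preceding \eqref{solform-velo} shows that $\CI_{m\pm}$ and $\CJ_m$ each factor as $A\cdot(\omega+\sigma A^2)\cdot Q(A,B_\pm)$ with $Q$ a polynomial of degree two in $A,B_\pm$; since $|\omega+\sigma A^2|\le C$ uniformly for $A\le A_6\le 1$ and $|B_\pm|\le CA$, one obtains the claimed $O(A^3)$ bound. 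The principal obstacle is the lower bound on $|F|$: the portion of $\wht\Gamma_1^\pm$ straddling the negative real axis lies outside every sector $\Sigma_\varepsilon$, so Lemma \ref{lem:fund-sym}(1) is unavailable there, and the resolution is to work in the rescaled variable $z\in K$ and exploit the non-vanishing of $F$ under $\Re B_\pm>0$ via a compactness-continuity argument modelled on Proposition \ref{prp:mid}(2).
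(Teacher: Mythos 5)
Your overall strategy coincides with the paper's: rescale $\lambda=A^2z$ so that $z$ runs over a fixed compact arc, obtain $C_1A\le\Re B_\pm\le|B_\pm|\le C_2A$ from $z_0\le\mu_\pm/\rho_\pm$, read off the bounds on $\Phi_j^{\Fa,\Fb},\Psi_j^{\Fa,\Fb},\CI_{m\pm},\CJ_m$ from the (quasi-)homogeneous structure of those symbols, write $F(A,\lambda)=A^3F(1,\zeta)$, and get the lower bound $|F|\ge C_1A^3$ from non-vanishing of $F(1,\zeta)$ on the compact arc plus continuity. All of that is fine and matches the paper.

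The gap is at the one step that carries the real content of the lemma: proving $F(1,\zeta)\neq 0$ at the point of the arc lying on the negative real axis, $\zeta=-z_0/4$ (the case $s=0$), which is outside every sector $\Sigma_\varepsilon$ so that Lemma \ref{lem:fund-sym} is unavailable. You dispose of this by asserting that ``$\Re B_\pm>0$ is precisely the condition under which $F$ is the non-degenerate boundary-symbol determinant,'' but that statement is proved nowhere in the paper and is not an elementary fact — it is exactly what has to be verified. Your analogy with Proposition \ref{prp:mid}(2) does not transfer: that argument starts from $F\neq0$ on $\{\Re\lambda\ge0\}$ and extends by uniform continuity a small distance into the left half-plane, whereas $\zeta=-z_0/4$ sits at a fixed negative real part and cannot be reached by such a perturbation. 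Your sanity check $\wtd F(0)=4(\mu_++\mu_-)^2$ is evaluated at $z=0$, which is not on the arc and says nothing about $z=-z_0/4$. The paper closes this point by a direct computation: for $s=0$ the quantities $b_\pm=\sqrt{1-(\rho_\pm/\mu_\pm)z_0/4}$ are real and satisfy $\sqrt3/2\le b_\pm<1$, so every group of terms in $F(1,-z_0/4)$ except the first is manifestly positive, and the first group satisfies
\begin{equation*}
-(\mu_+-\mu_-)^2+(3\mu_+-\mu_-)\mu_+b_++(3\mu_--\mu_+)\mu_-b_-
\ge -(\mu_+^2+\mu_-^2)+\tfrac{3\sqrt3}{2}(\mu_+^2+\mu_-^2)>0 .
\end{equation*}
Some such explicit verification (or a cited proof that the transmission Lopatinski determinant is nonzero under $\Re B_\pm>0$) is needed to make your argument complete; as written, the lower bound on $|F|$ — and hence the lemma — is not established.
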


\begin{proof}
See \cite[Lemma 4.9]{SaS1} for $B_\pm$.
Then the desired estimates for $\Phi_j^{\Fa,\Fb}$, $\Psi_j^{\Fa,\Fb}$, $\CI_{m\pm}$, and $\CJ_m$
follow from the estimates of $B_\pm$ immediately.

We now estimate $|F(A,\lambda)|$. Let $\lambda\in\wht \Gamma_1^+\cup\wht \Gamma_1^-$.
It is clear that $|F(A,\lambda)|\leq CA^3$ by $|B_\pm|\leq C A$.
In what follows, we prove $|F(A,\lambda)|\geq CA^3$. 
Since $\lambda=-(z_0/2)A^2+(z_0/4)A^2 e^{is}$ for $s\in[-\pi/2, \pi/2]$, there holds
\begin{equation}\label{formula-of-F}
F(A,\lambda)=A^3F(1,\zeta),
\quad \zeta=-\frac{z_0}{2}+\frac{z_0}{4}e^{is}.
\end{equation}
It suffices to show that
\begin{equation}\label{est:F-zeta}
F(1,\zeta)\neq 0 \quad \text{for $s\in[-\pi/2,\pi/2]$.}
\end{equation}

Let $s\neq 0$. Then $\zeta\in\Sigma_\varepsilon$ for some $\varepsilon=\varepsilon(s)$.
Therefore $F(1,\zeta)\neq 0$, since $|F(1,\zeta)|\geq C_\varepsilon$ 
for some positive constant $C_\varepsilon$ by Lemma \ref{lem:fund-sym}.

Next, we consider $s=0$. In this case, $\zeta=-z_0/4$ and set $b_\pm=\sqrt{(\rho_\pm/\mu_\pm)\zeta+1}$.
Then
\begin{equation}\label{ineq:bpm}
\frac{\sqrt{3}}{2} \leq b_\pm <1,
\end{equation}
and $F(1,\zeta)$ can be written as
\begin{align*}
F(1,\zeta)&=-(\mu_+-\mu_-)^2+(3\mu_+-\mu_-)\mu_+b_++(3\mu_--\mu_+)\mu_-b_- \\
&+(\mu_+b_++\mu_-b_-)^2+\mu_+\mu_-(b_++b_-)^2 
+(\mu_+b_++\mu_-b_-)(\mu_+b_+^2+\mu_-b_-^2).
\end{align*}
Since it follows from \eqref{ineq:bpm} that
\begin{align*}
&-(\mu_+-\mu_-)^2+(3\mu_+-\mu_-)\mu_+b_++(3\mu_--\mu_+)\mu_-b_- \\
&=-(\mu_+^2+\mu_-^2)+2\mu_+\mu_-+3\mu_+^2b_+ + 3\mu_-^2b_--\mu_+\mu_-(b_++b_-)  \\
&\geq 
-(\mu_+^2+\mu_-^2)+2\mu_+\mu_-+\frac{3\sqrt{3}}{2}(\mu_+^2 + \mu_-^2)-2\mu_+\mu_->0,
\end{align*}
we have $F(1,\zeta)>0$ for $s=0$.
Thus \eqref{est:F-zeta} holds, which implies $|F(1,\zeta)|\geq C$ for any $s\in[-\pi/2,\pi/2]$
and a positive constant $C$ independent of $s$.
Therefore $|F(A,\lambda)|\geq CA^3$ by \eqref{formula-of-F},
which completes the proof of Lemma \ref{lem:sym-gam1}.
\end{proof}

Note that $|\CL_A(\lambda)|\geq CA$ for $\lambda\in\wht\Gamma_1^+\cup\wht\Gamma_1^-$ when $A$ is small enough
as seen in Case 1 of the proof of Lemma \ref{lem:roots-1},
and thus it follows from Lemma \ref{lem:sym-gam1} that $|L(A,\lambda)|\geq CA^2$.
By this inequality and Lemma \ref{lem:sym-gam1},
we have the following estimates for the symbols of the representation formulas given in Subsection \ref{subsec:3-3}:
for the height function,
\begin{equation}\label{sym-gam1-1}
\left|\frac{F(A,\lambda)}{L(A,\lambda)}\right| \leq CA , \quad
\left|\frac{\Phi_j^{\Fa,\Fb}(\xi',\lambda)}{A(B_\Fb+A)L(A,\lambda)}\right| \leq C A, \quad
\left|\frac{\Psi_j^{\Fa,\Fb}(\xi',\lambda)}{AB_\Fb(B_\Fb+A)L(A,\lambda)}\right|\leq C;
\end{equation}
for the velocity
\begin{alignat}{2}\label{sym-gam1-2}
\left|\frac{\CI_{m\pm}(\xi',\lambda)}{L(A,\lambda)}\right|
&\leq C A, \quad 
&\left|\frac{\CJ_{m}(\xi',\lambda)}{E L(A,\lambda)}\right|
&\leq C, \notag \\
\left|\frac{\Phi_j^{\Fa,\Fb}(\xi',\lambda)\CI_{m\pm}(\xi',\lambda)}
{A(B_\Fb+A)F(A,\lambda) L(A,\lambda)}\right|
& \leq CA, \quad
&\left|\frac{\Psi_j^{\Fa,\Fb}(\xi',\lambda)\CI_{m\pm}(\xi',\lambda)}{AB_\Fb(B_\Fb+A)F(A,\lambda)L(A,\lambda)}\right|
& \leq C, \notag \\
\left|\frac{\Phi_j^{\Fa,\Fb}(\xi',\lambda)\CJ_{m}(\xi',\lambda)}{A(B_\Fb+A)F(A,\lambda)E L(A,\lambda)}\right|
&\leq  C, \quad 
&\left|\frac{\Psi_j^{\Fa,\Fb}(\xi',\lambda)\CJ_{m}(\xi',\lambda)}{AB_\Fb(B_\Fb+A)F(A,\lambda)E L(A,\lambda)}\right|
&\leq  \frac{C}{A}.
\end{alignat}

Now, recalling the operators difined in \eqref{dfn:op-heig} and \eqref{dfn:op-velo},
we introduce the following lemma (cf. \cite[Lemma 4.10]{SaS1} for details).

\begin{lem}\label{lem:gam-1}
Let $1\leq p\leq 2 \leq q \leq \infty$, $\langle t\rangle =t+1$, and $\Fa,\Fb\in\{+,-\}$.
Suppose that there exists a constant $A_7\in(0,A_3)$ such that for any $A\in(0,A_7)$
and $\lambda\in\wht \Gamma_1^+\cup\wht \Gamma_1^-$
\begin{alignat*}{3}
|k(\xi',\lambda)|&\leq CA, 
\quad &|k_{\CM}(\xi',\lambda)|&\leq CA, 
\quad &|k_B(\xi',\lambda)|&\leq C, \\
|l_\CM(\xi',\lambda)|&\leq CA, 
\quad &|l_B(\xi',\lambda)|&\leq C, 
\quad &|l_{\CM\CM}(\xi',\lambda)|&\leq CA, \\
|l_{\CM B}(\xi',\lambda)| &\leq C, 
\quad &|l_{B\CM}(\xi',\lambda)| &\leq C, 
\quad &|l_{BB}(\xi',\lambda)| &\leq CA^{-1},
\end{alignat*}
with some positive constant $C$ independent of $\xi'$ and $\lambda$.
Then there exists a constant $A_0\in(0,A_7)$ such that the following assertions hold.
\begin{enumerate}[$(1)$]
\item
For any $t>0$ and $(d,\Bf)\in Y_p$
\begin{align*}
\|K_{A_0}(t; \Gamma_1^\pm)d\|_{L_q(\BR^{N-1})}
&\leq C \langle t\rangle^{-\frac{N-1}{2}\left(\frac{1}{p}-\frac{1}{q}\right)-\frac{3}{2}}\|d\|_{L_p(\BR^{N-1})}, \\
\|K_{A_0,\CM}^{\Fa,\Fb}(t;\Gamma_1^\pm)\Bf\|_{L_q(\BR^{N-1})}
&\leq C \langle t\rangle^{-\frac{N-1}{2}\left(\frac{1}{p}-\frac{1}{q}\right)-\frac{1}{2}\left(\frac{1}{p}-\frac{1}{2}\right)-\frac{3}{4}}
\|\Bf\|_{L_p(\BR_\Fa^N)}, \\
\|K_{A_0,B}^{\Fa,\Fb}(t;\Gamma_1^\pm)\Bf\|_{L_q(\BR^{N-1})}
&\leq C \langle t\rangle^{-\frac{N-1}{2}\left(\frac{1}{p}-\frac{1}{q}\right)-\frac{1}{2}\left(\frac{1}{p}-\frac{1}{2}\right)-\frac{3}{4}}
\|\Bf\|_{L_p(\BR_\Fa^N)},
\end{align*}
with some positive constant $C$ independent of $t$, $d$, and $\Bf$.
\item
Let $\Gamma=\Gamma_1^+$ or $\Gamma=\Gamma_1^-$.
Then for any $t>0$ and $(d,\Bf)\in Y_p$
\begin{align*}
\|L_{A_0,\CM}^{\pm}(t;\Gamma)d\|_{L_q(\BR_\pm^N)}
&\leq C\langle t\rangle^{-\frac{N-1}{2}\left(\frac{1}{p}-\frac{1}{q}\right)
-\frac{1}{2}\left(\frac{1}{2}-\frac{1}{q}\right)-\frac{3}{4}}\|d\|_{L_p(\BR^{N-1})}, \\
\|L_{A_0,B}^{\pm}(t;\Gamma)d\|_{L_q(\BR_\pm^N)}
&\leq C\langle t\rangle^{-\frac{N-1}{2}\left(\frac{1}{p}-\frac{1}{q}\right)
-\frac{1}{2}\left(\frac{1}{2}-\frac{1}{q}\right)-\frac{3}{4}}\|d\|_{L_p(\BR^{N-1})}, \\
\|L_{A_0,\CM\CM}^{\pm,\Fa,\Fb}(t;\Gamma)\Bf\|_{L_q(\BR_\pm^N)}
&\leq C\langle t\rangle^{-\frac{N}{2}\left(\frac{1}{p}-\frac{1}{q}\right)}\|\Bf\|_{L_p(\BR_\Fa^N)}, \\
\|L_{A_0,\CM B}^{\pm,\Fa,\Fb}(t;\Gamma)\Bf\|_{L_q(\dot\BR^N)}
&\leq C\langle t\rangle^{-\frac{N}{2}\left(\frac{1}{p}-\frac{1}{q}\right)}\|\Bf\|_{L_p(\BR_\Fa^N)}, \\
\|L_{A_0,B\CM}^{\pm,\Fa,\Fb}(t;\Gamma)\Bf\|_{L_q(\BR_\pm^N)}&
\leq C\langle t\rangle^{-\frac{N}{2}\left(\frac{1}{p}-\frac{1}{q}\right)}\|\Bf\|_{L_p(\BR_\Fa^N)}, \\
\|L_{A_0,BB}^{\pm,\Fa,\Fb}(t;\Gamma)\Bf\|_{L_q(\BR_\pm^N)}&
\leq C\langle t\rangle^{-\frac{N}{2}\left(\frac{1}{p}-\frac{1}{q}\right)}\|\Bf\|_{L_p(\BR_\Fa^N)},
\end{align*}
with some positive constant $C$ independent of $t$, $d$, and $\Bf$.
\end{enumerate}
\end{lem}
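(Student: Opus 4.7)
The strategy parallels that of Lemma \ref{lem:res-main}, with one essential change: the fractional-power decay driven by $\Re\zeta_\pm \sim -A^{5/4}$ is here replaced by parabolic decay $\Re\lambda \le -cA^2$ on $\wht\Gamma_1^\pm$, so that Lemma \ref{lem:Lp-Lq} will be invoked with exponent $\theta = 2$ rather than $\theta = 5/4$. I would parametrize the contour as $\lambda = -(z_0/2)A^2 + (z_0/4)A^2 e^{\pm is}$, $s \in [0,\pi/2]$, which yields $|d\lambda| \le CA^2\,ds$ and the uniform estimate $|e^{\lambda t}| \le e^{-cA^2 t}$ with $c = z_0/4$. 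By Lemma \ref{lem:sym-gam1}, $\Re B_\pm \ge c_1 A$, so $|e^{-B_\pm z}| \le e^{-c_1 A z}$ and, via the mean-value identity $\CM_\pm(z) = -z\int_0^1 e^{-(sA + (1-s)B_\pm)z}\,ds$, one has $|\CM_\pm(z)| \le C A^{-1} e^{-(c_1/2) A z}$. Inserting these pointwise estimates together with the hypothesised symbol bounds and the $A^2$ contribution from $|d\lambda|$ reduces each of the nine operator integrands to a product of the form $\varphi_{A_0}(\xi') A^{\sigma} e^{-cA^2 t} e^{-cA(x_N + y_N)}$, with only those of $x_N$, $y_N$ present that occur in the operator, multiplied by the data's partial Fourier transform, for an appropriate exponent $\sigma$ read off from the hypotheses.

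From here I would follow the same pattern as in the proof of Lemma \ref{lem:res-main}: split $e^{-cA^2 t} = e^{-(c/2)A^2 t}\cdot e^{-(c/2)A^2 t}$, absorb $A^\sigma$ together with any spatial exponential into the first factor using Lemma \ref{lem:fund1} with $\nu_1 = 2$, $\nu_2 = 1$ to obtain a kernel of the form $C\bigl(t^{\sigma/2} + (x_N + y_N)^{\sigma}\bigr)^{-1}$, and then apply Lemma \ref{lem:Lp-Lq}(1) with $\theta = 2$ to the surviving Fourier multiplier $e^{-(c/2)A^2 t}$ to produce the spatial factor $t^{-\frac{N-1}{2}(1/p - 1/q)}$. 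For $K_{A_0}$, which has no spatial exponential, one instead uses $A^3 e^{-(c/2)A^2 t} \le C t^{-3/2}$ directly to produce the $t^{-3/2}$ factor. The remaining $y_N$- and $x_N$-integrations are dispatched via Lemma \ref{lem:fund2}(1)--(2). A trivial bound for $t \in (0,1]$, where $|e^{\lambda t}|$ is bounded on the contour and the $L_q$ operator norm is controlled by Parseval together with the uniform estimate $|\varphi_{A_0}(\xi') A^\sigma| \le CA_0^\sigma$, then converts the $t$-decay into the stated $\langle t\rangle$-decay.

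The main obstacle is bookkeeping: each $\CM$-factor contributes $A^{-1}$, each $e^{-B_\pm z}$ contributes $A^0$, $d\lambda$ contributes $A^2$, and the hypothesised symbol bound supplies $A^\sigma$, and the total power of $A$ must match, through $A^k e^{-cA^2 t} \le C t^{-k/2}$ or Lemma \ref{lem:fund1}, the specific exponent of $t$ displayed in each of the nine estimates. A secondary subtlety is that Lemma \ref{lem:fund2}(1) requires a strict endpoint inequality which degenerates when $p = q$; this is the reason why the statement restricts to $1\le p\le 2\le q\le \infty$, and the borderline cases $p = 2$ or $q = 2$ need to be handled separately, typically by applying Parseval first and interpolating afterwards, exactly as in Case 3 of the proof of Lemma \ref{lem:res-main}.
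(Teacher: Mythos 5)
Your plan is correct and follows essentially the same template the paper uses for the parallel contour lemmas (Lemmas \ref{lem:res-main} and \ref{lem:gam-4}), which is also the argument of the cited reference for this statement: parametrize $\wht\Gamma_1^\pm$ to get $|e^{\lambda t}|\le e^{-cA^2t}$ and $|d\lambda|\le CA^2\,ds$, bound $\CM_\pm$ by $CA^{-1}e^{-cAz}$ and $e^{-B_\pm z}$ by $e^{-cAz}$ via Lemma \ref{lem:sym-gam1}, and then run the $L_p\to L_2\to L_q$ scheme with Lemmas \ref{lem:Lp-Lq} ($\theta=2$), \ref{lem:fund1}, and \ref{lem:fund2}; I checked that your power-of-$A$ bookkeeping reproduces all nine stated exponents. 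The only point deserving care is the one you already flag, namely the degenerate case $p=q=2$ of Lemma \ref{lem:fund2}(1) for the double-integral operators, which is handled by Parseval plus a Schur-test (Hilbert-type) bound on the kernel $Ae^{-cA(x_N+y_N)}$.
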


Combining Lemma \ref{lem:gam-1} with \eqref{sym-gam1-1} and \eqref{sym-gam1-2}
proves Theorem \ref{gam_1-decay} immediately.
This completes the proof of Theorem \ref{gam_1-decay}.

\subsection{Analysis for $\Gamma_4^\pm$.}

In this subsection, we prove
\begin{thm}\label{thm:gam-4}
Let $1\leq p < 2 \leq q \leq \infty$ and $\langle t\rangle =t+1$
Then there exists a constant $A_0\in (0,A_3)$ such that for any $t>0$ and $(d,\Bf)\in Y_p$
\begin{align*}
\|\CH_{A_0}^{1}(t;\Gamma_4^\pm)d\|_{L_q(\BR^{N-1})}
&\leq C\langle t\rangle^{-\frac{N-1}{2}\left(\frac{1}{p}-\frac{1}{q}\right)-\frac{3}{4}\gamma_1}\|d\|_{L_p(\BR^{N-1})}, \\
\|\CH_{A_0}^{2}(t;\Gamma_4^\pm)\Bf\|_{L_q(\BR^{N-1})}
&\leq C\langle t\rangle^{-\frac{N-1}{2}\left(\frac{1}{p}-\frac{1}{q}\right)
-\frac{1}{2}\left(\frac{1}{p}-\frac{1}{2}\right)-\frac{3}{4}\gamma_2}\|\Bf\|_{L_p(\dot\BR^{N})}, \\
\|\CU_{A_0}^{1}(t;\Gamma_4^\pm)d\|_{L_q(\dot\BR^{N})}
&\leq C\langle t\rangle^{
-\frac{N-1}{2}\left(\frac{1}{p}-\frac{1}{q}\right)-\frac{1}{2}\left(\frac{1}{2}-\frac{1}{q}\right)-\frac{3}{4}\gamma_3}\|d\|_{L_p(\BR^{N-1})}, \\
\|\CU_{A_0}^{2}(t;\Gamma_4^\pm)\Bf\|_{L_q(\dot\BR^{N})}
&\leq C\langle t\rangle^{-\frac{N}{2}\left(\frac{1}{p}-\frac{1}{q}\right)}\|\Bf\|_{L_p(\dot\BR^{N})}, 
\end{align*}
where $C$ is a positive constant independent of $t$, $d$, and $\Bf$. Here
\begin{align*}
&0<\gamma_1<\min\left\{1,2(N-1)\left(\frac{1}{p}-\frac{1}{2}\right)\right\}, \quad
0<\gamma_2<\min\left\{1,2N\left(\frac{1}{p}-\frac{1}{2}\right)\right\}, \\
&0<\gamma_3<\min\left\{1,2\left((N-1)\left(\frac{1}{p}-\frac{1}{2}\right)+\frac{1}{2}-\frac{1}{q}\right)\right\}.
\end{align*}
\end{thm}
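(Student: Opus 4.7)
The plan is to mimic the strategy of Lemmas \ref{lem:res-main} and \ref{lem:gam-1} along the segment $\wht\Gamma_4^\pm$, then to extract the extra decay $\langle t\rangle^{-3\gamma_i/4}$ by trading a fractional $A$-power from the symbol against time.

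The geometric input I would set up first is that, parameterizing $\wht\Gamma_4^\pm$ as $\lambda=(1-s)z_1^\pm+sz_3^\pm$ with $s\in[0,1]$, the choice $z_1^\pm=-(z_0/2)A^2\pm i(z_0/4)A^2$ together with $\Re z_3^\pm<0$ fixed yields
\[
\Re\lambda\leq -cA^2\qquad\text{for all }\lambda\in\wht\Gamma_4^\pm
\]
and small $A$. Hence $|e^{\lambda t}|\leq e^{-cA^2 t}$ uniformly on the contour, $|\lambda|\lesssim 1$, and $|d\lambda|=|z_3^\pm-z_1^\pm|\,ds\lesssim ds$. By Lemma \ref{lem:fund-sym}, the quantities $B_\pm$, $D_\pm$, $E$, $F(A,\lambda)$ are comparable to the corresponding powers of $|\lambda|^{1/2}+A$, hence bounded above by fixed constants and below by the corresponding powers of $A$. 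For the denominator $\CL_A(\lambda)$, the Case 2 estimate of Lemma \ref{lem:roots-1} (which already handles a path of the same qualitative shape as $\wht\Gamma_4^\pm$) furnishes $|\CL_A(\lambda)|\geq c(|\lambda|+A^{1/2})^2$, whence $|L(A,\lambda)|\geq cA(|\lambda|+A^{1/2})^2$ by Lemma \ref{ano-form-L}. The resulting pointwise bounds on the nine symbols appearing in \eqref{dfn:op-heig}--\eqref{dfn:op-velo} are of the same orders as the hypotheses of Lemma \ref{lem:gam-1}.

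With these bounds in hand, I would split $e^{\lambda t}=e^{-cA^2\langle t\rangle/3}\cdot e^{cA^2\langle t\rangle/3}e^{\lambda t}$, where on $\spp\varphi_{A_0}$ the second factor is uniformly bounded once $A_0$ is chosen small. Applying Lemma \ref{lem:Lp-Lq}(1) with $\theta=2$ to the heat-type prefactor yields the base $x'$-decay $\langle t\rangle^{-\frac{N-1}{2}(1/p-1/q)}$, while trace-type gains of order $\langle t\rangle^{-\frac{1}{2}(1/p-1/2)}$ or $\langle t\rangle^{-\frac{1}{2}(1/2-1/q)}$ from the $\CM_\pm(\pm x_N)$ and $e^{-B_\pm(\cdot)}$ factors are produced through Lemmas \ref{lem:fund1} and \ref{lem:fund2}, exactly as in Cases 1--4 of Lemma \ref{lem:res-main}. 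For $\CU_{A_0}^{2}(t;\Gamma_4^\pm)$ two such trace gains compound to give the full $\langle t\rangle^{-\frac{N}{2}(1/p-1/q)}$, with no further $\gamma_i$-factor needed.

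The main obstacle, and the origin of the parameters $\gamma_i$, is producing the additional factor $\langle t\rangle^{-3\gamma_i/4}$ for the remaining three operators. The mechanism rests on the elementary inequality
\[
A^{3\gamma_i/2}\,e^{-cA^2\langle t\rangle}\leq C\langle t\rangle^{-3\gamma_i/4}\,e^{-(c/2)A^2\langle t\rangle},
\]
so that, after pulling a factor $A^{3\gamma_i/2}$ out of the symbol, the residual still feeds the heat-type argument of the previous paragraph and produces the extra $\langle t\rangle^{-3\gamma_i/4}$. The constraint $\gamma_i<1$ is dictated by how much positive $A$-power each symbol can legitimately absorb before $L(A,\lambda)^{-1}$ ceases to compensate: splitting the available slack $A^{3/2}$ in the denominator as $A^{3\gamma_i/2}\cdot A^{(3/2)(1-\gamma_i)}$ forces $\gamma_i<1$. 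The complementary constraint involving $2(N-1)(1/p-1/2)$, and its analogues $2N(1/p-1/2)$ and $2((N-1)(1/p-1/2)+1/2-1/q)$ for $\CH^2$ and $\CU^1$, is exactly the Hausdorff--Young range in which the modified weight $A^{3\gamma_i/2}\,\wht d$ or $A^{3\gamma_i/2}\,\wht{\Bf}$ remains controllable by the $L_p$-norm after pairing with the residual heat factor in $L_2$ via Lemma \ref{lem:Lp-Lq}(2). Tracking these two restrictions separately for each of the four operators yields the admissible intervals for $\gamma_1,\gamma_2,\gamma_3$ stated in the theorem.
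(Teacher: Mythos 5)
Your overall framework (contour parameterization, $\Re\lambda\leq-cA^2$ on $\wht\Gamma_4^\pm$, splitting off a heat-type factor, Lemmas \ref{lem:fund1}, \ref{lem:fund2}, \ref{lem:Lp-Lq}, \ref{lem:SS01}, and the treatment of $\CU_{A_0}^{2}(t;\Gamma_4^\pm)$, which indeed needs no $\gamma$) matches the paper, but the mechanism you propose for the crucial extra factor $\langle t\rangle^{-3\gamma_i/4}$ does not work. You claim to ``pull a factor $A^{3\gamma_i/2}$ out of the symbol'' and trade it against $e^{-cA^2\langle t\rangle}$. The symbols on $\wht\Gamma_4^\pm$ do not contain such a spare positive power of $A$ uniformly along the contour: near the endpoint $z_3^\pm$ one has $|\lambda|\sim1$, $|F(A,\lambda)|\sim1$, $|L(A,\lambda)|\sim1$, so $F/L$ is merely $O(1)$ there. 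The correct uniform bound is the $\lambda$-dependent one $|F/L|\leq C(|\lambda|^{1/2}+A^{1/4})^{-2}$ of \eqref{0223-1}, which is \emph{not} of the same order as the hypotheses of Lemma \ref{lem:gam-1}, contrary to what you assert (those bounds hold only on $\wht\Gamma_1^\pm$, where $|\lambda|\sim A^2$). Relatedly, your lower bound $|L|\geq cA(|\lambda|+A^{1/2})^2$ is weaker than the bound actually needed, $|L|\geq C(|\lambda|^{1/2}+A)(|\lambda|^{1/2}+A^{1/4})^4$, and loses a factor of order $(|\lambda|^{1/2}+A)/A$ at the far end of the contour.

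The actual source of the gain is the $s$-integration along the contour, and it runs in the opposite direction from your trade. Writing $(|\lambda|^{1/2}+A^{1/4})^{2}\geq C(\sqrt{s})^{2-\delta}A^{\delta/4}$ for $0<\delta<2$, one extracts $\int_0^1 s^{-(2-\delta)/2}e^{-cs\langle t\rangle}\,ds\leq C\langle t\rangle^{-\delta/2}$ from the retained factor $e^{-cs\langle t\rangle}$ --- information that your uniform bound $|e^{\lambda t}|\leq e^{-cA^2t}$ throws away --- at the price of introducing a \emph{negative} frequency power $A^{-\delta/4}$. That negative power is then absorbed into the heat factor via Lemma \ref{lem:SS01} and Young's inequality, costing $\langle t\rangle^{+\delta/8}$ and forcing $\delta<4(N-1)\bigl(\frac{1}{p}-\frac{1}{2}\bigr)$ so that $\CF_{\xi'}^{-1}[A^{-\delta/4}e^{-cA^2\langle t\rangle/8}]$ lies in $L_r(\BR^{N-1})$ with $1+\frac12=\frac1p+\frac1r$; the net gain is $\langle t\rangle^{-3\delta/8}=\langle t\rangle^{-\frac34\gamma_1}$ with $\gamma_1=\delta/2$. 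This also accounts for the two-sided constraints on the $\gamma_i$, which your mechanism cannot explain: a spare \emph{positive} power of $A$ would impose no low-frequency integrability restriction of the form $\gamma_1<2(N-1)\bigl(\frac1p-\frac12\bigr)$.
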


Note that $\wht \Gamma_4^+\cup\wht \Gamma_4^-\subset\Sigma_{\theta_2}$
for $\theta_2$ given in \eqref{dfn:theta}.
By Lemma \ref{lem:fund-sym},
we have for any $\xi'\in\BR^{N-1}\setminus\{0\}$ and $\lambda\in\wht \Gamma_+^4\cup\wht\Gamma_-^4$
\begin{alignat}{2}
|\Phi_j^{\Fa,\Fb}(\xi',\lambda)|&\leq CA^2(|\lambda|^{1/2}+A)^3,
\quad &|\Psi_j^{\Fa,\Fb}(\xi',\lambda)|&\leq C A^2(|\lambda|^{1/2}+A)^3, \notag \\
|\CI_{m\pm}(\xi',\lambda)|&\leq CA(|\lambda|^{1/2}+A)^2, 
\quad &|\CJ_m(\xi',\lambda)|&\leq CA(|\lambda|^{1/2}+A)^2, \label{phi-psi-i-j}
\end{alignat}
where $\Fa,\Fb\in\{+,-\}$ and $j,m=1,\dots,N$.
In addition,
similarly to \eqref{ineq:F-1}, there holds
\begin{equation*}
|\CL_A(\lambda)|\geq C(|\lambda|+A^{1/2})^2
\end{equation*}
for a sufficiently small $A$ and $\lambda\in \wht \Gamma_+^4\cup\wht\Gamma_-^4$,
which, combined with $|\lambda|+A^{1/2}\geq (1/2)(|\lambda|^{1/2}+A^{1/4})^2$
and Lemma \ref{fundlem:D},
furnishes
\begin{equation*}
|L(A,\lambda)|\geq C(|\lambda|^{1/2}+A)(|\lambda|^{1/2}+A^{1/4})^4.
\end{equation*}
By this inequality together with \eqref{phi-psi-i-j} and Lemma \ref{lem:fund-sym},
we have
the following estimates for the symbols of the representation formulas given in Subsection \ref{subsec:3-3}:
for the height function
\begin{align}
&\left|\frac{F(A,\lambda)}{L(A,\lambda)}\right| 
\leq \frac{C}{(|\lambda|^{1/2}+A^{1/4})^2}, \quad
\left|\frac{\Phi_j^{\Fa,\Fb}(\xi',\lambda)}{A(B_\Fb+A)L(A,\lambda)}\right| 
\leq \frac{CA(|\lambda|^{1/2}+A)}{(|\lambda|^{1/2}+A^{1/4})^{4}}, \notag\\
&\left|\frac{\Phi_j^{\Fa,\Fb}(\xi',\lambda)}{AB_\Fb(B_\Fb+A)L(A,\lambda)}\right|
\leq \frac{CA}{(|\lambda|^{1/2}+A^{1/4})^{4}};\label{0223-1}
\end{align}
for the velocity
\begin{equation}
\left|\frac{\CI_{m\pm}(\xi',\lambda)}{L(A,\lambda)}\right|
\leq \frac{CA(|\lambda|^{1/2}+A) }{(|\lambda|^{1/2}+A^{1/4})^{4}}, 
\quad \left|\frac{\CJ_{m}(\xi',\lambda)}{E L(A,\lambda)}\right|
\leq \frac{CA}{(|\lambda|^{1/2}+A^{1/4})^{4}}, \label{0223-2}
\end{equation}
and also
\begin{align}
\left|\frac{\Phi_j^{\Fa,\Fb}(\xi',\lambda)\CI_{m\pm}(\xi',\lambda)}
{A(B_\Fb+A)F(A,\lambda)L(A,\lambda)}\right| &\leq CA, \notag \\ 
\left|\frac{\Psi_j^{\Fa,\Fb}(\xi',\lambda)\CI_{m\pm}(\xi',\lambda)}{AB_\Fb(B_\Fb+A)F(A,\lambda)L(A,\lambda)}\right|
&\leq \frac{CA}{|\lambda|^{1/2}+A}, \notag \\
\left|\frac{\Phi_j^{\Fa,\Fb}(\xi',\lambda)\CJ_{m}(\xi',\lambda)}{A(B_\Fb+A)F(A,\lambda)E L(A,\lambda)}\right|
&\leq  \frac{CA}{|\lambda|^{1/2}+A}, \notag \\
\left|\frac{\Psi_j^{\Fa,\Fb}(\xi',\lambda)\CJ_m(\xi',\lambda)}{AB_\Fb(B_\Fb+A)F(A,\lambda)E L(A,\lambda)}\right|
&\leq \frac{CA}{(|\lambda|^{1/2}+A)^{2}}.\label{0223-3}
\end{align}

We now prove

\begin{lem}\label{lem:gam-4}
Let $1\leq p < 2 \leq q\leq  \infty$, $\langle t\rangle=t+1$, and $\Fa,\Fb\in\{+,-\}$.
Suppose that there exists a constant $A_8\in(0,A_3)$ such that for any $A\in(0,A_8)$
and $\lambda\in\wht \Gamma_4^+\cup\wht \Gamma_4^-$
\begin{alignat*}{2}
&|k(\xi',\lambda)|\leq \frac{C}{(|\lambda|^{1/2}+A^{1/4})^{2}}, 
&&\quad |k_{\CM}(\xi',\lambda)|\leq \frac{CA(|\lambda|^{1/2}+A)}{(|\lambda|^{1/2}+A^{1/4})^{4}},  \\
&|k_B(\xi',\lambda)|\leq \frac{CA}{(|\lambda|^{1/2}+A^{1/4})^{4}}, 
&& \quad |l_\CM(\xi',\lambda)| \leq \frac{CA(|\lambda|^{1/2}+A)}{(|\lambda|^{1/2}+A^{1/4})^{4}}, \\
&|l_B(\xi',\lambda)|  \leq \frac{CA}{(|\lambda|^{1/2}+A^{1/4})^{4}}, 
&& \quad |l_{\CM\CM}(\xi',\lambda)| \leq CA, \\
&|l_{\CM B}(\xi',\lambda)| \leq \frac{CA}{|\lambda|^{1/2}+A}, 
&& \quad |l_{B\CM}(\xi',\lambda)| \leq \frac{CA}{|\lambda|^{1/2}+A},  \\
&|l_{BB}(\xi',\lambda)| \leq \frac{CA}{(|\lambda|^{1/2}+A)^2},
\end{alignat*}
with some positive constant of $\xi'$ and $\lambda$.
Then there exists a constant $A_0\in(0,A_8)$ such that 
the following assertions hold.
\begin{enumerate}[$(1)$]
\item
For any $t>0$ and $(d,\Bf)\in Y_p$
\begin{align*}
\|K_{A_0}(t; \Gamma_4^\pm)d\|_{L_q(\BR^{N-1})}
&\leq C \langle t\rangle^{-\frac{N-1}{2}\left(\frac{1}{p}-\frac{1}{q}\right)-\frac{3}{4}\gamma_1}\|d\|_{L_p(\BR^{N-1})}, \\
\|K_{A_0,\CM}^{\Fa,\Fb}(t;\Gamma_4^\pm)\Bf\|_{L_q(\BR^{N-1})}
&\leq C \langle t\rangle^{-\frac{N-1}{2}\left(\frac{1}{p}-\frac{1}{q}\right)
-\frac{1}{2}\left(\frac{1}{p}-\frac{1}{2}\right)-\frac{3}{4}\gamma_2}
\|\Bf\|_{L_p(\BR_\Fa^N)}, \\
\|K_{A_0,B}^{\Fa,\Fb}(t;\Gamma_4^\pm)\Bf\|_{L_q(\BR^{N-1})}
&\leq C \langle t\rangle^{-\frac{N-1}{2}\left(\frac{1}{p}-\frac{1}{q}\right)-\frac{1}{2}\left(\frac{1}{p}-\frac{1}{2}\right)-\frac{3}{4}}
\|\Bf\|_{L_p(\BR_\Fa^N)} ,
\end{align*}
with some positive constant $C$ independent of $t$, $d$, and $\Bf$. 
\item
Let $\Gamma=\Gamma_4^+$ or $\Gamma=\Gamma_4^-$.
Then for any $t>0$ and $(d,\Bf)\in Y_p$
\begin{align*}
\|L_{A_0,\CM}^{\pm}(t;\Gamma)d\|_{L_q(\BR_\pm^N)}
&\leq C\langle t\rangle^{-\frac{N-1}{2}\left(\frac{1}{p}-\frac{1}{q}\right)
-\frac{1}{2}\left(\frac{1}{2}-\frac{1}{q}\right)-\frac{3}{4}\gamma_3}\|d\|_{L_p(\BR^{N-1})}, \\
\|L_{A_0,B}^{\pm}(t;\Gamma)d\|_{L_q(\BR_\pm^N)}
&\leq C\langle t\rangle^{-\frac{N-1}{2}\left(\frac{1}{p}-\frac{1}{q}\right)
-\frac{1}{2}\left(\frac{1}{2}-\frac{1}{q}\right)-\frac{3}{4}}\|d\|_{L_p(\BR^{N-1})}, \\
\|L_{A_0,\CM\CM}^{\pm,\Fa,\Fb}(t;\Gamma)\Bf\|_{L_q(\BR_\pm^N)}
&\leq C\langle t\rangle^{-\frac{N}{2}\left(\frac{1}{p}-\frac{1}{q}\right)}\|\Bf\|_{L_p(\BR_\Fa^N)}, \\
\|L_{A_0,\CM B}^{\pm,\Fa,\Fb}(t;\Gamma)\Bf\|_{L_q(\dot\BR^N)}
&\leq C\langle t\rangle^{-\frac{N}{2}\left(\frac{1}{p}-\frac{1}{q}\right)}\|\Bf\|_{L_p(\BR_\Fa^N)}, \\
\|L_{A_0,B\CM}^{\pm,\Fa,\Fb}(t;\Gamma)\Bf\|_{L_q(\BR_\pm^N)}&
\leq C\langle t\rangle^{-\frac{N}{2}\left(\frac{1}{p}-\frac{1}{q}\right)}\|\Bf\|_{L_p(\BR_\Fa^N)}, \\
\|L_{A_0,BB}^{\pm,\Fa,\Fb}(t;\Gamma)\Bf\|_{L_q(\BR_\pm^N)}&
\leq C\langle t\rangle^{-\frac{N}{2}\left(\frac{1}{p}-\frac{1}{q}\right)}\|\Bf\|_{L_p(\BR_\Fa^N)},
\end{align*}
with some positive constant $C$ independent of $t$, $d$, and $\Bf$.
\end{enumerate}
\end{lem}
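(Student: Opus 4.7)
The plan is to adapt the template of Lemmas~\ref{lem:res-main} and~\ref{lem:gam-1}: derive a pointwise-in-$(\xi',t)$ bound on the contour integrals along $\wht\Gamma_4^\pm$, and then convert it to an $L_p$--$L_q$ Fourier multiplier estimate in the variable $x'\in\BR^{N-1}$. What is genuinely new compared with $\Gamma_{\rm Res}^\pm$ (a small circle enclosing a pole) and $\Gamma_1^\pm$ (a small arc of radius $\sim A^2$) is that $\wht\Gamma_4^\pm$ is a line segment connecting a point at $|\lambda|\sim A^2$ to a fixed point at $|\lambda|\sim 1$. The full length of this contour therefore produces two independent sources of decay: a heat-type factor $e^{-cA^2 t}$ originating from the $z_1^\pm$ endpoint, and an additional $t^{-\gamma}$ factor obtained by integrating along the segment in the spectral variable. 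Combined, these will account for the fractional $\tfrac{3\gamma}{4}$ gain in the exponent of Theorem~\ref{thm:main3}.

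For the pointwise step, I first observe that both endpoints of $\wht\Gamma_4^\pm$ and hence (by convexity of the left half-plane and monotonicity of the argument along the segment) the whole contour lie in the sector $|\arg\lambda|\geq \pi/2+\delta$ for a fixed $\delta>0$, so $\Re\lambda\leq -c|\lambda|$ with $c>0$ independent of $A$, and $|\lambda|\geq cA^2$ along the segment. This yields the splitting $|e^{\lambda t}|\leq e^{-cA^2 t/2}\cdot e^{-c|\lambda|t/2}$. Combining with the elementary interpolation $(|\lambda|^{1/2}+A^{1/4})^{2}\geq |\lambda|^{1-\gamma}A^{\gamma/2}$ for $\gamma\in[0,1]$ applied to the hypothesis on $k(\xi',\lambda)$, and the substitution $u=c|\lambda|t/2$ in the resulting one-dimensional integral, yields
\begin{equation*}
\Bigl|\int_{\wht\Gamma_4^\pm}e^{\lambda t}k(\xi',\lambda)\,d\lambda\Bigr|\leq C e^{-cA^2 t/2}A^{-\gamma/2}\,t^{-\gamma}\qquad(\gamma\in(0,1)).
\end{equation*}
Analogous pointwise bounds for $k_\CM,k_B,l_\CM,l_B,\ldots$ follow after inserting the standard bounds $|\CM_\Fb(y_N)|\leq Ce^{-cAy_N}/(|\lambda|^{1/2}+A)$ and $|e^{-B_\pm y_N}|\leq e^{-cAy_N}$ valid on $\wht\Gamma_4^\pm$ (from Lemmas~\ref{lem:fund-sym}--\ref{fundlem:D}); the attainable value of $\gamma$, and hence the operator-specific threshold $\gamma_1,\gamma_2,\gamma_3$, varies with the symbol.

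The Fourier step for $K_{A_0}(t;\Gamma_4^\pm)d$ then proceeds by $\CF^{-1}\colon L_{q'}\to L_q$ (Hausdorff--Young, $q\geq 2$), H\"older's inequality, and $\CF\colon L_p\to L_{p'}$ (Hausdorff--Young, $p\leq 2$), reducing the estimate to
\begin{equation*}
\|\varphi_{A_0}(\xi')e^{-cA^2 t/2}|\xi'|^{-\gamma/2}\|_{L_{\tilde r}(\BR^{N-1})},\qquad \tfrac{1}{\tilde r}=\tfrac{1}{p}-\tfrac{1}{q}.
\end{equation*}
Polar coordinates and the substitution $u=c\tilde rA^2t/2$ evaluate this to $Ct^{-(N-1)/(2\tilde r)+\gamma/4}$ subject to the integrability condition $\gamma<2(N-1)/\tilde r$ at $\xi'=0$; multiplying by the $t^{-\gamma}$ prefactor produces the target rate $t^{-(N-1)/2(1/p-1/q)-3\gamma/4}$, and the worst case $q=2$ yields the uniform constraint $\gamma<2(N-1)(1/p-1/2)$ matching $\gamma_1$ in Theorem~\ref{thm:main3}. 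For $K^{\Fa,\Fb}_\CM, K^{\Fa,\Fb}_B$ the extra $y_N$ integration is closed by Lemma~\ref{lem:fund2}, contributing the factor $\tfrac{1}{2}(1/p-1/2)$ and raising the effective dimension to $N$ (hence $\gamma_2<2N(1/p-1/2)$); for $L^\pm_\CM,L^\pm_B$ the retained $x_N$-variable is handled by Lemma~\ref{lem:fund1}, contributing $\tfrac{1}{2}(1/2-1/q)$ and giving the mixed threshold on $\gamma_3$; the doubly weighted $L^{\pm,\Fa,\Fb}_{\CM\CM},\ldots, L^{\pm,\Fa,\Fb}_{BB}$ combine both devices to reach the full $N$-dimensional heat rate.

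The main obstacle will be the tight simultaneous balancing of the two competing constraints on the interpolation parameter $\gamma$: integrability of the $|\lambda|$-integral at $0$ forces $\gamma<1$, while integrability of $|\xi'|^{-\gamma/2}$ in $L_{\tilde r}(\BR^{N-1})$ forces $\gamma<2(N-1)/\tilde r$ or its operator-dependent analogue, and these two together explain the thresholds $\gamma_1,\gamma_2,\gamma_3$ listed in the statement. A secondary care-point is the endpoints $p=1$ and $q=\infty$, where Hausdorff--Young degenerates to the trivial embeddings $\|\wht d\|_{L_\infty}\leq\|d\|_{L_1}$ and $\|\CF^{-1}[f]\|_{L_\infty}\leq\|f\|_{L_1}$; these require a minor modification of the H\"older step but leave the overall scheme intact.
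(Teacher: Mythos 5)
Your proposal is correct and its core mechanism coincides with the paper's: the same parametrization of $\wht\Gamma_4^\pm$ as a segment from $z_1^\pm$ (at distance $\sim A^2$) to a fixed point, the same splitting of $e^{\lambda t}$ into a Gaussian factor $e^{-cA^2 t}$ times a factor decaying along the contour, and the same interpolation $(|\lambda|^{1/2}+A^{1/4})^2\geq |\lambda|^{1-\gamma}A^{\gamma/2}$ (the paper writes this as $(|\lambda|^{1/2}+A^{1/4})^2\geq(\sqrt{s})^{2-\delta}A^{\delta/4}$ with $\delta=2\gamma$), which trades a $t^{-\gamma}$ gain from the $\lambda$-integral against an $A^{-\gamma/2}$ loss that is then absorbed into the Gaussian at the cost of $t^{+\gamma/4}$; the resulting $-\tfrac{3}{4}\gamma$ and the thresholds $\gamma_1,\gamma_2,\gamma_3$ come out identically. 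The one place where you genuinely diverge is the conversion of the pointwise multiplier bound into an $L_p$--$L_q$ estimate: you use Hausdorff--Young, H\"older with $1/\tilde r=1/p-1/q$, and Hausdorff--Young again, whereas the paper first applies its Gaussian $L_2\to L_q$ smoothing lemma (Lemma \ref{lem:Lp-Lq}), then Parseval and Young's convolution inequality, and finally the kernel-decay Lemma \ref{lem:SS01} to bound $\|\CF_{\xi'}^{-1}[e^{-z_0A^2\langle t\rangle/8}A^{-\delta/4}]\|_{L_r(\BR^{N-1})}$ and its $y_N$- or $x_N$-weighted variants. Your route is more elementary in that it only uses the modulus of the symbol and avoids the derivative estimates $|\pd_{\xi'}^{\alpha'}(\cdot)|\leq C A^{-|\alpha'|}(\cdots)$ needed to invoke Lemma \ref{lem:SS01}; it is confined to $1\leq p\leq 2\leq q\leq\infty$, but that is exactly the stated range, so nothing is lost. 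The paper's physical-space kernel bound of the form $C(|x'|^{N-1+\sigma}+y_N^{N-1+\sigma}+\langle t\rangle^{(N-1+\sigma)/2})^{-1}$ is, however, what makes the $y_N$- and $x_N$-integrations for $K_{A_0,\CM}^{\Fa,\Fb}$, $L_{A_0,\CM}^{\pm}$ and the doubly weighted operators mechanical; in your scheme you must instead carry the factors $e^{-cAy_N}$, $e^{-cAx_N}$ through the H\"older step and close with Lemmas \ref{lem:fund1} and \ref{lem:fund2}, which is exactly the device used in Lemma \ref{lem:res-main} and works here as you indicate. I see no gap.
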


\begin{proof}
We here consider $K_{A_0}(t;\Gamma_4^+)$, $K_{A_0,\CM}^{+,+}(t;\Gamma_4^+)$,
and $L_{A_0,\CM}^+(t;\Gamma_4^+)$ only.
The desired estimates for 
\begin{equation*}
L_{A_0,B}^\pm(t;\Gamma), \quad
L_{A_0,\CM\CM}^{\pm,\Fa,\Fb}(t;\Gamma), \quad
L_{A_0,\CM B}^{\pm,\Fa,\Fb}(t;\Gamma), \quad
L_{A_0,B\CM}^{\pm,\Fa,\Fb}(t;\Gamma),\quad
L_{A_0,BB}^{\pm,\Fa,\Fb}(t;\Gamma)
\end{equation*}
are proved in \cite[Lemma 4.13]{SaS1},
and $K_{A_0,B}^{\Fa,\Fb}(t;\Gamma_4^\pm)$ can be proved similarly to the case of $L_{A_0,B}^\pm(t;\Gamma)$.

{\bf Case 1}: $K_{A_0}(t;\Gamma_4^+)$.
Since $\lambda=z_1^+(1-s)+z_3^+s$ for $0\leq s \leq 1$, there holds
\begin{align*}
&[K_{A_0}(t,\Gamma_4^+)d](x') \\
&=\CF_{\xi'}^{-1}
\left[e^{-z_0 A^2 \langle t\rangle/8}\cdot e^{z_0 A^2 \langle t\rangle/8 }\frac{\varphi_{A_0}(\xi')}{2\pi i}
\int_0^1 e^{\lambda t}
 k(\xi',\lambda)(z_3^+-z_1^+)\,ds \, \wht d(\xi')\right](x').
\end{align*}
It thus holds that by Lemma \ref{lem:Lp-Lq}, Parseval's identity, 
and the assumption for $k(\xi',\lambda)$ 
\begin{align*}
&\|K_{A_0}(t,\Gamma_4^+)d\|_{L_q(\BR^{N-1})} \\
&\leq C\langle t \rangle^{-\frac{N-1}{2}\left(\frac{1}{2}-\frac{1}{q}\right)}
\int_0^1\Big\|\varphi_{A_0}(\xi')\frac{e^{z_0A^2 \langle t\rangle/8 }e^{(\Re\lambda) t}}{(|\lambda|^{1/2}+A^{1/4})^2}
\wht d(\xi')\Big\|_{L_2(\BR^{N-1})}ds
=:I_1(t).
\end{align*}
We choose a sufficiently small $A_0\in(0,1)$ so that 
\begin{equation*}
e^{z_0A^2\langle t\rangle/8}e^{(\Re\lambda) t}\leq 
C e^{-z_0A^2\langle t\rangle/8}e^{-cs\langle t\rangle} \quad \text{on $\spp\varphi_{A_0}$}
\end{equation*}
for positive constants $C$ and $c$.
Then 
\begin{equation}\label{ineq-1-gam4}
I_1(t)\leq
C\langle t \rangle^{-\frac{N-1}{2}\left(\frac{1}{2}-\frac{1}{q}\right)}
\int_0^1e^{-cs\langle t\rangle} 
\Big\|\varphi_{A_0}(\xi')\frac{e^{-z_0A^2\langle t\rangle/8}}{(|\lambda|^{1/2}+A^{1/4})^2}
\wht d(\xi')\Big\|_{L_2(\BR^{N-1})}ds.
\end{equation}

Let $0<\delta<2$. Since 
\begin{equation*}
|\lambda|^{1/2}\geq C(A\sqrt{1-s}+\sqrt{s}) \quad \text{for $s\in [0,1]$,}
\end{equation*}
we see that 
\begin{equation}\label{est-lambdahalf}
(|\lambda|^{1/2}+A^{1/4})^2\geq (|\lambda|^{1/2})^{2-\delta}A^{\delta/4}\geq C(\sqrt{s})^{2-\delta}A^{\delta/4}.
\end{equation}
Combining this inequality with \eqref{ineq-1-gam4} furnishes
\begin{align*}
I_1(t)
&\leq C\langle t \rangle^{-\frac{N-1}{2}\left(\frac{1}{2}-\frac{1}{q}\right)}
\int_0^1 \frac{e^{-cs\langle t\rangle}}{\sqrt{s}^{\,2-\delta}}\,ds
\cdot \big\|e^{-z_0A^2\langle t\rangle/8}A^{-\delta/4}\wht d(\xi')\big\|_{L_2(\BR^{N-1})} \\
&\leq C\langle t \rangle^{-\frac{N-1}{2}\left(\frac{1}{2}-\frac{1}{q}\right)-\frac{\delta}{2}}
\big\|e^{-z_0A^2\langle t\rangle/8}A^{-\delta/4}\wht d(\xi')\big\|_{L_2(\BR^{N-1})},
\end{align*}
which, combined with Parseval's identity and Young's inequality, yields
\begin{align*}
I_1(t)
&\leq 
C\langle t \rangle^{-\frac{N-1}{2}\left(\frac{1}{2}-\frac{1}{q}\right)-\frac{\delta}{2}}
\|\CF_{\xi'}^{-1}[e^{-z_0A^2\langle t\rangle/8}A^{-\delta/4}\wht d(\xi')]\big\|_{L_2(\BR^{N-1})} \\
&\leq C\langle t \rangle^{-\frac{N-1}{2}\left(\frac{1}{2}-\frac{1}{q}\right)-\frac{\delta}{2}}
\|\CF_{\xi'}^{-1}[e^{-z_0A^2\langle t\rangle/8}A^{-\delta/4}]\|_{L_r(\BR^{N-1})}\|d\|_{L_p(\BR^{N-1})},
\end{align*}
where $1+(1/2)=(1/p)+(1/r)$.

From now on, we estimate $J_1(t):=\|\CF_{\xi'}^{-1}[e^{-z_0A^2\langle t\rangle/8}A^{-\delta/4}]\|_{L_r(\BR^{N-1})}$
by Lemma \ref{lem:SS01}.
By the Leibniz rule and Lemmas \ref{lem:fund-sym} and \ref{fundlem:D},
 we have for any multi-index $\alpha'\in\BN_0^{N-1}$ and $\xi'\in\BR^{N-1}\setminus\{0\}$
\begin{equation*}
|\pd_{\xi'}^{\alpha'}(e^{-z_0A^2\langle t\rangle/8}A^{-\delta/4})|\leq C_{\alpha'}A^{-(\delta/4)-|\alpha'|}
e^{-z_0A^2\langle t\rangle/16},
\end{equation*}
where $C_{\alpha'}$ is a positive constant independent of $\xi'$ and $t$.
Lemma \ref{lem:SS01} with $\sigma=1-(\delta/4)$, $L=N-2$, and $n=N-1$ 
then furnishes
\begin{equation*}
|\CF_{\xi'}^{-1}[e^{-z_0A^2\langle t\rangle/8}A^{-\delta/4}](x')|\leq C|x'|^{-(N-1-(\delta/4))} \quad (x'\in\BR^{N-1}\setminus\{0\}).
\end{equation*}
By direct calculations, we also have
\begin{equation*}
|\CF_{\xi'}^{-1}[e^{-z_0A^2\langle t\rangle/8}A^{-\delta/4}](x')|
\leq C\int_{\BR^{N-1}}e^{-z_0A^2\langle t\rangle/8}A^{-\delta/4}\,d\xi'
\leq C\langle t\rangle^{-(N-1-(\delta/4))/2},
\end{equation*}
and thus we obtain by these two inequalities
\begin{equation*}
|\CF_{\xi'}^{-1}[e^{-z_0A^2\langle t\rangle/8}A^{-\delta/4}](x')| 
\leq \frac{C}{\langle t\rangle^{(N-1-(\delta/4))/2}+|x'|^{(N-1-(\delta/4))}}.
\end{equation*}

Let us choose the above $\delta$ so that 
\begin{equation*}
0<\delta<\min\left\{2,4(N-1)\left(\frac{1}{p}-\frac{1}{2}\right)\right\}.
\end{equation*}
Then we have
\begin{equation*}
J_1(t)\leq C\langle t\rangle^{-\frac{1}{2}(N-1-\frac{\delta}{4})+\frac{N-1}{2r}}
 =C\langle t\rangle^{-\frac{N-1}{2}\left(\frac{1}{p}-\frac{1}{2}\right)+\frac{\delta}{8}}.
\end{equation*}
Hence
\begin{equation*}
I_1(t)\leq C\langle t\rangle^{-\frac{N-1}{2}\left(\frac{1}{p}-\frac{1}{q}\right)
-\frac{3}{8}\delta}\|d\|_{L_p(\BR^{N-1})},
\end{equation*}
which implies the desired estimate for $K_{A_0}(t;\Gamma_4^+)$ holds.

{\bf Case 2}: $K_{A_0,\CM}^{+,+}(t;\Gamma_4^+)$.
In the same way as we have obtained \eqref{ineq-1-gam4}, we obtain
\begin{align*}
&\|K_{A_0,\CM}^{+,+}(t;\Gamma_4^+)\Bf\|_{L_q(\BR^{N-1})} 
\leq C\langle t\rangle^{-\frac{N-1}{2}\left(\frac{1}{2}-\frac{1}{q}\right)} \\
&\times \int_0^\infty\int_0^1 e^{-cs\langle t\rangle} \Big\|
\varphi_{A_0}(\xi')e^{-z_0A^2\langle t\rangle/8}
k_\CM(\xi',\lambda)\CM_+(y_N)\wht \Bf(\xi',y_N)\Big\|_{L_2(\BR^{N-1})}dsdy_N  \\
&=:I_2(t).
\end{align*}
Since it holds by Lemma \ref{lem:fund-sym} that
  $C_1(|\lambda|^{1/2}+A)\leq|B_++A|\leq C_2|\lambda|^{1/2}$
for $A\in(0,1)$ and $\lambda\in\wht \Gamma_4^+$,
we see that
\begin{equation*}
\left|\frac{1}{B_+-A}\right|=\left|\frac{B_++A}{B_+^2-A^2}\right|\leq \frac{C}{|\lambda|^{1/2}}\leq \frac{\wtd C}{|\lambda|^{1/2}+A}.
\end{equation*}
Therefore for $A\in(0,1)$ and $\lambda\in\wht \Gamma_4^+$
\begin{equation*}
|\CM_+(y_N)|\leq \frac{Ce^{-cA y_N}}{|\lambda|^{1/2}+A} ,
\end{equation*}
with positive constants $C$ and $c$, which, combined with 
 the assumption for $k_\CM(\xi',\lambda)$ and \eqref{est-lambdahalf}, furnishes
\begin{equation*}
|k_\CM(\xi',\lambda)\CM_+(y_N)|\leq C\frac{A^{1/2}e^{-cA y_N}}{(|\lambda|^{1/2}+A^{1/4})^2} 
\leq C\frac{A^{1/2}e^{-cA y_N}}{(\sqrt{s})^{2-\delta}A^{\delta/4}} \quad (0<\delta<2).
\end{equation*}

One now sees that
\begin{align}\label{ineq-I2-gam4}
I_2(t)&\leq 
C\langle t\rangle^{-\frac{N-1}{2}\left(\frac{1}{2}-\frac{1}{q}\right)} 
\int_0^1 \frac{e^{-cs\langle t\rangle}}{(\sqrt{s})^{2-\delta}}\,ds \notag \\
&\times\int_0^\infty \|e^{-z_0A^2\langle t\rangle/8}A^{1/2-\delta/4}e^{-cAy_N}\wht\Bf(\xi',y_N)\|_{L_2(\BR^{N-1})}\,dy_N \notag \\
&\leq 
C\langle t\rangle^{-\frac{N-1}{2}\left(\frac{1}{2}-\frac{1}{q}\right)-\frac{\delta}{2}}
\int_0^\infty \|e^{-z_0A^2\langle t\rangle/8}A^{1/2-\delta/4}e^{-cAy_N}\wht\Bf(\xi',y_N)\|_{L_2(\BR^{N-1})}\,dy_N,
\end{align}
and thus for $1+(1/2)=(1/p)+(1/r)$
\begin{align*}
I_2(t)
&\leq C\langle t\rangle^{-\frac{N-1}{2}\left(\frac{1}{2}-\frac{1}{q}\right)-\frac{\delta}{2}} \\
&\times\int_0^\infty\|\CF_{\xi'}^{-1}[e^{-z_0A^2\langle t\rangle/8}A^{1/2-\delta/4}e^{-cAy_N}]\|_{L_r(\BR^{N-1})}
\|\Bf(\cdot,y_N)\|_{L_p(\BR^{N-1})}\,dy_N \\
&\leq C\langle t\rangle^{-\frac{N-1}{2}\left(\frac{1}{2}-\frac{1}{q}\right)-\frac{\delta}{2}} J_2(t)\|\Bf\|_{L_p(\BR_+^N)},
\end{align*}
where for $p'=p/(p-1)$
\begin{align*}
&J_2(t)= \\
&\left\{\begin{aligned}
&\left(\int_0^\infty\|\CF_{\xi'}^{-1}[e^{-z_0A^2\langle t\rangle/8}A^{1/2-\delta/4}e^{-cAy_N}]\|_{L_r(\BR^{N-1})}^{p'}dy_N\right)^{1/p'}
&& (1<p<2), \\
&\sup_{y_N>0}\|\CF_{\xi'}^{-1}[e^{-z_0A^2\langle t\rangle/8}A^{1/2-\delta/4}e^{-cAy_N}]\|_{L_r(\BR^{N-1})}
&&(p=1).
\end{aligned}\right.
\end{align*}
By the Leibniz rule and Lemmas \ref{lem:fund-sym} and \ref{fundlem:D},
 we have for any multi-index $\alpha'\in\BN_0^{N-1}$ and $\xi'\in\BR^{N-1}\setminus\{0\}$
\begin{equation*}
|\pd_{\xi'}^{\alpha'}(e^{-z_0A^2\langle t\rangle/8}A^{1/2-\delta/4}e^{-cAy_N})|\leq C_{\alpha'}A^{1/2-\delta/4-|\alpha'|}
e^{-z_0A^2\langle t\rangle/16},
\end{equation*}
where $C_{\alpha'}$ is a positive constant independent of $\xi'$ and $t$.
Lemma \ref{lem:SS01} with $\sigma=1/2-\delta/4$, $L=N-1$, and $n=N-1$ then furnishes 
\begin{equation*}
|\CF_{\xi'}^{-1}[e^{-z_0A^2\langle t\rangle/8}A^{1/2-\delta/4}e^{-cAy_N}](x')|
\leq C|x'|^{-(N-1+\sigma)} \quad (x'\in\BR^{N-1}\setminus\{0\}).
\end{equation*}
By direct calculations, we also have
\begin{align*}
&|\CF_{\xi'}^{-1}[e^{-z_0A^2\langle t\rangle/8}A^{1/2-\delta/4}e^{-cAy_N}](x')| \\
&\leq C\int_{\BR^{N-1}}A^{1/2-\delta/4}e^{-cAy_N} \,d\xi'\leq C y_N^{-(N-1+\sigma)} \quad (y_N>0), \\
&|\CF_{\xi'}^{-1}[e^{-z_0A^2\langle t\rangle/8}A^{1/2-\delta/4}e^{-cAy_N}](x')| \\
&\leq C\int_{\BR^{N-1}}e^{-z_0A^2\langle t\rangle/8}A^{1/2-\delta/4}\,d\xi'
\leq C\langle t\rangle^{-\frac{1}{2}(N-1+\sigma)} \quad (t>0).
\end{align*}
Combining these three inequalities yields
\begin{equation*}
|\CF_{\xi'}^{-1}[e^{-z_0A^2\langle t\rangle/8}A^{1/2-\delta/4}e^{-cAy_N}](x')|
\leq \frac{C}{|x'|^{N-1+\sigma}+y_N^{N-1+\sigma}+\langle t\rangle^{(N-1+\sigma)/2}},
\end{equation*}
which implies for any $\delta\in(0,2)$
\begin{equation*}
\|\CF_{\xi'}^{-1}[e^{-z_0A^2\langle t\rangle/8}A^{1/2-\delta/4}e^{-cAy_N}]\|_{L_r(\BR^{N-1})}
\leq C(y_N+\langle t\rangle^{1/2})^{-(N-1)\left(\frac{1}{p}-\frac{1}{2}\right)-\left(\frac{1}{2}-\frac{\delta}{4}\right)}.
\end{equation*}

Let us choose the above $\delta$ so that
\begin{equation*}
0<\delta<\min\left\{2,4N\left(\frac{1}{p}-\frac{1}{2}\right)\right\}.
\end{equation*}
Then we have
\begin{equation*}
J_2(t)\leq C\langle t\rangle^{-\frac{N-1}{2}\left(\frac{1}{p}-\frac{1}{2}\right)-\frac{1}{2}\left(\frac{1}{p}-\frac{1}{2}\right)+\frac{\delta}{8}}.
\end{equation*}
Hence
\begin{equation*}
I_2(t)\leq C\langle t\rangle^{-\frac{N-1}{2}\left(\frac{1}{p}-\frac{1}{q}\right)-\frac{1}{2}\left(\frac{1}{p}-\frac{1}{2}\right)-\frac{3}{8}\delta}
\|\Bf\|_{L_p(\BR_+^N)},
\end{equation*}
which implies the desired estimate for $K_{A_0,\CM}^{+,+}(t;\Gamma_4^+)$ holds.

{\bf Case 3}: $L_{A_0,\CM}^+(t;\Gamma_4^+)$.
In the same way as we have obtained \eqref{ineq-I2-gam4}, we obtain for $\delta\in(0,2)$
\begin{align*}
&\|[L_{A_0,\CM}^+(t;\Gamma_4^+)d](\cdot,x_N)\|_{L_q(\BR^{N-1})} \\
&\leq C
\langle t\rangle^{-\frac{N-1}{2}\left(\frac{1}{2}-\frac{1}{q}\right)-\frac{\delta}{2}}
\|e^{-z_0A^2\langle t\rangle/8}A^{1/2-\delta/4}e^{-cAx_N}\wht d(\xi')\|_{L_2(\BR^{N-1})}=:I_3(x_N,t)
\end{align*}
By Parseval's identity and Young's inequality,
\begin{align*}
&I_3(x_N,t) \\
&\leq C\langle t\rangle^{-\frac{N-1}{2}\left(\frac{1}{2}-\frac{1}{q}\right)-\frac{\delta}{2}}
\|\CF_{\xi'}^{-1}[e^{-z_0A^2\langle t\rangle/8}A^{1/2-\delta/4}e^{-cAx_N}]\|_{L_r(\BR^{N-1})}\|d\|_{L_p(\BR^{N-1})},
\end{align*}
where $1+(1/2)=(1/p)+(1/r)$. Similarly to $J_2(t)$ in Case 2, we observe for any $\delta\in(0,2)$ that
\begin{equation*}
\|\CF_{\xi'}^{-1}[e^{-z_0A^2\langle t\rangle/8}A^{1/2-\delta/4}e^{-cAx_N}]\|_{L_r(\BR^{N-1})}
\leq C(x_N+\langle t\rangle^{1/2})^{-(N-1)\left(\frac{1}{p}-\frac{1}{2}\right)-\left(\frac{1}{2}-\frac{\delta}{4}\right)}.
\end{equation*}
It thus holds that 
\begin{equation*}
\left(\int_0^\infty I_3(x_N,t)^q\,dx_N\right)^{1/q}
\leq C\langle t\rangle^{-\frac{N-1}{2}\left(\frac{1}{p}-\frac{1}{q}\right)-\frac{1}{2}\left(\frac{1}{2}-\frac{1}{q}\right)-\frac{3}{8}\delta}
\|d\|_{L_p(\BR^{N-1})}
\end{equation*}
under the assumption 
\begin{equation*}
0<\delta<\min\left\{2,4\left((N-1)\left(\frac{1}{p}-\frac{1}{2}\right)+\frac{1}{2}-\frac{1}{q}\right)\right\}.
\end{equation*}
This implies the desired estimate for $L_{A_0,\CM}^+(t;\Gamma_4^+)$,
which completes the proof of Lemma \ref{lem:gam-4}.
\end{proof}

Combining Lemma \ref{lem:gam-4} with \eqref{0223-1}, \eqref{0223-2}, and \eqref{0223-3}
yields Theorem \ref{thm:gam-4} immediately.
This completes the proof of Theorem \ref{thm:gam-4}.

\subsection{Analysis for $\Gamma_5^\pm$.}
Similarly to \cite[Subsection 4.4]{SaS1}, we can prove by Lemmas \ref{lem:fund-sym} and \ref{lem:nonzero}
the following theorem.

\begin{thm}\label{thm:gam-5}
Let $1\leq p \leq 2 \leq q \leq \infty$.
Then there exist constants $A_0\in(0,A_3)$ and $c_0>0$ such that for any $t\geq 1$
and $(d,\Bf)\in Y_p$
\begin{align*}
\|\CH_{A_0}^{1}(t;\Gamma_5^\pm)d\|_{L_q(\BR^{N-1})}
&\leq Ce^{-c_0t}\|d\|_{L_p(\BR^{N-1})}, \\
\|\CH_{A_0}^{2}(t;\Gamma_5^\pm)\Bf\|_{L_q(\BR^{N-1})}
&\leq Ce^{-c_0t}\|\Bf\|_{L_p(\dot\BR^{N})}, \\
\|\CU_{A_0}^{1}(t;\Gamma_5^\pm)d\|_{L_q(\dot\BR^{N})}
&\leq Ce^{-c_0t}\|d\|_{L_p(\BR^{N-1})}, \\
\|\CU_{A_0}^{2}(t;\Gamma_5^\pm)\Bf\|_{L_q(\dot\BR^{N})}
&\leq Ce^{-c_0t}\|\Bf\|_{L_p(\dot\BR^{N})}, 
\end{align*}
where $C$ is a positive constant independent of $t$, $d$, and $\Bf$. 
\end{thm}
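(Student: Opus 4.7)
The plan is to exploit that $\Gamma_5^\pm$ lies strictly inside a left half-plane $\{\Re\lambda \leq -c_0\}$ for some $c_0 > 0$, so that $|e^{\lambda t}|$ itself provides the exponential decay in $t$. From the parametrization $\lambda = z_3^\pm + s e^{\pm i(\pi-\theta_1)}$, $s\geq 0$, and $\Re z_3^\pm = -|z_3^\pm|/\sqrt{2} < 0$, one has $\Re\lambda \leq \Re z_3^\pm - s\cos\theta_1$, so for $t\geq 1$ the splitting
\[
|e^{\lambda t}| \leq e^{-c_0 t/2}\cdot e^{-(c_0 + 2s\cos\theta_1)t/2}
\]
extracts the time decay while the remaining factor guarantees absolute convergence of the contour integral in $s$.

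Next I would obtain uniform-in-$\xi'$ bounds on the multiplier symbols for $\xi'\in\spp\varphi_{A_0}$ (hence $A\leq 2A_0$) and $\lambda\in\Gamma_5^\pm$. Lemma~\ref{lem:fund-sym} gives the polynomial bounds on $B_\pm$, $F(A,\lambda)$, $\Phi_j^{\Fa,\Fb}$, $\Psi_j^{\Fa,\Fb}$, $\CI_{m\pm}$, and $\CJ_m$ in terms of $|\lambda|^{1/2}+A$, while Lemma~\ref{lem:nonzero}(1) handles $1/L(A,\lambda)$ on the tail $|\lambda|\geq \delta_0$ of $\Gamma_5^\pm$. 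On the bounded portion $|\lambda|\leq\delta_0$, I would combine Lemma~\ref{lem:nonzero}(2) with a compactness/continuity argument analogous to Proposition~\ref{prp:mid} to get a uniform lower bound of $|L(A,\lambda)|$, after first choosing $A_0$ small enough that the zeros $\lambda_\pm(A)=\pm i\alpha^{1/2}A^{1/2}+O(A^{5/4})$ of $\CL_A(\lambda)$ (cf.\ Proposition~\ref{prp:roots}) remain angularly separated from $\Gamma_5^\pm$. The factors $e^{\mp B_\pm x_N}$ and $e^{-B_\Fb y_N}$ arising in the velocity formulas contribute decay at rate $\Re B_\pm\geq C(|\lambda|^{1/2}+A)\geq CA_0$, and are treated by the techniques already developed for $\Gamma_{\rm Res}^\pm$, $\Gamma_1^\pm$, and $\Gamma_4^\pm$.

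Assembling these estimates, the Fourier multipliers
\[
m(\xi',x_N,t) = \frac{\varphi_{A_0}(\xi')}{2\pi i}\int_{\wht\Gamma_5^\pm} e^{\lambda t}(\cdots)\,d\lambda
\]
are $L^\infty$-bounded by $Ce^{-c_0 t/2}$ with compact $\xi'$-support in $\spp\varphi_{A_0}$. The $L_p$-$L_q$ bound with $1\leq p\leq 2\leq q\leq\infty$ then follows from Hausdorff--Young: for $1/s = 1/p - 1/q$,
\[
\|\CF_{\xi'}^{-1}[m\,\wht d]\|_{L_q(\BR^{N-1})}
\leq \|m\|_{L_s(\BR^{N-1})}\|\wht d\|_{L_{p'}(\BR^{N-1})}
\leq C\|m\|_{L_s}\|d\|_{L_p(\BR^{N-1})},
\]
together with $\|m\|_{L_s}\leq |\spp\varphi_{A_0}|^{1/s}\|m\|_{L_\infty}\leq Ce^{-c_0 t/2}$. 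For the velocity operators one additionally integrates the $x_N$-exponential to pass to $L_q(\dot\BR^N)$, and for the source-term operators $\CH_{A_0}^2$ and $\CU_{A_0}^2$ one uses Minkowski's inequality together with $y_N$-integrability coming from $e^{-B_\Fb y_N}$ and the smoothness assumption $\Bf\in L_p$.

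The main obstacle is establishing the uniform lower bound $|L(A,\lambda)|\geq c>0$ on the \emph{entire} contour $\Gamma_5^\pm$ for all sufficiently small $A$, since Lemma~\ref{lem:nonzero}(1) applies only to the large-$|\lambda|$ tail and Lemma~\ref{lem:nonzero}(2) guarantees non-vanishing only in the closed right half-plane. The resolution rests on the geometric separation between $\Gamma_5^\pm$ (which has argument close to $\pm\pi$) and the poles $\lambda_\pm(A)$ of $1/L(A,\lambda)$ (which cluster near the imaginary axis), combined with continuity of $L$ on compact subsets of parameter space. Once this separation is secured by taking $A_0$ small enough, the remainder of the argument parallels \cite[Subsection 4.4]{SaS1} and no new analytical ingredient is required.
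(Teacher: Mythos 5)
Your overall strategy is the one the paper intends: extract the factor $e^{-c_0 t}$ from $\Re\lambda\le\Re z_3^\pm-s\cos\theta_1<0$ along $\wht\Gamma_5^\pm$, bound the symbols by Lemmas~\ref{lem:fund-sym} and~\ref{lem:nonzero}, and close the $L_p$--$L_q$ estimate with the multiplier/Young machinery already set up for the other contours; the paper's proof is literally a citation of \cite[Subsection 4.4]{SaS1} together with those two lemmas, so this is essentially the same route.

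The one place your write-up goes astray is the step you yourself single out as the ``main obstacle,'' namely the lower bound for $|L(A,\lambda)|$ on a supposed bounded portion $|\lambda|\le\delta_0$ of the contour. That portion does not exist. The point $z_3^\pm$ is the intersection of $\arg\lambda=\pm 3\pi/4$ with $\Gamma_0^\pm$, and a direct computation from \eqref{dfn:gamma_0} gives $|z_3^\pm|=2\sqrt{2}\,\lambda_1/(\cos\theta_1-\sin\theta_1)\ge 2\sqrt{2}\,\lambda_1$, with $|\lambda|$ increasing along the ray $\wht\Gamma_5^\pm$ (its starting point already lies beyond the foot of the perpendicular from the origin to the line containing $\Gamma_0^\pm$). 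Since one may take $\lambda_1\ge\delta_0$ without loss of generality ($\Gamma_0$ must in any case lie in the set where the solution formulas, hence $L(A,\lambda)\neq0$, are available, and enlarging $\lambda_1$ does not change the operators by Cauchy's theorem), Lemma~\ref{lem:nonzero}(1) applies on the \emph{entire} contour, and $\wht\Gamma_5^\pm\subset\Sigma_{\theta_1}$ so Lemma~\ref{lem:fund-sym} does as well. This matters because your proposed fallback for the phantom small-$|\lambda|$ regime would not survive scrutiny: Lemma~\ref{lem:nonzero}(2) is stated only for $\Re\lambda\ge0$, whereas $\wht\Gamma_5^\pm$ lies entirely in $\Re\lambda<0$, and the assertion that the only zeros of $L(A,\cdot)$ near $\wht\Gamma_5^\pm$ are the $\lambda_\pm(A)$ of Proposition~\ref{prp:roots} is not established anywhere --- the Rouch\'e arguments of Propositions~\ref{prp:simple} and~\ref{prp:roots} locate zeros only inside the bounded region $K$. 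Once you discard that detour and use Lemma~\ref{lem:nonzero}(1) uniformly on $\wht\Gamma_5^\pm$, the rest of your argument (Hausdorff--Young/H\"older in $\xi'$, Minkowski and H\"older in $y_N$ against $e^{-B_{\Fb}y_N}$ and $\CM_{\Fb}(y_N)$, and the $x_N$-integration of $e^{\mp B_\pm x_N}$) goes through as claimed.
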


\subsection{Proof of Theorem \ref{thm:main3}.}

Recalling \eqref{dfn:u-h}, we observe for $\CS\in\{\CH,\CU\}$ that
 the operators in Theorem \ref{thm:main3} are given by 
\begin{alignat*}{2}
\CS_{A_0}^{1\pm}(t)d&=\CS_{A_0}^1(t;\Gamma_{\rm res}^\pm)d, 
\quad &\CS_{A_0}^{2\pm}(t)\Bf &=\CS_{A_0}^2(t;\Gamma_{\rm res}^\pm)\Bf, \\
\wtd\CS_{A_0}^{1}(t)d&=\sum_{\Fa\in\{+,-\}}\sum_{j\in\{1,4,5\}}\CS_{A_0}^1(t;\Gamma_j^\Fa)d, 
\quad &\wtd\CS_{A_0}^{2}(t)\Bf&=\sum_{\Fa\in\{+,-\}}\sum_{j\in\{1,4,5\}}\CS_{A_0}^2(t;\Gamma_j^\Fa)\Bf.
\end{alignat*}
Theorems \ref{thm:gam_res}, \ref{gam_1-decay}, \ref{thm:gam-4}, and \ref{thm:gam-5} then yields Theorem \ref{thm:main3} immediately.
This completes the proof of Theorem \ref{thm:main3}.

\section{Time-decay estimates for high frequency part}
This section proves Theorem \ref{thm:main4}.
Suppose $\rho_->\rho_+>0$ throughout this section.

Let us denote the points of intersection between $\lambda=-1+is$ $(s\in\BR)$ and $\Gamma_0^\pm$ given in \eqref{dfn:gamma_0}
by $z_4^\pm$,
and let $A_0$ be the positive constant given in Theorem \ref{thm:main3}.
We define $A_\infty=A_{\rm high}(1,\Im z_4^+)$ for the positive constant $A_{\rm high}$ given in Proposition \ref{prp:high}.
In addition, we set $M_1=A_0/2$ and $M_2=3A_\infty$ in Proposition \ref{prp:mid}.
Then we have $a_0\in(0,1)$ from Proposition \ref{prp:mid} and denote
the points of intersection between $\lambda=-a_0+is$ $(s\in\BR)$ and $\Gamma_0^\pm$ by $z_5^\pm$.
Note that $\Im z_5^-=-\Im z_5^+$.

Now we define integral paths $\wht \Gamma_6$ and $\wht\Gamma_7$ as follows:
\begin{align*}
\wht\Gamma_6
&=\{\lambda\in\BC : \lambda=-a_0+si, -\Im z_5^+ \leq s \leq \Im z_5^+\}, \\
\wht \Gamma_7
&=\{\lambda\in\BC : \lambda=-a_0+ i \Im z_5^+  +se^{i(\pi-\theta_1)}, s\geq 0\} \\
&\cup \{\lambda\in\BC : \lambda=-a_0-i\Im z_5^+ +se^{-i(\pi-\theta_1)}, s\geq 0\},
\end{align*}
Then Propositions \ref{prp:high} and \ref{prp:mid} yield the following lemma.

\begin{lem}\label{lem:highest}
\begin{enumerate}[$(1)$]
\item
$\Re B_\pm>0$, $F(A,\lambda)\neq 0$, and $L(A,\lambda)\neq 0$ when $A\geq A_0/2$ and $\lambda\in\Lambda(a_0,\Im z_5^+)$,
where $\Lambda(a_0,\Im z_5^+)$ is given in \eqref{cap:lam}.
\item
Let $s\in\BR$ and $\alpha'\in\BN_0^{N-1}$.
Then for any  $\lambda\in\wht \Gamma_6$
it holds that on $\spp\varphi_{A_\infty}\cup\spp\varphi_{[A_0,A_\infty]}$
\begin{align*}
&|\pd_\xi' B_\pm^s|\leq  CA^{s-|\alpha'|}, \quad |\pd_{\xi'}^{\alpha'}E^s|\leq C A^{s-|\alpha'|}, \quad
|\pd_{\xi'}^{\alpha'}(A+B_\pm)^s|\leq CA^{s-|\alpha'|}, \\
&|\pd_{\xi'}^{\alpha'}F(A,\lambda)^s|\leq CA^{3s-|\alpha'|}, \quad
|\pd_{\xi'}^{\alpha'}L(A,\lambda)^{-1}|\leq CA^{-4-|\alpha'|}.
\end{align*}
\end{enumerate}
\end{lem}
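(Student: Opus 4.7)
The overall strategy is to reduce everything to the two preceding propositions by splitting the range of $A$, and to exploit the fact that on $\wht\Gamma_6$ the spectral parameter $\lambda$ stays in a bounded set so that $|\lambda|^{1/2}+A$ and $A$ are comparable whenever $A$ is bounded below.

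\textbf{Part (1).} First I would verify the nesting $\Lambda(a_0,\Im z_5^+)\subset\Lambda(1,\Im z_4^+)$. Since $a_0\in(0,1)$, the inequality on real parts is immediate; for the imaginary parts, $z_4^+$ and $z_5^+$ are the intersections of the same ray $\Gamma_0^+$ with the vertical lines $\Re\lambda=-1$ and $\Re\lambda=-a_0$ respectively, and because $-1<-a_0$, the point $z_5^+$ is closer to the starting point $2\lambda_1/\sin\theta_1$ of the ray, giving $\Im z_5^+<\Im z_4^+$. Hence for $A\geq A_\infty=A_{\rm high}(1,\Im z_4^+)$, Proposition~\ref{prp:high} applies on $\Lambda(a_0,\Im z_5^+)$ and furnishes $\Re B_\pm>0$, $|F(A,\lambda)|\geq CA^3$, and $|L(A,\lambda)|\geq C\sigma A^4$. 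For $A\in[A_0/2,3A_\infty]$, Proposition~\ref{prp:mid} (applied with $M_1=A_0/2$, $M_2=3A_\infty$, $b=\Im z_5^+$, which is how $a_0$ was chosen) gives the same three non-vanishing statements. The two regimes overlap on $[A_\infty,3A_\infty]$ and together cover $A\geq A_0/2$.

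\textbf{Part (2).} On $\wht\Gamma_6$ one has $|\lambda|\leq(a_0^2+(\Im z_5^+)^2)^{1/2}=:R$, while on $\spp\varphi_{A_\infty}\cup\spp\varphi_{[A_0,A_\infty]}$ one has $A\geq A_0$. Consequently $A\leq|\lambda|^{1/2}+A\leq(1+R^{1/2}A_0^{-1})A$, so $|\lambda|^{1/2}+A\sim A$ uniformly. Substituting this into the symbol estimates of Lemma~\ref{lem:fund-sym}(2) and Lemma~\ref{fundlem:D}(1) delivers the asserted bounds for $B_\pm^s$, $E^s$, $(A+B_\pm)^s$ and $F(A,\lambda)^s$ with no further work.

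The only nontrivial point is the bound $|\pd_{\xi'}^{\alpha'}L(A,\lambda)^{-1}|\leq CA^{-4-|\alpha'|}$, which I would handle by splitting again. For $A\geq A_\infty$, Proposition~\ref{prp:high}(2) gives $|L|\geq c\sigma A^4$; starting from $L=\lambda F+A(\omega+\sigma A^2)(D_++D_-)$ in \eqref{Lop-det-2} and using Lemmas~\ref{lem:fund-sym}, \ref{fundlem:D} one derives $|\pd_{\xi'}^{\alpha'}L|\leq CA^{4-|\alpha'|}$ on this range, after which the Fa\`a di Bruno formula applied to $1/L$ yields the claim. For $A\in[A_0,A_\infty]$, both $L$ and all its $\xi'$-derivatives are continuous and $L$ is bounded away from zero by part~(1), so $\pd_{\xi'}^{\alpha'}L^{-1}$ is uniformly bounded on the compact set $[A_0,A_\infty]\times\wht\Gamma_6$, and since $A^{-4-|\alpha'|}$ is bounded below there, any such constant can be absorbed into $CA^{-4-|\alpha'|}$. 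The main obstacle I anticipate is purely bookkeeping in the Fa\`a di Bruno step: confirming that the constants extracted from Lemmas~\ref{lem:fund-sym} and \ref{fundlem:D} combine to give a bound of the right homogeneity $A^{-4-|\alpha'|}$, and that the constants match across the overlap $[A_\infty,3A_\infty]$ of the two regimes so that the final bound is uniform in $A\geq A_0$.
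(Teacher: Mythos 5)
Your proposal is correct and is essentially the paper's argument: the paper offers no written proof beyond the remark that Propositions \ref{prp:high} and \ref{prp:mid} yield the lemma, and your splitting $A\geq A_\infty$ versus $A\in[A_0/2,3A_\infty]$, together with the observation that $|\lambda|^{1/2}+A\sim A$ on $\wht\Gamma_6$ once $A\geq A_0$, is exactly the intended reduction. One small point to tighten: Lemma \ref{lem:fund-sym}(2) and Lemma \ref{fundlem:D}(1) are stated only for $\lambda\in\Sigma_\varepsilon\cup\{0\}$, whereas $\wht\Gamma_6$ contains $\lambda=-a_0$ (argument $\pi$), so they cannot be cited verbatim; instead one should note that for $A\geq A_0$ and $\lambda\in\wht\Gamma_6$ the quantity $(\rho_\pm/\mu_\pm)\lambda+A^2$ has real part at least $A_0^2-a_0\max(\rho_\pm/\mu_\pm)>0$ (guaranteed by the choice of $a_0$ in Proposition \ref{prp:mid}) and modulus comparable to $A^2$, after which the same elementary computation behind those lemmas gives $|\pd_{\xi'}^{\alpha'}B_\pm^s|\leq CA^{s-|\alpha'|}$ and the remaining bounds, including your Fa\`a di Bruno step for $L^{-1}$, go through as you describe.
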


Let $\CS\in\{\CH,\CU\}$ and $Z\in\{A_\infty,[A_0,A_\infty]\}$.
Recalling \eqref{eq:sol-main}, we have
by Lemma \ref{lem:highest} (1) and Cauchy's integral theorem
\begin{alignat*}{2}
\CS_Z^1(t)d&=\sum_{j=6}^7\CS_Z^1(t;\Gamma_j)d, \quad 
&\CS_Z^1(t;\Gamma_j)d&:=\CF_{\xi'}^{-1}[\wht \CS_Z^1(t;\wht\Gamma_j)d](x'), \\
\CS_Z^2(t)\Bf&=\sum_{j=6}^7\CS_Z^2(t;\Gamma_j)\Bf, \quad 
&\CS_Z^2(t;\Gamma_j)\Bf&:=\CF_{\xi'}^{-1}[\wht \CS_Z^2(t;\wht\Gamma_j)\Bf](x').
\end{alignat*}
Similarly to \cite[Section 5]{SaS1}, we can prove from Lemmas \ref{lem:highest}, \ref{lem:fund-sym}, and \ref{lem:nonzero}
the following theorem by choosing $A_\infty$ larger if necessary.

\begin{thm}\label{thm:high-path}
Let $q\in(1,\infty)$ and $Z\in\{A_\infty,[A_0,A_\infty]\}$.
Let $j=6,7$ and $k\in\BN_0$.
Then for any $t\geq 1$ and $(d,\Bf)\in X_q$
\begin{align*}
\|(\pd_t^k\CH_Z^1(t;\Gamma_j)d,\pd_t^k\CH_Z^2(t;\Gamma_j)\Bf)\|_{W_q^{3-1/q}(\BR^{N-1})}
&\leq C e^{-ct}\|(d,\Bf)\|_{X_q}, \\
\|(\pd_t^k\CU_Z^1(t;\Gamma_j)d,\pd_t^k\CU_Z^2(t;\Gamma_j)\Bf)\|_{H_q^2(\dot\BR^{N})}
&\leq C e^{-ct}\|(d,\Bf)\|_{X_q},
\end{align*}
where $C$ and $c$ are positive constants independent of $t$, $d$, and $\Bf$.
\end{thm}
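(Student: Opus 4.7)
The plan is to estimate each of the four operators $\pd_t^k\CH_Z^a(t;\Gamma_j)$ and $\pd_t^k\CU_Z^a(t;\Gamma_j)$ ($a=1,2$, $j=6,7$) by plugging the representation formulas given in Subsection \ref{subsec:3-3} directly into Fourier multiplier estimates on the half-space. The key observation is that both $\wht \Gamma_6$ and $\wht \Gamma_7$ lie in $\{\lambda\in\BC:\Re\lambda\leq -a_0<0\}$, so the factor $e^{\lambda t}$ contributes $|e^{\lambda t}|\leq e^{-a_0 t}$; what remains is to show that the spatial-norm bounds of the $\lambda$-integrals, after extracting this exponential, are uniformly controlled by $\|(d,\Bf)\|_{X_q}$. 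Lemma \ref{lem:highest}(1) guarantees that $L(A,\lambda)\neq 0$ in the entire region swept during the contour shift from $\Gamma_0$ to $\wht\Gamma_6\cup\wht\Gamma_7$ on the support of $\varphi_{A_\infty}+\varphi_{[A_0,A_\infty]}$, so Cauchy's theorem applies and the shift is justified.

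First I would handle the compact vertical path $\wht\Gamma_6$. Since $|\lambda|$ is bounded and the path length is finite, I would pull the supremum in $\lambda\in\wht\Gamma_6$ outside the integral. On $\spp\varphi_Z$ we have $A\geq A_0/2$, and Lemma \ref{lem:highest}(2) combined with the Leibniz rule and Lemmas \ref{lem:fund-sym}, \ref{fundlem:D} gives Mikhlin-type pointwise bounds $|\pd_{\xi'}^{\alpha'}m(\xi',\lambda)|\leq C_{\alpha'}A^{s-|\alpha'|}$ on every building-block symbol of the representation formulas, where $s$ matches the index of the target norm; the $\lambda^k$ arising from $\pd_t^k$ is uniformly bounded on $\wht\Gamma_6$. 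A standard Fourier multiplier theorem, combined with the $x_N$-integration estimates of Lemma \ref{lem:fund2} applied to the $y_N$-integrals (whose exponential factors $e^{-B_\Fb y_N}$ and $\CM_\Fb(y_N)$ decay because $\Re B_\Fb>0$ by Lemma \ref{lem:highest}(1)), yields the desired $L_q$/$W_q^{3-1/q}$/$H_q^2$ bounds, which multiplied by $e^{-a_0 t}$ give exponential decay.

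Next I would treat the infinite rays making up $\wht\Gamma_7$. Parametrizing the upper ray as $\lambda=-a_0+i\Im z_5^+ + se^{i(\pi-\theta_1)}$ for $s\geq 0$, one has $\Re\lambda=-a_0-s\cos\theta_1$. For the portion with $|\lambda|\geq \lambda_1$, the ray lies inside $\Sigma_{\theta_1}$, so the symbol-derivative bounds of Lemma \ref{lem:fund-sym} together with Lemma \ref{lem:nonzero}(1) apply and give polynomial-in-$|\lambda|$ control of the integrand; the bounded initial portion is treated as in the $\wht\Gamma_6$ case. Because $\int_0^\infty s^m e^{-s\cos\theta_1\cdot t}\,ds\leq C_m(\cos\theta_1\cdot t)^{-m-1}\leq C_m'$ for $t\geq 1$, the exponential in $s$ absorbs any polynomial factor $s^m$ coming from $\lambda^k$ (time derivatives), from $|\lambda|^{1/2}$-type growth in the symbols, or from the Mikhlin derivative estimates. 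After the $s$-integration the remaining multiplier in $\xi'$ again satisfies Mikhlin-type conditions uniformly in $t\geq 1$, producing a bound of order $e^{-a_0 t}$.

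The main obstacle will be verifying the Mikhlin hypothesis uniformly along the infinite portion of $\wht\Gamma_7$ for the highest-regularity targets $W_q^{3-1/q}(\BR^{N-1})$ for $H$ and $H_q^2(\dot\BR^N)$ for $\BU$: one must carefully track the three factors of $|\xi'|$ (or its equivalents $A$, $A+B_\pm$) absorbed by the Sobolev weights against the symbol decay in $A$ that Lemma \ref{lem:highest}(2) and Lemma \ref{lem:nonzero}(1) provide through the quotient by $L(A,\lambda)$. For the $H_q^2$-estimate of $\CU$, the $x_N$-derivatives produce additional factors of $A$ or $B_\pm$ from $e^{\mp B_\pm x_N}$ and $\CM_\pm(\pm x_N)$, controlled by Lemma \ref{fundlem:D}(2); the $y_N$-integration against $\wht \Bf(\xi',\Fa y_N)$ then uses Lemma \ref{lem:fund2} as in the low-frequency analysis. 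Assembling these estimates across each building block of the formulas in Subsection \ref{subsec:3-3}, summing over $j\in\{6,7\}$, and choosing any $\gamma_0\in(0,a_0)$ yields the theorem.
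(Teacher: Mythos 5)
Your proposal is correct and follows essentially the same route the paper takes (the paper's proof is itself only a reference to \cite[Section 5]{SaS1} together with Lemmas \ref{lem:highest}, \ref{lem:fund-sym}, and \ref{lem:nonzero}): deform $\Gamma_0$ to $\wht\Gamma_6\cup\wht\Gamma_7$, extract $e^{-a_0t}$ from $\Re\lambda\leq-a_0$, and control the remaining symbols by Mikhlin-type estimates, with the $e^{-s\cos\theta_1\,t}$ factor absorbing polynomial growth on the rays. One small imprecision: Lemma \ref{lem:highest}(1) only covers the strip $\Lambda(a_0,\Im z_5^+)$, so the justification of the contour shift on the portion of the swept region with $\Re\lambda\geq 0$ requires Lemma \ref{lem:nonzero}(2) (and Lemma \ref{lem:fund-sym} for $F\neq0$) rather than Lemma \ref{lem:highest}(1) alone.
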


Theorem \ref{thm:high-path} yields Theorem \ref{thm:main4} immediately.
This completes the proof of Theorem \ref{thm:main4}.

\def\thesection{A}
\renewcommand{\theequation}{A.\arabic{equation}}
\section{}
In this appendix, we consider the whole space problems in \eqref{eq:1-A}
and compute the representation formulas of  $\wht \psi_{\pm}|_{x_N=0}$ and $\pd_N\wht \psi_{\pm}|_{x_N=0}$.
We define the Fourier transform of $f=f(x)$ and 
the inverse Fourier transform of $g=g(\xi)$ by
\begin{equation*}
\CF[f](\xi)=\int_{\BR^N} e^{-ix\cdot\xi}f(x)\,dx, \quad 
\CF_{\xi}^{-1}[g](x)=\frac{1}{(2\pi)^N}\int_{\BR^N}e^{ix\cdot \xi}g(\xi)\,d\xi.
\end{equation*}

Let us denote the $j$th component of $\BF$ by $F_j$.
In \cite[Section 2]{SS11b}, we have 
\begin{align*}
\psi_\pm&=\CF_{\xi}^{-1}\left[\frac{\CF[\BF](\xi)}{\rho_\pm\lambda+\mu_\pm|\xi|^2}\right](x)
+\sum_{j=1}^{N}\CF_{\xi}^{-1}\left[\frac{(i\xi)(i\xi_j)\CF[F_j](\xi)}{|\xi|^2(\rho_\pm\lambda+\mu_\pm|\xi|^2)}\right](x), \\
\phi_+&=\phi_-=-\sum_{j=1}^{N}\CF_\xi^{-1}\left[\frac{i\xi_j\CF[F_j](\xi)}{|\xi|^2}\right](x).
\end{align*}
The formulas of $\phi_\pm$ imply $\jmp{\phi}=\phi_+(x',0\,+)-\phi_-(x',0\,-)=0$.
For $k=1,\dots,N-1$, $\psi_{k\pm}$ can be written as 
\begin{align*}
\psi_{k\pm}
&=\CF_{\xi}^{-1}\left[\frac{\CF[F_k](\xi)}{\rho_\pm\lambda+\mu_\pm|\xi|^2}\right](x)
-\sum_{j=1}^{N-1}\CF_{\xi}^{-1}\left[\frac{\xi_k \xi_j\CF[F_j](\xi)}{|\xi|^2(\rho_\pm\lambda+\mu_\pm|\xi|^2)}\right](x) \\
&+\CF_{\xi}^{-1}\left[\frac{(i\xi_k)(i\xi_N)\CF[F_N](\xi)}{|\xi|^2(\rho_\pm\lambda+\mu_\pm|\xi|^2)}\right](x),
\end{align*}
and also
\begin{equation*}
\psi_{N\pm}
=\CF_{\xi}^{-1}\left[\frac{A^2\CF[F_N](\xi)}{|\xi|^2(\rho_\pm\lambda+\mu_\pm|\xi|^2)}\right](x)
+\sum_{j=1}^{N-1}\CF_{\xi}^{-1}\left[\frac{(i\xi_N)(i\xi_j)\CF[F_j](\xi)}{|\xi|^2(\rho_\pm\lambda+\mu_\pm|\xi|^2)}\right](x),
\end{equation*}
where $A$ is given in \eqref{dfn:AB}. It holds for $l=1,\dots,N$ that
\begin{align*}
\CF[F_l](\xi)&=\int_0^\infty e^{-i y_N\xi_N} \wht F_l(\xi',y_N)\intd y_N
+\int_0^\infty e^{iy_N\xi_N}\wht F_l(\xi',-y_N) \intd y_N \\
&=\sum_{\Fa\in\{+,-\}}\int_0^\infty e^{-i\Fa y_N\xi_N}\wht F_l(\xi',\Fa y_N) \intd y_N
\end{align*}
which, inserted into the above formulas of $\psi_{k\pm}$ and $\psi_{N\pm}$, furnishes
\begin{align*}
&\wht \psi_{k\pm}(\xi',x_N,\lambda) \\
&=\sum_{\Fa\in\{+,-\}}
\int_0^\infty\wht F_k(\xi',\Fa y_N)\left(\frac{1}{2\pi}\int_{-\infty}^\infty
\frac{e^{i(x_N-\Fa y_N)\xi_N}}{\rho_\pm\lambda+\mu_\pm|\xi|^2}\intd\xi_N\right)\intd y_N \\
&-\sum_{j=1}^{N-1}\sum_{\Fa\in\{+,-\}}
\int_0^\infty \xi_k\xi_j \wht F_j(\xi',\Fa y_N)
\left(\frac{1}{2\pi}\int_{-\infty}^\infty\frac{e^{i(x_N-\Fa y_N)\xi_N}}{|\xi|^2(\rho_\pm\lambda+\mu_\pm|\xi|^2)}\intd\xi_N\right)\intd y_N \\
&+\sum_{\Fa\in\{+,-\}}\int_0^\infty i \xi_k \wht F_N(\xi',\Fa y_N)
\left(\frac{1}{2\pi}\int_{-\infty}^\infty\frac{i\xi_N e^{i(x_N-\Fa y_N)\xi_N}}{|\xi|^2(\rho_\pm\lambda+\mu_\pm|\xi|^2)}\intd\xi_N\right)\intd y_N, \\
&\wht \psi_{N\pm}(\xi',x_N,\lambda)  \\
&=\sum_{\Fa\in\{+,-\}}\int_0^\infty A^2 \wht F_N(\xi',\Fa y_N)
\left(\frac{1}{2\pi}\int_{-\infty}^\infty\frac{e^{i(x_N-\Fa y_N)\xi_N}}{|\xi|^2(\rho_\pm\lambda+\mu_\pm|\xi|^2)}\intd\xi_N\right)\intd y_N \\
&+\sum_{j=1}^{N-1}\sum_{\Fa\in\{+,-\}}
\int_0^\infty i\xi_j \wht F_j(\xi',\Fa y_N)
\left(\frac{1}{2\pi}\int_{-\infty}^\infty\frac{i\xi_N e^{i(x_N-\Fa y_N)\xi_N}}{|\xi|^2(\rho_\pm\lambda+\mu_\pm|\xi|^2)}\intd\xi_N\right)\intd y_N.
\end{align*}
Applying $\pd_N$ to these formulas yields
\begin{align*}
&\pd_N\wht \psi_{k\pm}(\xi',x_N,\lambda)  \\
&=\sum_{\Fa\in\{+,-\}}\int_0^\infty\wht F_k(\xi',\Fa y_N)\left(\frac{1}{2\pi}\int_{-\infty}^\infty
\frac{i\xi_Ne^{i(x_N-\Fa y_N)\xi_N}}{\rho_\pm\lambda+\mu_\pm|\xi|^2}\intd\xi_N\right)\intd y_N \\
&-\sum_{j=1}^{N-1}\sum_{\Fa\in\{+,-\}}
\int_0^\infty \xi_k\xi_j \wht F_j(\xi',\Fa y_N)
\left(\frac{1}{2\pi}\int_{-\infty}^\infty\frac{i\xi_N e^{i(x_N-\Fa y_N)\xi_N}}{|\xi|^2(\rho_\pm\lambda+\mu_\pm|\xi|^2)}\intd\xi_N\right)\intd y_N \\
&+\sum_{\Fa\in\{+,-\}}\int_0^\infty i \xi_k \wht F_N(\xi',\Fa y_N)
\left(\frac{1}{2\pi}\int_{-\infty}^\infty\frac{(i\xi_N)^2 e^{i(x_N-\Fa y_N)\xi_N}}{|\xi|^2(\rho_\pm\lambda+\mu_\pm|\xi|^2)}\intd\xi_N\right)\intd y_N, \\
&\pd_N\wht \psi_{N\pm}(\xi',x_N,\lambda) \\
&=\sum_{\Fa\in\{+,-\}}\int_0^\infty A^2 \wht F_N(\xi',\Fa y_N)
\left(\frac{1}{2\pi}\int_{-\infty}^\infty\frac{i\xi_N e^{i(x_N-\Fa y_N)\xi_N}}{|\xi|^2(\rho_\pm\lambda+\mu_\pm|\xi|^2)}\intd\xi_N\right)\intd y_N \\
&+\sum_{j=1}^{N-1}\sum_{\Fa\in\{+,-\}}
\int_0^\infty i \xi_j \wht F_j(\xi',\Fa y_N)
\left(\frac{1}{2\pi}\int_{-\infty}^\infty\frac{(i\xi_N)^2 e^{i(x_N-\Fa y_N)\xi_N}}{|\xi|^2(\rho_\pm\lambda+\mu_\pm|\xi|^2)}\intd\xi_N\right)\intd y_N.
\end{align*}

Now we have

\begin{lem}\label{lem:residue}
Let $\xi'=(\xi_1,\dots,\xi_{N-1})\in\BR^{N-1}\setminus\{0\}$ and $a\in\BR\setminus\{0\}$. 
Then 
\begin{align*}
\frac{1}{2\pi}\int_{-\infty}^\infty \frac{e^{i a \xi_N}}{\rho_\pm\lambda+\mu_\pm|\xi|^2}\intd \xi_N
&=\frac{e^{-B_\pm|a|}}{2\mu_\pm B_\pm}, \\
\frac{1}{2\pi}\int_{-\infty}^\infty \frac{i\xi_N e^{i a \xi_N}}{\rho_\pm\lambda+\mu_\pm|\xi|^2}\intd \xi_N
&=-{\rm sign}(a)\frac{e^{-B_\pm |a|}}{2\mu_\pm}, \\
\frac{1}{2\pi}\int_{-\infty}^\infty \frac{e^{i a \xi_N}}{|\xi|^2(\rho_\pm\lambda+\mu_\pm|\xi|^2)}\intd \xi_N
&=-\frac{1}{2\mu_\pm(A^2-B_\pm^2)}\left(\frac{e^{-A|a|}}{A}-\frac{e^{-B_\pm|a|}}{B_\pm}\right), \\
\frac{1}{2\pi}\int_{-\infty}^\infty \frac{i\xi_N e^{i a \xi_N}}{|\xi|^2(\rho_\pm\lambda+\mu_\pm|\xi|^2)}\intd \xi_N
&=-\frac{{\rm sign}(a)}{2\mu_\pm(A^2-B_\pm^2)}
\left(-e^{-A|a|}+e^{-B_\pm|a|}\right), \\
\frac{1}{2\pi}\int_{-\infty}^\infty \frac{(i\xi_N)^2 e^{i a \xi_N}}{|\xi|^2(\rho_\pm\lambda+\mu_\pm|\xi|^2)}\intd \xi_N
&=-\frac{1}{2\mu_\pm(A^2-B_\pm^2)}\left(Ae^{-A |a|}-B_\pm e^{-B_\pm|a|}\right),
\end{align*}
where $A$ and $B_\pm$ are defined in \eqref{dfn:AB}.
Here ${\rm sign}(a)=1$ when $a>0$ and ${\rm sign}(a)=-1$ when $a<0$. 
\end{lem}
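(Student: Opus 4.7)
The plan is a direct application of the residue theorem to each of the five integrals. The common structure is that $\rho_\pm\lambda + \mu_\pm|\xi|^2 = \mu_\pm(\xi_N^2 + B_\pm^2)$ contributes simple poles at $\xi_N = \pm i B_\pm$, while $|\xi|^2 = \xi_N^2 + A^2$ contributes simple poles at $\xi_N = \pm i A$. Since $A > 0$ and $\Re B_\pm > 0$ by the branch choice in \eqref{dfn:AB}, all four poles sit strictly off the real axis, with $iA$ and $iB_\pm$ in the open upper half-plane and $-iA$, $-iB_\pm$ in the open lower half-plane. For each integral I would close the real axis by a large semicircle in the upper half-plane when $a>0$ and in the lower half-plane when $a<0$; Jordan's lemma discards the arc, and the residue theorem evaluates the remaining integral as a sum of residues at the enclosed poles.

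The first two identities follow immediately. The residue of $e^{ia\xi_N}/[\mu_\pm(\xi_N - iB_\pm)(\xi_N + iB_\pm)]$ at $\xi_N = iB_\pm$ equals $e^{-a B_\pm}/(2i\mu_\pm B_\pm)$, which, after multiplication by $2\pi i$ and division by $2\pi$, gives $e^{-B_\pm a}/(2\mu_\pm B_\pm)$; the symmetric computation at $\xi_N = -iB_\pm$ covers $a<0$ and produces $e^{aB_\pm}/(2\mu_\pm B_\pm)$, so the two cases combine as $e^{-B_\pm|a|}/(2\mu_\pm B_\pm)$. The second identity picks up the $\operatorname{sign}(a)$ factor from the combined effect of the orientation reversal of the contour for $a<0$ and the additional $i\xi_N$ in the numerator evaluated at the pole.

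For the remaining three integrals, the plan is to first apply the partial fraction identity
$$\frac{1}{|\xi|^2(\rho_\pm\lambda + \mu_\pm|\xi|^2)} = \frac{1}{\mu_\pm(B_\pm^2 - A^2)}\left(\frac{1}{\xi_N^2 + A^2} - \frac{1}{\xi_N^2 + B_\pm^2}\right),$$
which reduces everything to three basic templates
$$\frac{1}{2\pi}\int_{-\infty}^\infty \frac{e^{ia\xi_N}}{\xi_N^2+c^2}\,d\xi_N = \frac{e^{-c|a|}}{2c}, \qquad \frac{1}{2\pi}\int_{-\infty}^\infty \frac{i\xi_N\, e^{ia\xi_N}}{\xi_N^2+c^2}\,d\xi_N = -\frac{\operatorname{sign}(a)}{2}\,e^{-c|a|},$$
$$\frac{1}{2\pi}\int_{-\infty}^\infty \frac{(i\xi_N)^2 e^{ia\xi_N}}{\xi_N^2+c^2}\,d\xi_N = -\frac{c}{2}\,e^{-c|a|}$$
for $c \in \{A, B_\pm\}$, each proved by the same contour closure. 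Substituting back with $c = A$ and $c = B_\pm$ and rewriting $-1/(A^2 - B_\pm^2) = 1/(B_\pm^2 - A^2)$ to absorb the sign into the prefactor reproduces the three stated right-hand sides after trivial simplification.

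The lemma contains no genuine obstacle; the single point worth a word of care is the applicability of Jordan's lemma when $B_\pm$ is complex rather than purely imaginary. This is secured by $\Re B_\pm > 0$ from Lemma \ref{lem:fund-sym}, which places $iB_\pm$ strictly in the open upper half-plane, so $|e^{ia\xi_N}|$ decays on the upper semicircular arc when $a>0$ uniformly as its radius tends to infinity; the symmetric estimate handles $a<0$ on the lower arc, and the contour argument then proceeds verbatim.
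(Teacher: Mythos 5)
Your overall strategy (contour closure plus the partial--fraction splitting) is sound, and identities one through four come out correctly from your first two templates. However, your third template carries a sign error, and as written your derivation of the fifth identity would fail. The correct value is
\begin{equation*}
\frac{1}{2\pi}\int_{-\infty}^\infty \frac{(i\xi_N)^2 e^{ia\xi_N}}{\xi_N^2+c^2}\,d\xi_N=+\frac{c}{2}\,e^{-c|a|},
\end{equation*}
not $-\tfrac{c}{2}e^{-c|a|}$: writing $(i\xi_N)^2/(\xi_N^2+c^2)=-1+c^2/(\xi_N^2+c^2)$ and discarding the constant term (which contributes nothing for $a\neq 0$) reduces the integral to $c^2$ times your first template, i.e.\ $c^2\cdot e^{-c|a|}/(2c)$; equivalently, the residue of $-\xi_N^2e^{ia\xi_N}/(\xi_N^2+c^2)$ at $\xi_N=ic$ is $c^2e^{-ac}/(2ic)$, which yields $+ce^{-ac}/2$ after multiplying by $2\pi i$ and dividing by $2\pi$. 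With your $-c/2$ the fifth right-hand side would come out as $+\frac{1}{2\mu_\pm(A^2-B_\pm^2)}\bigl(Ae^{-A|a|}-B_\pm e^{-B_\pm|a|}\bigr)$, the negative of the stated formula; with the corrected sign everything matches. A second, related point: for this template the integrand does not vanish on the large arc (it tends to $-e^{ia\xi_N}$), so a bare appeal to Jordan's lemma is insufficient and you must first split off the constant $-1$ as above. The paper sidesteps both issues by computing only the first and third formulas via residues and obtaining the second, fourth, and fifth by differentiating in $a$ under the integral sign, which is the cleaner route here.
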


\begin{proof}
The first and third formulas follow from the residue theorem.
Differentiating the first formula with respect to $a$, we have the second formula.
Analogously the fourth and fifth formulas follow from the third formula.
This completes the proof of Lemma \ref{lem:residue}.
\end{proof}

Recall $\CM_\pm(a)$ given in \eqref{dfn:DEM}, and then
\begin{equation*}
e^{-Aa}=(A-B_\pm)\CM_\pm(a)+e^{-B_\pm a} \quad (a\geq 0).
\end{equation*}
By this relation and Lemma \ref{lem:residue}, we have

\begin{lem}\label{lem:residue-2}
Let $\xi'=(\xi_1,\dots,\xi_{N-1})\in\BR^{N-1}\setminus\{0\}$ and $a\in\BR\setminus\{0\}$. 
Then 
\begin{align*}
\frac{1}{2\pi}\int_{-\infty}^\infty \frac{e^{i a \xi_N}}{|\xi|^2(\rho_\pm\lambda+\mu_\pm|\xi|^2)}\intd \xi_N
&=-\frac{\CM_\pm(|a|)}{2\mu_\pm A(A+B_\pm)}
+\frac{e^{-B_\pm|a|}}{2\mu_\pm AB_\pm(A+B_\pm)}, \\
\frac{1}{2\pi}\int_{-\infty}^\infty \frac{i\xi_N e^{i a \xi_N}}{|\xi|^2(\rho_\pm\lambda+\mu_\pm|\xi|^2)}\intd \xi_N
&= \frac{{\rm sign}(a)\cdot\CM_\pm(|a|)}{2\mu_\pm(A+B_\pm)}, \\
\frac{1}{2\pi}\int_{-\infty}^\infty \frac{(i\xi_N)^2 e^{i a \xi_N}}{|\xi|^2(\rho_\pm\lambda+\mu_\pm|\xi|^2)}\intd \xi_N
&=-\frac{A\CM_\pm(|a|)}{2\mu_\pm(A+B_\pm)} 
-\frac{e^{-B_\pm|a|}}{2\mu_\pm(A+B_\pm)}.
\end{align*}
\end{lem}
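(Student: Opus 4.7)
The plan is to derive each of the three identities directly from the corresponding formulas in Lemma \ref{lem:residue} by means of a single algebraic substitution. The key observation is that the definition of $\CM_\pm(a)$ in \eqref{dfn:DEM} immediately yields, for $|a|\geq 0$, the identity
\begin{equation*}
e^{-A|a|}=(A-B_\pm)\CM_\pm(|a|)+e^{-B_\pm|a|},
\end{equation*}
which is precisely the tool needed to replace $e^{-A|a|}$ in each of the three right-hand sides of Lemma \ref{lem:residue} by an expression containing the factor $(A-B_\pm)$. This factor will then cancel with one of the two factors in $A^2-B_\pm^2=(A-B_\pm)(A+B_\pm)$ appearing in every denominator, producing exactly the shape of the claimed formulas.

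In detail, for the first identity I would rewrite the right-hand side of the third formula in Lemma \ref{lem:residue} as
\begin{equation*}
-\frac{B_\pm e^{-A|a|}-A e^{-B_\pm|a|}}{2\mu_\pm AB_\pm(A-B_\pm)(A+B_\pm)},
\end{equation*}
substitute the identity above into $e^{-A|a|}$, and observe that the numerator factors as $(A-B_\pm)\bigl[B_\pm\CM_\pm(|a|)-e^{-B_\pm|a|}\bigr]$; after cancellation and splitting into two fractions one obtains the claimed formula. For the second identity, applying the same substitution to $-e^{-A|a|}+e^{-B_\pm|a|}$ yields $(A-B_\pm)\CM_\pm(|a|)$ with a minus sign, which cancels one factor in $A^2-B_\pm^2$ and leaves the asserted expression. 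For the third identity, similarly, $Ae^{-A|a|}-B_\pm e^{-B_\pm|a|}=(A-B_\pm)\bigl[A\CM_\pm(|a|)+e^{-B_\pm|a|}\bigr]$, and again the cancellation is immediate.

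None of the three computations presents a real obstacle; the only point requiring some care is to track the signs of $\mathrm{sign}(a)$ and of the overall minus sign in Lemma \ref{lem:residue}, and to verify that the factoring produces exactly a single copy of $(A-B_\pm)$ in the numerator so that the cancellation against $(A-B_\pm)(A+B_\pm)$ in the denominator leaves no residual singularity (recalling that $\CM_\pm$ was introduced precisely to remove the apparent pole at $A=B_\pm$). The three identities are all established by the same mechanism, so it is natural to present them in parallel and note that the proof reduces entirely to Lemma \ref{lem:residue} combined with the defining identity of $\CM_\pm$.
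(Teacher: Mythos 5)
Your proof is correct and is essentially identical to the paper's: the paper also deduces Lemma \ref{lem:residue-2} from Lemma \ref{lem:residue} via the identity $e^{-Aa}=(A-B_\pm)\CM_\pm(a)+e^{-B_\pm a}$ and the cancellation of one factor of $A-B_\pm$ against $A^2-B_\pm^2$. The three algebraic substitutions you outline all check out.
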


Together with the above formulas of $\wht \psi_{k\pm}$, $\wht \psi_{N\pm}$,
$\pd_N\wht \psi_{k\pm}$, and $\pd_N\wht \psi_{N\pm}$,
we obtain by Lemmas \ref{lem:residue} and \ref{lem:residue-2}
\begin{align}
\wht \psi_{k\pm}(\xi',0,\lambda) 
&=\sum_{\Fa\in\{+,-\}}\bigg\{\int_0^\infty \frac{e^{-B_\pm y_N}}{2\mu_\pm B_\pm}\wht F_k(\xi',\Fa y_N) \intd y_N  \notag \\
& +\sum_{j=1}^{N-1}\int_0^\infty \frac{\xi_k\xi_j\CM_\pm(y_N)}{2\mu_\pm A(A+B_\pm)} \wht F_j(\xi',\Fa y_N) \intd y_N \notag \\ 
&-\sum_{j=1}^{N-1}\int_0^\infty \frac{\xi_k\xi_j e^{-B_\pm y_N}}{2\mu_\pm AB_\pm(A+B_\pm )}\wht F_j(\xi',\Fa y_N) \intd y_N  \notag \\
& -\Fa\int_0^\infty \frac{i\xi_k\CM_\pm(y_N)}{2\mu_\pm(A+B_\pm)}\wht F_N(\xi',\Fa y_N) \intd y_N\bigg\}, \notag \\
\wht \psi_{N\pm}(\xi',0,\lambda)
&=\sum_{\Fa\in\{+,-\}}\bigg\{
-\int_0^\infty \frac{A\CM_\pm(y_N)}{2\mu_\pm(A+B_\pm)}\wht F_N(\xi',\Fa y_N) \intd y_N \notag \\
&+\int_0^\infty \frac{A e^{-B_\pm y_N}}{2\mu_\pm B_\pm(A+B_\pm)}\wht F_N(\xi',\Fa y_N) \intd y_N \notag \\
& -\Fa \sum_{j=1}^{N-1}\int_0^\infty \frac{i\xi_j\CM_\pm(y_N)}{2\mu_\pm(A+B_\pm)}\wht F_j(\xi',\Fa y_N) \intd y_N\bigg\}, \notag \\
\pd_N\wht\psi_{k\pm}(\xi',0,\lambda) 
&=\sum_{\Fa\in\{+,-\}}\bigg\{
\Fa\int_0^\infty \frac{e^{-B_\pm y_N}}{2\mu_\pm}\wht F_k(\xi',\Fa y_N) \intd y_N \notag \\
&+\Fa\sum_{j=1}^{N-1}\int_0^\infty 
\frac{\xi_j\xi_k \CM_\pm(y_N)}{2\mu_\pm(A+B_\pm)}\wht F_j(\xi',\Fa y_N) \intd y_N \notag \\
&-\int_0^\infty\frac{i\xi_k A \CM_\pm(y_N)}{2\mu_\pm(A+B_\pm)}\wht F_N(\xi',\Fa y_N) \intd y_N \notag \\
&-\int_0^\infty \frac{i\xi_k e^{-B_\pm y_N}}{2\mu_\pm(A+B_\pm)}\wht F_N(\xi',\Fa y_N) \intd y_N  \bigg\}, \notag \\
\pd_N\wht \psi_{N\pm}(\xi',0,\lambda) 
&=\sum_{\Fa\in\{+,-\}}\bigg\{
-\Fa\int_0^\infty  \frac{A^2\CM_\pm(y_N)}{2\mu_\pm(A+B_\pm)} \wht F_N(\xi',\Fa y_N) \intd y_N \notag \\
&-\sum_{j=1}^{N-1}\int_0^\infty \frac{i\xi_j A \CM_\pm(\Fa y_N)}{2\mu_\pm(A+B_\pm)}\wht F_j(\xi',\Fa y_N) \intd y_N \notag \\
&-\sum_{j=1}^{N-1}\int_0^\infty \frac{i\xi_j e^{-B_\pm y_N}}{2\mu_\pm(A+B_\pm)}\wht F_j(\xi',\Fa y_N) \intd y_N\bigg\}.\label{res-app}
\end{align}
This completes the proof of the appendix.


\end{document}